\documentclass[12pt, reqno]{amsart}
\usepackage{amsmath, amsthm, amssymb}
\usepackage{enumitem}

\usepackage{fullpage}
\usepackage[many]{tcolorbox}
\usepackage{xcolor}
\usepackage{wasysym}
\usepackage{mathtools}
\usepackage{textcomp}
\usepackage{stmaryrd}
\usepackage{pdflscape}
\usepackage{microtype}

\usepackage{tikz}
\usetikzlibrary{cd}
\usetikzlibrary{calc}
\usetikzlibrary{arrows,backgrounds,patterns.meta,arrows.meta}
\usetikzlibrary{positioning,shadings}
\usetikzlibrary{shapes}
\usetikzlibrary{backgrounds}
\usetikzlibrary{decorations,decorations.pathreplacing,decorations.markings,decorations.pathmorphing}
\tikzstyle{snake}=[decorate, decoration={snake, segment length=1mm, amplitude=.3mm}]
\tikzstyle{saw}=[decorate, decoration={saw, segment length=.7mm, amplitude=.25mm}]
\newcommand{\tikzmath}[2][]
{\vcenter{\hbox{\begin{tikzpicture}[#1]#2\end{tikzpicture}}}
}
\newcommand{\roundNbox}[6]{
	\draw[rounded corners=5pt, very thick, #1] ($#2+(-#3,-#3)+(-#4,0)$) rectangle ($#2+(#3,#3)+(#5,0)$);
	\coordinate (ZZa) at ($#2+(-#4,0)$);
	\coordinate (ZZb) at ($#2+(#5,0)$);
	\node at ($1/2*(ZZa)+1/2*(ZZb)$) {#6};
}
\tikzset{super thick/.style={line width=3pt}}
\tikzstyle{mid>}=[decoration={markings, mark=at position 0.55 with {\arrow{>}}}, postaction={decorate}]
\tikzstyle{mid<}=[decoration={markings, mark=at position 0.55 with {\arrow{<}}}, postaction={decorate}]

\tikzstyle{frameR}=[preaction={draw=#1,super thick,opacity=.2,decorate,decoration={curveto,amplitude=0,raise=-1.5pt}},thick,#1]
\tikzstyle{frameL}=[preaction={draw=#1,super thick,opacity=.2,decorate,decoration={curveto,amplitude=0,raise=1.5pt}},thick,#1]

\def\semicolon{;}
\def\applytolist#1{
    \expandafter\def\csname multi#1\endcsname##1{
        \def\multiack{##1}\ifx\multiack\semicolon
            \def\next{\relax}
        \else
            \csname #1\endcsname{##1}
            \def\next{\csname multi#1\endcsname}
        \fi
        \next}
    \csname multi#1\endcsname}

\def\calc#1{\expandafter\def\csname c#1\endcsname{{\mathcal #1}}}
\applytolist{calc}QWERTYUIOPLKJHGFDSAZXCVBNM;
\def\bbc#1{\expandafter\def\csname bb#1\endcsname{{\mathbb #1}}}
\applytolist{bbc}QWERTYUIOPLKJHGFDSAZXCVBNM;
\def\bfc#1{\expandafter\def\csname bf#1\endcsname{{\mathbf #1}}}
\applytolist{bfc}QWERTYUIOPLKJHGFDSAZXCVBNM;
\def\sfc#1{\expandafter\def\csname s#1\endcsname{{\sf #1}}}
\applytolist{sfc}QWERTYUIOPLKJHGFDSAZXCVBNM;
\def\fc#1{\expandafter\def\csname f#1\endcsname{{\mathfrak #1}}}
\applytolist{fc}QWERTYUIOPLKJHGFDSAZXCVBNM;
\def\rmc#1{\expandafter\def\csname rm#1\endcsname{{\mathrm #1}}}
\applytolist{rmc}QWERTYUIOPLKJHGFDSAZXCVBNM;

\tikzstyle{shaded}=[fill=red!10!blue!20!gray!30!white]
\tikzstyle{unshaded}=[fill=white]
\tikzstyle{over}=[double, draw=white, super thick, double=]
\tikzstyle{snake}=[decorate, decoration={snake, segment length=1mm, amplitude=.3mm}]
\tikzstyle{saw}=[decorate, decoration={saw, segment length=.7mm, amplitude=.25mm}]

\tikzstyle{coupon}=[draw, very thick, rectangle, rounded corners=5pt]
\tikzset{Rightarrow/.style={double equal sign distance,>={Implies},->},
triplecd/.style={-,preaction={draw,Rightarrow}},
quadruplecd/.style={preaction={draw,Rightarrow,
shorten >=0pt
},
shorten >=1pt,
-,double,double
distance=0.2pt}}
\tikzset{
    tripleline/.style args={[#1] in [#2] in [#3]}{
        #1,preaction={preaction={draw,#3},draw,#2}
    }
}
\tikzstyle{triple}=[tripleline={[line width=.15mm,black] in
      [line width=.7mm,white] in
      [line width=1mm,black]}] 
\tikzset{
    quadrupleline/.style args={[#1] in [#2] in [#3] in [#4]}{
        #1,preaction={preaction={preaction={draw,#4},draw,#3}, draw,#2}
    }
}
\tikzstyle{quadruple}=[quadrupleline={[line width=.3mm,white] in
      [line width=.6mm,black] in
      [line width=1.2mm,white] in
      [line width=1.5mm,black]}]

\tikzdeclarepattern{
  name=primeddots,
  type=uncolored,
  bounding box={(-.6pt,-.6pt) and (.6pt,.6pt)},
  tile size={(5pt,5pt)},
  tile transformation={rotate=60},
  parameters={none}, 
  code={
    \fill(0pt,0pt) circle (.35pt);
  }
}

\tikzdeclarepattern{
  name=primedbox,
  type=uncolored,
  bounding box={(-1pt,-1pt) and (1pt,1pt)},
  tile size={(5pt,5pt)},
  tile transformation={rotate=60},
  code={
    \draw[very thin] (-0.9pt,-0.9pt) -- (-0.9pt,0.9pt) -- (0.9pt,0.9pt) -- (0.9pt,-0.9pt) -- (-0.9pt,-0.9pt);
  }
}

\tikzdeclarepattern{
  name=primedplus,
  type=uncolored,
  bounding box={(-1pt,-1pt) and (1pt,1pt)},
  tile size={(5pt,5pt)},
  tile transformation={rotate=30},
  parameters={none}, 
  code={
    \draw[very thin] (-.9pt,-.9pt) -- (.9pt,.9pt);
    \draw[very thin] (.9pt,-.9pt) -- (-.9pt,.9pt);
  }
}

\tikzdeclarepattern{
  name=primedstar,
  type=uncolored,
  bounding box={(-1.2pt,-1.2pt) and (1.2pt,1.2pt)},
  tile size={(5pt,5pt)},
  tile transformation={rotate=30},
  parameters={none}, 
  code={
    \draw[very thin] (-1.2pt,0pt) -- (1.2pt,0pt);
    \draw[very thin] (-.6pt,-1.04pt) -- (.6pt,1.04pt);
    \draw[very thin] (-.6pt,1.04pt) -- (.6pt,-1.04pt);
  }
}

\tikzdeclarepattern{
  name=primeddots2,
  type=uncolored,
  bounding box={(-.3pt,-.3pt) and (.3pt,.3pt)},
  tile size={(2.5pt,2.5pt)},
  tile transformation={rotate=60},
  parameters={none}, 
  code={
    \fill(0pt,0pt) circle (.35pt);
  }
}

\tikzstyle{primedregion}[none]=[
	preaction={fill=#1},
	pattern=primeddots,
  draw=#1,
]

\tikzstyle{boxregion}[none]=[
	preaction={fill=#1},
	pattern=primedbox,
  draw=#1,
]

\tikzstyle{plusregion}[none]=[
	preaction={fill=#1},
	pattern=primedplus,
  draw=#1,
]

\tikzstyle{starregion}[none]=[
	preaction={fill=#1},
	pattern=primedstar,
  draw=#1,
]

\tikzstyle{primedregion2}[none]=[
	preaction={fill=#1},
	pattern=primeddots2,
  draw=#1,
]

\newcommand{\Bim}{\mathsf{Bim}}

\newcommand{\ZColor}{violet}

\newcommand{\AsColor}{red}

\newcommand{\AColor}{lightgray!50}
\newcommand{\BColor}{lightgray}

\newcommand{\Asplitcolor}{red!10}
\newcommand{\XsColor}{red!80!black}
\newcommand{\YsColor}{blue}
\newcommand{\Bsplitcolor}{blue!15}

\newcommand{\acol}{\AColor}
\newcommand{\bcol}{\Asplitcolor}
\newcommand{\ccol}{blue!10}
\newcommand{\Acol}{\AsColor}
\newcommand{\Xcol}{\XsColor}
\newcommand{\Ycol}{blue}
\newcommand{\Zcol}{\ZColor}
\newcommand{\Wcol}{orange}

\newcommand{\splitcat}{\mathsf{split}}

\makeatletter
\newcommand{\xMapsto}[2][]{\ext@arrow 0599{\Mapstofill@}{#1}{#2}}
\def\Mapstofill@{\arrowfill@{\Mapstochar\Relbar}\Relbar\Rightarrow}
\makeatother

\definecolor{violet}{RGB}{148,0,211}
\definecolor{DarkGreen}{RGB}{0,150,0}

\usepackage[pdftex,plainpages=false,hypertexnames=false,pdfpagelabels]{hyperref}
\definecolor{medium-blue}{rgb}{0,0,.8}
\hypersetup{colorlinks, linkcolor={purple}, citecolor={medium-blue}, urlcolor={medium-blue}}
\newcommand{\arxiv}[1]{\href{http://arxiv.org/abs/#1}{\tt arXiv:\nolinkurl{#1}}}
\newcommand{\arXiv}[1]{\href{http://arxiv.org/abs/#1}{\tt arXiv:\nolinkurl{#1}}}

\newcommand{\doi}[1]{\href{http://dx.doi.org/#1}{{\tt DOI:#1}}}

\DeclareMathOperator{\coev}{coev}
\DeclareMathOperator{\End}{End}
\DeclareMathOperator{\ev}{ev}

\DeclareMathOperator{\FPdim}{FPdim}
\DeclareMathOperator{\Hom}{Hom}
\DeclareMathOperator{\id}{id}

\DeclareMathOperator{\Irr}{\pi_0}
\DeclareMathOperator{\mate}{mate}

\newcommand{\op}{\mathrm{op}}

\DeclareMathOperator{\Tr}{Tr}
\DeclareMathOperator{\tr}{tr}

\DeclareFontFamily{U}{dmjhira}{}\DeclareFontShape{U}{dmjhira}{m}{n}{ <-> dmjhira }{}\DeclareRobustCommand{\yo}{\kern-0.35ex\text{\usefont{U}{dmjhira}{m}{n}\symbol{"48}}} 

\newcommand{\set}[2]{\left\{#1 \middle| #2\right\}}
\newcommand{\bra}[1]{\langle#1|}
\newcommand{\ket}[1]{|#1\rangle}
\renewcommand{\cent}{\textup{\textcent}}

\newcommand{\isometry}{\mathrel{\,\joinrel\lhook\joinrel\rightharpoonup}}
\newcommand{\fullyfaithful}{\mathrel{\,\joinrel\lhook\joinrel\rightharpoondown}}

\newcommand{\Mod}{\mathsf{Mod}}
\newcommand{\Fun}{\mathsf{Fun}}
\newcommand{\Vect}{\mathsf{Vec}}
\newcommand{\Hilb}{\mathsf{Hilb}}

\newcommand{\HstarAlg}{\mathsf{H^*Alg}}

\def\semicolon{;}
\def\applytolist#1{
    \expandafter\def\csname multi#1\endcsname##1{
        \def\multiack{##1}\ifx\multiack\semicolon
            \def\next{\relax}
        \else
            \csname #1\endcsname{##1}
            \def\next{\csname multi#1\endcsname}
        \fi
        \next}
    \csname multi#1\endcsname}

\def\calc#1{\expandafter\def\csname c#1\endcsname{{\mathcal #1}}}
\applytolist{calc}QWERTYUIOPLKJHGFDSAZXCVBNM;
\def\bbc#1{\expandafter\def\csname bb#1\endcsname{{\mathbb #1}}}
\applytolist{bbc}QWERTYUIOPLKJHGFDSAZXCVBNM;
\def\bfc#1{\expandafter\def\csname bf#1\endcsname{{\mathbf #1}}}
\applytolist{bfc}QWERTYUIOPLKJHGFDSAZXCVBNM;
\def\sfc#1{\expandafter\def\csname s#1\endcsname{{\sf #1}}}
\applytolist{sfc}QWERTYUIOPLKJHGFDSAZXCVBNM;
\def\fc#1{\expandafter\def\csname f#1\endcsname{{\mathfrak #1}}}
\applytolist{fc}QWERTYUIOPLKJHGFDSAZXCVBNM;
\def\rmc#1{\expandafter\def\csname rm#1\endcsname{{\mathrm #1}}}
\applytolist{rmc}QWERTYUIOPLKJHGFDSAZXCVBNM;

\theoremstyle{plain}
\newtheorem{thm}{Theorem}[section]
\newtheorem*{thm*}{Theorem}
\newtheorem{thmalpha}{Theorem}

\newtheorem{cor}[thm]{Corollary}

\newtheorem*{cor*}{Corollary}

\newtheorem*{conj*}{Conjecture}
\newtheorem{lem}[thm]{Lemma}
\newtheorem*{lem*}{Lemma}

\newtheorem{prop}[thm]{Proposition}

\newtheorem*{quest*}{Question}
\newtheorem*{claim*}{Claim}

\theoremstyle{definition}
\newtheorem{defn}[thm]{Definition}
\newtheorem{defnalpha}[thmalpha]{Definition}
\newtheorem{fact}[thm]{Fact}

\newtheorem{construction}[thm]{Construction}

\newtheorem{nota}[thm]{Notation}

\newtheorem{ex}[thm]{Example}
\newtheorem{sub-ex}[thm]{Sub-Example}
\newtheorem{counter-ex}[thm]{Counter-Example}
\newtheorem{rem}[thm]{Remark}
\newtheorem*{rem*}{Remark}

\AtBeginEnvironment{tikzcd}{
  \tikzset{
    every node/.append style={
        font=\normalfont
      }
  }
}
\usepackage{quiver}

\usetikzlibrary{arrows.meta}
\usetikzlibrary{fit}

\newcommand{\coloneq}{:=}
\def\coloneq{\mathrel{\vcenter{\baselineskip0.55ex\lineskiplimit0pt\hbox{\upshape.}\hbox{\upshape.}}}=} 
\def\eqcolon{\mathrel{\scalebox{-1}[1]{$\coloneq$}}}
\let\coloneqq\coloneq

\DeclareMathOperator{\isomeq}{\mathbin{\,\cong^\dag}}

\newcommand{\dtimesaux}[4]{\mathbin{\vcenter{\hbox{\raisebox{#4}{\scalebox{#3}{\ooalign{\hfil$\vcenter{\hbox{$#1\times$}}$\hfil\cr\hfil$\vcenter{\hbox{\mbox{#2\upshape\wasylozenge}}}$\hfil\cr}}}}}}}

\newcommand{\dtimes}{%
  \mathchoice
    {\dtimesaux{\displaystyle}{\normalsize}{1.15}{0.35ex}}
    {\dtimesaux{\textstyle}{\normalsize}{1.15}{0.35ex}}
    {\dtimesaux{\scriptstyle}{\scriptsize}{1.0}{0ex}}
    {\dtimesaux{\scriptscriptstyle}{\tiny}{1.0}{0ex}}%
}

\newcommand{\threehilbtimes}{\dtimes}

\makeatletter
\newcommand{\bigboxplus}{%
  \DOTSB\mathop{\mathpalette\mattos@bigplus\relax}\slimits@
}
\newcommand\mattos@bigplus[2]{%
  \vcenter{\hbox{%
    \sbox\z@{$#1\sum$}%
    \resizebox{!}{0.9\dimexpr\ht\z@+\dp\z@}{\raisebox{\depth}{$\m@th#1\boxplus$}}%
  }}%
  \vphantom{\sum}%
}
\makeatother

\title{Orthonormal bases for higher Hilbert spaces}
\author{Giovanni Ferrer, Brett Hungar, David Penneys, and Greyson Wesley}
\date{\today}

\begin{document}

\begin{abstract}
In our previous article [\arxiv{2410.05120}], we introduced the notion of a finite dimensional 3-Hilbert space, categorifying Baez's 2-Hilbert spaces.
In this article, by further categorifying Baez's higher linear algebra, we provide useful tools for working with 3-Hilbert spaces, including, generalized scalar multiplication, orthonormal bases, and unitary adjoints for operators.
We use these tools to endow the $\mathrm{C}^*$-3-category of 3-Hilbert spaces with a self-enrichment.
We prove a Unitary Yoneda Lemma/Riesz Representation Theorem for 3-Hilbert spaces: the Yoneda embedding is an isometric equivalence.
Finally, we define a unitary version of the Deligne product 
on 3-Hilbert spaces and prove that it satisfies an isometric version of the folding trick.
\end{abstract}

\maketitle

\tableofcontents

\newpage

\begin{landscape}
\thispagestyle{empty}    
\begin{table}
\hspace*{-1.5cm}
\begin{tabular}{|c||c|c|c|}
\hline
& $\Hilb$ & $2\Hilb$ & $3\Hilb$
\\\hline\hline
length
&
$\|\xi\|$
&
$d_c\coloneq\Tr^\cC(\id_c)$
&
$d_x\coloneq \Psi^\fX_x(\id_{1_x})$
\\\hline
inner product
&
$\langle \eta|\xi\rangle\in \bbC$
&
$\langle a|b\rangle_\Hilb^\cC\coloneq \cC(a\to b)\in \Hilb$
&
$\langle x|y\rangle^\fX_{2\Hilb}\coloneq \fX(x\to y)\in 2\Hilb$
\\\hline
simple object
&
$\xi\in H$ with $\|\xi\|=1$
&
$c\in \cC$ 
with $\Omega_c\in \Hilb$ 1-dimensional
& 
$x\in \fX$ 
with $\Omega_x\in 2\Hilb$ an $\rmH^*$-fusion cat.
\\\hline
ONB
&
$\{e_i\}$
spanning, $\langle e_i|e_j\rangle=\delta_{i=j}$
&
$\pi_0\cC$
set of representatives of simples
&
$\pi_0\fX$
one simple per component
\\\hline
addition
&
sum $\eta+\xi$
&
orthogonal direct sum $a\oplus b$
&
Hilbert direct sum $x\boxplus y$
\\\hline
scalar action
&
$\lambda\cdot \xi$ for $\lambda\in\bbC$
&
$H\odot c$
for $H\in \Hilb$
&
$\cM\boxdot x$
for $\cM\in 2\Hilb$
\\\hline
relative scalar action
&
$\lambda\cdot \|\xi\|^{-2} \cdot\xi$ for $\lambda\in \bbC$
&
$H\odot_{\Omega_c} c$
for $H\in \Mod^\dag(\Omega_c,\Tr^\cC_c)$
&
$\cM\boxdot_{\Omega_x} x$
for $\cM\in \Mod^\dag(\Omega_x)$
\\\hline
linear combination
&
$\sum_i \lambda_i e_i$
&
$\bigoplus_{b\in \pi_0\cC} H_b\odot_{\Omega_b} b$
&
$\bigboxplus_{b\in \pi_0\fX} \cM_b \boxdot_{\Omega_b} b$
\\\hline
Fourier expansion
&
$\xi = \sum_i \langle e_i|\xi\rangle e_i$
&
$c=\bigoplus_{b\in \pi_0\cC} \langle b|c\rangle \odot_{\Omega_b} b$
&
$x=\bigboxplus_{b\in \pi_0\fX} \langle b|x\rangle \boxdot_{\Omega_b} b$
\\\hline
linear map
&
$T\colon H\to K$
&
$\dag$-functor $F\colon  \cA\to \cB$
&
$\dag,\vee$-preserving 2-functor $F\colon \fX\to \fY$
\\\hline
coordinates
&
$H \cong \bigoplus_{e_i:\text{ONB}} \bbC \cdot e_i$
&
$\cC \cong \bigoplus_{c \in \pi_0\cC} \Mod^\dag(\Omega_c)$
&
$\fX \cong \bigboxplus_{x \in \pi_0\fX} \Mod^\dag(\Omega_x)$
\\\hline
adjoint
&
$\langle T\eta |\xi\rangle_K=\langle \eta| T^*\xi\rangle_H$
&
$\cB(F(a)\to b)\isomeq \cA(a\to F^*(b))$
&
$\fY(F(x)\to y)\isomeq \fX(x\to F^*(y))$
\\\hline
length preserving $\isometry$
&
$\|\cdot\|$-preserving 
$T\colon  H\to K$
&
$\Tr$-preserving $F\colon \cA\to \cB$
&
$\Psi$-preserving 
$F\colon \fX\to \fY$ 
\\\hline
isometry $\hookrightarrow$
&
$\langle T \eta | T \xi \rangle_K = \langle \eta | \xi \rangle_H$ 
&
$\cB(F(a) \to F(a')) \isomeq \cA(a \to a')$
&
$\fY(F(x) \to F(x')) \isomeq \fX(x \to x')$
\\\hline
unitary
&
$T^* = T^{-1}$
(invertible isometry)
&
isometric equivalence
&
isometric equivalence
\\\hline
linear functional
&
$T \colon H \to \bbC$
&
$\dag$-functor $F \colon \cA \to \Hilb$
&
$\dag$,$\vee$-preserving 2-functor $F \colon \fX \to 2\Hilb$
\\\hline
dual space
&
$H^\vee \coloneq \Hom(H,\bbC)$
&
$\Fun^\dag(\cA,\Hilb)$
&
$\Fun^{\dag,\vee}(\fX,2\Hilb)$
\\\hline
Riesz Representation
&
$\overline{H} \cong H^\vee$
&
$\cA^\op \isomeq \Fun^\dag(\cA,\Hilb)$
&
$\fX^\op \isomeq \Fun^{\dag,\vee}(\fX,2\Hilb)$
\\\hline
ket operators
&
$|\xi \rangle \colon \bbC \to H$ where $1_\bbC \mapsto \xi$
&
$|c\rangle \colon \Hilb \to \cC$ where $\bbC \mapsto c$
&
$|x\rangle \colon 2\Hilb \to \fX$ where $\Hilb \mapsto x$
\\\hline
relative ket operators
&
\textquotesingle\textquotesingle
&
$|c\rangle_\Omega \colon \Mod^\dag(\Omega_c) \to \cC$ where $\Omega_c \mapsto c$
&
$|x\rangle_\Omega \colon \Mod^\dag(\Omega_x) \to \fX$ where $\Omega_x \mapsto x$
\\\hline
bra operators
&
$\langle \eta | \coloneqq |\eta\rangle^* \colon H \to \bbC$
&
$\langle c | \coloneqq |c\rangle^* \colon \cC \to \Hilb$
&
$\langle x | \coloneqq |x\rangle^* \colon \fX \to 2\Hilb$
\\\hline
relative bra operators
&
\textquotesingle\textquotesingle
&
${}_\Omega\langle c | \coloneqq |c\rangle_\Omega^* \colon \cC \to \Mod^\dag(\Omega_c)$
&
${}_\Omega\langle x | \coloneqq |x\rangle_\Omega^* \colon \fX \to \Mod^\dag(\Omega_x)$
\\\hline
resolution of $\id$
&
$\id_H = \sum_{e_i : \text{ONB}} |e_i \rangle \langle e_i |$
&
$\id_{\cC} = \bigoplus_{b \in \pi_0\cC} |b \rangle_\Omega \langle b |$
&
$\id_{\fX} = \bigboxplus_{b \in \pi_0\fX} |b \rangle_\Omega \langle b |$
\\\hline
monoidal product
&
tensor product $\otimes$
&
unitary Deligne product $\boxtimes$
&
unitary Deligne product $\threehilbtimes$
\\\hline
monoidal unit
&
$\bbC$
&
$\Hilb$
&
$2\Hilb$
\\\hline
$E_1$-algebras
&
$\rmH^*$-algebras
&
$\rmH^*$-multifusion categories
&
???
\\\hline
\end{tabular}
\vspace*{1cm}
\caption{\label{fig:Categorify}Categorification of linear algebra: finite dimensional Hilbert spaces and operators}
\end{table}
\end{landscape}

\newpage
\section{Introduction}

Higher categories of higher vector spaces were introduced as receptacles for fully extended topological quantum field theories via the cobordism hypothesis \cite{MR1355899}.
Fusion 2-categories and semisimple 2-categories were introduced in \cite{1812.11933}, and a formal construction of $n\Vect$ via higher idempotent completion was implicitly outlined in \cite{1905.09566}.

Unitary theories are desired for applications to topologically ordered phases of matter in theoretical condensed matter physics \cite{MR1951039,cond-mat/0506438,PhysRevB.71.045110,MR1910833,NaaijkensThesis,MR4268163,MR4362722,MR4444089,MR4945955}.
The 2-category of finite dimensional 2-Hilbert spaces was introduced in \cite{MR1448713} (see \cite{2411.01678} for work on complete $\rmW^*$-categories, which behave like infinite dimensional 2-Hilbert spaces), and the 3-category of finite dimensional 3-Hilbert spaces was studied in detail in our previous article \cite{MR5078555}. 

In this article, we provide many important tools for working with finite dimensional higher Hilbert spaces.
(Some of our results have similar or analogous formulations in the non-unitary setting; see Remark \ref{rem:NonUnitary} below.)
Table \ref{fig:Categorify}, which appeared before the introduction, gives a detailed account of how various constructions in higher Hilbert spaces, like scalar multiplication, linear combinations, and orthonormal bases, change as the category level increases. 

Recall form \cite{MR1448713} that a 2-Hilbert space is a finitely semisimple $\rmC^*$-category $\cC$ equipped with a family of positive faithful traces $\{\Tr^\cC_c: \cC(c\to c)\to \bbC\}_{c\in \cC}$ satisfying
$$
\Tr^\cC_a(gf) = \Tr^\cC_b(fg)
\qquad\qquad\qquad
\forall\, 
f:a\to b
\text{ and } 
g:b\to a.
$$
These traces are equivalent data to enriching $\cC$ over $\Hilb$, the $\rmC^*$-category of finite dimensional Hilbert spaces, which consists of a positive definite inner product $\langle-|-\rangle_{a\to b}$ on each hom space $\cC(a\to b)$, satisfying
\begin{equation}
\label{eq:2HilbsAreHilbEnriched}
\langle gf|h\rangle_{a\to c}
=
\langle g|hf^\dag\rangle_{b\to c}
=
\langle f|g^\dag h\rangle_{a\to b}
\qquad\qquad
\forall\, 
f:a\to b,
g:b\to c,
\text{ and } 
h:a\to c.
\end{equation}
These equalities give a Unitary Yoneda Lemma for 2-Hilbert spaces \cite[\S2.1]{MR4750417} (based on \cite[Rem.~3.61 and footnote]{MR4598730} and \cite{MR3687214,MR4079745}):
each $\cC(-\to c): \cC^{\op}\to \Hilb$ is a dagger functor, and the Yoneda embedding $\cC\hookrightarrow \Fun^\dag(\cC^{\op}\to \Hilb)$ is a dagger equivalence.
We include a background section on 2-Hilbert spaces in \S\ref{sec:backgroundon2Hilbs} below.

Recall from \cite{MR5078555} that a 3-Hilbert space is a finite semisimple $\rmC^*$-2-category $\fX$ equipped with
\begin{itemize}
\item 
a unitary adjoint functor (UAF) $\vee$, which provides unitary duals for 1-morphisms (see Definition \ref{defn:UAF} below), and
\item 
a spherical weight $\Psi$, which is a family of positive linear functionals $\Psi_x\colon \End_\fX(1_x)\to \bbC$ for all $x\in \fX$  (see Definition \ref{3hilbertspace} below)
satisfying the following compatibility condition with the UAF:
$$
\Psi^\fX_a
\left(
\tikzmath{
\fill[fill=lightgray!50,rounded corners=5] (-.65,-.7) rectangle (1.2,.7);
\draw[thick,fill=lightgray] (-.15,0) arc (180:360+180:.5);
\node at (-.15,-.5) {$\scriptstyle X$};
\node at (.9,-.5) {$\scriptstyle X^\vee$};
\roundNbox{fill=white}{(0,0)}{.3}{0}{0}{$f$};
}
\right)
=
\Psi^\fX_b
\left(
\tikzmath{
\fill[fill=lightgray,rounded corners=5] (-.45,-.7) rectangle (1.5,.7);
\draw[thick,fill=lightgray!50] (-.05,0) arc (180:360+180:.5);
\node at (.95,-.5) {$\scriptstyle X$};
\node at (-.15,-.5) {$\scriptstyle X^\vee$};
\roundNbox{fill=white}{(0.8,0)}{.3}{0}{0}{$f$};
}
\right)
\qquad\qquad\qquad
\forall f\in \End_\fX({}_aX_b).
$$
\end{itemize}
The above formula defines a canonical positive faithful trace $\Tr^{\fX(a\to b)}_X$ on $\End_\fX({}_a X_b)$, equipping $\fX$ with a $2\Hilb$ enrichment.
We also insist that a 3-Hilbert space is \emph{complete}, i.e., admits Hilbert direct sums (see Definition \ref{defn:HilbertDirectSum} below) and all $\rmH^*$-monads split (see Definition \ref{defn:H*monad} below).

We include a background section on 3-Hilbert spaces in \S\ref{sec:backgrounon3Hilbs} below, where we actually prove some new foundational results.
Of particular importance is Lemma \ref{lem:uaf-reps-unitary-adjs}, which proves that the compatibility of the UAF with above $2\Hilb$-enrichment categorifies \eqref{eq:2HilbsAreHilbEnriched}:
$$
\fX({}_bY^\vee \otimes_a X_c \Rightarrow {}_bZ_c) \isomeq
\fX({}_aX_c \Rightarrow {}_aY\otimes_b Z_c) \isomeq
\fX({}_a X \otimes_c Z^\vee_b \Rightarrow {}_aY_b).
$$
In Corollary \ref{cor:isometric-yoneda}, we prove that the functor
\begin{equation}
\label{eq:BraFuUnctors}
\langle a| :=\fX(-\to a) : \fX^{1\op}\to 2\Hilb
\end{equation}
is uniquely representable up to unique isometric equivalence in $\fX$, which is a stronger notion of adjoint equivalence in $\fX$ with respect to the fixed UAF.

In \S\ref{subsec:coefficients}, we introduce a new notion of scalar multiplication for 3-Hilbert spaces, generalizing \cite[(11)]{2411.01678}.
Just as each Hilbert space comes with a $\bbC$-action, every 2-Hilbert space $\cC$ is tensored over $\Hilb$, in that $\cC$ is canonically a $\Hilb$-module category.
Each $c\in\cC$ gives a creation operator $\Hilb\to \cC$ given by $H\mapsto H\odot c$. 
However, more is true: we may canonically scale each $c\in \cC$ by a \emph{generalized scalar} $H\in \Mod^\dag(\Omega_c)$
the category of Hilbert space modules for the $\rmH^*$-algebra $\Omega_c\coloneq\End_\cC(c)$, which is a $\rmW^*$-algebra equipped with a positive faithful trace.
This construction was originally formulated in \cite[(11)]{2411.01678} in the language of $\rmW^*$-categories, and we provide an isometric version in Construction \ref{cstr:rel-tsr-prod-over-A} for 2-Hilbert spaces.

This story immediately categorifies to 3-Hilbert spaces.
Given $c\in \fX$, $\Omega_c:=\End_\fX(c)$ is an \emph{$\rmH^*$-multifusion category}, which is a unitary multifusion category equipped with a fixed unitary dual functor in the sense of \cite{MR4133163} and a spherical weight on $\End(1_c)$.
Just as modules for $\rmH^*$-algebras come with module traces, modules for $\rmH^*$-multifusion categories come with module category traces in the sense of \cite{MR3019263,MR4598730}.

\begin{defnalpha}[\ref{defn:coefficientsforobjects}] \label{defnalpha:coefficientsforobjects}
Let $\fX$ be a 3-Hilbert space, $c\in\fX$, and $\cM_{\Omega_c}\in \Mod^\dag(\Omega_c)$.
We define
$
\cM \boxdot_{\Omega_c} c
\in\fX
$
as the object corresponding to $\cM$ under the isometric embedding
$$
\begin{tikzcd}[row sep =0]
\cM\arrow[r, mapsto]
&
\cM\boxdot_{\Omega_c} c
\\
\Mod^\dag(\Omega_c)  
\arrow[r, hook,"|c\rangle_\Omega"]
& \fX
\\
\mbox{}
\\
\mbox{}
\\
\rmB \Omega_c \arrow[uuur, hook]
\arrow[uuu, hook]
\end{tikzcd}
$$
where this \emph{generalized ket} operator $|c\rangle_\Omega \colon \Mod^\dag(\Omega_c)\to \fX$ is the unique extension of the map $\rmB\Omega_c\hookrightarrow \fX$ 
given by $*\mapsto c$,
afforded by the universal property of completion
(see \eqref{eq:UniversalPropertyOfCompletion} below).
\end{defnalpha}
We show that this construction is compatible with \emph{bra operators} \eqref{eq:BraFuUnctors} in Proposition \ref{prop:2HilbertScalarsPullOut}.
One can also define $\cM\boxdot_\cA c$ for any module $\cM_\cA$ for an $\rmH^*$-multifusion category $\cA$ equipped with an isometric map $\cA\isometry \Omega_c$; see Definition \ref{D:reltsrprodH*monad} for more details.
Moreover, there is a canonical pentagonator which witnesses the higher associativity of generalized scalar multiplication.

\begin{rem}
In fact, this generalized scalar multiplication has already appeared in another language in the `infinite dimensional $\rmH^*$-algebra' setting, namely the 2-category of tracial von Neumann algebras.
Fix a tracial von Neumann algebra $(N,\tr_N)$,
an $\rmH^*$-multifusion category $\cA$, 
and an isometric dagger tensor functor $F:\cA \isometry \Omega_N:=\Bim(N)$.
Then $\cM\in \Mod^\dag(\cA)$ corresponds to an $\rmH^*$-algebra $A\in \HstarAlg(\cA)$, and the \emph{realization} $|A|_F$ in the sense of \cite{MR3948170,MR4079745} is precisely $\cM\boxdot_\cA N$.
Indeed, the connection can be made by observing that two simplifications were made in these articles:
\begin{itemize}
\item 
$N$ was always chosen to be a $\rm II_1$ factor obscuring the use of infinite dimensional $\rmH^*$-algebras/tracial von Neumann algebras, and
\item 
the compact $\rmW^*$-algebra objects used in \cite{MR4079745} were assumed to be \emph{connected}, and were thus equipped with a canonical state which is automatically tracial by \cite[Prop.~2.6]{MR3948170}, making them honest $\rmH^*$-algebra objects.
\end{itemize}
\end{rem}

\begin{rem}
The recent results in \cite{2507.05185} indicate that generalized scalar multiplication should be intimately connected the the SymTFT construction in quantum field theory \cite{1702.00673,MR4814695}.
In more detail, the article \cite{2507.05185} gave an operator algebraic interpretation of the SymTFT framework in terms of a physical boundary subalgebra $B$ for a quasi-local algebra $A$ over $\bbZ$, which should be viewed as the `fixed points' by a fusion category symmetry $\cS$ on $A$.
Under mild conditions, $A$ is the Q-system realization of a Lagrangian algebra object in the braided $\rmC^*$ tensor category of \emph{DHR bimodules} of $B$ \cite{MR4814692}.
In the 2-category of $\rmC^*$-algebras, $A=\cM\boxdot_\cC B$ where $\cC=\mathsf{DHR}(B)$, $\cM$ is the topological boundary corresponding to the Lagrangian algebra $L\in \mathsf{DHR}(B)$, and the symmetry category $\cS=\End(\cM_\cC)$.
\end{rem}

Equipped with the notion of generalized scalars, we discuss \emph{orthonormal bases} (ONB) for 3-Hilbert spaces in \S\ref{subsec:ONBS}. 

\begin{defnalpha}[\ref{defn:ONB}]
An \emph{orthonormal basis} (ONB)\footnote{\label{Footnote:ONB}Here, \emph{basis} refers to the property that there is only one $b_i$ per component, and \emph{normal} refers to the property that each $b_i$ is simple.
The \emph{ortho} part of \emph{orthonormal} is automatic since $\fX(b_i\to b_j)=0$ when $i\neq j$.} for a 3-Hilbert space $\fX$ consists of a choice of simple $b_i$ for each connected component of $\fX$. 
We denote a choice of ONB by $\pi_0\fX$.\footnote{At this point, this is a slight abuse of nomenclature, as an ONB is really a choice of simple in each component, rather than the components themselves.
Some might prefer the notation $\operatorname{Irr}(\fX)$ for such a choice.
We prefer $\pi_0\fX$ as readers familiar with fusion categories might confuse $\operatorname{Irr}(\fX)$ with a set of representatives for the 1-equivalence classes of simple objects, not just one simple per connected component of $\fX$ cf.~\cite[Def.~1.2.22]{1812.11933}.
} 
\end{defnalpha}

The data of an ONB is equivalent to the data of a finite list $\{\cC_i\}_i = \{\Omega_{b_i}\}_i$ of H*-fusion categories and an isometric equivalence of 3-Hilbert spaces $\bigboxplus_i \Mod^\dag(\cC_i) \cong \fX$.
We prove that the space of all ONBs is \emph{contractible} in the higher categorical sense.
This is a higher categorical analogue of the fact that given any two choices of basis for a vector space, there is a unique change of basis map.

\begin{thmalpha}[\ref{spaceofONBscontractible}]
The 3-groupoid of ONBs for a 3-Hilbert space $\fX$ is contractible.
\end{thmalpha}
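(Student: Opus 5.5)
We first have to say what the 3-groupoid of ONBs is, and then show that each of its hom-groupoids is contractible. Objects are ONBs $\{b_i\}_{i\in\pi_0\fX}$, one simple per connected component. A 1-morphism $\{b_i\}\to\{b_i'\}$ is a family of invertible (unitary, UAF-compatible) 1-morphisms, i.e.\ isometric equivalences. Equivalently, it is an isometric equivalence $\bigboxplus_i\Mod^\dag(\Omega_{b_i})\to\fX$ together with a comparison. The 2- and 3-morphisms are unitary 2-cells and equalities between them. Under this reformulation, an ONB is a pair consisting of a list of $\rmH^*$-fusion categories and an isometric equivalence $\Phi\colon\bigboxplus_i\Mod^\dag(\cC_i)\isomeq\fX$. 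Contractibility then amounts to three statements: the space is nonempty, any two ONBs are connected, and all automorphism spaces of the identifications are trivial.

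The plan is to identify the 3-groupoid of ONBs with a space of sections that is manifestly contractible. Nonemptiness comes from the existence of simples in each component, using the earlier structure theory that every component of a complete 3-Hilbert space is $\Mod^\dag(\Omega_b)$ for any simple $b$ in it. For connectivity, let $b,b'$ be simples in the same component. Then $\fX(b\to b')\neq0$, so some 1-morphism $M\colon b\to b'$ exists, and $\Omega_b$, $\Omega_{b'}$ are Morita equivalent via the bimodule category $\fX(b\to b')$.

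The key point is that the ONB data should remember not just the simples but the coordinate equivalence. I would therefore define the 3-groupoid as the groupoid of pairs $(\{\cC_i\},\Phi)$. A morphism $(\{\cC_i\},\Phi)\to(\{\cC'_i\},\Phi')$ is an isometric equivalence $\alpha\colon\bigboxplus\Mod^\dag(\cC_i)\to\bigboxplus\Mod^\dag(\cC'_i)$ together with a unitary 2-isomorphism $\Phi'\alpha\Rightarrow\Phi$, and so on up the levels. This is the homotopy fiber over $\fX$ of the functor $(\{\cC_i\})\mapsto\bigboxplus\Mod^\dag(\cC_i)$, restricted to isometric equivalences. The fiber is contractible because $\alpha$ is forced to be $\Phi'^{-1}\Phi$ up to a contractible choice. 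In other words, the category of factorizations of an equivalence through a fixed equivalence is contractible. This is the higher analogue of the unique change-of-basis map.

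To make this honest, I would use the universal property of completion \eqref{eq:UniversalPropertyOfCompletion}. It says that an isometric $\dag,\vee$-functor $\Mod^\dag(\Omega_b)\to\fX$ is determined, up to a contractible space of choices, by its restriction to $\rmB\Omega_b$, i.e.\ by the ket $|b\rangle_\Omega$. Via this property, the pair $(\{\cC_i\},\Phi)$ is equivalent to the pair $(\{b_i\},\text{identifications }\cC_i\cong\Omega_{b_i})$. That is the sense in which the two descriptions of ONB data agree.

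The main obstacle is bookkeeping the isometric (UAF- and $\Psi$-compatible) structure at every level. Specifically, one must check that the groupoid of isometric equivalences is an honest sub-3-groupoid, closed under inverses and adjoints. One must also check that the unitary 2- and 3-cells form contractible spaces of choices rather than merely nonempty ones. For this I would invoke the earlier result that representing objects are unique up to unique isometric equivalence (Corollary~\ref{cor:isometric-yoneda}), applied componentwise to the bra functors $\langle b_i|$.
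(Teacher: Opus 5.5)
Your final set-up matches the paper's route. You take pairs $(\{\cC_i\},\Phi)$ with isometric equivalences and unitary higher cells, i.e.\ the core of the slice $3\Hilb_{/\fX}$. You then reduce to the standard contractibility of such slices, plus a check that the forced cells are isometric or unitary. That is exactly what the paper does.

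Drop your first-paragraph description, though. Simples in one component are in general not equivalent: for example, $\Hilb(\bbZ/2)$ and $\Hilb$ are both simple objects of $\Mod^\dag(\Hilb(\bbZ/2))$, and only their $\Omega$'s are Morita equivalent. If 1-morphisms are ``families of isometric equivalences $b_i\to b_i'$'', the resulting 3-groupoid is neither connected nor has trivial automorphisms. That description is not equivalent to the slice one, and it would make the statement false. Your connectivity paragraph only makes sense for the corrected definition.

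The step you label the ``main obstacle'' is where the paper's proof actually does its work, and the paper settles it with unitary 2-adjunction. Given $(\{\cC_i\},F)$ and $(\{\cD_i\},G)$, it takes $H\coloneq G^*F$, where $G^*$ is the unitary 2-adjoint of $G$. By \eqref{eq:Unitary2AdjointExists}, $G^*$ is $\dag,\vee$-preserving, and it is an isometric equivalence because $G$ is. The paper then takes $\eta\colon F\Rightarrow GG^*F$ from the (co)evaluation of $G\dashv^\dag G^*$. For parallel $(H,\eta),(K,\kappa)$, the 2-cell $H\Rightarrow K$ is built from $\eta^*$, $\kappa$ and the $G^*$-(co)evaluations. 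This 2-cell, its modification, and the forced $\xi$ are all unitary because they are composites of isometric equivalences and unitaries.

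Your appeal to Corollary~\ref{cor:isometric-yoneda} gives only uniqueness of representing objects; it does not produce the inverse. What you need is existence, via Corollary~\ref{cor:AllPresheafsRepresentable}, applied to the presheaves $\fX(G(-)\to x)$ for every $x$. This is precisely how $G^*$ and the isometric equivalences $GG^*x\isomeq x$ arise. An arbitrary weak inverse $\Phi'^{-1}$ need not be $\dag,\vee$-preserving, nor come with a pointwise isometric (co)unit. So ``$\alpha=\Phi'^{-1}\Phi$'' is legitimate only once the inverse is chosen to be $\Phi'^*$.
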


Using ONBs, we show in Corollary \ref{cor:AllPresheafsRepresentable} that every presheaf $\fX^{1\op}\to 2\Hilb$ is isometrically representable via 
$$
F(x)=
\fX\left(x\to
\bigboxplus_{b\in\pi_0\fX} F(b)\boxdot_{\Omega_b} b\right).
$$
This formula allows us to define the notion of \emph{unitary 2-adjunction} for functors/operators between 3-Hilbert spaces.

Using the tools of generalized scalar multiplication, ONBs, and unitary 2-adjunction, we endow the space $\Hom(\fX \to \fY)\coloneq\Fun^{\dag,\vee}(\fX\to \fY)$ of UAF and $\dag$-preserving functors between 3-Hilbert spaces with the structure of a 3-Hilbert space itself.
That is, $3\Hilb$ is \emph{self-enriched}, as expected in \cite[\S5.3]{MR5078555}.
In \S\ref{subsec:selfenrichmentdesiderata} below, we prove this self-enrichment satisfies the desiderata from that article.
Finally, we use this self-enrichment to prove the following two important structural results for 3-Hilbert spaces.

\begin{thmalpha}[Unitary Yoneda]\label{thm:unitaryyoneda}
The Yoneda embedding $x\mapsto \fX(-\to x)$ is an isometric equivalence of 3-Hilbert spaces $\fX\cong \Hom(\fX^{1\op}\to 2\Hilb)$.
\end{thmalpha}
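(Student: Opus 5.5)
We plan to prove the theorem in three steps: first, the Yoneda 2-functor $\yo\colon x\mapsto \fX(-\to x)$ lands in $\Hom(\fX^{1\op}\to 2\Hilb)$ and is an isometry; second, it is essentially surjective; third, it preserves the UAF and the spherical weight, so that it is an isometric equivalence and not merely an equivalence.

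For the first step, Lemma \ref{lem:uaf-reps-unitary-adjs} gives natural unitary isomorphisms $\fX({}_bY^\vee\otimes X\Rightarrow Z)\isomeq \fX(X\Rightarrow Y\otimes Z)$. These show that for each $x$, the presheaf $\fX(-\to x)$ is a $\dag$- and $\vee$-preserving 2-functor $\fX^{1\op}\to 2\Hilb$. Precomposition with 1-morphisms is the 1-morphism action, and its unitary adjoint is precomposition with the dual. We then need a 2-categorical Yoneda lemma computed inside the self-enriched hom,
$$
\Hom(\fX^{1\op}\to 2\Hilb)\bigl(\fX(-\to x)\to F\bigr)\isomeq F(x),
$$
which should be an isometric equivalence of 2-Hilbert spaces. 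Taking $F=\fX(-\to y)$ then gives $\fX(x\to y)\isomeq \Hom(\yo x\to \yo y)$, so $\yo$ is fully faithful as a $2\Hilb$-enriched functor. To verify this equivalence, we decompose a transformation $\yo x\Rightarrow F$ using an ONB $\pi_0\fX$ (Theorem \ref{spaceofONBscontractible}). The transformation is determined by its component at $x$ evaluated on $1_x$, exactly as in the classical argument. The traces on $\Hom$ are defined in the self-enrichment via the ONB, as a weighted sum over $b\in\pi_0\fX$. We check that they match the trace on $F(x)$ coming from $\Omega_x$-module traces, using the desiderata established in \S\ref{subsec:selfenrichmentdesiderata}.

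For the second step, Corollary \ref{cor:AllPresheafsRepresentable} already gives, for any presheaf $F$, an isometric representation
$$
F\cong \fX\Bigl(-\to \bigboxplus_{b\in\pi_0\fX}F(b)\boxdot_{\Omega_b}b\Bigr).
$$
Hence $\yo$ is essentially surjective, and in fact surjective up to isometric equivalence.

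For the third step, we show that $\yo$ intertwines the UAFs and the spherical weights. On 1-morphisms, $\yo(X^\vee)$ should be the unitary adjoint 2-transformation of $\yo(X)$; this again follows from Lemma \ref{lem:uaf-reps-unitary-adjs}, applied componentwise. For weights, we must show that $\Psi^{\Hom}_{\yo x}(\id_{1_{\yo x}})$-type values agree with $\Psi^\fX_x$ on $\End(1_x)$. Writing $x$ in coordinates $\bigboxplus_b \langle b|x\rangle\boxdot_{\Omega_b}b$ reduces this to the case $x=b$ simple, where both sides are computed from the $\rmH^*$-fusion category $\Omega_b$ and its module trace. Since an equivalence that preserves $\dag$, $\vee$ and $\Psi$ is an isometric equivalence, this completes the proof.

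The main obstacle is expected to be the weight/trace compatibility, both in the enriched Yoneda isomorphism of the first step and in the third step. It depends on the precise normalization of the spherical weight on $\Hom(\fX\to\fY)$, in particular the factors of $d_b$ appearing in relative scalar multiplication $\boxdot_{\Omega_b}$. We would first verify it for $\fX=\Mod^\dag(\cC)$ with $\cC$ a single $\rmH^*$-fusion category, and then extend by additivity over Hilbert direct sums.
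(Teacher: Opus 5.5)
Your outline matches the paper's. Full faithfulness comes from the 2-categorical Yoneda lemma, and essential surjectivity from Corollary~\ref{cor:AllPresheafsRepresentable}. Preservation of $\vee$ follows from Lemma~\ref{lem:uaf-reps-unitary-adjs} applied componentwise. Weight-preservation is reduced to one simple object per component; your Fourier-expansion version of this reduction needs that the weight on $\cM\boxdot_{\Omega_b}b$ is determined by the weight at $b$, which the paper cites from \cite{MR5078555}.

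Two corrections are needed.

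First, drop the isometric enriched Yoneda claim in your first step. It is not needed: once $\yo$ is a $\dag$-, $\vee$- and $\Psi$-preserving equivalence, the functors $\fX(x\to y)\to\Hom(\yo x\to\yo y)$ are automatically trace-preserving, because the hom traces are $\Psi$ applied to $\vee$-closures. Your proposed justification ``using the desiderata of \S\ref{subsec:selfenrichmentdesiderata}'' is at best vague and at worst circular, since \ref{D:YonedaEmbeddingIsometric} there is exactly this theorem. You should also record why $\yo$ is itself a $\dag$-2-functor. Its value on a 2-morphism $f$ has components $-\otimes f$, and its structure maps are associators and unitors; the paper cites \cite{2404.05193} for this.

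Second, and more importantly, you defer the only non-formal step as ``the main obstacle'' and never supply it. Your guess that it hinges on normalizations coming from $\boxdot_{\Omega_b}$ is also off: none enter. Take $b\in\pi_0\fX$ and $\alpha\in\End(1_b)$. Every other basis element $c$ lies in a different component, so $\fX(c\to b)=0$ and only one term of \eqref{eq:3HilbOnFun} survives:
\[
\Psi^{\Hom}_{\yo b}(\yo\alpha)
=\frac{d_b}{D_{\Omega_b}}\,\Psi^{2\Hilb}_{\Omega_b}(\yo\alpha_b)
=\frac{d_b}{D_{\Omega_b}}\sum_{X\in\pi_0\Omega_b} d_X\,\Psi^\fX_b(\alpha\cdot\ell_X),
\]
where $\ell_X\in\End(1_b)$ is the closed $X$-loop, so $\Psi^\fX_b(\ell_X)=d_X$. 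Because $\Omega_b$ is fusion, $\End(1_b)=\bbC\,\id_{1_b}$, hence $\Psi^\fX_b(\alpha\cdot\ell_X)=\Psi^\fX_b(\alpha)\,d_X/d_b$. The sum therefore collapses to $D_{\Omega_b}^{-1}\bigl(\sum_X d_X^2\bigr)\Psi^\fX_b(\alpha)=\Psi^\fX_b(\alpha)$. The factor $d_b/D_{\Omega_b}$ in the definition of $\Psi^{\Hom}$ is there precisely to make this cancel.

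With this computation in place of your first-step detour, your argument is the paper's.
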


\begin{thmalpha}[Folding trick]\label{thm:foldingtrick}
There is a canonical isometric equivalence
$\Hom(\fX\to \fY)\cong \fY\threehilbtimes \fX^{1\op}$.
\end{thmalpha}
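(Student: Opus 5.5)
We prove the folding trick by reducing both sides to orthonormal bases and checking that the resulting components and endomorphism $\rmH^*$-fusion categories agree. The equivalence is built canonically from the Unitary Yoneda Lemma (Theorem \ref{thm:unitaryyoneda}) together with the definition of $\threehilbtimes$.

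First we reduce to a convenient model. Choose ONBs $\pi_0\fX=\{b_i\}$ and $\pi_0\fY=\{c_j\}$. These give isometric equivalences $\fX\cong\bigboxplus_i\Mod^\dag(\Omega_{b_i})$ and $\fY\cong\bigboxplus_j\Mod^\dag(\Omega_{c_j})$. By Theorem \ref{spaceofONBscontractible}, the space of such choices is contractible. So any isometric equivalence built from these choices, if it is natural in them, is canonical up to contractible choice. Both sides of the claimed equivalence should commute with Hilbert direct sums, in each variable separately and in an isometric way:
\begin{itemize}
\item for $\Hom(-\to-)$, this follows from the universal property of $\boxplus$ as both product and coproduct, together with the self-enrichment desiderata proven earlier;
\item for $\threehilbtimes$, it should follow from its definition.
\end{itemize}
This reduces the statement to the case $\fX=\Mod^\dag(\cA)$ and $\fY=\Mod^\dag(\cB)$ for $\rmH^*$-fusion categories $\cA,\cB$.

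Next we treat this single-component case. By the universal property of completion \eqref{eq:UniversalPropertyOfCompletion}, a $\dag,\vee$-preserving functor $\Mod^\dag(\cA)\to\fY$ is determined by its restriction to $\rmB\cA$. Concretely, it is the data of an object $y\in\fY$ with an isometric-type action $\cA\to\Omega_y$, that is, $y$ made into an $\cA$-module object. Equivalently, the functor category is the 3-Hilbert space of $\cB$-$\cA$-bimodule 2-Hilbert spaces with compatible module traces. On the other side, $\Mod^\dag(\cB)\threehilbtimes\Mod^\dag(\cA)^{1\op}\cong\Mod^\dag(\cB\boxtimes\cA^{\mathrm{mop}})$, which again gives bimodules. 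We would match these two descriptions explicitly. The canonical map in general is
$$
y\boxtimes x\longmapsto \fX(-\to x)\boxdot y,
$$
that is, the functor $x'\mapsto \fX(x'\to x)\boxdot y$ built with the generalized scalar multiplication of Definition \ref{defn:coefficientsforobjects}. Its essential surjectivity comes from the Fourier expansion $F=\bigboxplus_b |F(b)\rangle\langle b|$. This expansion is a resolution of the identity: it writes $F$ as a direct sum of such elementary tensors, and it follows from the representability result Corollary \ref{cor:AllPresheafsRepresentable}.

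The main obstacle will be the isometric part. We must check that the map is fully faithful with $\dag$-isometric hom 2-Hilbert spaces, i.e.,
$$
\Hom(\fX\to\fY)\bigl(F_{x,y}\Rightarrow F_{x',y'}\bigr)\isomeq \fY(y\to y')\boxtimes\fX(x'\to x),
$$
and that it preserves the spherical weights and the UAF. We would compute the left side with the unitary 2-adjunction: it becomes $\fY\bigl(y\to \fX(x'\to x)\boxdot y'\bigr)$, and we would then use Proposition \ref{prop:2HilbertScalarsPullOut} to pull the 2-Hilbert scalar out isometrically. For the weights, we would compare the spherical weight on the self-enriched $\Hom$ with the product weight $\Psi^\fY\otimes\Psi^\fX$ on simples $c_j\boxtimes b_i$. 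This reduces to a quantum-dimension normalization for $\boxdot_{\Omega}$ and is where most of the bookkeeping lies.
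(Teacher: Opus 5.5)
Your ket-bra strategy differs from the paper's proof and can be made to work, but as written the essential-surjectivity step fails. For comparison, the paper never builds a map by hand. It fixes presentations $\fX\cong^\dag\Mod^\dag(\cC)$ and $\fY\cong^\dag\Mod^\dag(\cD)$ and chains three isometric equivalences:
\begin{itemize}
\item (D3): functors are bimodules, and this is isometric because dimensions of simples agree;
\item Example \ref{ex:3HilbSymmetricMonoidal};
\item $\Mod^\dag(\cC^{\mathrm{mp}})\cong^\dag\Mod^\dag(\cC)^{1\op}$, via completion.
\end{itemize}
The bimodule identification you assert ``equivalently'' is exactly (D3), and its isometry is the real content there.

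\textbf{The gap.} The Fourier expansion does not write $F$ as a Hilbert direct sum of elementary tensors $|y\rangle\langle x|$. The resolution of the identity is $\id_\fX=\bigboxplus_b|b\rangle_\Omega\langle b|$ with \emph{relative} kets. By contrast $|F(b)\rangle\langle b|$ is too big: at $b$ it returns $\Omega_b\boxdot F(b)$, not $F(b)$. Also, Corollary \ref{cor:AllPresheafsRepresentable} only concerns $\fY=2\Hilb$. The claim is in fact false. Take $\fX=\fY=\Mod^\dag(\cC)$ with $\cC$ the Fibonacci category. Then $\id$ corresponds to the rank-$2$ bimodule ${}_\cC\cC_\cC$. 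Every nonzero $|y\rangle\langle x|$ corresponds to an external product $\fX(x\to\cC_\cC)\boxtimes y$, which has rank at least $4$ because Fibonacci has no rank-one module.

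\textbf{The repair.} The essential image of a fully faithful 1-morphism out of the complete $\fY\threehilbtimes\fX^{1\op}$ is closed under $\boxplus$ and $\rmH^*$-monad splittings. By Example \ref{ex:CompleteBLoops} it therefore contains the whole component of each simple it contains. Every simple $F$ shares a component with some simple $|c\rangle\langle b|$: choose $b$ with $F(b)\neq0$ and $c$ in the component of a simple summand of $F(b)$, and use $\Hom(|c\rangle\langle b|\Rightarrow F)\simeq\fY(c\to F(b))$, which follows from $\langle b|\dashv|b\rangle$ and (D1).

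\textbf{Variance.} $x'\mapsto\fX(x'\to x)\boxdot y$ is contravariant in $x'$. You want $x'\mapsto\fX(x\to x')\boxdot y=|y\rangle\langle x|(x')$. Only this reading is compatible with your hom formula $\fY(y\to y')\boxtimes\fX(x'\to x)$.

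\textbf{The isometric part is easier than you expect.} Hom-by-hom isometry would need something like an isometric \emph{precomposition} adjunction $-\circ\langle x|\dashv^\dag-\circ|x\rangle$. That is not among the established desiderata: (D6) only treats post-composition, and its pointwise argument does not transfer because the weights on $\Hom(\fX\to\fY)$ and $\Hom(2\Hilb\to\fY)$ sum over different ONBs. You do not need it.

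As in (D2) and (D3), first obtain a $\dag$, UAF-preserving equivalence, built from composition $\Hom(2\Hilb\to\fY)\times\Hom(\fX\to2\Hilb)\to\Hom(\fX\to\fY)$ and the universal property of completion. Then \cite[Rem.~4.50]{MR5078555} reduces isometry to the simples $c\threehilbtimes b$ with $b\in\pi_0\fX$ and $c\in\pi_0\fY$, and this check is one line:
\begin{itemize}
\item in \eqref{eq:3HilbOnFun} only $b'=b$ contributes;
\item $\fX(b\to b)=\Omega_b\cong\bigboxplus_X(\Hilb,d_X\Tr)$ as a 2-Hilbert space;
\item for simple $c$, the object $(\Hilb,r\Tr)\boxdot c$ (the splitting of $(1_c,r^{-1/2},r^{1/2})$) has dimension $r^2d_c$ by sphericality.
\end{itemize}
Hence $\Psi^{\Hom}_{|c\rangle\langle b|}(\id)=\frac{d_b}{D_{\Omega_b}}\cdot D_{\Omega_b}d_c=d_bd_c$, which is the dimension of $c\threehilbtimes b$.

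\textbf{What each route buys.} Yours gives a manifestly choice-free map and explains the normalization $d_b/D_{\Omega_b}$. With it there is no need to reduce to one component or to invoke Theorem \ref{spaceofONBscontractible}. The price is proving full faithfulness and essential surjectivity yourself. The paper's argument is three lines given (D3), but is canonical only up to the choice of presentations.
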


\begin{rem}
\label{rem:NonUnitary}
The generalized scalar multiplication from Definition \ref{defnalpha:coefficientsforobjects} is similar in spirit to the relative Deligne product in the non-unitary setting; see \cite[\S3]{MR4600461}.
The analogue of Theorem \ref{thm:unitaryyoneda}, that the (absolute) Yoneda embedding is an equivalence for non-unitary finite semisimple 2-categories, is \cite[Prop.~1.4.11]{1812.11933}.
Theorem \ref{thm:foldingtrick} for non-unitary finite semisimple 2-categories follows by combining \cite[Thm.~5.1.2]{MR4600461} and \cite[Thm.~2.2.4]{2311.16827}.
\end{rem}

\medskip
\noindent
{\textbf{Applications.}}
There are several forthcoming applications of the tools in this article for 3-Hilbert spaces.

In the upcoming article \cite{UnitaryDiskLike}, Wesley proves that (finite) 2- (respectively 3)-Hilbert spaces are precisely the local data of a fully extended 2D (respectively 3D) unitary TQFT. 
The 3D case directly uses many of the tools developed in this article.

Second, Penneys and Wesley together with Kevin Walker
are developing string net lattice models in higher dimensions using graphical calculus and skein theoretic techniques.
These models are essentially a Hamiltonian presentation of the fully extended TQFTs discussed in the previous application.
The tools of this article are essential for getting the unitarity of these constructions correct for $3+1$D models.

We also expect that the tools of this article will be helpful in characterizing unitary dualizability for unitary multifusion 2-categories in the spirit of \cite{MR4133163}.
Such a characterization will be essential for both discussing the many versions of 3-Hilbert spaces extending the treatment for 2D in \cite{2606.11334}, along with defining the notion of 4-Hilbert space (see below).

\medskip
\noindent
{\textbf{Towards looking deeper.}}
We conjecture that the $\rmC^*$-3-category $3\Hilb$ equipped with 
\begin{itemize}
\item unitary 2-adjunction $*$ as described in \S\ref{subsec:Unitary2Adjunction}, and
\item spherical weights $\Phi_\fX$ on each $\End(\fX)$ arising from the self-enrichment in \S \ref{sec:3HilbSelfEnriched}
\end{itemize}
forms a \emph{4-Hilbert space}, a notion which has not yet been established. 
Making this precise and rigorous would require
\begin{itemize}
\item a study of unitary dual (resp. adjoint) 2-functors on multifusion 2-categories (resp. semisimple $\rmC^*$-3-categories), and
\item 
a study of (isometric/Hilbert) completeness for such $\rmC^*$-3-categories in the presence of spherical weights. 
\end{itemize}
These tasks both lie outside of the scope of this paper. 
To begin these questions, one should use that $2\Hilb$ forms the prototypical example of an $\rmH^*$-(multi)fusion 2-category, and $3\Hilb$ should have a canonical $\rmO(3)$-dagger structure in the sense of \cite{2403.01651,2606.11334}.
\subsection*{Acknowledgments}
We thank Chumeng Di for many discussions along the way, in particular in the proof of Proposition \ref{prop:EnrichmentWellDefined}.
We also thank
Thibault D\'ecoppet, 
Corey Jones,
Lukas M\"uller, and Luuk Steuhouwer
for helpful discussions. 
The authors were supported by NSF DMS grants 2154389, 2244045, and 2554723.

\section{Background on 2-Hilbert spaces}
\label{sec:backgroundon2Hilbs}
In this section, we recall the necessary background on  2-Hilbert spaces from \cite{MR1448713}, streamlining our approach 
using techniques from \cite{2411.01678}.

\begin{nota}
In this article, we typically use the characters $H,K,L$ for Hilbert spaces, $\cA,\cB,\cC$ for 1-categories, and $\fX,\fY,\fZ$ for 2-categories.
That is, we attempt to use different fonts and different sections of the alphabet for objects with different category numbers.

For an object $c\in\cC$, we write $\Omega_c\coloneq\End_\cC(c)$.
For an object $x\in\fX$, we write $\Omega_x\coloneq\End_\fX(x)$ and $\Omega^2_x\coloneq\End_\fX(1_x)$.
\end{nota}

\subsection{2-Hilbert spaces}\label{subsec:2Hilbs}

A \emph{2-Hilbert space} is a finite semisimple $\rmC^*$-category $\cC$ equipped with a positive definite trace $\Tr^\cC$, which is a family of positive definite linear functionals $\Tr^\cC_c\colon  \Omega_c\to \bbC$ for each $c\in \cC$ satisfying the tracial condition
$$
\Tr^\cC_a(f^\dag g) = \Tr^\cC_b(g^\dag f)
\qquad\qquad\qquad
\forall f,g\in\cC(a\to b).
$$
In \cite{MR1448713}, a set $\pi_0\cC$ of representatives for the simple objects of $\cC$ is viewed as an \emph{orthonormal basis} (ONB) for $\cC$, where equivalence of simple objects is given by isometric (trace-preserving) equivalence in $\cC$.

We see that our 2-Hilbert space is \emph{enriched} over $\Hilb$, in that each $\cC(a\to b)$ comes equipped with an inner product $\langle f|g\rangle_{\cC(a\to b)} \coloneq\Tr^\cC_a(f^\dag  g)$, which satisfies
\begin{equation}
\langle h^\dag \circ k | g \rangle_{a\to b}
=
\langle k | h \circ g \rangle_{a\to c}
=
\langle k\circ g^\dag | h  \rangle_{b\to c}
\tag{\ref{eq:2HilbsAreHilbEnriched}}
\end{equation}
for all 
$g\colon a\to b$, $h\colon b\to c$, and $k\colon  a\to c$.
As noted in \cite[Footnote~12]{MR4598730},
the second equality in \eqref{eq:2HilbsAreHilbEnriched} holds if and only if each representable functor $\cC(-\to c)\colon \cC^{\op}\to \Hilb$ is a $\dag$-functor, in which case, the first equality in \eqref{eq:2HilbsAreHilbEnriched} implies that the Yoneda embedding $\cC\hookrightarrow \Fun^\dag(\cC^{\mathrm{op}}\to \Hilb)$ is a $\dag$-functor.
Moreover, when $\Fun^\dag(\cC^{\op}\to \Hilb)$ is equipped with the positive definite trace
$$
\Tr^{\Fun}_F(\eta\colon F\Rightarrow F)
\coloneq
\sum_{c\in\Irr\cC} d_c \Tr^\Hilb_{F(c)}(\eta_{c}),
$$
the Yoneda embedding is also isometric, i.e., trace-preserving \cite[Rem.~2.16]{MR5078555}.

\begin{rem}
The trace $\Tr^\cC$ allows us to identify $\cC(a\to b)$ with the Hilbert-space valued inner product \cite{MR2325696,2411.01678}:
$$
\langle a|b\rangle^\cC_\Hilb \coloneq p_b L^2(\End_\cC(a\oplus b),\Tr^\cC_{a\oplus b}) p_a.
$$
\end{rem}

\begin{nota}
In this article, all ($n$-)functors are assumed to be linear $\dag$-functors unless otherwise stated.
We use the following suggestive notation, under the desideratum that the most well-behaved notion gets the best notation:
\begin{itemize}
\item 
We write $F\colon \cA\isometry \cB$ for a trace-preserving functor, i.e., $\Tr^\cB_{F(a)}(F(f))=\Tr^\cA_a(f)$ for all $f\in \Omega_a$.
The hook on the back represents faithfulness, and the upwards harpoon represents trace-preserving.
The fact that trace-preserving functors are automatically faithful is captured by the hook and the harpoon being on the same side.
\item 
We write $F\colon  \cA\fullyfaithful \cB$ for a fully faithful functor.
Since fullness and faithfulness are independent notions, the hook and harpoon are on different sides.
\item 
We write $F\colon \cA\hookrightarrow \cB$ for a fully faithful trace-preserving functor, i.e., an \emph{isometry}.
Observe that $\hookrightarrow \,=\, \isometry + \fullyfaithful$.
\item 
We write $F\colon \cA\cong^\dag \cB$ to denote an \emph{isometric equivalence} of categories.
(Sometimes we omit the $\dag$ if we include `isometric equivalence' in words.)
\end{itemize}
\end{nota}

\begin{ex}
Recall from \cite{MR13235,MR3971584,MR5078555} that an $\rmH^*$-\emph{algebra} is a finite dimensional tracial von Neumann algebra, a.k.a., a finite dimensional $\rmC^*$/von Neumann algebra $A$ equipped with a faithful (non-normalized) positive trace $\Tr_A\colon  A\to \bbC$.
We define $\Mod^\dag(A)$ as the space of finite dimensional right Hilbert space $A$-modules $H_A$, which are organically equipped with right $A$-valued inner products
$$
\langle \eta|\xi\rangle_A \coloneq L_\eta^\dag L_\xi
$$
where $L_\eta\colon  L^2(A,\Tr_A)\to H$ is given by $a\Omega\mapsto \eta a$.
Moreover, since $H_A$ is finite dimensional,
$$
\End(H_A) = \operatorname{span}\set{L_\xi L_\eta^\dag}{\eta,\xi\in H},
$$ 
which is again an $\rmH^*$-algebra with trace determined by
$$
\Tr_{\End(H_A)}(L_\xi L_\eta^\dag)
=
\Tr_A(L_\eta^\dag L_\xi) 
= 
\Tr_A(\langle \eta|\xi\rangle_A)
=
\langle \eta|\xi\rangle_H.
$$
One verifies that these traces endow $\Mod^\dag(A,\Tr_A)$ with the structure of a 2-Hilbert space, and all 2-Hilbert spaces arise in this way.
\end{ex}

\begin{rem}
\label{rem:ExtendTraceToCompletion}
One can also define the 2-Hilbert space structure on $\Mod^\dag(A,\Tr_A)$ as follows.
Since $\End(L^2(A,\Tr_A)_A) \cong A$ we may equip $\End(L^2(A,\Tr_A)_A)$ with the trace $\Tr_A$.
This induces a unique trace $\Tr_{L^2(A,\Tr_A)^{\oplus n}} \coloneq \bigoplus^n \Tr_A$ on each free module $L^2(A,\Tr_A)^{\oplus n}_A$.
Then as every object in $\Mod^\dag(A,\Tr_A)$ is projective, we then extend the trace to every projective module $pL^2(A,\Tr_A)^{\oplus n}$ 
for $p\in M_n(A) = \End(L^2(A,\Tr_A)^{\oplus n}_A)$
determined by $\Tr_{pL^2(A,\Tr_A)^{\oplus n}} = \Tr_{L^2(A,\Tr_A)^{\oplus n}}(p \, \cdot\,).$ 
\end{rem}

\begin{defn}
A \emph{unitary category} is a $\rmC^*$-category $\cC$ with finite dimensional hom spaces such that each \emph{linking algebra}
$$
L(a_1,\dots, a_n)\coloneq
\begin{bmatrix}
\Omega_{a_1} & \cC(a_2\to a_1) & \cdots & \cC(a_n\to a_1) \\
\cC(a_1\to a_2) & \Omega_{a_2} & \cdots & \cC(a_n\to a_2) \\
\vdots & \vdots & \ddots & \vdots \\
\cC(a_1\to a_n) & \cC(a_2\to a_n) & \cdots & \Omega_{a_n}
\end{bmatrix}
$$
is a finite dimensional $\rmC^*$-algebra under matrix multiplication and conjugate transpose.

A \emph{pre-2-Hilbert space} is a unitary category $\cC$ equipped with a faithful positive trace $\Tr^\cC$ that is \emph{finite} in that there is a global bound on the dimension of the center of all linking algebras.
\end{defn}

\begin{ex}
The delooping $\rmB(A,\Tr_A)$  of an $\rmH^*$-algebra $(A,\Tr_A)$ is a pre-2-Hilbert space.
\end{ex}

\begin{defn}
The \emph{completion} $\cC^\cent$ of a pre-2-Hilbert space $\cC$ is the orthogonal projection completion of its orthogonal additive envelope \cite{2411.01678}.
The faithful positive trace $\Tr^\cC$ is extended to the completion analogous to Remark \ref{rem:ExtendTraceToCompletion}, under which the completion becomes a 2-Hilbert space.
Conversely, every 2-Hilbert space is complete (admits orthogonal direct sums and is orthogonal projection complete).
\end{defn} 

\begin{ex}
For an $\rmH^*$-algebra $(A,\Tr_A)$, 
the completion of the \emph{delooping}
$\rmB(A,\Tr_A)^\cent\cong^\dag{\Mod^\dag(A,\Tr_A)}$.
\end{ex}

The completion of a pre-2-Hilbert space comes equipped with an isometric fully faithful functor $\cC\hookrightarrow \cC^\cent$
which satisfies the following universal property.
\begin{itemize} 
\item 
For every 2-Hilbert space $(\cD,\Tr^\cD)$ (which is automatically complete), precomposition with $\iota$ is an 
equivalence of unitary categories
$$
\Hom(\cC^\cent\to \cD)
\xrightarrow{-\circ \iota}
\Hom(\cC\to \cD).
$$
\end{itemize}

\subsection{Generalized scalars in 2-Hilbert spaces}

A 2-Hilbert space is also \emph{tensored} over $\Hilb$, in that $\cC$ is canonically a $\Hilb$-module category.
In other words, there is a bilinear functor $\odot\colon \Hilb\times \cC\to \cC$ satisfying an associativity coherence with the monoidal structure of $\Hilb$, where for $c\in \cC$ and $H\in \Hilb$, the object $H\odot c$ is the orthogonal direct sum of $\dim(H)$ copies of $c$. 
The intimate connection between enrichment and tensoring for ordinary categories \cite{MR2177301} extends to the unitary setting \cite{MR4814691}
in that we have unitary hom-tensor adjunction
$$
\cC(H\odot a \to b) \isomeq \Hilb(H\to \cC(a\to b))
$$
where $\Hilb$ has the usual trace determined by $\Tr^\Hilb_\bbC(1)=1$.

Now given an object $c\in (\cC,\Tr^\cC)$, an isometric (trace-preserving) map of $\rmH^*$-algebras $(A,\Tr_A)\isometry \Omega_c\coloneq\End_\cC(c)$,\footnote{One might call such a $c\in \cC$ an \emph{$(A,\Tr_A)$-module internal to $(\cC,\Tr^\cC)$}.} 
and $H_A\in \Mod^\dag(A,\Tr_A)$,
there is another closely related construction 
$H\odot_A c$ modified from \cite[(11)]{2411.01678}.

\begin{construction}[{Adapted from \cite[(11)]{2411.01678}}]
\label{cstr:rel-tsr-prod-over-A}
For a 2-Hilbert space $(\cC,\Tr^\cC)$ and $c\in \cC$,  an isometry $(A,\Tr_A)\isometry (\Omega_c,\Tr^\cC_c)$
has a unique extension to an isometry $-\odot_A c\colon \Mod^\dag(A,\Tr_A)\isometry \cC$.
$$
\begin{tikzcd}[row sep=0]
H_A
\arrow[r,mapsto]
&
H\odot_A c
\\
\Mod^\dag(A,\Tr_A) 
\arrow[r,harpoon, hook]
&
\cC
\\\mbox{}\\\mbox{}\\
\rmB (A,\Tr_A)
\arrow[uuu,hook]
\arrow[r,harpoon, hook]
&
\rmB\Omega_c
\arrow[uuu,hook]
\end{tikzcd}
$$
This functor is well-behaved with respect to both Connes fusion in the 2-category $\HstarAlg$ of $\rmH^*$-algebras and Hilbert space bimodules, as well as isometric functors between 2-Hilbert spaces.
That is, the universal property gives a coherent family of unitary natural isomorphisms
$$
(K\boxtimes_B H)\odot_A c
\cong
K\odot_B (H\odot_A c)
$$
whenever $H$ is a $B-A$ bimodule \cite[Lem.~3.36]{2411.01678},
and for every trace-preserving functor $F\colon (\cC,\Tr^\cC)\to (\cD,\Tr^\cD)$, we get a canonical natural unitary
\begin{equation} \label{eq:FunctorsAreLinear}
F(H\odot_A c) \cong H\odot_A F(c).
\end{equation}
\end{construction}

\begin{sub-ex}
\label{sub-ex:ConnesFusion1}
As a specific example of the above feature, 
since $(\cC,\Tr^\cC)$ is a 2-Hilbert space, each Hilbert space $\cC(c\to b)$ is a right Hilbert $(\Omega_c,\Tr^\cC_c)$-module with $\Omega_c$-valued inner product
$$
\langle f_2|f_1\rangle_{\Omega_c}\coloneq f_2^\dag f_1 \in \Omega_c,
$$
and a left Hilbert $\Omega_b$-module with $\Omega_b$-valued inner product 
$$
{}_{\Omega_b}\langle g_1,g_2\rangle\coloneq g_1g_2^\dag\in \Omega_b.
$$
Since a linear functor
$\Mod^\dag(\Omega_c)\to \Hilb$ is determined by where it sends the generator $L^2(\Omega_c,\Tr^\cC_c)$,
for every $a,b\in \cC$,
we have a canonical natural unitary isomorphism
$$
\cC(c\to b)\odot_{(\Omega_c,\Tr^\cC_c)} \cC(a\to c)
\cong
\cC(c\to b)\boxtimes_{(\Omega_c,\Tr^\cC_c)} \cC(a\to c),
$$
where the latter denotes the Connes fusion relative tensor product in the 2-category $\HstarAlg$. 
We refer the interested reader to \cite[Part I, \S 3]{UQSLbook} for an accessible exposition on the subject.
Hence the inner product is given by
$$
\langle
f_1\boxtimes g_1
|
f_2\boxtimes g_2
\rangle
=
\langle
\langle f_2|f_1\rangle_{\Omega_c}\cdot g_1
|
g_2
\rangle
=
\langle f_2^\dag f_1 g_1|g_2\rangle
=
\Tr^\cC(g_2^\dag f_2^\dag f_1g_1),
$$
which is also equal to the formulas
$$
\langle
f_1\cdot {}_{\Omega_c}\langle g_1,g_2\rangle|
f_2
\rangle
\qquad\qquad\text{and}\qquad\qquad
\Tr^\cC(\langle f_2|f_1\rangle_{\Omega_c}\cdot{}_{\Omega_c}\!\langle g_1,g_2\rangle).
$$
Thus the composition map
$$
-\circ-\colon \cC(c\to b)\odot_{\Omega_c}\cC(a\to c)\to \cC(a\to b),
$$
which is well-defined as it is $\Omega_c$-middle linear, is an isometry.
\end{sub-ex}

\begin{rem}
When $c\in\pi_0\cC$,
even though $\Omega_c\cong \bbC$ as algebras, the inner products on the Hilbert spaces
$$
\cC(c\to b) \boxtimes_{(\Omega_c,\Tr^\cC_c)} \cC(a\to c)
\qquad\quad\text{ and }\qquad\quad
\cC(c\to b) \otimes \cC(a\to c)
$$
differ by the scalar $d_c \coloneq \Tr^\cC_c(\id_c)$.
\end{rem}

We end this section by remarking that our operation $-\odot_A c\colon  \Mod^\dag(A,\Tr_A)\to \cC$ for a given $A\isometry \Omega_c$ satisfies a unitary hom-tensor adjunction.
Indeed, applying \eqref{eq:FunctorsAreLinear} to the functor
$\cC(-\to b)\colon  \cC^{\op}\to \Hilb$ 
gives a canonical unitary isomorphism
\[
\cC(H \odot_A c \to b) \underset{\eqref{eq:FunctorsAreLinear}}{\cong} 
\cC(c \to b) 
\boxtimes_A 
\overline{H} 
\cong 
\Mod^\dag(A)(H_A \to \cC(c \to b)_A).
\]

\section{Background on 3-Hilbert spaces}
\label{sec:backgrounon3Hilbs}

In this section, we recall the necessary background on 3-Hilbert spaces from \cite{MR5078555}.
We include some new results, including:
\begin{itemize}
\item
In \S\ref{subsec:H*monads}, we prove that the space of splittings of an $\rmH^*$-monad is contractible, which was overlooked in \cite{MR5078555}.
\item
In \S\ref{sec:UnitaryYoneda}, we prove an isometric unitary Yoneda lemma internal to a 3-Hilbert space.
\end{itemize}

\subsection{Pre 3-Hilbert spaces}

\begin{defn}
Let $\fX$ be a 2-category.
The \emph{linking ($E_1$)-algebra} on the objects $a_1,\dots,a_n \in \fX$, denoted $\cL(a_1,\dots,a_n)$, is the matrix $E_1$-algebra
\[ 
\begin{bmatrix}
\Omega_{a_1} & \fX(a_2\to a_1) & \cdots & \fX(a_n\to a_1) \\
\fX(a_1\to a_2) & \Omega_{a_2} & \cdots & \fX(a_n\to a_2) \\
\vdots & \vdots & \ddots & \vdots \\
\fX(a_1\to a_n) & \fX(a_2\to a_n) & \cdots & \Omega_{a_n}
\end{bmatrix}
\]
where multiplication is given by matrix multiplication and composition in $\fX$.
Notice that if $\fX$ has finite direct sums, we have an isomorphism of algebras $ \cL(a_1,\dots,a_n) \cong \End_\fX(\bigoplus_i a_i)$.
\end{defn}

\begin{defn}
A \emph{unitary 2-category} is a $\rmC^*$-2-category (see \cite{MR4419534}) such that all the $n$-fold linking algebras are unitary multitensor categories.
That is, for any set of objects $a_1,\dots,a_n$, the monoidal category $\cL(a_1,\dots,a_n)$ is semisimple and rigid, and its linking algebras are all finite dimensional $\rm C^*$-algebras.
Observe that this means that $\fX$ is \emph{locally Cauchy complete}, i.e., all hom 1-categories $\fX(a\to b)$ admit orthogonal direct sums and are orthogonal projection complete.

We will restrict our attention to \emph{finite} unitary 2-categories, i.e., ones where each linking algebra is a unitary multifusion category, and there is a global bound on the dimension 
$\dim(\End_{\cZ(\cL)}(1_\cL))$ 
over all linking algebras $\cL$ of $\fX$.
\end{defn}

\begin{defn}\label{defn:UAF}
Let $\fX$ be a unitary 2-category. A \emph{unitary adjoint functor} (UAF) $\vee \colon \fX^{1\op,2\op} \to \fX$ consists of a choice of adjoint $({}_bX^\vee_a,\ev_X,\coev_X)$ for every 1-morphism ${}_aX_b$ such that
\begin{itemize}
    \item the canonical unit maps $1_a^\vee\to 1_a$ are unitary,\footnote{Here, the canonical map is given by the unitor $\ev_a: 1_a^\vee\otimes_a 1_a\to 1_a$, where $1_a$ is suppressed in the domain.}
    \item the canonical tensorator maps $\nu_{X,Y} \colon X^\vee \otimes Y^\vee \Rightarrow (Y \otimes X)^\vee$ are unitary; and
    \item for a 2-morphism $f$ in $\fX$, $(f^\vee)^\dag = (f^\dag)^\vee$.
\end{itemize}
Equivalently, these are the conditions in order for $\vee$ to assemble into a $\dag$-2-functor.
\end{defn}

\begin{nota}
We make heavy use of the graphical calculus for 2-categories.
For background, we refer the reader to \cite{MR3971584} and \cite{MR4419534}.
We denote objects $a,b\in\fX$ by shaded regions, a 1-morphism ${}_aX_b\in\fX$ and its adjoint ${}_bX^\vee_a$ by strands
$$
X=
\tikzmath{
\begin{scope}
\clip[rounded corners=5pt] (0,0) rectangle (.6,.6);
\fill[lightgray!50] (0,0) rectangle (.3,.6);    
\fill[lightgray] (.3,0) rectangle (.6,.6);    
\end{scope}
\draw[thick] (.3,0) -- (.3,.6);
}
\qquad\qquad
X^\vee=
\tikzmath{
\begin{scope}
\clip[rounded corners=5pt] (0,0) rectangle (.6,.6);
\fill[lightgray] (0,0) rectangle (.3,.6);    
\fill[lightgray!50] (.3,0) rectangle (.6,.6);    
\end{scope}
\draw[thick] (.3,0) -- (.3,.6);
}
\qquad\qquad\text{where}\qquad\qquad
a=
\tikzmath{
\fill[lightgray!50, rounded corners=5pt] (0,0) rectangle (.6,.6);    
}
\qquad\qquad
b=
\tikzmath{
\fill[lightgray, rounded corners=5pt] (0,0) rectangle (.6,.6);    
}
\,,
$$
and the evaluation and coevaluation for are given by shaded cups and caps:
$$
\ev_X =
\tikzmath{
\fill[lightgray, rounded corners=5pt] (0,0) rectangle (1.2,.6);
\draw[thick,fill=lightgray!50] (.3,0) arc(180:0:.3cm);
}
\qquad
\coev_X^\dag =
\tikzmath{
\fill[lightgray!50, rounded corners=5pt] (0,0) rectangle (1.2,.6);
\draw[thick,fill=lightgray] (.3,0) arc(180:0:.3cm);
}
\qquad
\ev_X^\dag =
\tikzmath{
\fill[lightgray, rounded corners=5pt] (0,0) rectangle (1.2,.6);
\draw[thick,fill=lightgray!50] (.3,.6) arc(-180:0:.3cm);
}
\qquad
\coev_X =
\tikzmath{
\fill[lightgray!50, rounded corners=5pt] (0,0) rectangle (1.2,.6);
\draw[thick,fill=lightgray] (.3,.6) arc(-180:0:.3cm);
}
\,.
$$
\end{nota}

\begin{rem}
A UAF $\vee$ on a unitary 2-category endows each $\End_\fX({}_a X_b)$ with positive definite $\End_\fX(1_a)$- and $\End_\fX(1_b)$-valued traces given by
$$
\tr^\vee_R\left(
\tikzmath{
\begin{scope}
\clip[rounded corners=5] (-.5,-.7) rectangle (.5,.7);
\fill[fill=lightgray!50] (-.5,-.7) rectangle (0,.7);
\fill[fill=lightgray] (.5,-.7) rectangle (0,.7);
\end{scope}
\draw[thick] (0,-.7) --node[left]{$\scriptstyle X$} (0,-.3);
\draw[thick] (0,.7) --node[left]{$\scriptstyle X$} (0,.3);
\roundNbox{fill=white}{(0,0)}{.3}{0}{0}{$f$}
}
\right)
\coloneq
\tikzmath{
\fill[fill=lightgray!50,rounded corners=5] (-.55,-.7) rectangle (1.1,.7);
\draw[thick,fill=lightgray] (-.15,0) arc (180:360+180:.5);
\node at (-.15,-.5) {$\scriptstyle X$};
\roundNbox{fill=white}{(0,0)}{.3}{0}{0}{$f$};
}
\qquad\qquad
\tr^\vee_L\left(
\tikzmath{
\begin{scope}
\clip[rounded corners=5] (-.5,-.7) rectangle (.5,.7);
\fill[fill=lightgray!50] (-.5,-.7) rectangle (0,.7);
\fill[fill=lightgray] (.5,-.7) rectangle (0,.7);
\end{scope}
\draw[thick] (0,-.7) --node[left]{$\scriptstyle X$} (0,-.3);
\draw[thick] (0,.7) --node[left]{$\scriptstyle X$} (0,.3);
\roundNbox{fill=white}{(0,0)}{.3}{0}{0}{$f$}
}
\right)
\coloneq
\tikzmath{
\fill[fill=lightgray,rounded corners=5] (-.3,-.7) rectangle (1.35,.7);
\draw[thick,fill=lightgray!50] (-.05,0) arc (180:360+180:.5);
\node at (.95,-.5) {$\scriptstyle X$};
\roundNbox{fill=white}{(0.8,0)}{.3}{0}{0}{$f$};
}
\qquad\qquad
\forall f\in \End_\fX({}_aX_b).
$$
\end{rem}

\begin{defn}[{c.f.~\cite[Def.~4.45]{MR5078555}}]
\label{3hilbertspace}
Given a unitary 2-category $\fX$ equipped with a UAF $\vee$, a \emph{spherical weight} is a collection of positive definite linear functionals $\Psi^\fX_a\colon  \End_\fX(1_a)\to \bbC$ for each $a\in \fX$ such that
$$
\Psi^\fX_a\circ \tr^\vee_R
=
\Psi^\fX_b\circ \tr^\vee_L
\qquad\qquad\qquad\qquad
\forall\, a,b\in\fX.
$$
A \emph{pre-3-Hilbert space} $(\fX, \vee, \Psi)$ is a finite unitary 2-category $\fX$ equipped with a UAF $\vee$ and a spherical weight $\Psi$.
\end{defn}

\begin{defn}
A \emph{1-morphism} of pre-3-Hilbert spaces is a $\dag$-2-functor $F\colon  \fX\to \fY$ that is \emph{UAF-preserving}, i.e., the canonical isomorphism
\begin{equation}
\label{eq:IsometricIsUAFPreserving}
\tikzmath{
\begin{scope}
\clip[rounded corners=5pt] (-1.6,-1) rectangle (2.2,1.7);
\fill[lightgray!50] (-.4,-1) -- (-.4,-.3) -- (.4,-.3) arc(-180:0:.3cm) -- (1,1.7) -- (-1.6,1.7) -- (-1.6,-1);
\fill[lightgray] (-.4,-1) -- (-.4,-.3) -- (.4,-.3) arc(-180:0:.3cm) -- (1,1.7) -- (2.2,1.7) -- (2.2,-1);
\end{scope}
\draw[thick] (-.4,-1) --node[left]{$\scriptstyle F(X^\vee)$} (-.4,-.3);
\draw[thick] (.4,-.3) arc(-180:0:.3cm) --node[right]{$\scriptstyle F(X)^\vee$} (1,1.7);
\draw[thick,double] (0,.3) -- (0,.7);
\roundNbox{fill=white}{(0,1)}{.3}{.4}{.4}{$F(\ev_X)$}
\roundNbox{fill=white}{(0,0)}{.3}{.4}{.4}{$F^2_{X^\vee,X}$}
}
\end{equation}
is unitary.\footnote{At this point, some readers might be confused why we do not require 1-morphisms to preserve spherical weights.
One should view the spherical weight as determining `lengths,' and one would never require linear maps between ordinary 1-Hilbert spaces to preserve lengths of vectors.
This is the same reason why we do not require 1-morphisms between 2-Hilbert spaces to preserve the positive traces.}

A \emph{2-morphism} $\alpha \colon F \Rightarrow G$ between 1-morphisms $F,G \colon \fX \to \fY$ is a $\dag$-2-natural transformation, i.e., a 2-natural transformation $\alpha = (\{\alpha_a\}_{a \in \fX}, \{\alpha_X\}_{X \in \fX(a \to b)})$ such that the naturator 
$$\alpha_X \in \fY(FX \otimes_{Fb} \alpha_b \to \alpha_a \otimes_{Ga} GX)$$
is unitary.

A \emph{3-morphism} $m$ is simply a modification $m = (m_a)_{a \in \fX}$ between the underlying 2-natural transformations. Notice that modifications between $\dag$-2-transformations admit an adjoint $m^\dag = (m^\dag_a)_{a \in \fX}$ due to each $\alpha_X$ being unitary. In Notation \ref{nota:overlaygraphicalcalculus}, we will provide an overlay graphical calculus for these morphisms between pre-3-Hilbert spaces.

We denote the unitary 2-category of 1-morphisms by $\Hom(\fX\to \fY)$.
\end{defn}

\begin{rem}
The weight $\Psi$ on $\fX$ determines the UAF $\vee$ on $\fX$.
That is, by \cite[Prop.~4.9]{MR5078555}, given two UAFs $\vee,\curlyvee$ on $\fX$ that are spherical under $\Psi$, the canonical isomorphism
$$
\tikzmath{
\begin{scope}
\clip[rounded corners=5pt] (-1.1,-.7) rectangle (2,1.7);
\fill[lightgray!50] (-.4,-.7) -- (-.4,.7) -- (.4,.7) -- (.4,.3) -- (1.4,.3) -- (1.4,1.7) -- (-1.1,1.7) -- (-1.1,-.7);
\fill[lightgray] (-.4,-.7) -- (-.4,.7) -- (.4,.7) -- (.4,.3) -- (1.4,.3) -- (1.4,1.7) -- (2,1.7) -- (2,-.7);
\end{scope}
\draw[thick] (.4,.3) --node[left]{$\scriptstyle X$} (.4,.7);
\draw[thick] (-.4,-.7) --node[left]{$\scriptstyle X^\vee$} (-.4,.7);
\draw[thick] (1.4,.3) --node[right]{$\scriptstyle X^\curlyvee$} (1.4,1.7);
\roundNbox{fill=white}{(0,1)}{.3}{.4}{.43}{$\ev_X^\vee$}
\roundNbox{fill=white}{(1,0)}{.3}{.4}{.4}{$\coev_X^{\curlyvee}$}
}
$$
is unitary, and thus $\vee\cong \curlyvee$.
This fact categorifies the following statement:
if $*,\dag$ are two $*$-structures on a $\bbC$-algebra $A$ and $\Tr\colon  A\to \bbC$ is positive definite for both $*$ and $\dag$ such that $\Tr(a^*a)=\Tr(a^\dag a)$ for all $a\in A$, then $*=\dag$.
(Indeed, by polarization, $\Tr(a^*b)=\Tr(a^\dag b)$ for all $b\in A$, and thus $a^*=a^\dag$.)
\end{rem}

\begin{defn}
The correct notion of equivalence for pre-3-Hilbert spaces is \emph{isometric equivalence}, i.e., a 1-morphism $F\colon  \fX\to \fY$
whose underlying dagger 2-functor is an equivalence of categories
which preserves the spherical weights. We denote isometric equivalence by $\fX\isomeq \fY$.

Again by \cite[Prop.~4.9]{MR5078555}, an isometric 1-morphism between pre-3-Hilbert spaces automatically preserves UAFs, meaning that the canonical map
\eqref{eq:IsometricIsUAFPreserving}
is automatically unitary.
(Indeed, $\Psi^\fY$ is a spherical weight for both $F(\vee_\fX(-))$ and $\vee_\fY$ on the image of $\fX\subset\fY$, whence \cite[Prop.~4.9]{MR5078555} gives the result.)
We use the notation $F\colon  \fX\isometry \fY$ to denote an isometry.
\end{defn}

\begin{ex}
\label{ex:UnitaryAdjunction}
The unitary 2-category $2\Hilb$ has a canonical pre-3-Hilbert space structure where the UAF is given by \emph{unitary adjunction} \cite[\S2]{MR4750417}, and the spherical weight is given by the formula
$$
\Psi_{(\cA, \Tr^\cA)}(\eta\colon  F\Rightarrow F)
\coloneq
\sum_{a\in\Irr\cA} d_a \Tr^\cA_{F(a)}(\eta_a)
\qquad\qquad\qquad
\forall\, F\colon  \cA\to\cA.
$$
The unitary adjoint of a dagger functor $F\colon  (\cA,\Tr^\cA)\to (\cB,\Tr^\cB)$ is a functor $G\colon  (\cB,\Tr^\cB)\to (\cA,\Tr^\cA)$ together with unitary natural isomorphisms
$$
\cB(F(a)\to b)\isomeq \cA(a\to G(b))
\qquad\qquad\qquad\qquad
\forall\,a\in \cA,\, b\in\cB.
$$
Unitary adjoints exist and are unique up to unique unitary isomorphism.
In fact, an explicit formula for the unitary adjoint of $F\colon \cA\to \cB$ is given by
$$
F^*(b)\coloneq 
\bigoplus_{a\in\pi_0\cA} 
\cB(F(a)\to b)\odot_{\Omega_a} a
=
\bigoplus_{a\in\pi_0\cA} 
\langle F(a)|b\rangle_{\Hilb}^\cB\odot_{\Omega_a} a.
$$
\end{ex}

\begin{ex}
The prototypical example of a pre-3-Hilbert space is the delooping of an $\rmH^*$-\emph{multifusion category}, which is a unitary multifusion category $\cC$ equipped with a \emph{unitary dual functor} (UDF) $\vee$ and a \emph{spherical trace} $\psi\colon  \End_\cC(1_\cC)\to \bbC$ in the sense of \cite{MR4133163}.
The UDF is exactly a UAF on the delooping $\rmB \cC$ (unitary 2-category with one object),
and the spherical trace $\psi$ exactly gives a spherical weight $\Psi$.

Conversely, given any pre-3-Hilbert space $(\fX,\vee,\Psi)$, 
every object $a\in \fX$
canonically gives an $\rmH^*$-multifusion category $\Omega_a\coloneq\End_\fX(a)$ by restricting $\vee$ and $\Psi$.
\end{ex}

\begin{rem} \label{rem:2Hilbvaluedinnerprod}
By \cite[Rem.~4.6]{MR5078555}, a pre-3-Hilbert space $\fX$ is organically enriched over $2\Hilb$.
One gets a faithful positive trace on each $\fX(a\to b)$ by
$$
\Tr^\fX_{X}(f)\coloneq\Psi^\fX_a
\left(
\tikzmath{
\fill[fill=lightgray!50,rounded corners=5] (-.55,-.7) rectangle (1.1,.7);
\draw[thick,fill=lightgray] (-.15,0) arc (180:360+180:.5);
\node at (-.15,-.5) {$\scriptstyle X$};
\roundNbox{fill=white}{(0,0)}{.3}{0}{0}{$f$};
}
\right)
=
\Psi^\fX_b
\left(
\tikzmath{
\fill[fill=lightgray,rounded corners=5] (-.3,-.7) rectangle (1.35,.7);
\draw[thick,fill=lightgray!50] (-.05,0) arc (180:360+180:.5);
\node at (.95,-.5) {$\scriptstyle X$};
\roundNbox{fill=white}{(0.8,0)}{.3}{0}{0}{$f$};
}
\right)
\qquad\qquad\qquad
\forall f\in \End_\fX({}_aX_b).
$$
\end{rem}

\begin{ex}
\label{ex:HomIsTensor}
For the 3-Hilbert space $2\Hilb$, a special case of \cite[Prop.~3.14]{MR5078555} gives an isometric equivalence
$$
\Hom(\cA\to \cB) \cong^\dag \cB\boxtimes \cA^{\op}
$$
when $\Hom(\cA\to \cB)$ is equipped with the spherical weight $\Psi$ from Example \ref{ex:UnitaryAdjunction}.
\end{ex}

A pre-3-Hilbert space structure on $\fX$ allows us to define the notion of a (co)isometry internal to $\fX$.

\subsection{Isometries and Hilbert direct sums}

\begin{defn}
A 1-morphism ${}_aX_b$ in a pre-3-Hilbert space $\fX$ is called an \emph{isometry} if 
$$
\tikzmath{
\fill[lightgray!50,rounded corners=5pt]
(-.6,-.6) rectangle (.6,.6);
\draw[fill=lightgray, thick] (0,0) circle (.3);
}
=
\tikzmath{
\fill[lightgray!50,rounded corners=5pt]
(-.6,-.6) rectangle (.6,.6);
}
\qquad\text{and}\qquad
\tikzmath{
\fill[lightgray!50,rounded corners=5pt]
(-.6,-.6) rectangle (.6,.6);
\draw[fill=lightgray, thick] (-.3,-.6) arc(180:0:.3cm);
\draw[fill=lightgray, thick] (-.3,.6) arc(-180:0:.3cm);
}
=
\tikzmath{
\fill[lightgray!50,rounded corners=5pt]
(-.6,-.6) rectangle (.6,.6);
\fill[lightgray] (-.3,-.6) rectangle (.3,.6);
\draw[thick] (-.3,-.6) -- (-.3,.6);
\draw[thick] (.3,-.6) -- (.3,.6);
}
\quad\underset{\text{\cite[Rem.~4.12]{MR5078555}}}{\Longleftrightarrow}\quad
\tikzmath{
\fill[lightgray,rounded corners=5pt]
(-.6,-.6) rectangle (.6,.6);
\draw[fill=lightgray!50, thick] (-.3,-.6) arc(180:0:.3cm);
\draw[fill=lightgray!50, thick] (-.3,.6) arc(-180:0:.3cm);
}
=
\tikzmath{
\fill[lightgray,rounded corners=5pt]
(-.6,-.6) rectangle (.6,.6);
\fill[lightgray!50] (-.3,-.6) rectangle (.3,.6);
\draw[thick] (-.3,-.6) -- (-.3,.6);
\draw[thick] (.3,-.6) -- (.3,.6);
}\,,
$$
i.e., $\coev_X\colon 1_a\to {}_aX\otimes_b X^\vee_a$ is unitary and $\ev_X\colon {}_bX^\vee\otimes_a X_b\to 1_b$ is an isometry.
We call ${}_aX_b$ a \emph{coisometry} if ${}_bX^\vee_a$ is an isometry.
It is straightforward to check that ${}_aX_b$ is both an isometry and coisometry if and only if it is an adjoint equivalence under the UAF $\vee$, which we call an \emph{isometric equivalence}. 
\end{defn}

\begin{ex}[{\cite[Ex.~4.13]{MR5078555}}]
The isometries in the pre-3-Hilbert space $2\Hilb$ of 2-Hilbert spaces are the fully faithful isometric functors $F\colon (\cA,\Tr^\cA)\hookrightarrow (\cB,\Tr^\cB)$, i.e. 
$$
\Tr^\cB_{F(a)}(F(f))=\Tr^\cA_a(f) \qquad\qquad\qquad \forall f\in \End_\cA(a).
$$
\end{ex}

The notion of isometry was designed to give an elegant definition of the Hilbert direct sum of objects in a pre-3-Hilbert space.

\begin{defn}
\label{defn:HilbertDirectSum}
A \emph{Hilbert direct sum} of $a,b\in \fX$ consists of an object $a\boxplus b\in \fX$ and isometries $I\colon a\hookrightarrow a\boxplus b$ and $J\colon  b\hookrightarrow a\boxplus b$ such that 
$$
\ev_I\ev_I^\dag + \ev_J\ev_J^\dag 
=
\tikzmath{
\fill[lightgray!30,rounded corners=5pt]
(-.6,-.6) rectangle (.6,.6);
\draw[fill=lightgray!65, thick] (0,0) circle (.3);
\node at (0,0) {$\scriptstyle a$};
}
+
\tikzmath{
\fill[lightgray!30,rounded corners=5pt]
(-.6,-.6) rectangle (.6,.6);
\draw[fill=lightgray, thick] (0,0) circle (.3);
\node at (0,0) {$\scriptstyle b$};
}
=
\tikzmath{
\fill[lightgray!30,rounded corners=5pt]
(-.6,-.6) rectangle (.6,.6);
}
=
\id_{1_{a\boxplus b}}.
$$

The Hilbert direct sum is unique up to unique isometric equivalence when it exists.
Moreover, all 1-morphisms of pre-3-Hilbert spaces preserve Hilbert direct sums by \cite[Lem.~4.23]{MR5078555}.

There is a formal \emph{Hilbert direct sum completion} $\Hilb_{\boxplus}(\fX)$ which comes with an isometric fully faithful 1-morphism $\iota\colon \fX \hookrightarrow \Hilb_{\boxplus}(\fX)$
satisfying the following universal property: \begin{itemize} 
\item for every $\fY$ which admits Hilbert direct sums, precomposition with $\iota$ is a
pointwise isometric equivalence of unitary 2-categories
\[
\Hom(\Hilb_{\boxplus}(\fX)\to \fY)
\xrightarrow{-\circ \iota}
\Hom(\fX\to \fY).
\]
\end{itemize}
In more detail, if 1-morphisms $F_1,F_2\colon  \Hilb_\boxplus(\fX)\to \fY$ both extend $F\colon \fX\to \fY$, then the unique equivalence $\eta\colon  F_1\Rightarrow F_2$ that maps to $\id_F$ is
pointwise isometric,\footnote{That this $\eta$ is pointwise isometric was missed in \cite[Prop.~4.28]{MR5078555}.} 
i.e., its components $\eta_x\in \fY(F_1(x)\to F_2(x))$ are isometries in $\fY$:
$$
F_1(x)\isomeq \bigboxplus F(x_i) \isomeq F_2(x).
$$
Moreover, this equivalence preserves the full subcategories of isometries \cite[Prop.~4.28]{MR5078555}.
\end{defn}

\begin{ex}
The pre-3-Hilbert space $2\Hilb$ admits Hilbert direct sums, which is given by $(\cA,\Tr^\cA)\boxplus (\cB,\Tr^\cB)\coloneq(\cA\oplus\cB, \Tr^\cA\oplus \Tr^\cB)$.
\end{ex}

\subsection{\texorpdfstring{$\rmH^*$}{H*}-monads}
\label{subsec:H*monads}
\begin{defn}\label{defn:H*monad}
An $\rmH^*$-\emph{monad} in a pre-3-Hilbert space $\fX$ is an object $a\in\fX$ and a 1-morphism ${}_aA_a\in \End_\fX(a)$ equipped with an associative multiplication $\mu\colon  {}_aA\otimes_aA_a\to {}_aA_a$
and a unit $\iota\colon  1_a\to {}_aA_a$ satisfying the following axioms:
\begin{enumerate}[label=($\rmH^*$\arabic*)]
\item
\label{H:Frobenius}
($\rmC^*$-Frobenius)
$\mu^\dag$ is an $A$–$A$-bimodule map, i.e.,
\[
\tikzmath{
    \fill[\AColor, rounded corners=5pt] (-.3,-.6) rectangle (1.5,.6);
    \draw[\AsColor,thick] (0,-.6) -- (0,0) arc (180:0:.3cm) arc (-180:0:.3cm) -- (1.2,.6);
    \draw[\AsColor,thick] (.3,.3) -- (.3,.6);
    \draw[\AsColor,thick] (.9,-.3) -- (.9,-.6);
}
=
\tikzmath{
    \fill[\AColor, rounded corners=5pt] (-.3,0) rectangle (.9,1.2);
    \draw[\AsColor,thick] (0,0) arc (180:0:.3cm);
    \draw[\AsColor,thick] (0,1.2) arc (-180:0:.3cm);
    \draw[\AsColor,thick] (.3,.3) -- (.3,.9);
}
=
\tikzmath{
    \fill[\AColor, rounded corners=5pt] (-.3,.6) rectangle (1.5,-.6);
    \draw[\AsColor,thick] (0,.6) -- (0,0) arc (-180:0:.3cm) arc (180:0:.3cm) -- (1.2,-.6);
    \draw[\AsColor,thick] (.3,-.3) -- (.3,-.6);
    \draw[\AsColor,thick] (.9,.3) -- (.9,.6);
}
\quad\text{where}\;
a= \tikzmath{
    \tikzset{scale=1.25}
    \fill[\AColor,rounded corners](0,0)rectangle(.5,.5);
},
\;
{}_aA_a= \tikzmath{
    \tikzset{scale=1.25}
    \fill[\AColor,rounded corners](0,0)rectangle(.5,.5);
    \draw[\AsColor,thick](0.25,0)--(0.25,.5);
},
\;
\mu=\tikzmath{
    \tikzset{scale=1.25,yscale=-1}
    \fill[\AColor,rounded corners](-.05,0)rectangle(.65,.5);
    \draw[\AsColor,thick](0.3,0)--(0.3,.25);
    \draw[\AsColor,thick](0.05,.5) arc(-180:0:.25);
},
\;
\mu^\dag=
 \tikzmath{
    \tikzset{scale=1.25}
    \fill[\AColor,rounded corners](-.05,0)rectangle(.65,.5);
    \draw[\AsColor,thick](0.3,0)--(0.3,.25);
    \draw[\AsColor,thick](0.05,.5) arc(-180:0:.25);
}.
\]
\item\label{H:Separable} (Separable) The endomorphism $\mu\mu^\dag$ of $_aA_a$ is invertible, i.e.,
\[
\tikzmath{
    \tikzset{scale=0.3}
    \coordinate (O) at (0,0);
    \fill[\AColor,rounded corners] (O) rectangle ++(3,4);
    \draw[\AsColor,thick] (O) ++(1.5,0) -- ++(0,1);
    \draw[\AsColor,thick] (O) ++(1.5,3) -- ++(0,1);
    \draw[thick,fill=\AColor, draw=\AsColor](O)++(1.5,2) circle(1);
}
\in\End_{\fX}(A)^\times.
\]
\item\label{H:Standard} (Standard) For all endomorphisms $f \in \End_{\fX}({}_aA_a)$, 
$$
\Psi_a\!\left(
\tikzmath{
\tikzset{scale=-1}
\begin{scope}[scale=0.8]
\fill[\AColor, rounded corners=5pt] (-0.5,1.2) rectangle (1.3,-1.2);
\draw[\AsColor,thick] (.4,.7) -- (.4,1);
\filldraw[\AsColor] (.4,1) circle (.05cm);
\draw[\AsColor,thick] (.8,.3) arc (0:180:.4cm) -- +(0,-.6) arc (-180:-90:.4) coordinate (x1) arc (-90:0:.4) -- +(0,.6);
\draw[\AsColor,thick] (x1) -- +(0,-.3) coordinate (x2);
\filldraw[\AsColor] (x2) circle (.05cm);
\end{scope}
\roundNbox{fill=white}{(0,0)}{.275}{0}{0}{$f$}
}
\right)
=
\Psi_a\!\left(
\tikzmath{
\begin{scope}[scale=0.8]
\fill[\AColor, rounded corners=5pt] (-0.5,1.2) rectangle (1.3,-1.2);
\draw[\AsColor,thick] (.4,.7) -- (.4,1);
\filldraw[\AsColor] (.4,1) circle (.05cm);
\draw[\AsColor,thick] (.8,.3) arc (0:180:.4cm) -- +(0,-.6) arc (-180:-90:.4) coordinate (x1) arc (-90:0:.4) -- +(0,.6);
\draw[\AsColor,thick] (x1) -- +(0,-.3) coordinate (x2);
\filldraw[\AsColor] (x2) circle (.05cm);
\end{scope}
\roundNbox{fill=white}{(0,0)}{.275}{0}{0}{$f$}
}
\right)
\qquad\quad\text{where}\qquad 
\tikzmath{
    \tikzset{scale=1.25}
    \fill[\AColor,rounded corners](0,0)rectangle(.5,.5);
    \draw[\AsColor,thick](0.25,.5)--(.25,.25);
    \filldraw[\AsColor](.25,.25)circle(0.05cm);
}
=\iota,
\qquad
\tikzmath{
    \tikzset{scale=1.25}
    \tikzset{yscale=-1}
    \fill[\AColor,rounded corners](0,0)rectangle(.5,.5);
    \draw[\AsColor,thick](0.25,.5)--(.25,.25);
    \filldraw[\AsColor](.25,.25)circle(0.05cm);
}
=\iota^\dag.
$$
Equivalently (cf.~\cite[Facts~3.19, (A5)]{MR5078555}), the morphisms
$$
\tikzmath{
\fill[\AColor, rounded corners=5pt] (1.8,-.3) rectangle (-.5,1.2);
\draw[\AsColor,thick] (0,-.3) --node[left]{$\scriptstyle A$} (0,.3) arc (180:0:.3cm) arc (-180:0:.3cm) --node[right]{$\scriptstyle A^\vee$} (1.2,1.2);
\draw[\AsColor,thick] (.3,.9) -- (.3,.6);
\filldraw[\AsColor] (.3,.9) circle (.05cm);
}
\qquad\qquad\text{and}\qquad\qquad
\tikzmath{
\fill[\AColor, rounded corners=5pt] (-1.8,-.3) rectangle (.5,1.2);
\draw[\AsColor,thick] (0,-.3) --node[right]{$\scriptstyle A$} (0,.3) arc (0:180:.3cm) arc (0:-180:.3cm) --node[left]{$\scriptstyle A^\vee$} (-1.2,1.2);
\draw[\AsColor,thick] (-.3,.9) -- (-.3,.6);
\filldraw[\AsColor] (-.3,.9) circle (.05cm);
}
$$
are unitary (and thus equal).
\end{enumerate}
A \emph{splitting} of an $\rmH^*$-monad $({}_aA_a,\mu,\iota)$ is a pair $({}_aX_b,\gamma)$ where ${}_aX_b$ is a 1-morphism such that $\ev_X\ev_X^\dag \in \Omega^2_b$ is invertible and $\gamma\colon {}_aA_a\Rightarrow {}_aX\otimes_b X^\vee_a$ is a unitary monad isomorphism.\footnote{The symbol $\gamma$ in a splitting is meant to resemble a \emph{zipper}, where the ${}_aA_a$ string `unzips' into ${}_aX\otimes_b X^\vee_a$.
}
\end{defn}

\begin{rem}
Diagrammatically, if we write
\[
a=\tikzmath{
\tikzset{scale=0.7}
    \coordinate(O) at (0,0); 
    \fill[\AColor,rounded corners=5pt](O)rectangle ++(1,1);
},
\qquad\qquad
b=\tikzmath{
\tikzset{scale=0.7}
    \coordinate(O) at (0,0); 
    \fill[\Asplitcolor,rounded corners=5pt](O)rectangle ++(1,1);
},
\qquad\qquad
X=\tikzmath{
    \tikzset{scale=0.7}
    \coordinate(O) at (0,0); 
    \clip[rounded corners](O)rectangle ++(1,1);
    \fill[\AColor](O)rectangle ++(1,1);                    
    \fill[\Asplitcolor](O)++(0.5,0)rectangle ++(0.5,1);         
    \draw[\XsColor,thick](O)++(0.5,0)--++(0,1);                 
},
\qquad\qquad
\gamma=\tikzmath{
    \tikzset{scale=.7,yscale=-1}
    \coordinate(O) at (0,0); 
    \fill[\AColor,rounded corners](O)++(-0.25,0)rectangle ++(1.5,1);         
    \draw[\AsColor,thick](O)++(0.5,0.5)--++(0,0.5);
    \fill[\Asplitcolor](O)++(1,0) arc(0:180:0.5) --cycle;
    \draw[\XsColor,thick] (O)++(1,0) arc(0:180:0.5);
},
\]
then to say $(X,\gamma)$ splits $(A,\mu,\iota)$ means that
\[
\tikzmath{
\tikzset{scale=0.3}
    \coordinate (O) at (0,0);
    \fill[\Asplitcolor, rounded corners=5pt] (O) rectangle ++(3,3);
    \draw[thick,fill=\AColor, draw=\XsColor] (O)++(1.5,1.5) circle (0.8); 
}
\in
(\Omega^2_b)^\times,
\]
that $\gamma$ is unitary, i.e.,
\[
\tikzmath{
    \tikzset{scale=0.3}
    \coordinate (O) at (0,0);
    \fill[\AColor,rounded corners] (O) rectangle ++(3,4);
    \draw[\AsColor,thick] (O) ++(1.5,0) -- ++(0,1);
    \draw[\AsColor,thick] (O) ++(1.5,3) -- ++(0,1);
    \draw[thick,fill=\Asplitcolor, draw=\XsColor](O)++(1.5,2) circle(1);
}
=
\tikzmath{
\tikzset{scale=0.3}
    \coordinate (O) at (0,0);
    \fill[\AColor,rounded corners] (O) rectangle ++(2,4);
    \draw[\AsColor,thick] (O) ++(1,0) -- ++(0,4);
},
\qquad
\tikzmath{
\tikzset{scale=0.3}
    \coordinate (O) at (0,0);
    \fill[\AColor,rounded corners] (O) rectangle ++(4,4); 
    \fill[\Asplitcolor] (O) ++(1,4) arc(-180:0:1) -- cycle; 
    \fill[\Asplitcolor] (O) ++(1,0) arc(180:0:1) -- cycle; 
    \draw[\AsColor,thick] (O) ++(2,1)--++(0,2);
    \draw[\XsColor,thick] (O) ++(1,4) arc(-180:0:1);
    \draw[\XsColor,thick] (O) ++(1,0) arc(180:0:1);
}
=
\tikzmath{
\tikzset{scale=0.3}
    \coordinate (O) at (0,0);
    \fill[\AColor,rounded corners] (O) rectangle ++(4,4); 
    \fill[\Asplitcolor] (O) ++(1.2,0) rectangle ++(1.6,4); 
    \draw[\XsColor,thick] (O) ++(1.2,0) -- ++(0,4); 
    \draw[\XsColor,thick] (O) ++(2.8,0) -- ++(0,4);
},
\]
and that $\gamma$ is compatible with the canonical monad structure on ${}_aX\otimes_b X^\vee_a$, i.e.,
\[
\tikzmath{
    \tikzset{scale=0.3}
    \coordinate (O) at (0,0);
    \fill[\AColor,rounded corners] (O) rectangle ++(4,4); 
    \fill[\Asplitcolor] (O) ++(1,4) arc(-180:0:1) --cycle; 
    \draw[\AsColor,thick] (O) ++(1,0) arc(180:0:1); 
    \draw[\AsColor,thick] (O) ++(2,1)--++(0,2); 
    \draw[\XsColor,thick] (O) ++(1,4) arc(-180:0:1); 
}
=
\tikzmath{
    \tikzset{scale=0.25}
    \coordinate (O) at (0,0);
    \fill[\AColor,rounded corners] (O) rectangle ++(6,5);
    \fill[\Asplitcolor,rounded corners] (O) ++(1.8,5) to[out=-90,in=90] ++(-1, -3.2) arc(-180:0:0.7) -- ++(0, 0.2) arc(180:0:0.8) -- ++(0, -0.2) arc(-180:0:0.7) to[out=90,in=-90] ++(-1, 3.2) -- cycle;
    \draw[\AsColor,thick] (O) ++(1.5,0) -- ++(0,1.1); 
    \draw[\AsColor,thick] (O) ++(4.5,0) -- ++(0,1.1);
    \draw[\XsColor,thick] (O) ++(1.8,5) to[out=-90,in=90] ++(-1, -3.2) arc(-180:0:0.7) -- ++(0,0.2);
    \draw[\XsColor,thick] (O) ++(4.2,5) to[out=-90,in=90] ++(1, -3.2) arc(0:-180:0.7) -- ++(0,0.2);
    \draw[\XsColor,thick] (O) ++(2.2,2.0) arc(180:0:0.8);            
}
\qquad\text{and}\qquad
\tikzmath{
    \tikzset{scale=0.3}
    \coordinate (O) at (0,0);
    \fill[\AColor,rounded corners] (O) rectangle ++(4,4);
    \fill[\Asplitcolor] (O) ++(1,3) arc(-180:0:1) --++(0,1)--++(-2,0)--cycle; 
    \filldraw[\AsColor](2,1)circle(0.125cm);
    \draw[\AsColor,thick] (O) ++(2,2)--++(0,-1);
    \draw[\XsColor,thick] (O) (1,4)--++(0,-1) arc(-180:0:1) --++(0,1);
}
=
\tikzmath{
    \tikzset{scale=0.3}
    \coordinate (O) at (0,0);
    \fill[\AColor,rounded corners] (O) rectangle ++(4,4);
    \fill[\Asplitcolor] (O) ++(1,1.75) arc(-180:0:1) --++(0,2.25)--++(-2,0)--cycle; 
    \draw[\XsColor,thick] (O) (1,4)--++(0,-2.25) arc(-180:0:1) --++(0,2.25);
}
.
\]
\end{rem}

\begin{rem}
\label{rem:ModuleFromSplitting}
    As noted in \cite[Rmk. 4.36]{MR5078555}, if $({}_aX_b,\gamma)$ splits the $\rmH^*$-monad $({}_aA_a,\mu,\iota)$, then $X$ is a left $A$-module and $X^\vee$ is a right $A$-module with actions
    \[
    \tikzmath{
    \tikzset{scale=0.6}
        \fill[\AColor,rounded corners](0,0)rectangle(2,2);
        \begin{scope}
            \clip(1.25,0) rectangle (2,2);
            \fill[\Asplitcolor,rounded corners](.75,0)rectangle(2,2);
        \end{scope}
        \draw[\AsColor,thick](0.5,0)to[out=90,in=220](1.25,1);
        \draw[\XsColor,thick](1.25,0)--++(0,2);
    }
    \coloneq   
    \tikzmath{
    \tikzset{scale=0.6,xscale=1.25}
        \def\snakepath{(1.5,0)--++(0,1.15)arc(0:180:0.25)arc(0:-180:0.25)[rounded corners]--++(0,.85)}
        \fill[\AColor,rounded corners](0,0)rectangle(2,2);
        \begin{scope}
            \clip(1.5,0) rectangle (2,2);
            \fill[\Asplitcolor,rounded corners](.75,0)rectangle(2,2);
            \fill[\Asplitcolor,rounded corners](.75,0)rectangle(2,2);
        \end{scope}
        \fill[\Asplitcolor]\snakepath--(2,2)--(2,0)--cycle;
        \draw[\AsColor,thick](0.75,0)--(0.75,0.9);
        \draw[\XsColor,thick]\snakepath;
    }
    \qquad\text{and}\qquad
    \tikzmath{
        \tikzset{scale=0.6,xscale=-1}
        \fill[\AColor,rounded corners](0,0)rectangle(2,2);
        \begin{scope}
            \clip(1.25,0) rectangle (2,2);
            \fill[\Asplitcolor,rounded corners](.75,0)rectangle(2,2);
        \end{scope}
        \draw[\AsColor,thick](0.5,0)to[out=90,in=220](1.25,1);
        \draw[\XsColor,thick](1.25,0)--++(0,2);
    }
    \coloneq
    \tikzmath{
    \tikzset{scale=0.6,xscale=-1.25}
        \def\snakepath{(1.5,0)--++(0,1.15)arc(0:180:0.25)arc(0:-180:0.25)[rounded corners]--++(0,.85)}
        \fill[\AColor,rounded corners](0,0)rectangle(2,2);
        \begin{scope}
            \clip(1.5,0) rectangle (2,2);
            \fill[\Asplitcolor,rounded corners](.75,0)rectangle(2,2);
            \fill[\Asplitcolor,rounded corners](.75,0)rectangle(2,2);
        \end{scope}
        \begin{scope}[xshift=0mm]
        \fill[\Asplitcolor]\snakepath--(2,2)--(2,0)--cycle;
        \draw[\AsColor,thick](0.75,0)--(0.75,0.9);
        \draw[\XsColor,thick]\snakepath;
        \end{scope}
    }.
    \]
Moreover, it is straightforward to show the following relation regarding the ${}_aA_a$-bubble:
$$
\tikzmath{
    \tikzset{scale=0.3}
    \coordinate (O) at (0,0);
    \fill[\AColor,rounded corners] (O) rectangle ++(3,4);
    \draw[\AsColor,thick] (O) ++(1.5,0) -- ++(0,1);
    \draw[\AsColor,thick] (O) ++(1.5,3) -- ++(0,1);
    \draw[thick,fill=\AColor, draw=\AsColor](O)++(1.5,2) circle(1);
}
=
\tikzmath{
    \tikzset{scale=0.3}
    \coordinate (O) at (0,0);
    \fill[\AColor,rounded corners] (O) rectangle ++(3,4);
    \draw[\AsColor,thick] (O) ++(1.5,0) -- ++(0,1);
    \draw[\AsColor,thick] (O) ++(1.5,3) -- ++(0,1);
    \draw[thick,fill=\Asplitcolor, draw=\XsColor](O)++(1.5,2) circle(1);
    \draw[thick,fill=\AColor, draw=\XsColor](O)++(1.5,2) circle(.5);
}
\,.
$$
\end{rem}

\begin{defn}
Let ${}_aA_a$, ${}_bB_b$, and ${}_cC_c$ be $\mathrm{H}^*$-monads in a pre-3-Hilbert space $\fX$. For a $B$--$A$-bimodule ${}_bX_a$ and an $A$--$C$-bimodule ${}_aY_c$, the \emph{relative tensor product} ${}_aX\otimes_AY_c$ is the $A$--$C$-bimodule given by the 
splitting of the \emph{separability idempotent}
\[
   \tikzmath[scale=0.75]{
    \def\xsteplen{1}
    \fill[gray!10,rounded corners] (-0.3*\xsteplen,0) rectangle (0.5*\xsteplen,2);
    \fill[gray!50,rounded corners] (0.5*\xsteplen,0) rectangle (1.3*\xsteplen,2);
    \fill[\AColor] (0,0) rectangle (\xsteplen,2);
   \draw[red,thick](0,1)--(\xsteplen,1);
   \draw[thick](0,0)--(0,2);
   \draw[thick](\xsteplen,0)--(\xsteplen,2);
   }
   \coloneq
   \tikzmath[scale=0.75]{
    \def\xsteplen{1.5}
   \def\sprad{0.225}
        \fill[gray!10,rounded corners] (-0.3*\xsteplen,0) rectangle (1.3*\xsteplen,2);
        \fill[gray!50,rounded corners] (0.5*\xsteplen,0) rectangle (1.3*\xsteplen,2);
        \fill[\AColor] (0,0) rectangle (\xsteplen,2);
       \draw[thick,red] (.5*\xsteplen,1) circle (\sprad);
       \draw[thick,red] (0,1.75) to[out=-45,in=90] (.5*\xsteplen,{1+\sprad});
       \draw[thick,red] (0.5*\xsteplen,{1-\sprad}) to[out=-90,in=135] (\xsteplen,0.25);
       \node[red,above right] at (.5*\xsteplen,1){\text{\!\scriptsize$-1$}};
       \draw[thick](0,0)--(0,2);
       \draw[thick](\xsteplen,0)--(\xsteplen,2);
    }
    =
   \tikzmath[scale=0.75,yscale=-1]{
    \def\xsteplen{1.5}
    \def\sprad{0.225}
    \fill[gray!10,rounded corners] (-0.3*\xsteplen,0) rectangle (1.3*\xsteplen,2);
    \fill[gray!50,rounded corners] (0.5*\xsteplen,0) rectangle (1.3*\xsteplen,2);
    \fill[\AColor] (0,0) rectangle (\xsteplen,2);
       \draw[thick,red] (.5*\xsteplen,1) circle (\sprad);
       \draw[thick,red] (0,1.75) to[out=-45,in=90] (.5*\xsteplen,{1+\sprad});
       \draw[thick,red] (0.5*\xsteplen,{1-\sprad}) to[out=-90,in=135] (\xsteplen,0.25);
       \node[red,above right] at (.5*\xsteplen,1){\text{\!\scriptsize$-1$}};
       \draw[thick](0,0)--(0,2);
       \draw[thick](\xsteplen,0)--(\xsteplen,2);
    }
    \colon 
    {}_bX\otimes_a Y_c\to {}_bX\otimes_a Y_c
    \quad\text{where}\quad
    \tikzmath{
        \tikzset{scale=0.2}
        \coordinate (O) at (0,0);
        \fill[\AColor,rounded corners] (-1,0) rectangle ++(5,4);
        \draw[\AsColor,thick] (O) ++(1.5,0) -- ++(0,1.425);
        \draw[\AsColor,thick] (O) ++(1.5,2.575) -- ++(0,1.425);
        \draw[thick,fill=\AColor, draw=\AsColor](O)++(1.5,2) circle(0.85);
        \node[red](r) at (3,3){$\scriptstyle r$};
    }
    \coloneq 
    \left(\tikzmath{
        \tikzset{scale=0.235}
        \coordinate (O) at (0,0);
        \fill[\AColor,rounded corners] (-.5,0) rectangle ++(4,4);
        \draw[\AsColor,thick] (O) ++(1.5,0) -- ++(0,1);
        \draw[\AsColor,thick] (O) ++(1.5,3) -- ++(0,1);
        \draw[thick,fill=\AColor, draw=\AsColor](O)++(1.5,2) circle(1.15);
    }\right)^{\!r}\!.
\]
\end{defn}

Just as the Hilbert direct sums of a pair of objects forms a contractible space, so do the splittings of an $\rmH^*$-monad.
Since this fact was unfortunately overlooked in \cite{MR5078555}, we include a short discussion here.

\begin{defn}
Let $({}_aA_a,\mu,\iota)$ be an $\rmH^*$-monad in a pre-3-Hilbert space $(\fX,\vee,\Psi)$. 
Splittings of $A$ form a 2-groupoid $\splitcat(A)$ as follows.
\begin{itemize}
\item 
1-morphisms $({}_aX_b,\gamma)\to ({}_aY_c,\delta)$ are pairs
$({}_bZ_c,\zeta)$ where ${}_bZ_c$ is an isometric equivalence and $\zeta\colon {}_aY_c\Rightarrow {}_aX \otimes_b Z_c$ is a unitary intertwining the left $A$-module actions, or equivalently,
$(1_X\otimes\coev_Z \otimes 1_{X^\vee})\circ\gamma =
(\zeta\otimes \overline{\zeta})\circ \delta$, which in diagrams is
\begin{equation}
\label{eq:Splitting1Morphism}
\tikzmath{
\tikzset{scale=1.25,xscale=1.25}
    \clip[rounded corners] (-.9,-0.6) rectangle (.9,0.9);
    \fill[\acol] (-1.2,-0.6) rectangle (1.2,0.9);
    \draw[\Acol, thick] (0,-0.6) -- (0,-0.3);
    \filldraw[fill=\bcol, draw=\Xcol, thick] (-0.6,0.9) -- node[left, xshift=.5mm, \Xcol]{$\scriptstyle X$} (-0.6,0.3) arc(-180:0:0.6) -- node[right, xshift=-1mm,yshift=.25ex, \Xcol]{$\scriptstyle X^\vee$} (0.6,0.9);
    \filldraw[fill=\ccol, draw=\Zcol, snake, thick] (-0.25,0.9) -- (-0.25,0.4) arc(-180:0:0.25) node[midway, below, yshift=-1mm, \Zcol] {} -- (0.25,0.9);
    \node[\Zcol] at (-0.4, 0.6) {$\scriptstyle Z$};
    \node[\Zcol,xshift=.35ex,yshift=.25ex] at (0.4, 0.6) {$\scriptstyle Z^\vee$};
}
=
\tikzmath{
\tikzset{scale=1.25}
    \clip[rounded corners](-1.1,-.6)rectangle (1.1,.9);
    \fill[\AColor](-1.2,-.6)rectangle (1.2,.9);
    \draw[thick,\AsColor](0,-.6)--(0,-.3);
    \filldraw[draw=\YsColor,thick,fill=\ccol](-.6,.3)arc(-180:0:.6);
    \fill[\ccol](-.6,.3)arc(-90:0:.3)--++(0,.3)--++(.3,0)--++(0,-.6)--cycle;
    \fill[\bcol](-.6,.3)arc(-90:-180:.3)--++(0,.3)--++(.6,0)--++(0,-.3)arc(0:-90:.3);
    \draw[thick,\XsColor] (-.6,.3)arc(-90:-180:.3)--++(0,.3);
    \draw[thick,\Zcol,snake] (-.6,.3)arc(-90:0:.3)--(-.3,.9);
    \begin{scope}[xscale=-1]
        \fill[\ccol](-.6,.3)arc(-90:0:.3)--++(0,.3)--++(.3,0)--++(0,-.6)--cycle;
        \fill[\bcol](-.6,.3)arc(-90:-180:.3)--++(0,.3)--++(.6,0)--++(0,-.3)arc(0:-90:.3);
        \draw[thick,\XsColor] (-.6,.3)arc(-90:-180:.3)--++(0,.3);
        \draw[thick,\Zcol,snake] (-.6,.3)arc(-90:0:.3)--(-.3,.9); 
    \end{scope}
}
\,
\qquad\text{where}\;\quad
\begin{array}{l}
\left(\tikzmath{
    \tikzset{scale=0.7}
    \coordinate(O) at (0,0); 
    \clip[rounded corners](O)rectangle ++(1,1);
    \fill[\acol](O)rectangle ++(1,1);                    
    \fill[\bcol](O)++(0.5,0)rectangle ++(0.5,1);
    \draw[\Xcol,thick](O)++(0.5,0)--++(0,1);
},
\tikzmath{
    \tikzset{scale=.7,yscale=-1}
    \coordinate(O) at (0,0); 
    \fill[\AColor,rounded corners](O)++(-0.25,0)rectangle ++(1.5,1);      
    \draw[\Acol,thick](O)++(0.5,0.5)--++(0,0.5);
    \fill[\bcol](O)++(1,0) arc(0:180:0.5) --cycle;
    \draw[\Xcol,thick] (O)++(1,0) arc(0:180:0.5);
}\right)
=
({}_aX_b,\gamma),
\\[2ex]
\left(\tikzmath{
    \tikzset{scale=0.7}
    \coordinate(O) at (0,0); 
    \clip[rounded corners](O)rectangle ++(1,1);
    \fill[\acol](O)rectangle ++(1,1);             
    \fill[\ccol](O)++(0.5,0)rectangle ++(0.5,1);  
    \draw[\Ycol,thick](O)++(0.5,0)--++(0,1); 
},
\tikzmath{
    \tikzset{scale=.7,yscale=-1}
    \coordinate(O) at (0,0); 
    \fill[\acol,rounded corners](O)++(-0.25,0)rectangle ++(1.5,1);
    \draw[\Acol,thick](O)++(0.5,0.5)--++(0,0.5);
    \fill[\ccol](O)++(1,0) arc(0:180:0.5) --cycle;
    \draw[\Ycol,thick] (O)++(1,0) arc(0:180:0.5);
}\right)
=
({}_aY_c,\delta),
\\[2ex] 
\left(\tikzmath{
    \tikzset{scale=0.7}
    \coordinate(O) at (0,0); 
    \clip[rounded corners](O)rectangle ++(1,1);
    \fill[\bcol](O)rectangle ++(1,1);
    \fill[\ccol](O)++(0.5,0)rectangle ++(0.5,1);             
    \draw[\Zcol,snake,thick](O)++(0.5,0)--++(0,1);
},
\tikzmath{
    \tikzset{scale=.725,yscale=-1}
    \clip[rounded corners](-0.25,0)rectangle (-0.25,0)rectangle ++(1.5,1);         
    \fill[\acol,rounded corners](0,0)++(-0.25,0)rectangle ++(1.5,1);
    \fill[\bcol](1,0) arc(0:180:0.5) --cycle;
    \fill[\ccol](.5,.5)arc(90:0:0.5)--(1.5,0)--(1.5,1)--(.5,1)--cycle;
    \draw[\Ycol,thick](0.5,0.5)--++(0,0.5);
    \draw[\Xcol,thick] (0,0) arc(180:90:0.5);
    \draw[\Zcol,snake,thick] (.5,.5) arc(90:0:0.5);
}\right)
=
({}_bZ_c,\zeta).
\end{array}
\end{equation}
\item 
2-morphisms $({}_bW_c,\omega)\Rightarrow ({}_bZ_c,\zeta)$ are unitaries $u\colon {}_bW_c\Rightarrow {}_bZ_c$ satisfying 
\begin{equation}
\label{eq:Splitting2Morphism}
\zeta 
=
\tikzmath{
    \clip[rounded corners](-.1,0)rectangle(1.5,1.5);
    \fill[\acol,rounded corners](-.25,0)rectangle(1.75,1.5);
    \coordinate(omega) at (.7,.4);
    \coordinate(u) at (1.2,.9);
    \fill[\bcol](u)arc(0:-180:.5)coordinate(q);
    \fill[\bcol](q) rectangle (1.35,1.5-|u);
    \fill[\ccol](0,0-|omega)--(omega)arc(-90:0:.5)--(1.35,1.5-|u)--(1.5,1.5)--(1.5,0)--cycle;
    \draw[\Xcol,thick](omega)arc(270:180:.5)--++(0,.6);
    \draw[thick,\Ycol](omega)--(0,0-|omega);
    \draw[\Zcol,snake,thick](omega)arc(-90:0:.5)--(1.35,1.5-|u);
}
=
\tikzmath{
    \clip[rounded corners](-.1,0)rectangle(1.5,1.5);
    \fill[\acol,rounded corners](-.25,0)rectangle(1.75,1.5);
    \coordinate(omega) at (.7,.4);
    \coordinate(u) at (1.2,.9);
    \fill[\bcol](u)arc(0:-180:.5)coordinate(q);
    \fill[\bcol](q) rectangle (1.35,1.5-|u);
    \fill[\ccol](0,0-|omega)--(omega)arc(-90:0:.5)--(1.35,1.5-|u)--(1.5,1.5)--(1.5,0)--cycle;
    \draw[\Xcol,thick](omega)arc(270:180:.5)--++(0,.6);
    \draw[\Zcol,snake,thick](u)--(1.35,1.5-|u);
    \draw[thick,\Ycol](omega)--(0,0-|omega);
    \draw[\Wcol,snake,thick](omega)arc(-90:0:.5);
    \node[draw=black,circle,very thick,rounded corners,fill=white,inner sep=2pt] at (u){$\scriptstyle u$};
}
= (1_X\otimes u)\circ \omega
\;\;\;\text{where}\;\;
\begin{array}{c}
\!\!\left(\tikzmath{
    \tikzset{scale=0.7}
    \coordinate(O) at (0,0); 
    \clip[rounded corners](O)rectangle ++(1,1);
    \fill[\bcol](O)rectangle ++(1,1);
    \fill[\ccol](O)++(0.5,0)rectangle ++(0.5,1);              
    \draw[\Zcol,snake,thick](O)++(0.5,0)--++(0,1);                       
},
\tikzmath{
    \tikzset{scale=.725,yscale=-1}
    \clip[rounded corners](-0.25,0)rectangle (-0.25,0)rectangle ++(1.5,1);         
    \fill[\acol,rounded corners](0,0)++(-0.25,0)rectangle ++(1.5,1);
    \fill[\bcol](1,0) arc(0:180:0.5) --cycle;
    \fill[\ccol](.5,.5)arc(90:0:0.5)--(1.5,0)--(1.5,1)--(.5,1)--cycle;
    \draw[\Ycol,thick](0.5,0.5)--++(0,0.5);
    \draw[\Xcol,thick] (0,0) arc(180:90:0.5);
    \draw[\Zcol,snake,thick] (.5,.5) arc(90:0:0.5);
}\right)
=
({}_bZ_c,\zeta),
\\[2ex]
\left(\tikzmath{
    \tikzset{scale=0.7}
    \coordinate(O) at (0,0); 
    \clip[rounded corners](O)rectangle ++(1,1);
    \fill[\bcol](O)rectangle ++(1,1);
    \fill[\ccol](O)++(0.5,0)rectangle ++(0.5,1);              
    \draw[\Wcol,snake,thick](O)++(0.5,0)--++(0,1);                       
},
\tikzmath{
    \tikzset{scale=.725,yscale=-1}
    \clip[rounded corners](-0.25,0)rectangle (-0.25,0)rectangle ++(1.5,1);         
    \fill[\acol,rounded corners](0,0)++(-0.25,0)rectangle ++(1.5,1);
    \fill[\bcol](1,0) arc(0:180:0.5) --cycle;
    \fill[\ccol](.5,.5)arc(90:0:0.5)--(1.5,0)--(1.5,1)--(.5,1)--cycle;
    \draw[\Ycol,thick](0.5,0.5)--++(0,0.5);
    \draw[\Xcol,thick] (0,0) arc(180:90:0.5);
    \draw[\Wcol,snake,thick] (.5,.5) arc(90:0:0.5);
}\right)
=
({}_bW_c,\omega).
\end{array}
\end{equation}
\end{itemize}
\end{defn}

\begin{lem}\label{lem:h-star-monad-splittings-unique}
The space of splittings of an $\rmH^*$-monad is contractible.
\end{lem}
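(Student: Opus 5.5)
The plan is to verify the two things contractibility of the 2-groupoid $\splitcat(A)$ requires. First, any two splittings are joined by a 1-morphism. Second, between any two parallel 1-morphisms there is exactly one 2-morphism. Together these say every hom-groupoid is nonempty and equivalent to a point. Existence of a splitting is a completeness hypothesis on $\fX$ and is not part of the claim. So I read the lemma as "contractible whenever nonempty", i.e.\ as a statement about 1- and 2-morphisms only.

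\emph{Step 1: a normalization lemma.} For a splitting $({}_aX_b,\gamma)$, I would first show that $X$ is faithful on the $b$-side in a unitary way. Concretely, the canonical map ${}_bX^\vee\otimes_A X_b\Rightarrow 1_b$ should be unitary, where the relative tensor product is taken using the left $A$-module structure on $X$ and the right $A$-module structure on $X^\vee$ from Remark~\ref{rem:ModuleFromSplitting}. The ingredients are as follows.
\begin{itemize}
\item The separability idempotent on $X^\vee\otimes_a X$ is computed from the $A$-bubble. By the bubble relation in Remark~\ref{rem:ModuleFromSplitting}, this bubble is an $X$-bubble wrapped around the invertible element $\ev_X\ev_X^\dag\in(\Omega^2_b)^\times$.
\item Unitarity of $\gamma$, together with the standardness axiom \ref{H:Standard}, should force this idempotent to be exactly $\ev_X^\dag(\ev_X\ev_X^\dag)^{-1}\ev_X$.
\item Hence $X^\vee\otimes_A X$ splits as $1_b$, with a unitary (rather than merely invertible) identification.
\end{itemize}
I expect this to be the main obstacle: getting unitarity rather than invertibility. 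My first attempt is to write the splitting isometry as $\ev_X^\dag(\ev_X\ev_X^\dag)^{-1/2}$. I would then use the standardness condition, in its unitary-morphism form, to identify the $r$-th power of the $A$-bubble that appears in the separability idempotent.

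\emph{Step 2: existence of 1-morphisms.} Given splittings $({}_aX_b,\gamma)$ and $({}_aY_c,\delta)$, set ${}_bZ_c\coloneq X^\vee\otimes_A Y$. This is the splitting of the separability idempotent on $X^\vee\otimes_a Y$, which exists because hom categories are orthogonal projection complete. Using Step~1 for both $X$ and $Y$ together with $\gamma$, I get unitaries
\[
Z\otimes_c Z^\vee\cong X^\vee\otimes_A Y\otimes_c Y^\vee\otimes_A X\cong X^\vee\otimes_A A\otimes_A X\cong 1_b,
\]
and symmetrically $Z^\vee\otimes_b Z\cong 1_c$. The remaining task is to check that these unitaries are the UAF (co)evaluations, so that $Z$ is an isometric equivalence. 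Next, define $\zeta$ as the composite
\[
Y\cong A\otimes_A Y\xrightarrow{\gamma\otimes_A 1}X\otimes_b X^\vee\otimes_A Y = X\otimes_b Z .
\]
It is unitary and left $A$-linear because $\gamma$ is a unitary monad isomorphism. Equation~\eqref{eq:Splitting1Morphism} then reduces to the zig-zag identity for $Y$ combined with the monad compatibility of $\delta$.

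\emph{Step 3: unique 2-morphisms.} Given parallel 1-morphisms $(W,\omega)$ and $(Z,\zeta)$, equation~\eqref{eq:Splitting2Morphism} forces $1_X\otimes u=\zeta\omega^\dag$, since $\omega$ is unitary. Applying $X^\vee\otimes_A-$ and using the unitary $X^\vee\otimes_A X\cong 1_b$ from Step~1 gives the explicit formula
\[
u=(\ev\otimes 1_Z)\circ(1_{X^\vee}\otimes\zeta\omega^\dag)\circ(\ev^\dag\otimes 1_W),
\]
with $\ev$ normalized as in Step~1. This proves uniqueness. For existence, note that $\zeta\omega^\dag$ is a left $A$-linear unitary $X\otimes W\Rightarrow X\otimes Z$. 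Left $A$-linearity is what allows it to pass through the relative tensor product, so this $u$ does satisfy $1_X\otimes u=\zeta\omega^\dag$. Unitarity of $u$ follows from unitarity of $\zeta\omega^\dag$ and of the normalized evaluation.
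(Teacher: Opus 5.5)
Your proposal is correct and follows essentially the same route as the paper. Uniqueness of 2-morphisms comes from injectivity of $1_X\otimes-$, and 1-morphisms come from $Z=X^\vee\otimes_A Y$, with $\zeta$ built from $\gamma$ and a square-root bubble normalization (this matches the paper's $\omega$). The 2-morphisms come from an explicit capped formula using the normalized evaluation; you build $u$ between arbitrary parallel 1-morphisms, while the paper only builds it against its canonical one, which suffices in a groupoid. Your Step 1 is the fact the paper imports from \cite[Rem.~4.40]{MR5078555}. It needs only unitarity and monad-compatibility of $\gamma$, which give the idempotent as $\ev_X^\dag(\ev_X\ev_X^\dag)^{-1}\ev_X$ directly; standardness is not needed.
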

\begin{proof}
Observe that \eqref{eq:Splitting2Morphism} immediately implies that at most one 2-morphism exists between parallel 1-morphisms; indeed, $\zeta,\omega$ are unitary and $1_X\otimes -$ is injective as $\ev^\dag_X\ev_X$ is invertible.
Hence it suffices to prove that every two splittings admit a 1-morphism and that parallel 1-morphisms admit a 2-morphism.
\\ 

\item[\underline{$\exists$ 1-morphism:}]
Suppose $({}_aX_b,\gamma)$ and $({}_aY_c,\delta)$ are two splittings of the $\rmH^*$-monad ${}_aA_a$. It is an immediate consequence of \cite[Remarks 4.12, 4.40]{MR5078555} that ${}_bW_c\coloneq{}_bX^\vee\otimes_AY_c$ is an isometric equivalence $b \isomeq c$. One then verifies that 
\[
    \omega\coloneq
    \tikzmath{
        \def\tkzxstep{0}
        \def\tkzlbord{-1.4}
        \tikzset{scale=1.25}
        \clip[rounded corners] (\tkzlbord,1) rectangle (.25,-0.25);
        \fill[\AColor] (\tkzlbord,1) rectangle (\tkzxstep,-0.25);
        \fill[\Bsplitcolor] (\tkzxstep,1) rectangle (0.25,-0.25);
        \draw[thick,\YsColor] (\tkzxstep,-.25) -- (\tkzxstep,1);
        \filldraw[thick, \XsColor, fill=\Asplitcolor] (-.3,1) --(-.3,.7) arc(0:-180:.3cm) --(-.9,1);
        \draw[\AsColor,thick](-.9,.8) to[out=210,in=90] (-1.2,.5) -- ++(0,-.25) arc(90:450:2pt) node[above right, red, xshift=-1mm, yshift=-2mm]{$\scriptscriptstyle 1/2$};
        \draw[\AsColor,thick](-.3,.8)--(0,.8);
    }
    \qquad\text{where}\qquad
    \tikzmath{
        \tikzset{scale=0.2}
        \coordinate (O) at (0,0);
        \fill[\AColor,rounded corners] (-1,0) rectangle ++(5,4);
        \draw[\AsColor,thick] (O) ++(1.5,2.575) -- ++(0,1.425);
        \draw[thick,fill=\AColor, draw=\AsColor](O)++(1.5,2) circle(0.575);
        \node[red](r) at (3,3){$\scriptstyle r$};
    }\coloneq    
    \tikzmath{
        \tikzset{scale=0.2}
        \coordinate (O) at (0,0);
        \fill[\AColor,rounded corners] (-1,0) rectangle ++(5,4);
        \draw[\AsColor,thick] (O) ++(1.5,.75) -- ++(0,0.925);
        \draw[\AsColor,thick] (O) ++(1.5,2.825) -- ++(0,1.175);
        \draw[thick,fill=\AColor, draw=\AsColor](O)++(1.5,2.25) circle(0.575);
        \node[red](r) at (3,3){$\scriptstyle r$};
        \fill[red](1.5,.75)circle(0.3cm);
    }\,,\quad
    r\in\mathbb{R}
\]
is a unitary $A$-module intertwiner.
\\

\item[\underline{$\exists$ 2-morphism:}] Given a 1-morphism $({}_bZ_c,\zeta)$ parallel to our established 1-morphism $ ({}_bX^\vee\otimes_A Y_c,\omega)\colon ({}_aX_b,\gamma) \to ({}_aY_c,\delta)$,
one verifies using \eqref{eq:Splitting1Morphism} that
\[
u\coloneq
\tikzmath{
    \clip[rounded corners] (-1, -0.5) rectangle (1.2, 1.5);
    \fill[\bcol] (-1, -0.5) rectangle (1.2, 1.5);
    \coordinate (zetadag) at (0.5, 0.5);
    \fill[\ccol] (0.5, 1.5) -- (zetadag) arc(90:0:0.5) -- (1.0, -0.5) -- (1.5, -0.5) -- (1.5, 1.5) -- cycle;
    \fill[\acol] (-0.5, 1.5) -- (-0.5, 0) arc(180:360:0.25) arc(180:90:0.5) -- (0.5, 1.5) -- cycle;
    \draw[\Ycol, thick] (zetadag) -- (0.5, 1.5);
    \draw[\Zcol, snake, thick] (1.0, -0.5) -- (1.0, 0) arc(0:90:0.5);
    \draw[\Xcol, thick] (-0.5, 1.5) -- (-0.5, 0) arc(180:360:0.25) arc(180:90:0.5);
    \draw[\AsColor,thick] (-0.5, 0.8) arc(90:0:.3);
    \draw[\Acol,thick] (-0.5,1.1)--(0.5,1.1); arc(90:0:.3);
    \filldraw[\AsColor,fill=\AColor,thick] (-0.2, 0.4) circle(0.1)node[above right,xshift=-1.5mm]{$\scriptscriptstyle-1/2$};
}\colon {}_bZ_c\Rightarrow {}_bX^\vee\otimes_A Y_c
\]
is a unitary satisfying \eqref{eq:Splitting2Morphism}. In more detail, unitarity follows from
\begin{align*}
    u^\dagger u
    &=
\tikzmath{
    \clip[rounded corners=2.5mm] (-1.2, -1.2) rectangle (1.5, 2.4);
    \fill[\Asplitcolor] (-1.2, -1.2) rectangle (1.5, 2.4);
    \begin{scope}
        \coordinate(o) at (0.6, 0.15);
        \fill[\ccol] (1.5, 0.6) -- (0.6, 0.6) -- (o) to[out=0,in=90]++(0.3, -0.3)coordinate(qZ) -- (qZ |- 0, -1.2) -- (1.5, -1.2) -- cycle;
        \fill[\AColor] (0.6, 0.6) coordinate(l) -- (o) arc(90:180:0.45) -- ++(0, -0.15) arc(360:180:0.45) coordinate[pos=0.25](fd) coordinate(qY) -- (qY |- 0, 0.6) -- cycle;
        \draw[\Zcol,thick,snake](o)to[out=0,in=90](qZ) -- (qZ |- 0, -1.2);
        \draw[thick,\XsColor](o)arc(90:180:0.45)--++(0, -0.15)arc(360:180:0.45)--(qY |- 0, 0.6);
        \draw[\AsColor,thick](0.6, 0.3) -- (-0.75, 0.3);
        \draw[\AsColor,thick](fd)--++(-0.15, 0.15)coordinate(end);
        \draw[thick,\YsColor](o)--(l);
        \fill[\AColor,thick,draw=\AsColor](end)circle(2pt)node[above, xshift=-2mm,inner sep=1.5pt,\AsColor]{$\scriptscriptstyle-1/2$};
    \end{scope}
    \begin{scope}
        \coordinate(o) at (0.6, 1.05);
        \fill[\ccol] (1.5, 0.6) -- (0.6, 0.6) -- (o) to[out=0,in=-90]++(0.3, 0.3)coordinate(qZ) -- (qZ |- 0, 2.4) -- (1.5, 2.4) -- cycle;
        \fill[\AColor] (0.6, 0.6) coordinate(l) -- (o) arc(270:180:0.45) -- ++(0, 0.15) arc(0:180:0.45) coordinate[pos=0.25](fd) coordinate(qY) -- (qY |- 0, 0.6) -- cycle;
        \draw[\Zcol,thick,snake](o)to[out=0,in=-90](qZ) -- (qZ |- 0, 2.4);
        \draw[thick,\XsColor](o)arc(270:180:0.45)--++(0, 0.15)arc(0:180:0.45)--(qY |- 0, 0.6);
        \draw[\AsColor,thick](fd)--++(-0.15, -0.15)coordinate(end);
        \draw[\AsColor,thick](0.6, 0.9) -- (-0.75, 0.9);
        \draw[thick,\YsColor](o)--(l);
        \fill[\AColor,thick,draw=\AsColor](end)circle(2pt)node[below=0.5mm,xshift=-1.5mm,inner sep=1.5pt,\AsColor]{$\scriptscriptstyle-1/2$};
    \end{scope}
}
=
\tikzmath{
    \clip[rounded corners=2.5mm] (-1.2, -1.2) rectangle (1.5, 2.4);
    \fill[\Asplitcolor] (-1.2, -1.2) rectangle (1.5, 2.4);
    \begin{scope}
        \coordinate(o) at (0.6, 0.15);
        \fill[\ccol] (1.5, 0.6) -- (0.6, 0.6) -- (o) to[out=0,in=90]++(0.3, -0.3)coordinate(qZ) -- (qZ |- 0, -1.2) -- (1.5, -1.2) -- cycle;
        \fill[\AColor] (0.6, 0.6) coordinate(l) -- (o) arc(90:180:0.45) -- ++(0, -0.15) arc(360:180:0.45) coordinate[pos=0.25](fd) coordinate(qY) -- (qY |- 0, 0.6) -- cycle;
        \draw[\Zcol,thick,snake](o)to[out=0,in=90](qZ) -- (qZ |- 0, -1.2);
        \draw[thick,\XsColor](o)arc(90:180:0.45)--++(0, -0.15)arc(360:180:0.45)--(qY |- 0, 0.6);
        \draw[\AsColor,thick](0.15, -0.15) -- (-0.75, -0.15);
        \draw[\AsColor,thick](fd)--++(-0.15, 0.15)coordinate(end);
        \draw[thick,\YsColor](o)--(l);
        \fill[\AColor,thick,draw=\AsColor](end)circle(2pt)node[above, xshift=-2mm,inner sep=1.5pt,\AsColor]{$\scriptscriptstyle-1/2$};
    \end{scope}
    \begin{scope}
        \coordinate(o) at (0.6, 1.05);
        \fill[\ccol] (1.5, 0.6) -- (0.6, 0.6) -- (o) to[out=0,in=-90]++(0.3, 0.3)coordinate(qZ) -- (qZ |- 0, 2.4) -- (1.5, 2.4) -- cycle;
        \fill[\AColor] (0.6, 0.6) coordinate(l) -- (o) arc(270:180:0.45) -- ++(0, 0.15) arc(0:180:0.45) coordinate[pos=0.25](fd) coordinate(qY) -- (qY |- 0, 0.6) -- cycle;
        \draw[\Zcol,thick,snake](o)to[out=0,in=-90](qZ) -- (qZ |- 0, 2.4);
        \draw[thick,\XsColor](o)arc(270:180:0.45)--++(0, 0.15)arc(0:180:0.45)--(qY |- 0, 0.6);
        \draw[\AsColor,thick](fd)--++(-0.15, -0.15)coordinate(end);
        \draw[\AsColor,thick](0.15, 1.35) -- (-0.75, 1.35);
        \draw[thick,\YsColor](o)--(l);
        \fill[\AColor,thick,draw=\AsColor](end)circle(2pt)node[below=0.5mm,xshift=-1.5mm,inner sep=1.5pt,\AsColor]{$\scriptscriptstyle-1/2$};
    \end{scope}
}
    =
    \tikzmath{
    \clip[rounded corners=2.5mm] (-1.2, -1.05) rectangle (1.2, 2.1);
    \fill[\Asplitcolor,rounded corners] (-1.2, -1.05) rectangle (1.2, 2.4);
    \fill[\ccol] (1.2, -1.05) rectangle (0.6, 2.1);
    \draw[\Zcol,thick,snake] (0.6, -1.05) -- (0.6, 2.4);
    \fill[\AColor] (0.15, -0.3) arc(360:180:0.45) -- (-0.75, 1.35) arc(180:0:0.45) -- cycle;
    \draw[thick,\XsColor] (0.15, -0.3) arc(360:180:0.45) coordinate[pos=0.25](fd) -- (-0.75, 1.35) arc(180:0:0.45) coordinate[pos=0.75](fdTop) -- cycle;
    \draw[\AsColor,thick] (0.15, 0) -- (-0.75, 0);
    \draw[\AsColor,thick] (0.15, 1.05) -- (-0.75, 1.05);
    \draw[\AsColor,thick] (fd) -- ++(-0.15, 0.15) coordinate(end);
    \fill[\AColor,thick,draw=\AsColor] (end) circle(2pt) node[above,xshift=-1.5mm, inner sep=1.5pt, \AsColor] {$\scriptscriptstyle-1/2$};
    \draw[\AsColor,thick] (fdTop) -- ++(-0.15, -0.15) coordinate(endTop);
    \fill[\AColor,thick,draw=\AsColor] (endTop) circle(2pt) node[below=0.5mm,xshift=-1.5mm, inner sep=1.5pt, \AsColor] {$\scriptscriptstyle-1/2$};
}
\underset{\text{\cite[Rem.~4.40]{MR5078555}}}{=}
\tikzmath{
    \clip[rounded corners=2.5mm] (-0.6, -1.05) rectangle (0.6, 2.4);
    \fill[\ccol] (-0.6, -1.05) rectangle (0.6, 2.4);
    \fill[\Asplitcolor] (-0.6, -1.05) rectangle (0, 2.4);
    \draw[\Zcol,thick,snake] (0, -1.05) -- (0, 2.4);
}
    =
    \id_{{}_bZ_c}
\\
uu^\dag
&=
\tikzmath{
    \clip[rounded corners=2.5mm] (-1.2, -1.9) rectangle (1.2,1.6);
    \fill[\Asplitcolor] (-1.2, -2.1) rectangle (1.2, 2.1);
    \begin{scope}[yshift=0.3cm]
        \coordinate (zetadag) at (0.45, 0.45);
        \fill[\ccol] (0.45, 1.8) -- (zetadag) arc(90:0:0.45) -- (0.9, -0.6) -- (1.2, -0.6) -- (1.2, 1.8) -- cycle;
        \fill[\acol] (-0.45, 1.8) -- (-0.45, 0.15) arc(180:360:0.3) arc(180:90:0.3) -- (0.45, 1.8) -- cycle;
        \draw[\Ycol, thick] (zetadag) -- (0.45, 1.8);
        \draw[\Zcol, snake, thick] (0.9, -0.6) -- (0.9, 0) arc(0:90:0.45);
        \draw[\Xcol, thick] (-0.45, 1.8) -- (-0.45, 0.15) arc(180:360:0.3) arc(180:90:0.3);
        \draw[\AsColor,thick] (-0.45, 0.75) arc(90:0:0.3);
        \draw[\AsColor,thick] (-0.45, 1.05) -- (0.45, 1.05);
        \filldraw[\AsColor,fill=\AColor,thick] (-0.15, 0.45) circle(0.075) node[above right,xshift=-1.5mm]{$\scriptscriptstyle-1/2$};
    \end{scope}
    \begin{scope}[yshift=-0.5cm,yscale=-1]
        \coordinate (zetadag) at (0.45, 0.45);
        \fill[\ccol] (0.45, 1.8) -- (zetadag) arc(90:0:0.45) -- (0.9, -0.6) -- (1.2, -0.6) -- (1.2, 1.8) -- cycle;
        \fill[\acol] (-0.45, 1.8) -- (-0.45, 0.15) arc(180:360:0.3) arc(180:90:0.3) -- (0.45, 1.8) -- cycle;
        \draw[\Ycol, thick] (zetadag) -- (0.45, 1.8);
        \draw[\Zcol, snake, thick] (0.9, -0.6) -- (0.9, 0) arc(0:90:0.45);
        \draw[\Xcol, thick] (-0.45, 1.8) -- (-0.45, 0.15) arc(180:360:0.3) arc(180:90:0.3);
        \draw[\AsColor,thick] (-0.45, 0.75) arc(90:0:0.3);
        \draw[\AsColor,thick] (-0.45, 1.05) -- (0.45, 1.05);
        \filldraw[\AsColor,fill=\AColor,thick] (-0.15, 0.45) circle(0.075) node[below right,xshift=-1.5mm]{$\scriptscriptstyle-1/2$};
    \end{scope}
}
\underset{\text{\cite[Rem.~4.40]{MR5078555}}}{=}
\tikzmath{
    \clip[rounded corners=2.5mm] (-1.2, -1.9) rectangle (1.2, 1.6);
    \fill[\Asplitcolor] (-1.2, -2.1) rectangle (1.2, 2.1);
    \fill[\ccol] (0.45, 2.1) -- (0.45, -2.1) -- (1.2, -2.1) -- (1.2, 2.1) -- cycle;
    \fill[\acol] (-0.45, 2.1) -- (-0.45, -2.1) -- (0.45, -2.1) -- (0.45, 2.1) -- cycle;
    \draw[\Ycol, thick] (0.45, 2.1) -- (0.45, -2.1);
    \draw[\Xcol, thick] (-0.45, 2.1) -- (-0.45, -2.1);
    \draw[\AsColor,thick] (-0.45, 0.9) -- (0.45, 0.9);
    \draw[\AsColor,thick] (-0.45, -1.1) -- (0.45, -1.1);
}
=
\tikzmath{
    \clip[rounded corners=2.5mm] (-1.2, -1.9) rectangle (1.2, 1.6);
    \fill[\Asplitcolor] (-1.2, -2.1) rectangle (1.2, 2.1);
    \fill[\ccol] (0.45, 2.1) -- (0.45, -2.1) -- (1.2, -2.1) -- (1.2, 2.1) -- cycle;
    \fill[\acol] (-0.45, 2.1) -- (-0.45, -2.1) -- (0.45, -2.1) -- (0.45, 2.1) -- cycle;
    \draw[\Ycol, thick] (0.45, 2.1) -- (0.45, -2.1);
    \draw[\Xcol, thick] (-0.45, 2.1) -- (-0.45, -2.1);
    \draw[\AsColor,thick] (-0.45, -0.1) -- (0.45, -0.1);
}
=
\id_{{}_bX^\vee\otimes_A Y_c},
\end{align*}
while \eqref{eq:Splitting2Morphism} follows from
\[
(1_X\otimes u)\circ \omega
=
\tikzmath{
    \clip[rounded corners] (-1.5, -1.2) rectangle (1.2, 1.5);
    \fill[\AColor] (-2.1,-1.2) rectangle (1.2,1.5); 
    \fill[\ccol] (0.45, 1.5) -- (0.45, 0.45) arc(90:0:0.45) -- (0.9, -0.3) arc(0:-90:0.6) -- (0.3, -1.2) -- (1.2, -1.2) -- (1.2, 1.5) -- cycle;
    \fill[\Asplitcolor] (-1.05,1.5)--(-1.05,0)to[out=-90,in=180] (0.3,-0.9)
    arc(-90:0:0.6) -- (0.9, -0.3) -- (0.9, 0) arc(0:90:0.45) arc(90:180:0.3) arc(360:180:0.3) -- (-0.45, 1.5) -- cycle;
    \coordinate (zetadag) at (0.45, 0.45);
    \coordinate (t) at (0.3, -0.9);
    \draw[\Ycol, thick] (zetadag) -- (0.45, 1.5);
    \draw[\Zcol, snake, thick] (0.9, -0.3) -- (0.9, 0) arc(0:90:0.45);
    \draw[\Zcol, snake, thick] (0.9, -0.3) arc(0:-90:0.6);
    \draw[\Ycol,thick] (t)--(0.3, -1.2);
    \draw[\Xcol,thick] (t) to[out=180,in=-90](-1.05,0) -- (-1.05,1.5);
    \draw[\Xcol, thick] (-0.45, 1.5) -- (-0.45, 0.15) arc(180:360:0.3) arc(180:90:0.3);
    \draw[\AsColor,thick] (-0.45, 0.75) arc(90:0:0.3);
    \draw[\AsColor,thick] (-0.45, 1.05) -- (0.45, 1.05);
    \filldraw[\AsColor,fill=\AColor,thick] (-0.15, 0.45) circle(0.075) node[above right,xshift=-2mm]{$\scriptscriptstyle-1/2$};
}
=
\tikzmath{
    \clip[rounded corners] (-1.5, -1.2) rectangle (1.2, 1.5);
    \fill[\AColor] (-2.1,-1.2) rectangle (1.2,1.5); 
    \fill[\ccol] (0.45, 1.5) -- (0.45, 0.45) arc(90:0:0.45) -- (0.9, -0.3) arc(0:-90:0.6) -- (0.3, -1.2) -- (1.2, -1.2) -- (1.2, 1.5) -- cycle;
    \fill[\Asplitcolor] (-1.05, 1.5)--++(0,-.6) arc(-180:0:0.3)--++(0,.6)--cycle;
    \fill[\Asplitcolor] (0.45, 1.5) -- (0.45, 0.45) arc(90:0:0.45) -- (0.9, -0.3) arc(0:-90:0.6) to[out=180,in=-90](-1.05,0.0) arc(180:0:0.3) -- (-0.45, -0.15) arc(180:360:0.3) -- (0.15, 0.15) arc(180:90:0.3) -- cycle;
    \coordinate (zetadag) at (0.45, 0.45);
    \coordinate (t) at (0.3,-0.9);
    \draw[\Ycol, thick] (zetadag) -- (0.45, 1.5);
    \draw[\Zcol, snake, thick] (0.9, -0.3) -- (0.9, 0) arc(0:90:0.45);
    \draw[\Zcol, snake, thick] (0.9, -0.3) arc(0:-90:0.6);
    \draw[\Ycol,thick] (t)--(0.3, -1.2);
    \draw[\Xcol,thick] (t) to[out=180,in=-90](-1.05,0.0) arc(180:0:0.3);
    \draw[\Xcol, thick] (-0.45, 0.0) -- (-0.45, -0.15) arc(180:360:0.3) -- (0.15, 0.15) arc(180:90:0.3);
    \draw[\Xcol,thick] (-1.05, 1.5)--++(0,-.6) arc(-180:0:0.3)--++(0,.6);
    \draw[\AsColor,thick] (-0.75,0.3) -- (-0.75,0.6);
    
    \draw[\AsColor,thick] (-0.45, 0.9) arc(90:0:0.3);
    \draw[\AsColor,thick] (-0.45, 1.2) -- (0.45, 1.2);
    \filldraw[\AsColor,fill=\AColor,thick] (-0.15, 0.6) circle(0.075) node[above right,xshift=-2mm]{$\scriptscriptstyle-1/2$};
}
=
\tikzmath{
    \clip[rounded corners] (-1.9, -1.2) rectangle (1.2, 1.5);
    \fill[\AColor] (-2.1,-1.2) rectangle (1.5,1.5); 
    \fill[\ccol] (0.3, -1.2) rectangle (1.5,1.5);
    \fill[\Asplitcolor] (-1.05, 1.5)--++(0,-.6) arc(-180:0:0.3)--++(0,.6)--cycle;
    \draw[\Xcol,thick] (-1.05, 1.5)--++(0,-.6) arc(-180:0:0.3)--++(0,.6);
    \draw[\AsColor,thick] (-1.05, 0.9) arc(90:180:0.3)coordinate(q);
    \draw[\AsColor,thick] (-0.45, 1.2) -- (0.3, 1.2);    \filldraw[\AsColor,fill=\AColor,thick] (q) circle(0.075) node[above left,xshift=1mm]{$\scriptscriptstyle1/2$};
    \coordinate (ttop) at (0.3, 0.15);
    \coordinate (t) at (0.3,-0.9);
    \fill[\Asplitcolor]
      (t) arc(-90:-180:0.3)
      -- ++(0,0.45)
      arc(180:90:0.3)
      arc(90:0:0.3)
      -- ++(0,-0.45)
      arc(0:-90:0.3)
      -- cycle;
    \draw[\Ycol, thick] (ttop) -- (0.3, 1.5);
    \draw[\Ycol,thick] (t)--(0.3, -1.2);
    \draw[\Zcol,snake,thick] (t) arc(-90:0:0.3)--++(0,.45)arc(0:90:0.3);
    \draw[\Xcol, thick] (t) arc(-90:-180:0.3)--++(0,.45)arc(180:90:0.3);
}
=
\omega.\qedhere
\]
\end{proof}

\begin{defn} 
There is a formal \emph{$\rmH^*$-monad completion} 
$\HstarAlg(\fX)$
which comes with an isometric 1-morphism $\iota\colon \fX\to \HstarAlg(\fX)$ 
which 
satisfies the following universal property: \begin{itemize} 
\item for every $\fY$ in which all $\rmH^*$-monads split, precomposition with $\iota$ is a
pointwise isometric equivalence of unitary 2-categories
$$
\Hom( \HstarAlg(\fX)\to \fY)
\xrightarrow{-\circ \iota}
\Hom(\fX\to \fY).
$$
\end{itemize}
Again, the pointwise isometric property was missed in \cite[Prop.~4.43]{MR5078555}, but it follows readily from Lemma \ref{lem:h-star-monad-splittings-unique}.
Moreover, this equivalence preserves the full subcategories of isometries \cite[Prop.~4.43]{MR5078555}.
\end{defn} 

\begin{fact}
\label{fact:IsometricPreservesStuff}
Suppose $\fX,\fY$ are pre-3-Hilbert spaces and $F\colon \fX\isometry \fY$ is an isometric $\dag$-functor. 
Then, since $F$ preserves UAFs by \cite[Prop.~4.9]{MR5078555} as in \eqref{eq:IsometricIsUAFPreserving},
$F$ automatically preserves (co)isometries by \cite[Lem.~4.23]{MR5078555}, Hilbert direct sums, $\rmH^*$-monads, and their splittings.
\end{fact}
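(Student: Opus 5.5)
The statement is Fact~\ref{fact:IsometricPreservesStuff}: an isometric $\dag$-functor $F\colon \fX\isometry\fY$ preserves (co)isometries, Hilbert direct sums, $\rmH^*$-monads, and their splittings. The plan is to reduce everything to two properties of $F$. First, $F$ preserves the UAF up to the canonical unitary \eqref{eq:IsometricIsUAFPreserving}, by \cite[Prop.~4.9]{MR5078555}. Second, $F$ preserves spherical weights, and hence the traces $\Tr^\fX_X$ of Remark~\ref{rem:2Hilbvaluedinnerprod}. Transporting along the unitary identification $F(X^\vee)\cong F(X)^\vee$, we may take $F(\ev_X)$ and $F(\coev_X)$ to be the evaluation and coevaluation of $F(X)$. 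All the defining conditions are equations or invertibility/unitarity statements built from $\ev,\coev,\mu,\iota,\dag$, composition and $\otimes$, so a $\dag$-2-functor carries them over verbatim.

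\textbf{(Co)isometries.} If ${}_aX_b$ is an isometry, then $\coev_X$ is unitary and $\ev_X$ is an isometry. Applying $F$ and using unitarity of the tensorator $F^2$ and the unitor, $\coev_{F(X)}$ is unitary and $\ev_{F(X)}$ is an isometry, because $F$ preserves $\dag$ and composition. This is \cite[Lem.~4.23]{MR5078555}. Coisometries follow by applying the same argument to $X^\vee$, using $F(X^\vee)\cong F(X)^\vee$.

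\textbf{Hilbert direct sums.} The isometries $I,J$ go to isometries. The relation $\ev_I\ev_I^\dag+\ev_J\ev_J^\dag=\id_{1_{a\boxplus b}}$ is linear in $2$-morphisms, and $F$ is linear and sends $\id_{1}$ to $\id_{1}$ via the unitor. Hence the image relation holds.

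\textbf{$\rmH^*$-monads and splittings.} Define the monad structure on $F(A)$ by $F(\mu)\circ F^2_{A,A}$ and $F(\iota)\circ F^1$.
\begin{itemize}
\item Associativity and unitality follow from the coherence of $F^2$ and $F^1$.
\item \ref{H:Frobenius} holds because $F^2$ is unitary, so $\dag$ commutes with this transport.
\item \ref{H:Separable} holds because $F$ preserves invertibility.
\item \ref{H:Standard} is the one axiom involving $\Psi$; I expect it to be the only genuine point. Use its equivalent form, namely that the two composites $A\Rightarrow A^\vee$ built from $\mu$, $\iota^\dag$, and the (co)evaluations are unitary. That form transports verbatim, because $F$ preserves the UAF unitarily. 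Alternatively, one can use $\Psi^\fY_{F(a)}\circ F=\Psi^\fX_a$ directly.
\end{itemize}
For a splitting $(X,\gamma)$, take $(F(X),\, F^{2\,\dag}\circ F(\gamma))$, where $F(X^\vee)$ is identified with $F(X)^\vee$. Invertibility of $\ev\ev^\dag$ and unitarity of $\gamma$ are preserved. Monad compatibility holds because the canonical monad structure on $X\otimes X^\vee$ is built from $\ev_X$, which $F$ preserves.

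The main obstacle is bookkeeping: checking that the canonical comparison unitaries ($F^2$, $F^1$, and the UAF comparison) are compatible with one another, so that the transported $\ev$ and $\coev$ satisfy the zigzag relations for the chosen dual. I would settle this once by invoking uniqueness of unitary duals (\cite[Prop.~4.9]{MR5078555}). After that, every remaining verification is formal.
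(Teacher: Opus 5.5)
Your proposal is correct and follows the paper's route: unitary UAF-preservation from \cite[Prop.~4.9]{MR5078555} turns every defining condition into a formal transport along the unitary comparison maps. From there, (co)isometries follow by \cite[Lem.~4.23]{MR5078555}, and Hilbert direct sums, $\rmH^*$-monads and splittings carry over verbatim, which you spell out in more detail than the paper does. The one thing to drop is your ``alternative'' check of \ref{H:Standard} via $\Psi^\fY_{F(a)}\circ F=\Psi^\fX_a$: it only controls endomorphisms of $F(A)$ lying in the image of $F$, so it would need $F$ to be full, whereas the unitarity form you use first has no such requirement.
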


\subsection{3-Hilbert spaces}

\begin{defn}
A \emph{3-Hilbert space} is a pre-3-Hilbert space that is \emph{complete}, i.e., admits all Hilbert direct sums and all $\rmH^*$-monads split.\footnote{In fact, one can provide an even weaker notion than pre-3-Hilbert space whose iterated completion (all $\dag$,$\vee$-preserving functors into $2\Hilb$) will again be a 3-Hilbert space.
This will appear in \cite{UnitaryDiskLike}.
}
\end{defn}

\begin{ex}\label{ex:ModforH*mFC}
Given an $\rmH^*$-multifusion category $\cC$, 
the unitary 2-category
$\Mod^\dag(\cC)$ of unitary $\cC$-module categories equipped with unitary $\cC$-module traces in the sense of \cite{MR3019263,MR4598730} is a 3-Hilbert space where the UAF is given by unitary adjunction as in Example \ref{ex:UnitaryAdjunction}, and the spherical weight $\Psi$ is given 
by the following formula when $\cC$ is indecomposable:
\begin{equation}\label{modeeqn}
\Psi_{\cM}^{\Mod^\dag(\cC)}(\eta\colon \id_\cM\Rightarrow \id_\cM)
\coloneq
\frac{\dim(\End_\cC(1_\cC))^2}{\FPdim(\cC) \psi_\cC(\id_{1_\cC})}
\sum_{m\in\Irr\cM}
d_m \Tr^\cM_m(\eta_m).
\end{equation}
The formula for arbitrary $\cC$ is then given by decomposing $\cC$ into its indecomposable summands.

In the sequel, we may use the abbreviations
\begin{itemize}
\item 
$d_c\coloneq\psi(\tr_\cC(\id_c))$,
\item 
$D_\cC\coloneq \sum_{c\in\pi_0\cC} d_c^2$,
\item 
$n_\cC\coloneq\dim(\End_\cC(1_\cC))$, and
\item 
$
\delta_\cC\coloneq \psi_\cC(\id_{1_\cC})
$.\footnote{\label{footnote-notation}These renormalization factors also appear in \cite[\S2.4]{2104.02101}.
In that article's notation, these two terms are precisely $N(1_\cC)^2$ and $\mathrm{tr}_s(\id_{1_{\mathcal{C}}})$ respectively, where the latter is defined by the evaluation $\mathrm{ev}(\varnothing_{1_\mathcal{C}})$ of the ``empty'' string diagram $\varnothing_{1_\mathcal{C}}$ on the 2-sphere labeled by $\id_{1_{\mathcal{C}}}$.} 
\end{itemize}
The normalization factor above is introduced to fulfill the following desiderata:
\begin{itemize}
\item 
the $\cC$-module $\dag$-functor $\cM_\cC\mapsto \Fun^\dag_\cC(\cC\to \cM)$ given by $m\mapsto m\lhd -$ is isometric \cite[Prop.~3.9]{MR5078555},
\item 
the natural map $\cM\boxtimes_\cC \cN \to \Fun^\dag_\cC(\cM^{\op}\to \cN)$
is isometric \cite[Prop.~3.14]{MR5078555},
and
\item 
we have an isometric equivalence $\Mod^\dag(\cC)\cong \HstarAlg(\cC)$
\cite[Thm.~4.51]{MR5078555}.
\end{itemize}

\end{ex}

By \cite[Prop.~4.28 and 4.43]{MR5078555}, 
given a pre-3-Hilbert space,
the formal construction 
$\iota\colon \fX\hookrightarrow \fX^\cent\coloneq\HstarAlg(\Hilb_\boxplus(\fX))$ 
is called the \emph{completion} of $\fX$.
The completion
satisfies the universal property that 
for every 3-Hilbert space $\fY$, precomposition with $\iota$ is a pointwise isometric equivalence of unitary 2-categories
\begin{equation}
\label{eq:UniversalPropertyOfCompletion}
\Hom(\fX^\cent\to \fY)
\xrightarrow{-\circ \iota}
\Hom(\fX\to \fY).
\end{equation}
Moreover, this equivalence preserves full subcategories of isometries.

\begin{ex}
\label{ex:CompleteH*mFC}
Given an $\rmH^*$-multifusion category $\cC$, $(\rmB \cC)^\cent\cong \HstarAlg(\cC) \cong \Mod^\dag(\cC)$.
\end{ex}

\begin{ex}
\label{ex:CompleteBLoops}
For any connected 3-Hilbert space $\fX$ and $x\in\fX$, $\fX\cong (\rmB\Omega_x)^\cent$.
\end{ex}

\begin{rem}
Recall from Section \S \ref{subsec:2Hilbs}, for an H*-algebra $A$ we also have $(\rmB A)^\cent \cong \Mod^\dag(A)$. As $A_A$ generates $\Mod^\dag(A)$, we saw that the trace $\Tr^{\Mod^\dag(A)}$ is uniquely determined on an $A$-module $M_A$ by the concrete relation $\Tr^{\End_A(M)}(|\xi \rangle \langle \eta | ) = \Tr^A(\langle \eta | \xi \rangle_A)$ for $\eta,\xi \in M$. 

To categorify this fact, we briefly introduce a categorified trace in the spirit of \cite{MR3578212,MR4750417}. Indeed, just as $\cC( - \to - )$ acts as a categorified inner product for an H*-multifusion category, we may define a $\Hilb$-valued trace on $\cC$ by
$$
\operatorname{TR}^\cC(c) \coloneqq \cC(1_\cC \to c)
$$
equipped with the inner product $\langle f | g \rangle_{\operatorname{TR}^\cC(c)} = \psi^\cC(g^\dag \circ f)$ for $f,g \in \operatorname{TR}^\cC(c)$. Notice this $\Hilb$-valued trace admits natural \emph{traciator unitaries} witnessing its traciality
\[
\operatorname{TR}^\cC(b \otimes c) 
\cong^\dag 
\cC(b^\vee \to c)
\cong^\dag
\operatorname{TR}^\cC(c \otimes b). 
\]
This construction also satisfies a form of positive definiteness, in the sense that
\[
\operatorname{TR}^\cC(c^\vee \otimes c) \cong^\dag \cC(c \to c) 
\]
is a $\rmW^*$-algebra, which is zero if and only if $c=0$.
Applying this analysis to the following linking H*-multifusion category from \cite[Sub-Example 3.21]{MR5078555} 
\[
\cL(\cM_\cC) = \End(\cM_\cC \oplus \cC_\cC) = 
\begin{bmatrix}
\cC & \cM^\op
\\
\cM & \End_\cC(\cM)
\end{bmatrix}
, 
\]
we obtain 
\[
\operatorname{TR}^{\End_\cC(\cM)}(|n \rangle_\cC \langle m |)
\cong^\dag 
\operatorname{TR}^{\cC}(\langle m | n \rangle_\cC) \qquad\text{for } m,n \in \cM.
\]
Here $\langle m|n\rangle_\cC$ is the right $\cC$-valued \emph{internal hom}---denoted by $[m,n]_\cC$ in \cite{MR5078555}---which is determined by the unitary adjunction
\[
    \cM(n\triangleleft c\to m)\cong\cC(c\to \langle n|m\rangle_\cC),
\]
i.e., $\langle n|\colon\cM\to\cC$ is the unitary adjoint of $n\triangleleft -\colon \cC\to \cM$.
\end{rem}

\subsection{The 3-category of 3-Hilbert spaces}

\begin{nota}\label{nota:overlaygraphicalcalculus}
For $F,G\in \Hom(\fX\to \fY)$, we denote a natural transformation by $\alpha \colon  F\Rightarrow G$, and for natural transformations $\alpha,\beta\colon F\Rightarrow G$, we denote a modification by $m\colon  \alpha \Rrightarrow \beta$.
That is, for the objects, 1-morphisms, and 2-morphisms in $\Hom(\fX\to \fY)$, 
we use the 3-categorical notation as they are 1-cells, 2-cells, and 3-cells respectively in $3\Hilb$.
By \cite[Prop.~2.13]{MR4369356}, $\Hom(\fX\to \fY)$ is again a $\rmC^*/\rmW^*$ 2-category, where the $\dag$ on modifications is given componentwise.

We use the overlay graphical calculus for 2-categories of functors between 2-categories from \cite{MR4369356}.
We continue to write 1-composition in $\Hom(\fX\to \fY)$ from \emph{left to right}.\footnote{We denote the 1-composition $G\circ F\colon  \fX\to \fZ$ of 2-functors $F\colon  \fX\to \fY$ and $G\colon  \fY\to \fZ$ \emph{right to left} as usual, but in the overlay graphical calculus, this composition happens \emph{out of the page}.
}
We denote functors $F,G\colon \fX\to \fY$ applied to objects and 1-morphisms in $\fX$ by overlaying \emph{patterns} to shaded regions and strands in $\fX$.
For example, we overlay $F$ and $G$ on $a\in \fX$ by
$$
F=
\tikzmath{
\filldraw[primedregion=white, rounded corners=5pt, draw=black, dotted] (0,0) rectangle (.6,.6);    
}
\qquad\qquad
G=
\tikzmath{
\fill[boxregion=white, rounded corners=5pt, draw=black, dotted] (0,0) rectangle (.6,.6);    
}
\qquad\qquad
a=
\tikzmath{
\fill[lightgray!50, rounded corners=5pt] (0,0) rectangle (.6,.6);    
}
\qquad\rightsquigarrow\qquad
F(a)=
\tikzmath{
\fill[primedregion=lightgray!50, rounded corners=5pt] (0,0) rectangle (.6,.6);    
}
\qquad\qquad
G(a)=
\tikzmath{
\fill[boxregion=lightgray!50, rounded corners=5pt] (0,0) rectangle (.6,.6);    
}
$$
For a 1-morphism ${}_aX_b$, we then denote $F(X)$ and $G(X)$ as follows.
$$
X=
\tikzmath{
\begin{scope}
\clip[rounded corners=5pt] (0,0) rectangle (.6,.6);
\fill[lightgray!50] (0,0) rectangle (.3,.6);    
\fill[lightgray] (.3,0) rectangle (.6,.6);    
\end{scope}
\draw[thick,\XsColor] (.3,0) -- (.3,.6);
}
\qquad
\qquad
b=
\tikzmath{
\fill[lightgray, rounded corners=5pt] (0,0) rectangle (.6,.6);    
}
\qquad
\rightsquigarrow
\qquad
F(X)=
\tikzmath{
\begin{scope}
\clip[rounded corners=5pt] (0,0) rectangle (.6,.6);
\fill[primedregion=lightgray!50] (0,0) rectangle (.3,.6);    
\fill[primedregion=lightgray] (.3,0) rectangle (.6,.6);    
\end{scope}
\draw[thick,\XsColor] (.3,0) -- (.3,.6);
}
\qquad\qquad
G(X)=
\tikzmath{
\begin{scope}
\clip[rounded corners=5pt] (0,0) rectangle (.6,.6);
\fill[boxregion=lightgray!50] (0,0) rectangle (.3,.6);    
\fill[boxregion=lightgray] (.3,0) rectangle (.6,.6);    
\end{scope}
\draw[thick,\XsColor] (.3,0) -- (.3,.6);
}
$$
For a 2-morphism $f\colon {}_aX_b\Rightarrow {}_aY_b$, we make a slight departure from \cite{MR4369356} by denoting $F(f)$ and $G(f)$ by
$$
Y=
\tikzmath{
\begin{scope}
\clip[rounded corners=5pt] (0,0) rectangle (.6,.6);
\fill[lightgray!50] (0,0) rectangle (.3,.6);    
\fill[lightgray] (.3,0) rectangle (.6,.6);    
\end{scope}
\draw[thick,\YsColor] (.3,0) -- (.3,.6);
}
\qquad
\rightsquigarrow
\qquad
F(f)
=
\tikzmath{
\begin{scope}
\clip[rounded corners=5pt] (-.6,-.6) rectangle (.6,.6);
\fill[primedregion=lightgray!50] (-.6,-.6) rectangle (0,.6);    
\fill[primedregion=lightgray] (0,-.6) rectangle (.6,.6);    
\end{scope}
\draw[dotted, rounded corners=5pt] (-.6,-.6) rectangle (.6,.6);
\draw[thick, \XsColor] (0,-.6) -- (0,-.3);
\draw[thick, \YsColor] (0,.6) -- (0,.3);
\roundNbox{primedregion=white, draw=black}{(0,0)}{.3}{0}{0}{$f$}
}
\qquad\qquad
G(f)
=
\tikzmath{
\begin{scope}
\clip[rounded corners=5pt] (-.6,-.6) rectangle (.6,.6);
\fill[boxregion=lightgray!50] (-.6,-.6) rectangle (0,.6);    
\fill[boxregion=lightgray] (0,-.6) rectangle (.6,.6);    
\end{scope}
\draw[dotted, rounded corners=5pt] (-.6,-.6) rectangle (.6,.6);
\draw[thick, \XsColor] (0,-.6) -- (0,-.3);
\draw[thick, \YsColor] (0,.6) -- (0,.3);
\roundNbox{boxregion=white, draw=black}{(0,0)}{.3}{0}{0}{$f$}
}\,.
$$
Transformations $\alpha,\beta\colon F\Rightarrow G$ 
are represented by strings with \emph{textures}, 
$$
\alpha
=
\tikzmath{
\begin{scope}
\clip[rounded corners=5pt] (0,0) rectangle (.6,.6);
\fill[primedregion=white] (0,0) rectangle (.3,.6);    
\fill[boxregion=white] (.3,0) rectangle (.6,.6);    
\end{scope}
\draw[dotted, rounded corners=5pt] (0,0) rectangle (.6,.6);
\draw[thick] (.3,0) -- (.3,.6);
}
=
\left(
\left\{
\alpha_a
=
\tikzmath{
\begin{scope}
\clip[rounded corners=5pt] (0,0) rectangle (.6,.6);
\fill[primedregion=lightgray!50] (0,0) rectangle (.3,.6);    
\fill[boxregion=lightgray!50] (.3,0) rectangle (.6,.6);    
\end{scope}
\draw[thick] (.3,0) -- (.3,.6);
}
\right\}_{a\in \fX}
,
\left\{
\alpha_X
=
\tikzmath{
\begin{scope}
\clip[rounded corners=5] (-.3,-.5) rectangle (1.3,1.5);
\fill[lightgray] (0,-.5)  -- (0,0) .. controls ++(0,.5) and ++(0,-.5) .. (1,1) -- (1,1.5) -- (1.3,1.5) -- (1.3,-.5) -- cycle;
\fill[lightgray!50] (0,-.5)  -- (0,0) .. controls ++(0,.5) and ++(0,-.5) .. (1,1) -- (1,1.5) -- (-.3,1.5) -- (-.3,-.5) -- cycle;
\fill[pattern=primeddots] (1,-.5) -- (1,0) .. controls ++(0,.5) and ++(0,-.5) .. (0,1) -- (0,1.5) -- (-.3,1.5) -- (-.3,-.5) -- cycle;
\fill[pattern=primedbox] (1,-.5) -- (1,0) .. controls ++(0,.5) and ++(0,-.5) .. (0,1) -- (0,1.5) -- (1.3,1.5) -- (1.3,-.5) -- cycle;
\end{scope}
\draw[thick] (1,-.5) node [below] {$\scriptstyle \alpha_b$} -- (1,0) .. controls ++(0,.5) and ++(0,-.5) .. (0,1) -- (0,1.5) node [above] {$\scriptstyle \alpha_a$};
\draw[thick,\XsColor] (0,-.5) node [below] {$\scriptstyle FX$}-- (0,0) .. controls ++(0,.5) and ++(0,-.5) .. (1,1) -- (1,1.5) node [above] {$\scriptstyle GX$};
\filldraw[thick,fill=white] (.5,.5) circle (.1cm);
}
\right\}_{X\colon a\to b}
\right)
$$
and modifications $m\colon \alpha\Rrightarrow \beta$ are denoted by coupons along those textured strings.
$$
\beta
=
\tikzmath{
\begin{scope}
\clip[rounded corners=5pt] (0,0) rectangle (.6,.6);
\fill[primedregion=white] (0,0) rectangle (.3,.6);    
\fill[boxregion=white] (.3,0) rectangle (.6,.6);    
\end{scope}
\draw[dotted, rounded corners=5pt] (0,0) rectangle (.6,.6);
\draw[thick, snake] (.3,0) -- (.3,.6);
}
\qquad\rightsquigarrow\qquad
\tikzmath{
\begin{scope}
\clip[rounded corners=5pt] (-.6,-.6) rectangle (.6,.6);
\fill[primedregion=white] (-.6,-.6) rectangle (0,.6);    
\fill[boxregion=white] (0,-.6) rectangle (.6,.6);    
\end{scope}
\draw[dotted, rounded corners=5pt] (-.6,-.6) rectangle (.6,.6);
\draw[thick] (0,-.6) -- (0,-.3);
\draw[thick, snake] (0,.3) -- (0,.6);
\roundNbox{fill=white}{(0,0)}{.3}{0}{0}{$m$}
}
=
\left\{
m_a =
\tikzmath{
\begin{scope}
\clip[rounded corners=5pt] (-.6,-.6) rectangle (.6,.6);
\fill[primedregion=lightgray!50] (-.6,-.6) rectangle (0,.6);    
\fill[boxregion=lightgray!50] (0,-.6) rectangle (.6,.6);    
\end{scope}
\draw[dotted, rounded corners=5pt] (-.6,-.6) rectangle (.6,.6);
\draw[thick] (0,-.6) -- (0,-.3);
\draw[thick, snake] (0,.3) -- (0,.6);
\roundNbox{fill=white}{(0,0)}{.3}{0}{0}{$m_a$}
}
\right\}_{a\in\fX}
$$
\end{nota}

\subsection{The isometric unitary Yoneda lemma internal to a 3-Hilbert space}
\label{sec:UnitaryYoneda}

We now categorify \eqref{eq:2HilbsAreHilbEnriched} to 3-Hilbert spaces using the unitary adjunctions 
${}_bY^\vee \otimes_a - \dashv^\dag {}_aY\otimes_b -$
and
$-\otimes_b Z_c \dashv^\dag -\otimes_c Z^\vee_b$:
$$
\fX({}_bY^\vee \otimes_a X_c \Rightarrow {}_bZ_c) \isomeq
\fX({}_aX_c \Rightarrow {}_aY\otimes_b Z_c) \isomeq
\fX({}_a X \otimes_c Z^\vee_b \Rightarrow {}_aY_b).
$$
In Lemma \ref{lem:uaf-reps-unitary-adjs} below, we show that the second unitary adjunction implies that the representable functors $\fX(-\to x)\colon \fX^{1\op}\to 2\Hilb$ are $\dag$ and UAF-preserving,
in which case the first unitary adjunction implies
that the Yoneda embedding $\fX\hookrightarrow \Hom(\fX^{1\op}\to 2\Hilb)$ is also $\dag$, UAF-preserving, and isometric.
This first unitary adjunction will be proven in \ref{D:YonedaEmbeddingIsometric} below once we define the UAF and spherical weight on $\Hom(\fX^{1\op}\to 2\Hilb)$.

\begin{lem}\label{lem:uaf-reps-unitary-adjs}
For a 1-morphism ${}_aY_b\in \fX$,
and any $c \in \fX$, the functors 
${}_aY\otimes_b - \colon \fX(b \to c) \to \fX(a \to c)$ and ${}_bY^\vee \otimes_a - \colon \fX(a \to c)\to \fX(b \to c)$, with adjunction data coming from the UAF $(-)^\vee$, are unitary adjoints. 
In other words, $\fX(-\to c)\colon \fX^{1\op}\to 2\Hilb$ is $\dag$ and UAF-preserving, i.e., a 1-morphism of 3-Hilbert spaces.

Similarly, $-\otimes_b Z_c \dashv^\dag -\otimes_c Z^\vee_b$ and each $\fX(a\to -)\colon \fX\to 2\Hilb$ is a 1-morphism of 3-Hilbert spaces.
\end{lem}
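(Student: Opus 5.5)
The plan is to reduce unitary adjunction to an equality of inner products on hom spaces, and then check that equality with the spherical weight. In $2\Hilb$, a pair of dagger functors $L=Y^\vee\otimes_a-$ and $R=Y\otimes_b-$ with an adjunction is a \emph{unitary} adjunction exactly when the mate bijection
$$
\fX(b\to c)(Y^\vee\otimes_a X\Rightarrow Z)\to \fX(a\to c)(X\Rightarrow Y\otimes_b Z)
$$
is a unitary for the inner products $\langle f|g\rangle=\Tr^{\fX}(f^\dag g)$ from Remark \ref{rem:2Hilbvaluedinnerprod}. Concretely, the map sends $f$ to $(1_Y\otimes f)\circ(\coev_Y\otimes 1_X)$, with inverse $g\mapsto(\ev_Y\otimes 1_Z)\circ(1_{Y^\vee}\otimes g)$. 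It is a linear bijection by the zigzag identities, so it is enough to show it preserves inner products.

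\textbf{Step 1: the inner product computation.} For $f,f'\colon Y^\vee\otimes_a X\Rightarrow Z$ I will compute $\Tr^\fX_X\big(\mate(f)^\dag\mate(f')\big)$ as $\Psi_a$ of a closed diagram. There are two inputs:
\begin{itemize}
\item By the third UAF axiom $(f^\vee)^\dag=(f^\dag)^\vee$, together with the unitarity of the unitors and tensorators, the dagger of a coevaluation is the corresponding cap: $\coev_Y^\dag$ is the cap and $\ev_Y^\dag$ is the cup.
\item So $\mate(f)^\dag\mate(f')$ is the diagram with $f'$ stacked under $f^\dag$, whose $Y^\vee$ strands are joined by a cup/cap pair around the $Y$ line.
\end{itemize}
Closing off the $X$ strand with the right trace gives a closed diagram under $\Psi_a$. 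Isotopy (zigzags) turns it into $f^\dag f'$, closed up on the $Z$ strand and on the $Y^\vee$ strand. The $Y^\vee$ loop now has region $b$ inside instead of $a$. I then apply the sphericality $\Psi_a\circ\tr_R=\Psi_b\circ\tr_L$ to the loop. This identifies the expression with $\Tr^\fX_{Y^\vee\otimes X}(f^\dag f')$, computed with the right trace on $Z$ and the left trace on $Y^\vee$. The step relies on the independence of $\Tr^\fX$ from the choice of side, which is Remark \ref{rem:2Hilbvaluedinnerprod}, applied to the composite 1-morphism $Y^\vee\otimes X$.

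\textbf{Step 2: the main obstacle.} The delicate point is that the ambient traces on $\End(Y\otimes Z)$ and $\End(Y^\vee\otimes X)$ must be the ones built from the UAF duals of composites. This requires identifying $(Y\otimes Z)^\vee\cong Z^\vee\otimes Y^\vee$ via the tensorator $\nu$ and $1^\vee\cong 1$. Both are unitary by Definition \ref{defn:UAF}, so the loop for the composite is the nested pair of loops. I will state this reduction explicitly before doing the isotopy.

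\textbf{Step 3: consequences.} Once the adjunction is unitary, the functor $\fX(-\to c)$ sends $Y$ to $Y\otimes-$ and $Y^\vee$ to its unitary adjoint, i.e. to the UAF of $2\Hilb$ from Example \ref{ex:UnitaryAdjunction}. The canonical comparison map \eqref{eq:IsometricIsUAFPreserving} is precisely the mate isomorphism just shown to be unitary, so the functor is UAF-preserving. It is a $\dag$-2-functor because whiskering commutes with $\dag$ and the associators are unitary. The statement for $-\otimes_bZ_c\dashv^\dag-\otimes_cZ^\vee_b$ and for $\fX(a\to-)$ follows by the mirror-image argument, swapping the roles of left and right traces.
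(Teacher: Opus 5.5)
Your proposal is correct and follows essentially the paper's argument: you show the mate bijection is isometric by writing both sides as a spherical weight of a closed diagram, using that the UAF is $\dag$-compatible, so the mate of $f^\dag$ is the dagger of the mate of $f$. The only difference is minor. The paper closes everything on the left under $\Psi_c$, so the comparison is a pure isotopy (plus traciality). You instead close $X$ on the right under $\Psi_a$ and then invoke sphericality to move the $Y^\vee$ loop; note that in your Step 1 the strand closed on the right is $X$, not $Z$.
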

\begin{proof}
Given ${}_bZ_c$ and ${}_aX_c$, we must show that 
$$
\mate\colon \fX( {}_aX_c \Rightarrow {}_aY \otimes_b Z_c ) \xrightarrow{\sim} \fX({}_bY^\vee \otimes_a X_c \Rightarrow {}_bZ_c)$$ 
under the unitary adjunction is unitary. 
Indeed, as $\vee$ is a UAF on $\fX$, for $f\colon {}_a X_c \Rightarrow {}_a Y \otimes_b Z_c$,  $\mate(f^\dag)=\mate(f)^\dag$.
We thus observe
\[
\|f\|^2 =
\Psi_c^\fX\left(\,
\tikzmath{
\fill[lightgray, rounded corners=5pt] (-1.7,-1.2) rectangle (.7,2.2);
\fill[lightgray!65] (-.2,1.3) arc(0:180:.2cm) -- (-.6,-.3) arc(-180:0:.2cm) -- (.2,-.3) arc(0:-180:.6cm) -- (-1,1.3) arc(180:0:.6cm);
\fill[lightgray!30] (0,.3) -- (0,.7) -- (-.2,1.3) arc(0:180:.2cm) -- (-.6,-.3) arc(-180:0:.2cm);
\draw[thick] (-.2,1.3) arc(0:180:.2cm) --node[right]{$\scriptstyle Y^\vee$} (-.6,-.3) arc(-180:0:.2cm);
\draw[thick] (.2,1.3) arc(0:180:.6cm) --node[left]{$\scriptstyle Z^\vee$} (-1,-.3) arc(-180:0:.6cm);
\draw[thick] (0,.3) --node[right]{$\scriptstyle X$} (0,.7);
\roundNbox{fill=white}{(0,1)}{.3}{.1}{.1}{$f$}
\roundNbox{fill=white}{(0,0)}{.3}{.1}{.1}{$f^\dag$}
}\,
\right)
=
\Psi_c^\fX\left(\,
\tikzmath{
\fill[lightgray, rounded corners=5pt] (-1.9,-1.2) rectangle (1.2,2.2);
\fill[lightgray!65] (0,1.3) arc(0:180:.6cm) -- (-1.2,-.3) arc(-180:0:.6cm);
\fill[lightgray!30] (-.4,.3) rectangle (.4,.7);
\draw[thick] (0,1.3) arc(0:180:.6cm) --node[left]{$\scriptstyle Z^\vee$} (-1.2,-.3) arc(-180:0:.6cm);
\draw[thick] (-.4,.3) --node[right]{$\scriptstyle Y^\vee$} (-.4,.7);
\draw[thick] (.4,.3) --node[right]{$\scriptstyle X$} (.4,.7);
\roundNbox{fill=white}{(0,1)}{.3}{.55}{.55}{$\mate(f)$}
\roundNbox{fill=white}{(0,0)}{.3}{.55}{.55}{$\mate(f)^\dag$}
}\,
\right)
=
\|\!\mate(f)\|^2,
\]
so $\mate$ is an invertible isometry, that is, a unitary.
The proof of the other unitary adjunction is similar, capping on the right hand side.
\end{proof}

The following corollary is essential for a representing object of a functor to be well-defined up to isometric equivalence.

\begin{cor}\label{cor:isometric-yoneda}
Let $a,b \in \fX$ for a 3-Hilbert space $\fX$. 
If there is a natural unitary equivalence 
$$\fX(-\to a) \isomeq \fX(-\to b)$$
that is pointwise isometric (that is, its components are isometric equivalences of 2-Hilbert spaces), then the representing equivalence $a \simeq b$ is an isometric equivalence in $\fX$.
\end{cor}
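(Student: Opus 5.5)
Let $\eta\colon \fX(-\to a)\Rightarrow \fX(-\to b)$ be the given $\dag$-natural equivalence, whose components $\eta_x\colon \fX(x\to a)\isomeq\fX(x\to b)$ are isometric equivalences of 2-Hilbert spaces. I would first extract the representing 1-morphism by the ordinary 2-categorical Yoneda argument: set ${}_aW_b\coloneq\eta_a(1_a)\in\fX(a\to b)$. Naturality of $\eta$ in $x$ gives unitaries $\eta_x(X)\cong X\otimes_a W$ for all ${}_xX_a$, because the naturators of a 2-morphism of pre-3-Hilbert spaces are unitary. Thus $\eta\cong -\otimes_a W$, and since $\eta$ is an equivalence, $W$ is an equivalence in $\fX$. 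By the same reasoning, the unitary-adjoint inverse $\eta^*\cong\eta^{-1}$ (unitary adjoints exist in $2\Hilb$ by Example~\ref{ex:UnitaryAdjunction}, and an isometric equivalence is unitary, so its adjoint is its inverse) is represented by some ${}_bV_a$ with $V\cong \eta^{-1}_b(1_b)$.

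Next I would compare $V$ with $W^\vee$. By Lemma~\ref{lem:uaf-reps-unitary-adjs}, $-\otimes_a W\dashv^\dag -\otimes_b W^\vee$ is a unitary adjunction at each $x$. Hence the unitary adjoint of each component $\eta_x$ is $-\otimes_b W^\vee$. Each $\eta_x$ is an isometric equivalence of 2-Hilbert spaces, so its unitary adjoint coincides with its inverse up to unique unitary isomorphism. Therefore $-\otimes_b W^\vee\cong \eta_x^{-1}$ unitarily. Evaluating at $x=b$ on $1_b$ should yield a unitary $W^\vee\cong V$, which is compatible with the unit and counit of the equivalence.

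The remaining task is to show that $\coev_W\colon 1_a\Rightarrow W\otimes_b W^\vee$ and $\ev_W\colon W^\vee\otimes_a W\Rightarrow 1_b$ are unitary, i.e.\ that $W$ is an isometric equivalence. The key observation is that the unit of the unitary adjunction $-\otimes_a W\dashv^\dag -\otimes_b W^\vee$ at $x=a$, evaluated on $1_a$, is exactly $\coev_W$. The counit at $x=b$ on $1_b$ is $\ev_W$. For the adjunction $\eta_x\dashv^\dag\eta_x^*$ with $\eta_x$ an isometric equivalence, I would argue that unit and counit are unitary. The trace-preserving property gives $\eta_x^*\eta_x\cong\id$, and the unit is then an isometry between isomorphic objects in a finite-dimensional $\rmC^*$-setting, hence a unitary. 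Concretely, full faithfulness plus unitarity of the hom-isomorphism $\fX(x\to b)(\eta_x X\to \eta_x X')\isomeq \fX(x\to a)(X\to X')$ forces this. Transporting through the unitary isomorphism $\eta_x\cong -\otimes_a W$ then shows $\coev_W$ and $\ev_W$ are unitary, so $W$ is both an isometry and a coisometry.

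I expect the main obstacle to be this last step, namely checking that the unitary isomorphisms $\eta_x\cong-\otimes_aW$ intertwine the abstract unitary-adjunction unit with the specific map $\coev_W$. This requires care with the naturators and the mate correspondence from Lemma~\ref{lem:uaf-reps-unitary-adjs}. I would handle it by writing the unit of $\eta_a\dashv^\dag\eta_a^*$ at $1_a$ as the mate of $\id_{W}$ and comparing it diagrammatically with $\coev_W$, using that the mate bijection is unitary.
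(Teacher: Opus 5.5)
Your proposal is correct and follows the paper's own proof. Like the paper, you take the representing 1-morphism $W$ and use Lemma~\ref{lem:uaf-reps-unitary-adjs} to see that $-\otimes_b W^\vee$, with the UAF data $\coev_W,\ev_W$, is the unitary adjoint of the isometric equivalence $-\otimes_a W$; you then conclude that this unit and counit must be unitary. The obstacle you anticipate disappears once you note that $-\otimes_a W$ is unitarily isomorphic to $\eta_x$ and so is itself an isometric equivalence. You can then run your unit-is-unitary argument directly on the Lemma's adjunction, whose unit and counit are literally $\id\otimes\coev_W$ and $\id\otimes\ev_W$, with no transport needed.
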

\begin{proof}
Let ${}_aX_b$ be the representing equivalence. 
By Lemma \ref{lem:uaf-reps-unitary-adjs}, the unitary adjoint of $- \otimes_a X _b \colon \fX(-\to a) \to \fX(-\to b)$ is represented by ${}_bX^\vee_a$ and the adjunction data of the UAF. 
In particular, since this data gives an isometric equivalence of 2-Hilbert spaces, this means $\ev_X$ and $\coev_X$ are both unitary isomorphisms, which is equivalent to $X$ being an isometric equivalence.
\end{proof}

\begin{cor}
\label{cor:YonedaLinear}
Taking hom categories in a 3-Hilbert space $\fX$ is compatible with Hilbert direct sums, i.e., we have
\begin{align*}
\fX(a\boxplus b\to c)
&\isomeq
\fX(a\to c)\boxplus \fX(b\to c)
\\
\fX(a\to b\boxplus c)
&\isomeq
\fX(a\to b)\boxplus \fX(a\to c)
\end{align*}
as 2-Hilbert spaces.
\end{cor}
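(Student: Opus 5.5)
The plan is to deduce both equivalences from the fact that the representable functors $\fX(-\to c)$ and $\fX(a\to -)$ are 1-morphisms of 3-Hilbert spaces (Lemma \ref{lem:uaf-reps-unitary-adjs}). Every 1-morphism of pre-3-Hilbert spaces preserves Hilbert direct sums by \cite[Lem.~4.23]{MR5078555}. For the second formula, apply $\fX(a\to -)\colon \fX\to 2\Hilb$ to the Hilbert direct sum $(b\boxplus c, I, J)$. The images $I_*=-\otimes I$ and $J_*=-\otimes J$ are then isometries in $2\Hilb$, i.e.\ fully faithful trace-preserving functors $\fX(a\to b)\hookrightarrow\fX(a\to b\boxplus c)$ and $\fX(a\to c)\hookrightarrow\fX(a\to b\boxplus c)$. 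The relation $\ev_I\ev_I^\dag+\ev_J\ev_J^\dag=\id_{1_{b\boxplus c}}$ is carried to the corresponding relation in $2\Hilb$. This exhibits $\fX(a\to b\boxplus c)$ as the Hilbert direct sum $\fX(a\to b)\oplus\fX(a\to c)$ with the direct sum trace. Uniqueness of Hilbert direct sums up to isometric equivalence then gives the required isometric equivalence.

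To make this concrete without citing Lemma 4.23 as a black box, I would check the two ingredients directly. First, $-\otimes_b I$ is fully faithful and trace-preserving because $\coev_I$ is unitary. Concretely, $\fX({}_aX\otimes I\Rightarrow {}_aX'\otimes I)\cong\fX(X\Rightarrow X'\otimes I\otimes I^\vee)\cong\fX(X\Rightarrow X')$. Traces are preserved because $\Psi_{b\boxplus c}$ of a diagram containing an $I$-loop reduces, via sphericality, to $\Psi_b$ of the diagram with the loop removed, using that the $I$-bubble in the $b$-region is trivial. Second, every $W\colon a\to b\boxplus c$ splits orthogonally as $W\cong (W\otimes I^\vee\otimes I)\oplus(W\otimes J^\vee\otimes J)$. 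This uses the projections $1_W\otimes\ev_I^\dag\ev_I$, which are orthogonal and sum to the identity, together with local Cauchy completeness. Morphisms between the $I$-part and the $J$-part vanish because $I^\vee\otimes J=0$, which follows from the isometry conditions and the direct-sum relation.

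The first formula is the same argument applied to $\fX(-\to c)\colon\fX^{1\op}\to2\Hilb$. The subtle point is that a Hilbert direct sum in $\fX$ is also one in $\fX^{1\op}$. This holds because under $1\op$ the isometries $I,J$ become $I^\vee,J^\vee$ read as maps out of $a,b$, and the defining bubble relations are symmetric, using \cite[Rem.~4.12]{MR5078555} to pass between the two forms of the isometry condition. Concretely, the 1-morphisms $I^\vee\otimes-$ and $J^\vee\otimes-$ play the role of the inclusions.

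I expect the main obstacle to be the trace bookkeeping: checking that the trace on $\fX(a\to b\boxplus c)$ restricts exactly, with no extra normalization factor, to the traces of the summands. This reduces to the bubble identities $\ev_I\ev_I^\dag=\id_{1_b}$ together with sphericality of $\Psi$.
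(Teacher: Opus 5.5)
Your proposal is correct and follows the paper's proof. The paper likewise notes that $\fX(-\to c)$ and $\fX(a\to -)$ are $\dag$- and UAF-preserving by Lemma \ref{lem:uaf-reps-unitary-adjs}, so they preserve (co)isometries and Hilbert direct sums by \cite[Lem.~4.23]{MR5078555}, and your hands-on check just unpacks that lemma in this case. It does contain a few notational slips:
\begin{itemize}
\item the orthogonal projections on $W$ are $1_W\otimes\ev_I\ev_I^\dag$ and $1_W\otimes\ev_J\ev_J^\dag$, not $1_W\otimes\ev_I^\dag\ev_I$;
\item the composite that kills cross terms is $I\otimes J^\vee=0$, since $I^\vee\otimes J$ is not composable;
\item the trivial bubble used for trace preservation is $\coev_I^\dag\coev_I=\id_{1_b}$, whereas $\ev_I\ev_I^\dag\in\End(1_{b\boxplus c})$ is the projection onto the $b$-summand.
\end{itemize}
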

\begin{proof}
Since each $\fX(-\to c)$ and $\fX(a\to -)$ preserve UAFs, they also preserve (co)isometries and Hilbert direct sums by \cite[Lem.~4.23]{MR5078555}; see Remark \ref{fact:IsometricPreservesStuff}.
\end{proof}

\section{The 3-Hilbert space toolbox}

Recall that for a 2-Hilbert space $(\cC,\Tr^\cC)$, $\pi_0\cC$ is a set of representatives for the simple objects of $\cC$.
In light of the unitary Yoneda results for 2-Hilbert spaces \cite{MR4750417}, recast in terms of the $\Hilb$-valued inner product in \cite{2411.01678}, one should view $\pi_0\cC$ as an ONB, as it gives a canonical resolution of the identity
$$
\id_\cC = \bigoplus_{b\in \pi_0\cC} | b\rangle\langle b|.
$$
Here, $|b\rangle\langle b| = \cC(b \to -) \odot_{\Omega_b} b$, meaning that for each $c\in \cC$, \cite[Eq.~(5)]{MR4750417} or \cite[(17)]{2411.01678} gives a canonical unitary isomorphism
$$
c \cong^\dag \bigoplus_{b\in\pi_0\cC} \ket b\bra b c =\bigoplus_{b\in\pi_0\cC} \cC(b\to c) \odot_{\Omega_b} b
=
\bigoplus_{b\in\pi_0\cC} \langle b| c\rangle^\cC_{\Hilb} \odot_{\Omega_b} b.
$$
With this in mind, we now turn our attention to ONBs and resolutions of the identity for 3-Hilbert spaces.
Our main tools for this section will be the unitary Yoneda Lemma for 3-Hilbert spaces, as well as a new generalized scalar multiplication operation.

\subsection{Coefficients for objects in a 3-Hilbert space}
\label{subsec:coefficients}
We mimic the construction $H\odot_A c$ for 2-Hilbert spaces from Construction \ref{cstr:rel-tsr-prod-over-A} in the context of 3-Hilbert spaces via the universal property of completion.

\begin{defn}\label{defn:coefficientsforobjects}
Suppose $\fX$ is a 3-Hilbert space, $c\in\fX$, and $\cM_{\Omega_c}\in \Mod^\dag(\Omega_c)$.
We define the object
$
\cM \boxdot_{\Omega_c} c
\in\fX
$
as the object corresponding to $\cM\in \Mod^\dag(\Omega_c)$ under the isometric embedding
$$
\begin{tikzcd}[row sep =0]
\cM\arrow[r, mapsto]
&
\cM\boxdot_{\Omega_c} c
\\
\Mod^\dag(\Omega_c)  
\arrow[r, hook]
& \fX
\\
\mbox{}
\\
\mbox{}
\\
\rmB \Omega_c \arrow[uuur, hook]
\arrow[uuu, hook]
\end{tikzcd}
$$
Here, the top arrow $\Mod^\dag(\Omega_c)\to \fX$ is the unique extension of the map $\rmB\Omega_c\hookrightarrow \fX$ 
given by $*\mapsto c$
afforded by the universal property of completion
\eqref{eq:UniversalPropertyOfCompletion}
together with the isometric equivalence
$(\rmB\Omega_c)^\cent \cong \Mod^\dag(\Omega_c)$ from Examples \ref{ex:CompleteH*mFC} and \ref{ex:CompleteBLoops}.
\end{defn}

\begin{rem}\label{rem:RelativeScalarActionIsDagVeePreserving}
As $\rmB \Omega_c \hookrightarrow \cC$ is trivially $\dag,\vee$-preserving, this action $- \boxdot_{\Omega_c} c$ by relative scalars is automatically $\dag,\vee$-preserving by the universal property of completion
\eqref{eq:UniversalPropertyOfCompletion}. 
\end{rem}

\begin{ex}
Consider $\fX=\Mod^\dag(\cC)$ for an $\rmH^*$-multifusion category $\cC$.
Since we have a canonical equivalence of $\rmH^*$-multifusion categories $\Omega_{\cC_\cC}\cong \cC$,
we see that the unique extension
$\Mod^\dag(\Omega_{\cC_\cC}) \hookrightarrow \fX=\Mod^\dag(\cC)$ is the identity functor.
We may thus identify 
$$
\cM_\cC\boxdot_{\Omega_{\cC_\cC}} \cC_\cC = 
\cM_\cC.
$$
\end{ex}

\begin{rem}
Suppose $c\in\fX$ and $d\in \fX$ is another object in the same component as $c$.
Then $\fX(c\to d)\in \Mod^\dag(\Omega_c)$
and
we have a canonical isometric equivalence 
\begin{equation}
\label{eq:ExpressInBasis}    
\fX(c\to d) \boxdot_{\Omega_c} c \cong d.
\end{equation}
Indeed, setting $\cC\coloneq\Omega_c$,
by \cite[Cor.~4.52]{MR5078555},
we may assume $\fX\cong \Mod^\dag(\cC)$
(which is the only part of $\fX$ that $c$ will recover), under which $c\mapsto \cC_\cC$
and $d\mapsto \cM_\cC$.
We have an isometric equivalences of $\cC$-modules
$$
\Hom(\cC_\cC \to \cM_{\cC}) 
\underset{\text{\cite[Prop.~3.9]{MR5078555}}}{\cong} 
\cM_{\cC}
\underset{\text{\cite[Cor.~3.12]{MR5078555}}}{\cong} 
\cM_\cC \boxtimes_\cC \cC_\cC.
$$
We thus see that 
$$
\cM_\cC
\cong
\cM \boxtimes_\cC \cC_\cC 
\cong 
\Hom(\cC_\cC \to \cM_\cC) \boxtimes_\cC \cC.
$$
\end{rem}

Similar to \cite[Eq.~(11)]{2411.01678} one categorical level down, we may also discuss more general coefficients in $\Mod^\dag(\cA)$ where $\cA\isometry \Omega_c$ is an isometric dagger tensor functor.

\begin{defn}\label{D:reltsrprodH*monad}
Suppose $\fX$ is a 3-Hilbert space, $c\in\fX$, 
$\cA$ is an $\rmH^*$-multifusion category,
and $F\colon  \cA\isometry \Omega_c$ is an isometric dagger tensor functor.
For 
$\cM_{\cA}\in \Mod^\dag(\cA)$, we define the object
$
\cM \boxdot_{\cA} c
\in\fX
$
as the object corresponding to $\cM\in \Mod^\dag(\cA)$ under the isometric embedding
$$
\begin{tikzcd}[row sep =0]
\cM 
\arrow[r, mapsto]
&
\cM \boxdot_{\cA} c
\\
\Mod^\dag(\cA)  
\arrow[r, hook, harpoon]
& 
\fX
\\
\mbox{}
\\
\mbox{}
\\
\rmB \cA
\arrow[uuu, hook]
\arrow["\mathrm{B}F", r, hook, harpoon]
&
\rmB \Omega_c \arrow[uuu, hook]
& 
\end{tikzcd}
$$
(We note that, technically speaking, the symbol $\cM \boxdot_{\cA} c$ should also involve $F$; we suppress it to ease the notation.)

As above, the map $\Mod^\dag(\cA)\hookrightarrow \fX$ is the unique extension $\rmB\cA\hookrightarrow \fX$ given by $*\mapsto c$ afforded by the universal property of completion
\eqref{eq:UniversalPropertyOfCompletion} together with the isometric equivalence
$(\rmB\cA)^\cent \cong \Mod^\dag(\cA)$.
\end{defn}

\begin{rem}\label{rem:ComputeRelativeScalar}
We can explicitly spell out how to compute $\cM \boxdot_\cA c$ via the definition above.
Given $\cM \in \Mod^\dag(\cA)$, there is an $\rmH^*$-algebra $A \in \cA$ such that $\cM \cong^\dag \Mod_\cA(A)$.
Now $FA$ is an $\rmH^*$-monad on $c$ in $\fX$, so we can split $FA$.
This splitting is the object $\cM\boxdot_\cA c$.
This computation follows directly from the isometric equivalence $\Mod^\dag(\cA) \cong \HstarAlg(\cA)$.
\end{rem}

\begin{ex}
Definition \ref{D:reltsrprodH*monad} gives every 3-Hilbert space $\fX$ a natural tensored structure over $2\Hilb$.
For any object $x \in \fX$, the unit functor $\Hilb \isometry \Omega_x$ is an isometric dagger tensor functor, and thus for any 2-Hilbert space $\cH$, this defines an object $\cH\boxdot x \coloneq \cH\boxdot_{\Hilb}x$.
Explicitly, if $\cH = \bigboxplus (\Hilb,r\Tr)^{\boxplus n_r}$, then $\cH\boxdot x \cong^\dag \bigboxplus \left((\Hilb,r\Tr)\boxdot x\right)^{\boxplus n_r}$, where $(\Hilb,r\Tr)\boxdot x$ is given by splitting the $\rmH^*$-monad $(1_x,r^{-1/2}\id_{1_x},r^{1/2}\id_{1_x})$.
(This is the monad obtained from following the calculation in Remark \ref{rem:ComputeRelativeScalar}).
\end{ex}

\begin{rem}
Similar to \cite[proof of Lem.~3.36]{2411.01678},
the universal property of completion allows us to factorize the map $\cM\mapsto \cM\boxdot_\cA c$ as
$$
\begin{tikzcd}[row sep =0]
\cM 
\arrow[r, mapsto]
& 
\cM \boxtimes_{\cA} \Omega_c
\arrow[r, mapsto]
&
\cM \boxdot_{\cA} c
\\
\Mod^\dag(\cA)  
\arrow[r, hook, harpoon]
& \Mod^\dag(\Omega_c) 
\arrow[r, hook]
& 
\fX.
\\
\mbox{}
\\
\mbox{}
\\
\rmB \cA
\arrow[uuu, hook]
\arrow["\mathrm{B}F", r, hook, harpoon]
&
\rmB \Omega_c \arrow[uuu, hook] \arrow[uuur, hook] 
& 
\end{tikzcd}
$$
Using this factorization, one can construct isometric equivalences
$$
(\cM\boxtimes_\cB \cN)\boxdot_\cA c
\xrightarrow{\simeq}
\cM\boxdot_\cB (\cN\boxdot_\cA c)
$$
by importing the isometric equivalence from $3\Hilb$
$$
(\cM\boxtimes_\cB \cN)\boxtimes_\cA \cA
\xrightarrow{\simeq}
\cM\boxtimes_\cB (\cN\boxtimes_\cA \cA).
$$
One also obtains in this way a pentagonator (corresponding to $\cM_1 \boxtimes \cM_2 \boxtimes \cM_3 \boxtimes \cM_4$) as well as three triangulators (corresponding to $1 \boxtimes \cM \boxtimes \cN$, $\cM \boxtimes 1 \boxtimes \cN$, and $\cM \boxtimes \cN \boxtimes 1$) that satisfy the usual coherences for 3-categories (an associahedra axiom corresponding to $\cM_1 \boxtimes \cM_2 \boxtimes \cM_3 \boxtimes \cM_4 \boxtimes \cM_5$ and two unitality axioms corresponding to $\cM_1 \boxtimes 1 \boxtimes \cM_3 \boxtimes \cM_4$ and $\cM_1 \boxtimes \cM_2 \boxtimes 1 \boxtimes \cM_4$). We refer the interested reader to \cite{MR3076451}.
\end{rem}

\begin{lem}
\label{lem:1MorphismsAreLinear}
Suppose $\fX,\fY$ are 3-Hilbert spaces, and $c,\cA,\cM$ are as above.
For any 1-morphism $G\colon \fX\to \fY$, we have a canonical natural isometric equivalence
$$
G(\cM \boxdot_\cA c) \isomeq \cM \boxdot_\cA G(c).
$$
\end{lem}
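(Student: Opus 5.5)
The plan is to compare two functors $\Mod^\dag(\cA)\to\fY$ and then use the uniqueness clause in the universal property of completion \eqref{eq:UniversalPropertyOfCompletion}. Write $\iota_c\colon \Mod^\dag(\cA)\isometry\fX$ for the extension of $\rmB\cA\xrightarrow{\rmB F}\rmB\Omega_c\hookrightarrow\fX$ from Definition \ref{D:reltsrprodH*monad}. The two functors are:
\begin{itemize}
\item $G\circ\iota_c$, which sends $\cM\mapsto G(\cM\boxdot_\cA c)$;
\item $\iota_{G(c)}$, which sends $\cM\mapsto\cM\boxdot_\cA G(c)$.
\end{itemize}
For the second functor to make sense, we first need an isometric dagger tensor functor $\cA\isometry\Omega_{G(c)}$.

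\textbf{Step 1: the functor $\cA\isometry\Omega_{G(c)}$.} The natural candidate is $G|_{\Omega_c}\circ F$. Since $G$ is a $\dag$, UAF-preserving 2-functor, its restriction $\Omega_c\to\Omega_{G(c)}$ is a dagger tensor functor compatible with the UDFs. The issue is whether it preserves spherical traces, since $G$ need not preserve $\Psi$.

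I would handle this as follows. If $G$ is not isometric, then $\cM\boxdot_\cA G(c)$ should be read via Remark \ref{rem:ComputeRelativeScalar}. Choose an $\rmH^*$-algebra $A\in\cA$ with $\cM\cong^\dag\Mod_\cA(A)$. Then $\cM\boxdot_\cA G(c)$ is the splitting of the $\rmH^*$-monad $G(F(A))$ on $G(c)$.

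I expect the standardness axiom \ref{H:Standard} to hold for $G(F(A))$. The reason is that its equivalent formulation, unitarity of the two cup--multiplication composites, only uses $\dag$ and $\vee$, and these are preserved by $G$. The $\rmC^*$-Frobenius and separability axioms are clearly preserved by any dagger 2-functor. So $G$ sends $\rmH^*$-monads to $\rmH^*$-monads.

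This point is the main obstacle: making precise which structure on $\Omega_{G(c)}$ the relative scalar uses. My first attempt would be to verify that the $\rmH^*$-monad/splitting description in Remark \ref{rem:ComputeRelativeScalar} depends only on $\dag$, $\vee$, and the splitting data. None of these involve $\Psi$ beyond what the UAF determines.

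\textbf{Step 2: both functors are 1-morphisms extending the same map.} $G\circ\iota_c$ is a composite of $\dag$, UAF-preserving 2-functors, hence a 1-morphism of 3-Hilbert spaces. $\iota_{G(c)}$ is one by Remark \ref{rem:RelativeScalarActionIsDagVeePreserving}. Precomposing each with $\rmB\cA\hookrightarrow\Mod^\dag(\cA)$ gives, respectively:
\begin{itemize}
\item $G\circ(\rmB\Omega_c\hookrightarrow\fX)\circ\rmB F$;
\item $(\rmB\Omega_{G(c)}\hookrightarrow\fY)\circ\rmB(G F)$.
\end{itemize}
These are literally equal on the nose: both send $*\mapsto G(c)$ and $a\mapsto G(F(a))$, and the tensorators agree.

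\textbf{Step 3: conclude from the universal property.} Since $\fY$ is a 3-Hilbert space, \eqref{eq:UniversalPropertyOfCompletion} applied to $(\rmB\cA)^\cent\cong\Mod^\dag(\cA)$ gives the following. Restriction along $\rmB\cA\hookrightarrow\Mod^\dag(\cA)$ is a pointwise isometric equivalence of $\Hom$-2-categories. The identity transformation between the restrictions therefore lifts to an essentially unique equivalence $\eta\colon G\circ\iota_c\Rightarrow\iota_{G(c)}$, and its components are isometric equivalences in $\fY$. This pointwise isometry is exactly the refinement noted after Definition \ref{defn:HilbertDirectSum} and in the $\rmH^*$-completion definition, which relies on Lemma \ref{lem:h-star-monad-splittings-unique}.

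The component $\eta_\cM\colon G(\cM\boxdot_\cA c)\isomeq\cM\boxdot_\cA G(c)$ is the desired equivalence, and it is natural in $\cM$ because $\eta$ is a 2-natural transformation. Canonicity follows from contractibility: the lift is unique up to unique invertible modification.

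Concretely, one can also see the equivalence directly. $G$ preserves $\rmH^*$-monads and their splittings (the analogue of Fact \ref{fact:IsometricPreservesStuff}), so $G$ applied to a splitting of $F(A)$ is a splitting of $G(F(A))$. Lemma \ref{lem:h-star-monad-splittings-unique} then provides the unique isometric equivalence between it and any chosen splitting.
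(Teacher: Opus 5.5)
Your proposal is correct and is essentially the paper's proof. Your Steps~2--3 are exactly its argument: $G(-\boxdot_\cA c)$ and $-\boxdot_\cA G(c)$ both extend the same 1-morphism $\rmB\cA\to\fY$, $*\mapsto G(c)$, so the universal property of completion makes them pointwise isometrically equivalent. Your Step~1 observes that this extension only needs $G\circ F$ to be $\dag$- and UAF-preserving, not weight-preserving. That matters because $G$ need not preserve $\Psi$ (e.g.\ $G=\fX(b\to -)$ as used in Proposition~\ref{prop:2HilbertScalarsPullOut}), and the paper's proof leaves this point implicit.
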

\begin{proof}
This follows from the universal property of completion via the following diagram.
$$
\begin{tikzcd}
&
\Mod^\dag(\cA)
\arrow[dl,hook,harpoon, "\cM\mapsto \cM\boxdot_\cA c", swap]
\arrow[dr,hook,harpoon, "\cM\mapsto \cM\boxdot_\cA G(c)"]
\\
\fX 
\arrow[rr,near start, "G"]
&&
\fY.
\\
&
\rmB\cA
\arrow[uu, hook, near start, "\iota", crossing over]
\arrow[ur, hook, harpoon ,"*\mapsto G(c)", swap]
\arrow[ul, hook, harpoon, "*\mapsto c"]
\end{tikzcd}
$$
Since
$
G(\cA\boxdot_\cA c)
=
G(c)
=
\cA\boxdot_\cA G(c),
$
the functors $\Mod^\dag(\cA)\to \fY$ given by
$G(-\boxdot_\cA c)$ and $-\boxdot_\cA G(c)$ are then pointwise isometrically isomorphic
as they are both the unique extension of $\rmB\cA\to \fY$ given by $*\mapsto G(c)$.
\end{proof}

The reason we view this construction as generalized scalars for $c\in \fX$ is that scalars `pull out' of the categorified $2\Hilb$-valued hom inner product.

\begin{prop}
\label{prop:2HilbertScalarsPullOut}
For all $b,c\in \fX$ and $\cM\in \Mod^\dag(\cA)$ where $\cA\hookrightarrow \Omega_c$,
we have canonical isometric equivalences of 2-Hilbert spaces
\begin{align*}
\fX(b\to \cM\boxdot_{\cA} c) 
&\isomeq
\cM\boxtimes_{\cA}\fX(b\to c)
\\
\fX(\cM\boxdot_{\cA} c\to b) 
&\isomeq
\fX(c\to b)\boxtimes_{\cA} \cM^{\op}.
\end{align*}
Thus $\cM\boxdot_\cA c$ isometrically represents the functor $\cM\boxtimes_\cA\fX(-\to c)\colon  \fX^{\op}\to 2\Hilb$.
\end{prop}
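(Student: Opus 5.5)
We will reduce the statement to Lemma \ref{lem:1MorphismsAreLinear} applied to the representable functors. By Lemma \ref{lem:uaf-reps-unitary-adjs}, $G\coloneq\fX(b\to -)\colon \fX\to 2\Hilb$ is a 1-morphism of 3-Hilbert spaces. Lemma \ref{lem:1MorphismsAreLinear} then gives a canonical isometric equivalence
$$
\fX(b\to \cM\boxdot_\cA c)=G(\cM\boxdot_\cA c)\isomeq \cM\boxdot_\cA G(c)=\cM\boxdot_\cA \fX(b\to c).
$$
Here $\fX(b\to c)$ is viewed as an object of $2\Hilb$ carrying the isometric map $\cA\isometry\Omega_c\to \Omega_{\fX(b\to c)}$ induced by the right action of $\Omega_c$ via postcomposition. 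The latter map is isometric because $G$ is $\dag,\vee$-preserving. So it remains to identify generalized scalar multiplication in the 3-Hilbert space $2\Hilb$ with the unitary relative Deligne product. That is, for $\cN\in 2\Hilb$ with a right $\cA$-action, we need $\cM\boxdot_\cA \cN\isomeq \cM\boxtimes_\cA\cN$.

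For this identification we use the uniqueness clause of the universal property \eqref{eq:UniversalPropertyOfCompletion}. The assignment $\cM\mapsto \cM\boxtimes_\cA \cN$ is a $\dag,\vee$-preserving 2-functor $\Mod^\dag(\cA)\to 2\Hilb$. On the generator $\cA_\cA$ it gives $\cA\boxtimes_\cA\cN\isomeq\cN$, by the isometric unitor from \cite[Cor.~3.12]{MR5078555}. On $\End(\cA_\cA)\cong\cA$ it restricts to the given action $\rmB\cA\to \rmB\Omega_\cN$. Both it and $-\boxdot_\cA\cN$ therefore extend the same map $\rmB\cA\to 2\Hilb$, $*\mapsto\cN$. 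Consequently they are pointwise isometrically equivalent.

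The second equivalence follows by the same argument with $G\coloneq\fX(-\to b)\colon\fX^{1\op}\to 2\Hilb$, again using Lemma \ref{lem:uaf-reps-unitary-adjs}. Passing to $\fX^{1\op}$ converts the right $\cA$-module $\cM$ into the left module $\cM^{\op}$ over $\cA$, and the relative Deligne product appears on the other side. Concretely, $\rmB\cA\to\fX^{1\op}$ sends $*\mapsto c$ via $\cA^{\mathrm{mp}}$, and the extension sends $\cM\boxdot_\cA c$ to $\cM^{\op}$ under the identification $\Mod^\dag(\cA)^{1\op}\simeq {}^\dag\Mod(\cA)$, $\cM\mapsto\cM^{\op}$. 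The final sentence of the proposition then follows from the first equivalence, which is natural in $b$ because the equivalence of Lemma \ref{lem:1MorphismsAreLinear} is canonical. Naturality in $b$ can be checked on 1-morphisms $b'\to b$ through the same uniqueness argument applied to the natural transformation $\fX(b\to-)\Rightarrow\fX(b'\to-)$.

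The main obstacle is making the identification $\cM\boxdot_\cA\cN\isomeq\cM\boxtimes_\cA\cN$ in $2\Hilb$ genuinely \emph{isometric}, i.e., trace-compatible. The universal property only guarantees that the extension is unique up to pointwise isometric equivalence. We must therefore verify two things. First, $-\boxtimes_\cA\cN$ is honestly $\dag$ and UAF-preserving on $\Mod^\dag(\cA)$. Second, the traces on $\cM\boxtimes_\cA\cN$ from the normalization \eqref{modeeqn} match the ones produced by splitting $\rmH^*$-monads as in Remark \ref{rem:ComputeRelativeScalar}. Our first attempt will be to check this on the $\rmH^*$-algebra model $\cM\cong\Mod_\cA(A)$. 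There, $\cM\boxtimes_\cA\cN\cong\Mod_\cN(A)$ is exactly the splitting of the $\rmH^*$-monad $A\triangleright-$ on $\cN$, so the trace compatibility reduces to the standardness axiom \ref{H:Standard}.
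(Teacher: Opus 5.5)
Your proposal is the paper's proof. The paper applies Lemma~\ref{lem:1MorphismsAreLinear} to the 1-morphism $\fX(b\to -)$ supplied by Lemma~\ref{lem:uaf-reps-unitary-adjs}, and says the second equivalence is similar. Your uniqueness-of-extension identification of $-\boxdot_\cA\cN$ with $-\boxtimes_\cA\cN$ in $2\Hilb$ makes explicit a step the paper leaves tacit.

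One assertion should be removed. You claim the composite $\cA\isometry\Omega_c\to\Omega_{\fX(b\to c)}$ is isometric because $\fX(b\to-)$ is $\dag,\vee$-preserving. This is false in general, since 1-morphisms of 3-Hilbert spaces need not preserve spherical weights. For example, if $\fX(b\to c)=0$, the weight on $\Omega_{\fX(b\to c)}$ vanishes while $\psi_\cA$ does not.

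You never need this claim. The universal property \eqref{eq:UniversalPropertyOfCompletion} extends every $\dag,\vee$-preserving functor $\rmB\cA\to 2\Hilb$, uniquely up to pointwise isometric equivalence; this includes $*\mapsto \fX(b\to c)$ with its module trace. That is all Lemma~\ref{lem:1MorphismsAreLinear} and your comparison with $\boxtimes_\cA$ actually use. The paper tacitly relies on the same relaxation when it applies the lemma to a non-isometric $G$.

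For the same reason, your last paragraph's separate trace comparison via \ref{H:Standard} is redundant. Once $-\boxtimes_\cA\cN$ is known to be $\dag,\vee$-preserving with the isometric unitor $\cA\boxtimes_\cA\cN\cong\cN$, uniqueness of the extension already makes the identification pointwise isometric.
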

\begin{proof}
For the first statement, apply Lemma \ref{lem:1MorphismsAreLinear} to 
$\fX(b\to -)\colon \fX\to 2\Hilb$, which is a 1-morphism in $3\Hilb$ by Lemma \ref{lem:uaf-reps-unitary-adjs}.
The second statement is similar.
\end{proof}

\subsection{Orthonormal bases for a 3-Hilbert space}
\label{subsec:ONBS}
Recall from \cite[Def.~1.2.22]{1812.11933} that two simple objects $a,b\in\fX$ are in the same \emph{component} if there is a non-zero (adjointable) 1-morphism ${}_aX_b\in \fX$.
Being in the same component is an equivalence relation on simples, denoted $a\sim b$.

\begin{defn}\label{defn:ONB}
An \emph{orthonormal basis} (ONB; see Footnote \ref{Footnote:ONB})
for a 3-Hilbert space $\mathfrak X$ is a choice of simple object $b_i$ for each component of $\fX$.
We will denote an ONB by $\pi_0\fX$.
\end{defn}

\begin{rem}
By 
\cite[Cor.~4.52]{MR5078555},
an ONB is equivalent to the data of a finite list $\{\cC_i\}$ of $\rmH^*$-fusion categories and an isometric equivalence of 3-Hilbert spaces $\bigboxplus_i \Mod^\dag(\cC_{i}) \cong \fX$. 
\end{rem}

\begin{rem}
\label{rem:DeterminedByONB}
Fix an ONB $\pi_0\fX=\{b_i\}$ of $\fX$, and observe that $(\rmB\Omega_{\bigboxplus b_i})^\cent = \fX$.
By the universal property of completion, for every 3-Hilbert space $\fY$,
precomposition with the isometric inclusion $\iota\colon \rmB\Omega_{\bigboxplus b_i}\hookrightarrow \fX$ is a pointwise isometric equivalence
\[
\Hom(\fX\to \fY)
\xrightarrow[\simeq]{\iota\circ -}
\Hom(\rmB\Omega_{\bigboxplus b_i}\to \fY).
\]
It immediately follows that
\begin{itemize}
\item 
every $F\in\Hom(\fX\to \fY)$ is completely determined (up to contractible choice of pointwise isometric equivalence) by its restriction to $\bigboxplus \rmB\Omega_{b_i} \isomeq \rmB \Omega_{\bigboxplus b_i}$,
\item 
a transformation $\alpha\colon  F\Rightarrow G$ is completely determined by 
the 1-morphism $\alpha_{\bigboxplus b_i}: F(\bigboxplus b_i)\rightarrow G(\bigboxplus b_i)$ 
together with its half-braidings with $\Omega_{\boxplus b_i}$, i.e., for each $X\in \Omega_{\boxplus b_i}$, the unitaries $\alpha_X: F(X)\otimes_{F(\boxplus b_i)} \alpha_{\boxplus b_i} \to \alpha_{\boxplus b_i}\otimes_{G(\boxplus b_i)} G(X)$ which satisfy monoidality and unitality coherences as well as
\[
\tikzmath{
\begin{scope}
\clip[rounded corners=5] (-.4,-.6) rectangle (1.4,1.6);
\fill[primedregion=\AColor] (-.4,-.6) -- (1,-.6) -- (1,0) .. controls ++(0,.5) and ++(0,-.5) .. (0,1) -- (0,1.6) -- (-.4,1.6);
\fill[boxregion=\AColor] (1.4,-.6) -- (1,-.6) -- (1,0) .. controls ++(0,.5) and ++(0,-.5) .. (0,1) -- (0,1.6) -- (1.4,1.6);
\end{scope}
\draw[thick] (1,-.6) node [below] {$\scriptstyle \alpha_{\boxplus b_i}$} -- (1,0) .. controls ++(0,.5) and ++(0,-.5) .. (0,1) -- (0,1.6) node [above] {$\scriptstyle \alpha_{\boxplus b_i}$};
\draw[thick,\XsColor] (0,-.6) node [below] {$\scriptstyle FX$}-- (0,0) .. controls ++(0,.5) and ++(0,-.5) .. (1,1) -- (1,1.6) node [above] {$\scriptstyle GX$};
\filldraw[thick,fill=white] (.5,.5) circle (.1cm);
\roundNbox{primedregion=white, draw=black}{(0,-.2)}{.2}{0}{0}{$\scriptstyle f$};
}
=
\tikzmath{
\begin{scope}
\clip[rounded corners=5] (-.4,-.6) rectangle (1.4,1.6);
\fill[primedregion=\AColor] (-.4,-.6) -- (1,-.6) -- (1,0) .. controls ++(0,.5) and ++(0,-.5) .. (0,1) -- (0,1.6) -- (-.4,1.6);
\fill[boxregion=\AColor] (1.4,-.6) -- (1,-.6) -- (1,0) .. controls ++(0,.5) and ++(0,-.5) .. (0,1) -- (0,1.6) -- (1.4,1.6);
\end{scope}
\draw[thick] (1,-.6) node [below] {$\scriptstyle \alpha_{\boxplus b_i}$} -- (1,0) .. controls ++(0,.5) and ++(0,-.5) .. (0,1) -- (0,1.6) node [above] {$\scriptstyle \alpha_{\boxplus b_i}$};
\draw[thick,\XsColor] (0,-.6) node [below] {$\scriptstyle FX$}-- (0,0) .. controls ++(0,.5) and ++(0,-.5) .. (1,1) -- (1,1.6) node [above] {$\scriptstyle GX$};
\filldraw[thick,fill=white] (.5,.5) circle (.1cm);
\roundNbox{boxregion=white, draw=black}{(1,1.2)}{.2}{0}{0}{$\scriptstyle f$};
}
\qquad\qquad\qquad
\forall f\in \End_\fX(X).
\]
\item 
Finally, by an argument similar to \cite[Prop.~4.8]{MR4369356}, a modification $m\colon \alpha\Rrightarrow \beta$ is completely determined by the $m_b\colon  \alpha_b\Rightarrow \beta_b$ for $b\in\pi_0\fX$. Indeed, since modifications $m,n : \alpha \Rrightarrow \beta$ are uniquely determined by their component on $\bigboxplus b$, if $m_b = n_b$ for $b \in \pi_0 \fX$ then
\[
\tikzmath{
\begin{scope}
\clip[rounded corners=5pt] (-.6,-.6) rectangle (.6,.6);
\fill[primedregion=lightgray!50] (-.6,-.6) rectangle (0,.6);    
\fill[boxregion=lightgray!50] (0,-.6) rectangle (.6,.6);    
\end{scope}
\draw[dotted, rounded corners=5pt] (-.6,-.6) rectangle (.6,.6);
\draw[thick] (0,-.6) -- (0,-.3);
\draw[thick, snake] (0,.3) -- (0,.6);
\roundNbox{fill=white}{(0,0)}{.3}{0.1}{0.1}{$m_{\boxplus b}$}
}
=
\bigboxplus_{b \in \pi_0 \fX}
\tikzmath{
\begin{scope}
\clip[rounded corners=5pt] (-1.2,-.6) rectangle (.6,1.2);
\fill[lightgray!50] (-1.2,-.6) rectangle (0,1.2);    
\fill[lightgray!50] (0,-.6) rectangle (.6,1.2); 
\draw[thick,red,fill=\BColor] (-.6,.75) circle (.3);
\fill[primedregion] (-1.2,-.6) rectangle (0,1.2);    
\fill[boxregion] (0,-.6) rectangle (.6,1.2);    
\end{scope}
\draw[dotted, rounded corners=5pt] (-1.2,-.6) rectangle (.6,1.2);
\draw[thick] (0,-.6) -- (0,-.3);
\draw[thick, snake] (0,.3) -- (0,1.2);
\roundNbox{fill=white}{(0,0)}{.3}{0.1}{0.1}{$m_{\boxplus b}$}
}
=
\bigboxplus_{b \in \pi_0 \fX}
\tikzmath{
\begin{scope}
\clip[rounded corners=5pt] (-.6,-.6) rectangle (.6,1.2);
\fill[lightgray!50] (-.6,-.6) rectangle (0,1.2);    
\fill[lightgray!50] (0,-.6) rectangle (.6,1.2); 
\draw[thick,red,fill=\BColor] (0,.75) circle (.3);
\fill[primedregion] (-.6,-.6) rectangle (0,1.2);    
\fill[boxregion] (0,-.6) rectangle (.6,1.2);    
\end{scope}
\draw[dotted, rounded corners=5pt] (-.6,-.6) rectangle (.6,1.2);
\draw[thick] (0,-.6) -- (0,-.3);
\draw[thick, snake] (0,.3) -- (0,1.2);
\roundNbox{fill=white}{(0,0)}{.3}{0.1}{0.1}{$m_{\boxplus b}$};
\draw[thick,fill=white] (0,.75+.3) circle (.05);
\draw[thick,fill=white] (0,.75-.3) circle (.05);
}
=
\bigboxplus_{b \in \pi_0 \fX}
\tikzmath{
\begin{scope}
\clip[rounded corners=5pt] (-.9,-.9) rectangle (.9,.9);
\fill[lightgray!50] (-.9,-.9) rectangle (0,.9);    
\fill[lightgray!50] (0,-.9) rectangle (.9,.9); 
\draw[thick,red,fill=\BColor] (0,0) circle (.6);
\fill[primedregion] (-.9,-.9) rectangle (0,.9);    
\fill[boxregion] (0,-.9) rectangle (.9,.9);    
\end{scope}
\draw[dotted, rounded corners=5pt] (-.9,-.9) rectangle (.9,.9);
\draw[thick] (0,-.9) -- (0,-.3);
\draw[thick, snake] (0,.3) -- (0,.9);
\roundNbox{fill=white}{(0,0)}{.3}{0}{0}{$m_{b}$};
\draw[thick,fill=white] (0,.6) circle (.05);
\draw[thick,fill=white] (0,-.6) circle (.05);
}
=
\bigboxplus_{b \in \pi_0 \fX}
\tikzmath{
\begin{scope}
\clip[rounded corners=5pt] (-.9,-.9) rectangle (.9,.9);
\fill[lightgray!50] (-.9,-.9) rectangle (0,.9);    
\fill[lightgray!50] (0,-.9) rectangle (.9,.9); 
\draw[thick,red,fill=\BColor] (0,0) circle (.6);
\fill[primedregion] (-.9,-.9) rectangle (0,.9);    
\fill[boxregion] (0,-.9) rectangle (.9,.9);    
\end{scope}
\draw[dotted, rounded corners=5pt] (-.9,-.9) rectangle (.9,.9);
\draw[thick] (0,-.9) -- (0,-.3);
\draw[thick, snake] (0,.3) -- (0,.9);
\roundNbox{fill=white}{(0,0)}{.3}{0}{0}{$n_{b}$};
\draw[thick,fill=white] (0,.6) circle (.05);
\draw[thick,fill=white] (0,-.6) circle (.05);
}
=\cdots=
\tikzmath{
\begin{scope}
\clip[rounded corners=5pt] (-.6,-.6) rectangle (.6,.6);
\fill[primedregion=lightgray!50] (-.6,-.6) rectangle (0,.6);    
\fill[boxregion=lightgray!50] (0,-.6) rectangle (.6,.6);    
\end{scope}
\draw[dotted, rounded corners=5pt] (-.6,-.6) rectangle (.6,.6);
\draw[thick] (0,-.6) -- (0,-.3);
\draw[thick, snake] (0,.3) -- (0,.6);
\roundNbox{fill=white}{(0,0)}{.3}{0.1}{0.1}{$n_{\boxplus b}$}
}
\]
and hence $m = n$.
\end{itemize}
\end{rem}

\begin{prop}
\label{prop:FourierExpansion}
Let $\pi_0\fX=\{b_i\}$ be an ONB for $\mathfrak X$.
For any $c\in\fX$, there is a natural isometric isomorphism $c \isomeq \bigboxplus_{b\in\pi_0\fX} \fX(b\to c) \boxdot_{\Omega_b} b$.
\end{prop}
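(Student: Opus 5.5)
My plan is to prove this Fourier expansion by a Yoneda argument: I will show that both sides isometrically represent the same presheaf $\fX^{1\op}\to 2\Hilb$, and then apply Corollary \ref{cor:isometric-yoneda} to conclude that the representing equivalence is isometric. Set $d\coloneq\bigboxplus_{b\in\pi_0\fX}\fX(b\to c)\boxdot_{\Omega_b} b$.

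First I compute the presheaf represented by $d$. For any $x\in\fX$, Corollary \ref{cor:YonedaLinear} gives
$$
\fX(x\to d)\isomeq\bigboxplus_{b\in\pi_0\fX}\fX\bigl(x\to \fX(b\to c)\boxdot_{\Omega_b} b\bigr).
$$
Proposition \ref{prop:2HilbertScalarsPullOut}, applied with $\cA=\Omega_b$, then identifies each summand with $\fX(b\to c)\boxtimes_{\Omega_b}\fX(x\to b)$. Both identifications are natural in $x$, since they come from 1-morphisms of 3-Hilbert spaces (Lemma \ref{lem:uaf-reps-unitary-adjs} and Lemma \ref{lem:1MorphismsAreLinear}). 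It therefore suffices to produce a natural, pointwise isometric equivalence
$$
\bigboxplus_{b\in\pi_0\fX}\fX(b\to c)\boxtimes_{\Omega_b}\fX(x\to b)\isomeq \fX(x\to c).
$$
This is the categorified resolution of the identity. The candidate map is composition ${}_xY_b\boxtimes {}_bZ_c\mapsto Y\otimes_b Z$. It is $\Omega_b$-balanced, so it descends to the relative Deligne product; this is the 3-Hilbert analogue of Sub-Example \ref{sub-ex:ConnesFusion1}.

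To check that this map is an isometric equivalence, I will reduce to each component separately. The simple $x$ and the terms with $b$ in other components contribute zero, by orthogonality of components. By \cite[Cor.~4.52]{MR5078555}, I may then work in $\Mod^\dag(\cC)$ with $\cC=\Omega_b$, $b\mapsto\cC_\cC$, $x\mapsto\cN_\cC$, and $c\mapsto \cM_\cC$. There $\fX(b\to c)\cong\cM$ and $\fX(x\to b)\cong\Fun_\cC(\cN\to\cC)$ by \cite[Prop.~3.9]{MR5078555}. The composition map then becomes the canonical map $\cM\boxtimes_\cC\Fun_\cC(\cN\to\cC)\cong\cM\boxtimes_\cC\cN^{\op}\to\Fun_\cC(\cN\to\cM)$, which is isometric by \cite[Prop.~3.14]{MR5078555}.

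Alternatively, when $x$ lies in the component of $b$, one can use \eqref{eq:ExpressInBasis} to write $c$'s component as $\fX(b\to c)\boxdot_{\Omega_b}b$ directly. The full statement then follows by decomposing $c$ into its component pieces $c\isomeq\bigboxplus_i c_i$, where $c_i$ is obtained by cutting $c$ down with the projections in $\Omega^2_c$ that correspond to components. This route avoids the presheaf step for the object itself. Even so, naturality and isometry are most cleanly packaged through Corollary \ref{cor:isometric-yoneda}.

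I expect the main obstacle to be bookkeeping of normalizations. I must check that the isometric identifications from \cite{MR5078555}, with the normalization factor in \eqref{modeeqn}, match the weights carried by $\boxdot_{\Omega_b}$. That is, the factors $n_\cC$, $\delta_\cC$, and $\FPdim$ must cancel so that the composite is trace-preserving and not just an equivalence. I will first try to sidestep this by invoking Remark \ref{rem:DeterminedByONB}: both $\id_\fX$ and $x\mapsto\bigboxplus_b\fX(b\to x)\boxdot_{\Omega_b}b$ are 1-morphisms $\fX\to\fX$ which agree, up to isometric equivalence, on $\rmB\Omega_{\boxplus b}$, since $\fX(b\to b)\boxdot_{\Omega_b}b\cong b$ canonically. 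Uniqueness of extension from the completion then gives a pointwise isometric equivalence, which is exactly the desired natural isometric isomorphism.
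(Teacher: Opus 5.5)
Your proposal is correct, and your ``alternative'' paragraph is exactly the paper's proof. The paper applies \eqref{eq:ExpressInBasis} to the $b$-component of $c$ for each $b\in\pi_0\fX$, using that $\fX(b\to c)$ only sees that component, and then takes Hilbert direct sums.

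Your primary route is genuinely different. You show that $\bigboxplus_b\fX(b\to c)\boxdot_{\Omega_b}b$ isometrically represents $\fX(-\to c)$, using Corollary \ref{cor:YonedaLinear}, Proposition \ref{prop:2HilbertScalarsPullOut}, and an isometric hom-level resolution of the identity $\bigboxplus_b\fX(b\to c)\boxtimes_{\Omega_b}\fX(x\to b)\isomeq\fX(x\to c)$ given by composition. You then conclude with Corollary \ref{cor:isometric-yoneda}. This is essentially the argument of Corollary \ref{cor:AllPresheafsRepresentable} specialized to representable presheaves, except that you replace its final appeal to \eqref{eq:ExpressInBasis} with isometry of composition via \cite[Prop.~3.9, 3.14]{MR5078555}.

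What this buys you is the hom-level resolution of the identity as a byproduct, plus naturality packaged by Yoneda. The price is two checks. First, composition must agree with the canonical map of \cite[Prop.~3.14]{MR5078555} under the identifications $\fX(b\to c)\cong\cM$ and $\fX(x\to b)\cong\cN^{\op}$. Second, the isometries of Proposition \ref{prop:2HilbertScalarsPullOut} must be natural in $x$. The paper's route is a one-liner only because \eqref{eq:ExpressInBasis} was already established by the same transport to $\Mod^\dag(\Omega_b)$.

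Your worry about normalizations is moot in either route. The cited results \cite[Prop.~3.9, Cor.~3.12, Prop.~3.14]{MR5078555} are already isometric for the weight \eqref{modeeqn}, so the factors cancel by construction.

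Your third route via Remark \ref{rem:DeterminedByONB} also works, and it gives naturality in $c$ for free, since it is the statement $\id_\fX\cong\bigboxplus_b|b\rangle_\Omega{}_\Omega\langle b|$. However, checking agreement only on the single object of $\rmB\Omega_{\boxplus b}$ is not enough. You need the two 1-morphisms to agree as functors on $\rmB\Omega_{\boxplus b}$, including on 1-morphisms with their naturators. This does hold: ${}_\Omega\langle b|$ restricted to $\rmB\Omega_b$ is the canonical inclusion $\rmB\Omega_b\hookrightarrow\Mod^\dag(\Omega_b)$, and $|b\rangle_\Omega$ was defined as the extension along that inclusion.
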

\begin{proof}
Immediate from \eqref{eq:ExpressInBasis} by taking Hilbert direct sums.
\end{proof}

\begin{cor}
For any simple $s\in \fX$, $|s\rangle\langle s| \in \End(\fX)$ given by $x\mapsto \fX(s\to x) \boxdot_{\Omega_s} s$ is projection onto the $s$-component of $\fX$.
\end{cor}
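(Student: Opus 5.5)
We show that $|s\rangle\langle s|$ is, up to canonical isometric equivalence, the composite $\fX \to \fX_s \hookrightarrow \fX$, where $\fX_s$ is the connected component of $s$. The first functor is the projection onto that summand and the second is the inclusion. The plan is to reduce to Proposition \ref{prop:FourierExpansion} applied with an ONB containing $s$.

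\textbf{Step 1: decomposition.} Since $s$ is simple, it lies in a single component. We may therefore choose an ONB $\pi_0\fX$ with $s\in\pi_0\fX$; by Theorem \ref{spaceofONBscontractible}-style contractibility the choice of the remaining basis elements is immaterial. By \cite[Cor.~4.52]{MR5078555} we have an isometric equivalence $\fX\isomeq\bigboxplus_{b\in\pi_0\fX}\fX_b$, where $\fX_b$ is the component of $b$ and $\fX_b\cong\Mod^\dag(\Omega_b)$. Proposition \ref{prop:FourierExpansion} then gives a natural isometric isomorphism
$$x\isomeq \bigboxplus_{b\in\pi_0\fX}\fX(b\to x)\boxdot_{\Omega_b} b.$$
Under this decomposition, the $b$-th summand is exactly the $\fX_b$-component of $x$. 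Indeed, by Corollary \ref{cor:YonedaLinear} the 2-Hilbert space $\fX(b\to x)$ only sees the $\fX_b$-component of $x$, since hom categories between different components vanish. Moreover, by \eqref{eq:ExpressInBasis}, $\fX(b\to x_b)\boxdot_{\Omega_b} b\cong x_b$.

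\textbf{Step 2: identifying the summand.} The $s$-summand is precisely $|s\rangle\langle s|x=\fX(s\to x)\boxdot_{\Omega_s}s$, which is therefore isometrically equivalent to $x_s$, the projection of $x$ onto $\fX_s$.

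\textbf{Step 3: the projection properties.} Two facts remain.
\begin{enumerate}
\item $|s\rangle\langle s|$ is idempotent up to isometric equivalence: for $x$ in $\fX_s$ it acts as the identity by \eqref{eq:ExpressInBasis}.
\item It kills the other components: if $x\in\fX_{b}$ with $b\not\sim s$, then $\fX(s\to x)=0$, so $0\boxdot_{\Omega_s}s=0$.
\end{enumerate}
Both facts follow from the additivity of $-\boxdot_{\Omega_s}s$, which holds because it is a 1-morphism of 3-Hilbert spaces (Remark \ref{rem:RelativeScalarActionIsDagVeePreserving}) and hence preserves Hilbert direct sums. It is also self-adjoint in the sense that $\langle s|$ is the unitary adjoint of $|s\rangle_\Omega$. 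The latter is clear from Proposition \ref{prop:2HilbertScalarsPullOut}, which gives $\fX(\cM\boxdot_{\Omega_s}s\to x)\isomeq\fX(s\to x)\boxtimes_{\Omega_s}\cM^{\op}\cong\Mod^\dag(\Omega_s)(\cM\to\fX(s\to x))$.

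\textbf{Main obstacle.} The delicate point is naturality in $x$: the isometric isomorphisms $|s\rangle\langle s|x\cong x_s$ must assemble into a 2-natural equivalence of 1-morphisms $\fX\to\fX$, not merely a family of objectwise equivalences. I would handle this via Remark \ref{rem:DeterminedByONB}: both $|s\rangle\langle s|$ and the projection are 1-morphisms that agree, up to contractible choice, on $\rmB\Omega_{\bigboxplus b}$. On that subcategory both send $s\mapsto s$ with the identity on $\Omega_s$ and send the other $b$ to $0$. The universal property of completion then yields a canonical pointwise isometric equivalence between them.
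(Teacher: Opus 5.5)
Your proposal is correct and follows the paper's proof, which is exactly to extend $\{s\}$ to an ONB and read off the $s$-summand of the Fourier expansion in Proposition~\ref{prop:FourierExpansion}. Your extra checks (idempotence, vanishing on the other components, and 2-naturality via Remark~\ref{rem:DeterminedByONB}) only spell out what the paper leaves implicit, since the naturality is already part of Proposition~\ref{prop:FourierExpansion}.
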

\begin{proof}
Extend $\{s\}$ to an ONB for $\fX$ and apply Proposition \ref{prop:FourierExpansion}. 
\end{proof}

\begin{cor}
\label{cor:AllPresheafsRepresentable}
A presheaf $F\colon \fX^{1\op}\to 2\Hilb$ 
is isometrically representable by the object
$$
\bigboxplus_{b\in \pi_0\fX} F(b)\boxdot_{\Omega_b} b
\in \fX.
$$
\end{cor}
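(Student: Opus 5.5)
Set $y\coloneq\bigboxplus_{b\in\pi_0\fX} F(b)\boxdot_{\Omega_b} b$. Since $F$ is a $\dag$,$\vee$-preserving 2-functor $\fX^{1\op}\to 2\Hilb$, each $F(b)$ carries a right action of $\Omega_b$ (via $F$ on $\End_\fX(b)$, reversed by the $1\op$). This makes $F(b)\in\Mod^\dag(\Omega_b)$, so $y$ is well defined. The plan is to compare $\fX(-\to y)$ and $F$ on the ONB, then extend to all of $\fX$ using Remark \ref{rem:DeterminedByONB} and the universal property of completion.

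First, for an arbitrary $x\in\fX$, combine Corollary \ref{cor:YonedaLinear} with Proposition \ref{prop:2HilbertScalarsPullOut} to obtain isometric equivalences
$$
\fX(x\to y)\isomeq \bigboxplus_{b\in\pi_0\fX}\fX(x\to F(b)\boxdot_{\Omega_b}b)\isomeq \bigboxplus_{b\in\pi_0\fX} F(b)\boxtimes_{\Omega_b}\fX(x\to b).
$$
Specialize to $x=b'\in\pi_0\fX$. For $b\neq b'$ we have $\fX(b'\to b)=0$, since distinct basis elements lie in different components. We also have $\fX(b\to b)=\Omega_b$, and $F(b)\boxtimes_{\Omega_b}\Omega_b\isomeq F(b)$ by \cite[Cor.~3.12]{MR5078555}. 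Together these give $\fX(b'\to y)\isomeq F(b')$.

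Next, check that this identification is compatible with the action of $\Omega_{b'}$ on both sides. Naturality in $x$ of the equivalences in Proposition \ref{prop:2HilbertScalarsPullOut} gives a pointwise isometric equivalence between the restrictions of $\fX(-\to y)$ and $F$ to $\rmB\Omega_{\boxplus b}^{1\op}$. The right-regular module identification $\cM\boxtimes_{\Omega_b}\Omega_b\cong\cM$ is an equivalence of $\Omega_b$-modules, so the half-braidings match.

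Then apply Remark \ref{rem:DeterminedByONB} to $\fX^{1\op}$. Its ONB is the same set $\{b\}$, and $\fX^{1\op}$ is again a 3-Hilbert space. By Lemma \ref{lem:uaf-reps-unitary-adjs}, $\fX(-\to y)$ is a 1-morphism of 3-Hilbert spaces, as is $F$ by hypothesis. Both therefore extend the same functor on $\rmB\Omega_{\boxplus b}$, so the universal property \eqref{eq:UniversalPropertyOfCompletion} produces a pointwise isometric equivalence $\fX(-\to y)\isomeq F$. This is exactly isometric representability.

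The main obstacle is the middle step: showing that the isometric equivalences of Proposition \ref{prop:2HilbertScalarsPullOut} are natural in the first variable with respect to the $\Omega_{b'}$-action. This requires unwinding Lemma \ref{lem:1MorphismsAreLinear} for $\fX(-\to\,\cdot\,)$ viewed as a functor in two variables. I would handle it by realizing both sides as extensions from $\rmB\Omega_b$ in the second variable, where the $\Omega_{b'}$-action is visibly compatible, and then invoking uniqueness of extensions.
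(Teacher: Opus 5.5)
Your argument is correct, but it takes a different route from the paper's proof.

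\textbf{The paper's route.} The paper works objectwise. For $c$ in the component of $a\in\pi_0\fX$, it uses Corollary~\ref{cor:YonedaLinear} and Proposition~\ref{prop:2HilbertScalarsPullOut} to get $\fX(c\to y)\isomeq F(a)\boxtimes_{\Omega_a}\fX(c\to a)$. It then uses that $F$ itself commutes with generalized scalars (Lemma~\ref{lem:1MorphismsAreLinear} applied to the contravariant $F$) to move the coefficient inside, giving $F(\fX(c\to a)^{\op}\boxdot_{\Omega_a}a)$. Next it identifies $\fX(c\to a)^{\op}\cong\fX(a\to c)$ via the UAF, applies the Fourier expansion \eqref{eq:ExpressInBasis} $\fX(a\to c)\boxdot_{\Omega_a}a\isomeq c$, and concludes with Corollary~\ref{cor:isometric-yoneda} and Hilbert direct sums.

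\textbf{Your route.} You evaluate only at the basis, where everything collapses to $F(b)\boxtimes_{\Omega_b}\Omega_b\isomeq F(b)$. You then let \eqref{eq:UniversalPropertyOfCompletion}, via Remark~\ref{rem:DeterminedByONB} for $\fX^{1\op}$, propagate this to all objects.

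\textbf{What each buys.} The paper gets an explicit identification at every object and never has to discuss transformations out of $\rmB\Omega_{\boxplus b}$. However, it leaves naturality in $c$ implicit. Each step would have to be natural, including the first-variable naturality of Proposition~\ref{prop:2HilbertScalarsPullOut} that you flagged.

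Your approach gets a genuine natural equivalence from the universal property. It also avoids both the linearity of $F$ and the Fourier expansion of arbitrary objects. The cost is two things. First, you must check that the basis identification is a unitary $\Omega_b$-module equivalence, i.e.\ the half-braidings match. Second, you rely on the clause that the equivalence in \eqref{eq:UniversalPropertyOfCompletion} is pointwise isometric, so that the lifted transformation $\fX(-\to y)\Rightarrow F$ has isometric components everywhere, not just on the basis.
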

\begin{proof}
For $c\in\fX$ in a single component, let $a\in \pi_0\fX$ such that $c\sim a$.
Then we have
\begin{align*}
\fX\left(
c\to 
\bigboxplus_{b} F(b)\boxdot_{\Omega_b} b
\right)
&\isomeq
\bigboxplus_{b}\, \fX\left(
c\to 
F(b)\boxdot_{\Omega_b} b
\right)
&&
\text{(Cor.~\ref{cor:YonedaLinear})}
\\&\isomeq
\bigboxplus_{b}
F(b)
\boxtimes_{\Omega_b}
\fX(c\to b)
&&
\text{(Prop.~\ref{prop:2HilbertScalarsPullOut})}
\\&\isomeq
F(a) \boxtimes_{\Omega_a}
\fX(c\to a)
&&\text{($c\sim a$)}
\\&\isomeq
F(\fX(c\to a)^{\op}\boxdot_{\Omega_a} a)
&&
\text{(Prop.~\ref{prop:2HilbertScalarsPullOut})}
\\&\isomeq
F(\fX(a\to c)\boxdot_{\Omega_a} a)
&&\text{(adjoints)}
\\&\isomeq
F(c).
&&\eqref{eq:ExpressInBasis}
\end{align*}
The result now follows by Corollary \ref{cor:isometric-yoneda} and taking Hilbert direct sums.
\end{proof}

\subsection{Unitary 2-adjunction}\label{subsec:Unitary2Adjunction}
As discussed in Example \ref{ex:UnitaryAdjunction} above, given two 2-Hilbert spaces $\cA,\cB$, two dagger functors $F\colon \cA\to \cB$ and $F^*\colon \cB\to \cA$ are said to be \emph{unitary adjoints}, denoted $F\dashv^\dag F^*$, if there is a family of natural unitary isomorphisms
$$
\cB(F(a)\to b)\isomeq \cA(a\to F^*(b))
\qquad\qquad
\forall\,a\in \cA,\, b\in\cB.
$$
As an application of ONBs, we get unitary adjoints for $\dag$-2-functors between 3-Hilbert spaces.
We first recall the definition of (non-dagger!) adjoint 2-functors from \cite{MR371990}.

\begin{defn}\label{defn:2-adjoint}
Suppose $F\colon \fX\to \fY$ and $G\colon  \fY\to \fX$ are (non-dagger) 2-functors between 3-vector spaces (finite semisimple linear 2-categories).
We say that $F$ is \emph{left adjoint} to $G$, denoted $F\dashv G$, if there is a collection of equivalences of categories 
$$
\langle Fx|y\rangle 
\coloneq 
\fY(F(x) \to y) 
\cong 
\fX(x \to G(y))
\eqcolon
\langle x| Gy\rangle
$$ 
that is natural in $x$ and $y$.
That is, for every 1-cell $f\colon x' \to x$ there are isomorphisms 
\begin{equation}\label{eq:UnitaryAdjunctionData1cells}
\begin{tikzcd}
\langle Fx | y \rangle 
\arrow[r, "\sim"] 
\arrow[d,"\langle Ff|"'] 
& 
\langle x | Gy \rangle 
\arrow[d, "\langle f |"]
\\
\langle Fx' | y \rangle
\arrow[ur, Rightarrow, "\sim_f"description]
\arrow[r, "\sim"']
& 
\langle x' | Gy \rangle
\end{tikzcd}
\qquad=\qquad
\tikzmath{
\begin{scope}
\clip[rounded corners=5] (-.4,-.6) rectangle (1.4,1.6);
\fill[\AColor] (-.4,-.6) -- (0,-.6) -- (0,0) .. controls ++(0,.5) and ++(0,-.5) .. (1,1) -- (1,1.6)-- (-.4,1.6) -- cycle;
\fill[\BColor] (1.4,-.6) -- (0,-.6) -- (0,0) .. controls ++(0,.5) and ++(0,-.5) .. (1,1) -- (1,1.6) -- (1.4,1.6) -- cycle;
\fill[primedregion] (-.4,-.6) -- (1,-.6) -- (1,0) .. controls ++(0,.5) and ++(0,-.5) .. (0,1) -- (0,1.6) -- (-.4,1.6);
\fill[boxregion] (1.4,-.6) -- (1,-.6) -- (1,0) .. controls ++(0,.5) and ++(0,-.5) .. (0,1) -- (0,1.6) -- (1.4,1.6);
\end{scope}
\draw[thick] (1,-.6) node [below] {$\scriptstyle \sim$} -- (1,0) .. controls ++(0,.5) and ++(0,-.5) .. (0,1) -- (0,1.6) node [above] {$\scriptstyle \sim$};
\draw[thick,\XsColor] (0,-.6) node [below] {$\scriptstyle \langle Ff|y \rangle$}-- (0,0) .. controls ++(0,.5) and ++(0,-.5) .. (1,1) -- (1,1.6) node [above] {$\scriptstyle \langle f|Gy \rangle$};
\filldraw[thick,fill=white] (.5,.5) circle (.1cm);
}
\qquad\qquad
\begin{aligned}
\tikzmath{
\fill[lightgray!50, rounded corners=5pt] (0,0) rectangle (.6,.6);    
}
&=
x\threehilbtimes y 
\\
\tikzmath{
\fill[lightgray, rounded corners=5pt] (0,0) rectangle (.6,.6);    
}
&=
x'\threehilbtimes y 
\\
\tikzmath{
\filldraw[primedregion=white, rounded corners=5pt, draw=black, dotted] (0,0) rectangle (.6,.6);    
}
&=
\Hom_\fY(F(-)\to -)
\\
\tikzmath{
\fill[boxregion=white, rounded corners=5pt, draw=black, dotted] (0,0) rectangle (.6,.6);    
}
&=
\Hom_\fX(-\to G(-))
\end{aligned}
\end{equation}
and similarly for $g\colon y \to y'$.
Above, we use the overlay graphical calculus for the functors
\[
\Hom_\fY(F(-)\to -), \,\Hom_\fX(-\to G(-)) \colon \fX^{1\op}\threehilbtimes\fY\longrightarrow 2\Vect,
\]
where $\threehilbtimes$ is the Deligne product for 3-vector spaces defined in \cite{MR4677193}.
These isomorphisms are required to satisfy the condition that for every 2-cell $\alpha\colon f \Rightarrow f'\colon x' \to x$,
\begin{equation}\label{eq:UnitaryAdjunctionData2cells}
\tikzmath{
\begin{scope}
\clip[rounded corners=5] (-.4,-.6) rectangle (1.4,1.6);
\fill[\AColor] (-.4,-.6) -- (0,-.6) -- (0,0) .. controls ++(0,.5) and ++(0,-.5) .. (1,1) -- (1,1.6)-- (-.4,1.6) -- cycle;
\fill[\BColor] (1.4,-.6) -- (0,-.6) -- (0,0) .. controls ++(0,.5) and ++(0,-.5) .. (1,1) -- (1,1.6) -- (1.4,1.6) -- cycle;
\fill[primedregion] (-.4,-.6) -- (1,-.6) -- (1,0) .. controls ++(0,.5) and ++(0,-.5) .. (0,1) -- (0,1.6) -- (-.4,1.6);
\fill[boxregion] (1.4,-.6) -- (1,-.6) -- (1,0) .. controls ++(0,.5) and ++(0,-.5) .. (0,1) -- (0,1.6) -- (1.4,1.6);
\end{scope}
\draw[thick] (1,-.6) node [below] {$\scriptstyle \sim $} -- (1,0) .. controls ++(0,.5) and ++(0,-.5) .. (0,1) -- (0,1.6) node [above] {$\scriptstyle \sim $};
\draw[thick,\XsColor] (0,-.6) node [below] {$\scriptstyle \langle Ff| y \rangle$} -- (0,0);
\draw[thick,blue] (0,0) .. controls ++(0,.5) and ++(0,-.5) .. (1,1) -- (1,1.6) node [above] {$\scriptstyle \langle f'|Gy \rangle $};
\filldraw[thick,fill=white] (.5,.5) circle (.1cm);
\roundNbox{fill=white,draw=black}{(0,-.2)}{.2}{.15}{.25}{$\scriptstyle \langle F\alpha |$};
}
=
\begin{tikzcd}[column sep=25,row sep=45]
\langle Fx | y \rangle  
\arrow[r, "\sim"] 
\arrow[d,bend right=30, near start, "\langle Ff|"'] 
\arrow[d, bend left=30, near start, "{\langle Ff'|}"]
\arrow[d,phantom,"\overset{\langle F\alpha|}{\Rightarrow}"]
& 
\langle x | Gy \rangle 
\arrow[d, "{\langle f' |}"]
\\
\langle Fx' | y \rangle
\arrow[ur, Rightarrow, "{\sim_{f'}}"description]
\arrow[r, "\sim"']
& 
\langle x' | Gy \rangle
\end{tikzcd}
=
\begin{tikzcd}[column sep=25,row sep=45]
\langle Fx | y \rangle
\arrow[r, "\sim"] 
\arrow[d,"\langle Ff |"'] 
& 
\langle x | Gy \rangle 
\arrow[d, bend right=30, near end, "{\langle f |}"']
\arrow[d, bend left=30, near end, "{\langle f'|}"]
\arrow[d,phantom,"\overset{\langle \alpha |}{\Rightarrow}"]
\\
\langle Fx' | y \rangle
\arrow[ur, Rightarrow, "\sim_f"description]
\arrow[r, "\sim"']
& 
\langle x' | Gy \rangle
\end{tikzcd}
=
\tikzmath{
\begin{scope}
\clip[rounded corners=5] (-.4,-.6) rectangle (1.4,1.6);
\fill[\AColor] (-.4,-.6) -- (0,-.6) -- (0,0) .. controls ++(0,.5) and ++(0,-.5) .. (1,1) -- (1,1.6)-- (-.4,1.6) -- cycle;
\fill[\BColor] (1.4,-.6) -- (0,-.6) -- (0,0) .. controls ++(0,.5) and ++(0,-.5) .. (1,1) -- (1,1.6) -- (1.4,1.6) -- cycle;
\fill[primedregion] (-.4,-.6) -- (1,-.6) -- (1,0) .. controls ++(0,.5) and ++(0,-.5) .. (0,1) -- (0,1.6) -- (-.4,1.6);
\fill[boxregion] (1.4,-.6) -- (1,-.6) -- (1,0) .. controls ++(0,.5) and ++(0,-.5) .. (0,1) -- (0,1.6) -- (1.4,1.6);
\end{scope}
\draw[thick] (1,-.6) node [below] {$\scriptstyle \sim$} -- (1,0) .. controls ++(0,.5) and ++(0,-.5) .. (0,1) -- (0,1.6) node [above] {$\scriptstyle \sim$};
\draw[thick,\XsColor] (0,-.6) node [below] {$\scriptstyle \langle Ff| y \rangle $}-- (0,0) .. controls ++(0,.5) and ++(0,-.5) .. (1,1);
\draw[thick,blue] (1,1) -- (1,1.6) node [above] {$\scriptstyle \langle f' | Gy \rangle $};
\filldraw[thick,fill=white] (.5,.5) circle (.1cm);
\roundNbox{fill=white, draw=black}{(1,1.2)}{.2}{.1}{.1}{$\scriptstyle \langle \alpha | $};
}\!.
\end{equation}
Equivalently, there are natural transformations $\eta\colon \id_{\fX} \Rightarrow GF$ and $\varepsilon\colon FG \Rightarrow \id_{\fY}$, and invertible modifications $m\colon (\varepsilon F\circ F\eta) \Rrightarrow \id_F$ and $n\colon \id_G \Rrightarrow (G\varepsilon\circ \eta G)$ that satisfy the swallowtail equations \cite[\S 2.1.2]{1812.11933}.
\end{defn}

\begin{defn}
Let $F\in\Hom(\fX \to \fY)$.
We call a (not necessarily $\dag,\vee$-preserving) linear functor $F^*\colon\fY \to \fX$ its \emph{unitary 2-adjoint} if there is a collection of isometric equivalences of 2-Hilbert spaces 
$$
\fY(F(c)\to d) \isomeq \fX(c\to F^*(d))
$$
that are natural in $c$ and $d$ (as in Definition \ref{defn:2-adjoint}).
\end{defn}

\begin{ex}
For $x\in\fX$ and $\cM\in \Mod^\dag(\Omega_x)$, recall that
$|x\rangle_\Omega \cM := \cM \boxdot_{\Omega_x} x$
and
${}_\Omega\langle x|y := \fX(x \to y)_{\Omega_x}$.
We see that these functors are unitary adjoints by computing
\begin{align*}    
\Hom_{\Omega_x}(\fX(x \to y) \to \cM)
&\cong^\dag 
\cM \boxtimes_{\Omega_x} \fX(x \to y)^{\op}
&&\text{(\cite[Prop.~3.14]{MR5078555})}
\\&\cong^\dag
\cM \boxtimes_{\Omega_x} \fX(y \to x) 
\\&\cong^\dag
\fX(y \to \cM \boxdot_{\Omega_x} x)
&&\text{(Prop.~\ref{prop:2HilbertScalarsPullOut}).}
\end{align*}
\end{ex}

\begin{prop}
Unitary adjoints exist 
and are unique up to unique isometric equivalence
for every 1-morphism in $3\Hilb$.
In particular, the unitary adoint of $F\in \Hom(\fX\to \fY)$ automatically lies in $\Hom(\fY\to \fX)$.
\end{prop}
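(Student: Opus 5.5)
Fix $F\in\Hom(\fX\to\fY)$ and define $F^*$ by the formula suggested by Corollary \ref{cor:AllPresheafsRepresentable}: for $d\in\fY$, the presheaf $\fY(F(-)\to d)\colon \fX^{1\op}\to 2\Hilb$ is the composite of $F^{1\op}$ with the representable $\fY(-\to d)$. The latter is a 1-morphism by Lemma \ref{lem:uaf-reps-unitary-adjs}, and $F$ is $\dag,\vee$-preserving, so the composite is again a 1-morphism of 3-Hilbert spaces. Choosing an ONB $\pi_0\fX$, I set
$$
F^*(d)\coloneq \bigboxplus_{b\in\pi_0\fX} \fY(F(b)\to d)\boxdot_{\Omega_b} b .
$$
Corollary \ref{cor:AllPresheafsRepresentable} then gives pointwise isometric equivalences $\fX(c\to F^*(d))\isomeq \fY(F(c)\to d)$, natural in $c$. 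Equivalently, $F^*=\bigboxplus_b |b\rangle_\Omega\circ {}_\Omega\langle b|\circ F$ up to the identification ${}_\Omega\langle F b|$, which exhibits $F^*$ as a composite of functors already known to lie in $3\Hilb$.

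To make $F^*$ a 2-functor and the equivalences natural in $d$, I apply the same construction to 1- and 2-morphisms. A 1-morphism $g\colon d\to d'$ induces a $\dag$-functor $\fY(F(b)\to d)\to\fY(F(b)\to d')$ of right $\Omega_b$-modules. Generalized scalar multiplication $-\boxdot_{\Omega_b} b$ is a 1-morphism $\Mod^\dag(\Omega_b)\to\fX$ (Remark \ref{rem:RelativeScalarActionIsDagVeePreserving}), so it sends this to a 1-morphism $F^*(g)$, and likewise for 2-morphisms. Thus $F^*$ is the composite of $\dag,\vee$-preserving 2-functors
$$
\fY \xrightarrow{\ \bigoplus_b \fY(F(b)\to -)\ } \bigboxplus_b \Mod^\dag(\Omega_b) \xrightarrow{\ \bigboxplus_b -\boxdot_{\Omega_b} b\ } \fX .
$$
This shows directly that $F^*\in\Hom(\fY\to\fX)$, which is the "in particular" clause. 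The naturality data $\sim_f,\sim_g$ of Definition \ref{defn:2-adjoint} come from the naturality, with respect to 1-morphisms, of the chain of isometric equivalences in the proof of Corollary \ref{cor:AllPresheafsRepresentable}. Each of these equivalences is canonical: it comes from Proposition \ref{prop:2HilbertScalarsPullOut}, Corollary \ref{cor:YonedaLinear}, and \eqref{eq:ExpressInBasis}. By Remark \ref{rem:DeterminedByONB}, it suffices to check naturality on the full subcategory $\rmB\Omega_{\boxplus b}$. Condition \eqref{eq:UnitaryAdjunctionData2cells} then follows from functoriality of these canonical maps in 2-cells.

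For uniqueness, suppose $G,G'$ are two unitary 2-adjoints. For each $d$ we get a composite pointwise isometric natural equivalence $\fX(-\to G(d))\isomeq\fY(F(-)\to d)\isomeq\fX(-\to G'(d))$. Corollary \ref{cor:isometric-yoneda} upgrades this to an isometric equivalence $G(d)\simeq G'(d)$ in $\fX$. The full Yoneda lemma in $\fX$, together with the contractibility of the choices in Corollary \ref{cor:isometric-yoneda}, assembles these into a transformation $G\Rightarrow G'$ compatible with the adjunction data. It is unique up to unique invertible modification, because a representing object of a presheaf is unique up to a contractible space of isometric equivalences.

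I expect the main obstacle to be coherence: verifying that the isometric equivalences in Corollary \ref{cor:AllPresheafsRepresentable} are genuinely natural in both variables and satisfy \eqref{eq:UnitaryAdjunctionData2cells}, rather than merely existing objectwise. My plan is to reduce everything via Remark \ref{rem:DeterminedByONB} to the single object $\bigboxplus b$ and its endomorphism $\rmH^*$-multifusion category. There the statement becomes the unitary adjunction ${}_\Omega\langle x|\dashv^\dag |x\rangle_\Omega$ computed in the preceding Example, composed with $F$, and naturality reduces to naturality of the isometric equivalences \cite[Prop.~3.14]{MR5078555}.
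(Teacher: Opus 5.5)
Your proposal is correct and follows the paper's proof. You define $F^*(d)=\bigboxplus_{b\in\pi_0\fX} \fY(F(b)\to d)\boxdot_{\Omega_b} b$ as the representing object from Corollary \ref{cor:AllPresheafsRepresentable}, get $F^*\in\Hom(\fY\to\fX)$ from Lemma \ref{lem:uaf-reps-unitary-adjs} and Remark \ref{rem:RelativeScalarActionIsDagVeePreserving}, and get uniqueness from Corollary \ref{cor:isometric-yoneda}, exactly as the paper does; your extra naturality discussion via Remark \ref{rem:DeterminedByONB} makes explicit what the paper leaves implicit, with one small slip: the middle factor of your composite should be ${}_\Omega\langle F(b)|$ restricted along $F|_{\Omega_b}\colon \Omega_b\to\Omega_{F(b)}$, not ${}_\Omega\langle b|\circ F$.
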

\begin{proof}
For a 1-morphism $F\colon  \fX\to \fY$,
the presheaf $\fX^{\op}\to 2\Hilb$ given by
$$
c\mapsto \fY(F(c)\to d)
$$
is isometrically representable by Corollary \ref{cor:AllPresheafsRepresentable}, which gives an explicit formula for $F^*$ by 
\begin{equation}
\label{eq:Unitary2AdjointExists}    
F^*(d) = \bigboxplus_{b\in\Irr \fX}\, \fY(F(b)\to d) \boxdot_{\Omega_b} b.
\end{equation}
This formula manifestly defines a functor in $\Hom(\fY\to \fX)$ as each $\fY(F(b) \to - )$ is $\dag,\vee$-preserving by Lemma \ref{lem:uaf-reps-unitary-adjs} and the relative scalar action $- \boxdot_{\Omega_b} b$ is automatically $\dag,\vee$-preserving (see Remark \ref{rem:RelativeScalarActionIsDagVeePreserving}).
By Corollary \ref{cor:isometric-yoneda}, the unitary 2-adjoint is unique up to unique isometric equivalence.
\end{proof}

We end this section with the following proposition, which can be viewed as an analog of the Closed Graph Theorem for operators between 3-Hilbert spaces, cf.~\cite[Rem.~2.4]{MR4750417}.

\begin{prop}\label{prop:adjointableimpliesdagpreserving}
If $F\colon  \fX\to \fY$ and $G\colon  \fY\to \fX$ are (not necessarily $\dag,\vee$-preserving) linear functors which are unitary adjoints,
then $F,G$ are automatically $\dag$-preserving.
\end{prop}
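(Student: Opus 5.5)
The plan is to categorify the argument that a functor between 2-Hilbert spaces with a unitary adjoint is automatically $\dag$: adjoints of precomposition maps are precomposition by adjoints, and a Yoneda-type evaluation then detects $F(f)^\dag=F(f^\dag)$. Throughout I use that the adjunction data consist of isometric equivalences $\sim\colon\fY(F(c)\to d)\isomeq\fX(c\to G(d))$, which are $\dag$-functors between 2-Hilbert spaces, together with naturality 2-cells $\sim_f$ as in \eqref{eq:UnitaryAdjunctionData1cells} and \eqref{eq:UnitaryAdjunctionData2cells}. I take these 2-cells to be unitary, as is implicit in naturality in the unitary setting.

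\emph{Step 1 ($F$ commutes with $\dag$ on 2-morphisms).} Fix a 2-morphism $\alpha\colon X\Rightarrow X'$ with $X,X'\colon c'\to c$. Precomposition gives $\dag$-functors $\langle X|=X\otimes-\colon\fX(c\to G d)\to\fX(c'\to Gd)$, and similarly $\langle FX|$ on the $\fY$ side. Whiskering gives natural transformations $\langle\alpha|$ and $\langle F\alpha|$. Because $\fX$ and $\fY$ are $\rmC^*$-2-categories, whiskering is compatible with $\dag$, so $\langle\alpha|^\dag=\langle\alpha^\dag|$ and $\langle F\alpha|^\dag=\langle (F\alpha)^\dag|$, with $\dag$ computed componentwise.

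Condition \eqref{eq:UnitaryAdjunctionData2cells} says that $\sim_{X'}\circ(\sim\!\langle F\alpha|)=(\langle\alpha|\sim)\circ\sim_X$. The $\sim_X$ are unitary and $\sim$ is a $\dag$-functor, so taking adjoints of this equation gives
\[
\langle (F\alpha)^\dag|=\sim^{-1}\text{-conjugate of }\langle\alpha^\dag|=\langle F(\alpha^\dag)|,
\]
where the second equality applies \eqref{eq:UnitaryAdjunctionData2cells} to $\alpha^\dag$. Next I evaluate this equality of transformations at $d=F(c')$ on the object $1_{F(c')}$, where $\langle FX|$ sends $1_{F(c')}$ to $FX\otimes 1_{F(c')}\cong FX$. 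Removing the unitor, which is unitary, gives $(F\alpha)^\dag=F(\alpha^\dag)$.

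\emph{Step 2 (the coherence data of $F$ are unitary).} A $\dag$-2-functor also needs unitary tensorator $F^2_{X,Y}$ and unitor. Naturality of $\sim$ with respect to composites $X\otimes Y$ expresses $\sim_{X\otimes Y}$ as a pasting of $\sim_X$, $\sim_Y$, the associator/tensorator in $\fX$ (all unitary), and $\langle F^2_{X,Y}|$. Since everything else in the pasting is unitary, $\langle F^2_{X,Y}|$ is unitary. Evaluating at the identity as in Step 1 shows $F^2_{X,Y}$ is unitary, and the unitor is handled the same way using $\sim_{1_c}$.

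\emph{Step 3 ($G$).} The same argument, run with postcomposition $\langle -|g\rangle$ for $g\colon d\to d'$ and evaluated at $c=G(d)$ on $1_{G(d)}$, shows that $G$ is $\dag$-preserving.

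\emph{Main obstacle.} I expect Step 2 to be the hardest part. One must check carefully that the coherence axiom for $\sim$ on composites, which Definition \ref{defn:2-adjoint} only sketches, really isolates $\langle F^2|$ with all other cells unitary. One must also justify that whiskering by $1_{F(c')}$ reflects unitarity; this is where faithfulness of $X\otimes-$ on identities and the unitors enter.
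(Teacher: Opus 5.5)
Your proposal is correct and Step 1 is essentially the paper's proof: take the dagger of \eqref{eq:UnitaryAdjunctionData2cells}, using that the naturality cells $\sim_f$ are unitary (the paper uses this implicitly) and that $\sim$ is a $\dag$-functor; reapply \eqref{eq:UnitaryAdjunctionData2cells} to $\alpha^\dag$ to get $\langle (F\alpha)^\dag| = \langle F(\alpha^\dag)|$; conclude by faithfulness of the Yoneda embedding. Your Steps 2 and 3 (unitarity of $F^2,F^0$ from the composition and unit axioms of the pseudonatural equivalence, and the mirror argument for $G$) are valid additions the paper leaves implicit, and there is one small slip: since $\langle FX|=FX\otimes-$ has domain $\fY(F(c)\to d)$, you should evaluate at $d=F(c)$ on $1_{F(c)}$, not at $F(c')$.
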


\begin{proof}
For every 2-cell $\alpha\colon f \Rightarrow f'\colon x' \to x$, and $y \in \fY$, observe
\[
\def\tkzscaletemp{0.7}
\scalebox{\tkzscaletemp}{$\tikzmath{
\begin{scope}
\clip[rounded corners=5] (-.4,-2.2) rectangle (2,1.6);
\fill[\AColor] (-.4,-2.22) -- (0.8,-2.2) -- (0.8,1.6) -- (-.4,1.6) ;
\fill[\BColor] (1.4,-2.2)  -- (0.8,-2.2) -- (0.8,1.6) -- (2,1.6) -- (2,-2.2);
\fill[primedregion] (-.4,-2.2) rectangle (2,1.6);
\end{scope}
\draw[thick,blue] (0.8,-2.2) node [below] {$\scriptstyle \langle Ff'| y \rangle $} -- (0.8,-.3);
\draw[thick,\XsColor] (0.8,-.3) -- (0.8,1.6) node [above] {$\scriptstyle \langle Ff| y \rangle $};
\roundNbox{fill=white,draw=black}{(0.8-.175,-.3)}{.25}{.1}{.45}{$\scriptstyle \langle (F\alpha)^\dag |$};
}$}
=
\scalebox{\tkzscaletemp}{$\tikzmath{
\begin{scope}
\clip[rounded corners=5] (-.4,-2.2) rectangle (2,1.6);
\fill[\AColor] (-.4,-2.22) -- (0,-2.2) -- (0,1.6) -- (-.4,1.6) ;
\fill[\BColor] (1.4,-2.2)  -- (0,-2.2) -- (0,1.6) -- (2,1.6) -- (2,-2.2);
\fill[primedregion] (-.4,-2.2) rectangle (2,1.6);
\fill[\BColor] (1.6,-1.3) arc (360:180:.6) -- (.4,.3) arc(180:0:.6) -- cycle;
\fill[boxregion] (1.6,-1.3) arc (360:180:.6) -- (.4,.3) arc(180:0:.6) -- cycle;
\end{scope}
\draw[thick,blue] (0,-2.2) node [below] {$\scriptstyle \langle Ff'| y \rangle $} -- (0,1.2);
\draw[thick,\XsColor] (0,1.2) -- (0,1.6) node [above] {$\scriptstyle \langle Ff| y \rangle $};
\draw[thick] (1.6,-1.3) arc (360:180:.6) -- (.4,.3) arc(180:0:.6) -- cycle;
\roundNbox{fill=white,draw=black}{(0,1.2)}{.25}{.1}{.45}{$\scriptstyle \langle (F\alpha)^\dag |$};
}$}
=
\scalebox{\tkzscaletemp}{$
\tikzmath{
\begin{scope}
\clip[rounded corners=5] (-.4,-2.2) rectangle (2,1.6);
\fill[\AColor] (-.4,-2.22) -- (0,-2.2) -- (0,-1.6) .. controls ++(0,.5) and ++(0,-.5) .. (1,-.6) -- (1,0) .. controls ++(0,.5) and ++(0,-.5) .. (0,1) -- (0,1.6) -- (-.4,1.6) ;
\fill[\BColor] (1.4,-2.2) -- (0,-2.2) -- (0,-1.6) .. controls ++(0,.5) and ++(0,-.5) .. (1,-.6) -- (1,0) .. controls ++(0,.5) and ++(0,-.5) .. (0,1) -- (0,1.6) -- (2,1.6) -- (2,-2.2);
\fill[primedregion] (-.4,-2.2) rectangle (2,1.6);
\begin{scope}
\clip (1.6,-1.6) arc (360:180:.3) -- (1,-1.6) .. controls ++(0,.5) and ++(0,-.5) .. (0,-.6)-- (0,0) .. controls ++(0,.5) and ++(0,-.5) .. (1,1) arc(180:0:.3) -- cycle;
\fill[\AColor] (-.4,-2.22) -- (0,-2.2) -- (0,-1.6) .. controls ++(0,.5) and ++(0,-.5) .. (1,-.6) -- (1,0) .. controls ++(0,.5) and ++(0,-.5) .. (0,1) -- (0,1.6) -- (-.4,1.6) ;
\fill[\BColor] (1.4,-2.2) -- (0,-2.2) -- (0,-1.6) .. controls ++(0,.5) and ++(0,-.5) .. (1,-.6) -- (1,0) .. controls ++(0,.5) and ++(0,-.5) .. (0,1) -- (0,1.6) -- (2,1.6) -- (2,-2.2);
\fill[boxregion] (1.6,-1.6) arc (360:180:.3) -- (1,-1.6) .. controls ++(0,.5) and ++(0,-.5) .. (0,-.6)-- (0,0) .. controls ++(0,.5) and ++(0,-.5) .. (1,1) arc(180:0:.3) -- cycle;
\end{scope}

\end{scope}
\draw[thick,blue] (0,-2.2) node [below] {$\scriptstyle \langle Ff'| y \rangle $} -- (0,-1.6) .. controls ++(0,.5) and ++(0,-.5) .. (1,-.6) -- (1,0) .. controls ++(0,.5) and ++(0,-.5) .. (0,1) -- (0,1.2);
\draw[thick,\XsColor] (0,1.2) -- (0,1.6) node [above] {$\scriptstyle \langle Ff| y \rangle $};
\draw[thick] (1.6,-1.6) arc (360:180:.3) -- (1,-1.6) .. controls ++(0,.5) and ++(0,-.5) .. (0,-.6)-- (0,0) .. controls ++(0,.5) and ++(0,-.5) .. (1,1) arc(180:0:.3) -- cycle;
\filldraw[thick,fill=white] (.5,.5) circle (.1cm);
\filldraw[thick,fill=white] (.5,-1.1) circle (.1cm);
\roundNbox{fill=white,draw=black}{(0,1.2)}{.25}{.1}{.45}{$\scriptstyle \langle (F\alpha)^\dag |$};
}$}
\underset{\eqref{eq:UnitaryAdjunctionData2cells}^\dag}{=}
\scalebox{\tkzscaletemp}{$\tikzmath{
\begin{scope}
\clip[rounded corners=5] (-.4,-2.2) rectangle (2,1.6);
\fill[\AColor] (-.4,-2.22) -- (0,-2.2) -- (0,-1.6) .. controls ++(0,.5) and ++(0,-.5) .. (1,-.6) -- (1,0) .. controls ++(0,.5) and ++(0,-.5) .. (0,1) -- (0,1.6) -- (-.4,1.6) ;
\fill[\BColor] (1.4,-2.2) -- (0,-2.2) -- (0,-1.6) .. controls ++(0,.5) and ++(0,-.5) .. (1,-.6) -- (1,0) .. controls ++(0,.5) and ++(0,-.5) .. (0,1) -- (0,1.6) -- (2,1.6) -- (2,-2.2);
\fill[primedregion] (-.4,-2.2) rectangle (2,1.6);
\begin{scope}
\clip (1.6,-1.6) arc (360:180:.3) -- (1,-1.6) .. controls ++(0,.5) and ++(0,-.5) .. (0,-.6)-- (0,0) .. controls ++(0,.5) and ++(0,-.5) .. (1,1) arc(180:0:.3) -- cycle;
\fill[\AColor] (-.4,-2.22) -- (0,-2.2) -- (0,-1.6) .. controls ++(0,.5) and ++(0,-.5) .. (1,-.6) -- (1,0) .. controls ++(0,.5) and ++(0,-.5) .. (0,1) -- (0,1.6) -- (-.4,1.6) ;
\fill[\BColor] (1.4,-2.2) -- (0,-2.2) -- (0,-1.6) .. controls ++(0,.5) and ++(0,-.5) .. (1,-.6) -- (1,0) .. controls ++(0,.5) and ++(0,-.5) .. (0,1) -- (0,1.6) -- (2,1.6) -- (2,-2.2);
\fill[boxregion] (1.6,-1.6) arc (360:180:.3) -- (1,-1.6) .. controls ++(0,.5) and ++(0,-.5) .. (0,-.6)-- (0,0) .. controls ++(0,.5) and ++(0,-.5) .. (1,1) arc(180:0:.3) -- cycle;
\end{scope}
\end{scope}
\draw[thick,blue] (0,-2.2) node [below] {$\scriptstyle \langle Ff'| y \rangle $} -- (0,-1.6) .. controls ++(0,.5) and ++(0,-.5) .. (1,-.6) -- (1,-.2);
\draw[thick,\XsColor] (1,-.2) -- (1,0) .. controls ++(0,.5) and ++(0,-.5) .. (0,1) -- (0,1.6) node [above] {$\scriptstyle \langle Ff| y \rangle $};
\draw[thick] (1.6,-1.6) arc (360:180:.3) -- (1,-1.6) .. controls ++(0,.5) and ++(0,-.5) .. (0,-.6)-- (0,0) .. controls ++(0,.5) and ++(0,-.5) .. (1,1) arc(180:0:.3) -- cycle;
\filldraw[thick,fill=white] (.5,.5) circle (.1cm);
\filldraw[thick,fill=white] (.5,-1.1) circle (.1cm);
\roundNbox{fill=white, draw=black}{(1,-.2)}{.25}{.05}{.05}{$\scriptstyle \langle \alpha^\dag | $};
}$}
\underset{\eqref{eq:UnitaryAdjunctionData2cells}}{=}
\scalebox{\tkzscaletemp}{$\tikzmath{
\begin{scope}
\clip[rounded corners=5] (-.4,-2.2) rectangle (2,1.6);
\fill[\AColor] (-.4,-2.22) -- (0,-2.2) -- (0,-1.6) .. controls ++(0,.5) and ++(0,-.5) .. (1,-.6) -- (1,0) .. controls ++(0,.5) and ++(0,-.5) .. (0,1) -- (0,1.6) -- (-.4,1.6) ;
\fill[\BColor] (1.4,-2.2) -- (0,-2.2) -- (0,-1.6) .. controls ++(0,.5) and ++(0,-.5) .. (1,-.6) -- (1,0) .. controls ++(0,.5) and ++(0,-.5) .. (0,1) -- (0,1.6) -- (2,1.6) -- (2,-2.2);
\fill[primedregion] (-.4,-2.2) rectangle (2,1.6);
\begin{scope}
\clip (1.6,-1.6) arc (360:180:.3) -- (1,-1.6) .. controls ++(0,.5) and ++(0,-.5) .. (0,-.6)-- (0,0) .. controls ++(0,.5) and ++(0,-.5) .. (1,1) arc(180:0:.3) -- cycle;
\fill[\AColor] (-.4,-2.22) -- (0,-2.2) -- (0,-1.6) .. controls ++(0,.5) and ++(0,-.5) .. (1,-.6) -- (1,0) .. controls ++(0,.5) and ++(0,-.5) .. (0,1) -- (0,1.6) -- (-.4,1.6) ;
\fill[\BColor] (1.4,-2.2) -- (0,-2.2) -- (0,-1.6) .. controls ++(0,.5) and ++(0,-.5) .. (1,-.6) -- (1,0) .. controls ++(0,.5) and ++(0,-.5) .. (0,1) -- (0,1.6) -- (2,1.6) -- (2,-2.2);
\fill[boxregion] (1.6,-1.6) arc (360:180:.3) -- (1,-1.6) .. controls ++(0,.5) and ++(0,-.5) .. (0,-.6)-- (0,0) .. controls ++(0,.5) and ++(0,-.5) .. (1,1) arc(180:0:.3) -- cycle;
\end{scope}
\end{scope}
\draw[thick,blue] (0,-2.2) node [below] {$\scriptstyle \langle Ff'| y \rangle $} --  (0,-1.8);
\draw[thick,\XsColor] (0,-1.8) -- (0,-1.6) .. controls ++(0,.5) and ++(0,-.5) .. (1,-.6) -- (1,0) .. controls ++(0,.5) and ++(0,-.5) .. (0,1) -- (0,1.6) node [above] {$\scriptstyle \langle Ff| y \rangle $};
\draw[thick] (1.6,-1.6) arc (360:180:.3) -- (1,-1.6) .. controls ++(0,.5) and ++(0,-.5) .. (0,-.6)-- (0,0) .. controls ++(0,.5) and ++(0,-.5) .. (1,1) arc(180:0:.3) -- cycle;
\filldraw[thick,fill=white] (.5,.5) circle (.1cm);
\filldraw[thick,fill=white] (.5,-1.1) circle (.1cm);
\roundNbox{fill=white, draw=black}{(0,-1.8)}{.25}{.1}{.45}{$\scriptstyle \langle F(\alpha^\dag) | $};
}$}
=
\scalebox{\tkzscaletemp}{$\tikzmath{
\begin{scope}
\clip[rounded corners=5] (-.4,-2.2) rectangle (2,1.6);
\fill[\AColor] (-.4,-2.22) -- (0,-2.2) -- (0,1.6) -- (-.4,1.6) ;
\fill[\BColor] (1.4,-2.2)  -- (0,-2.2) -- (0,1.6) -- (2,1.6) -- (2,-2.2);
\fill[primedregion] (-.4,-2.2) rectangle (2,1.6);
\fill[\BColor] (1.6,-.9) arc (360:180:.6) -- (.4,.7) arc(180:0:.6) -- cycle;
\fill[boxregion] (1.6,-.9) arc (360:180:.6) -- (.4,.7) arc(180:0:.6) -- cycle;
\end{scope}
\draw[thick,blue] (0,-2.2) node [below] {$\scriptstyle \langle Ff'| y \rangle $} --  (0,-1.8);
\draw[thick,\XsColor] (0,-1.8) -- (0,1.6) node [above] {$\scriptstyle \langle Ff| y \rangle $};
\draw[thick] (1.6,-.9) arc (360:180:.6) -- (.4,.7) arc(180:0:.6) -- cycle;
\roundNbox{fill=white,draw=black}{(0,-1.8)}{.25}{.1}{.45}{$\scriptstyle \langle F(\alpha^\dag) |$};
}$}
=
\scalebox{\tkzscaletemp}{$\tikzmath{
\begin{scope}
\clip[rounded corners=5] (-.4,-2.2) rectangle (2,1.6);
\fill[\AColor] (-.4,-2.22) -- (0.8,-2.2) -- (0.8,1.6) -- (-.4,1.6) ;
\fill[\BColor] (1.4,-2.2)  -- (0.8,-2.2) -- (0.8,1.6) -- (2,1.6) -- (2,-2.2);
\fill[primedregion] (-.4,-2.2) rectangle (2,1.6);
\end{scope}
\draw[thick,blue] (0.8,-2.2) node [below] {$\scriptstyle \langle Ff'| y \rangle $} -- (0.8,-.3);
\draw[thick,\XsColor] (0.8,-.3) -- (0.8,1.6) node [above] {$\scriptstyle \langle Ff| y \rangle $};
\roundNbox{fill=white,draw=black}{(0.8-.175,-.3)}{.25}{.1}{.45}{$\scriptstyle \langle F(\alpha^\dag) |$};
}$}
\]
and hence $(F\alpha)^\dag = F(\alpha^\dag)$ as the Yoneda embedding $\fY \hookrightarrow \Hom(\fY^{1\op} \to 2\Hilb)$ is faithful. 
\end{proof}

\subsection{Application: the space of ONBs is contractible}

As an application of unitary adjunction, we prove that given any two ONBs, there is a canonical transformation between them, and moreover, the space of ONBs is contractible.

\begin{defn}
Given a 3-Hilbert space $\fX$, ONBs form a 3-groupoid as follows.
\begin{itemize}
\item 
Objects consist of a list of $\rmH^*$-fusion categories $\{\cC_i\}$ and an isometric equivalence $F\colon  \bigboxplus_i \Mod^\dag(\cC_i)\to \fX$, which can be viewed as an object in the slice category $3\Hilb_{/\fX}$.
\item 
Given objects $(\{\cC_i\},F)$ and $(\{\cD_i\},G)$, a 1-morphism is an isometric equivalence $H\colon \bigboxplus_i \Mod^\dag(\cC_i) \to \bigboxplus_i \Mod^\dag(\cD_i)$ together with a pointwise isometric equivalence
$\eta\colon  F \Rightarrow GH$. In diagrams,
\[
\tikzmath{
\begin{scope}
\clip [rounded corners=5] (-.7,-.7) rectangle (.7,.7);
\fill[\AColor] (-.7,-.7) rectangle (.7,.7);
\fill[\Asplitcolor] (0,-.7) rectangle (.7,.7);
\fill[\ccol] (-.3,0) rectangle (.3,.7);
\end{scope}
\draw[thick] (-.3,.3) -- (-.3,.7) node[above] {$\scriptstyle G$};
\draw[thick] (.3,.3) -- (.3,.7) node[above] {$\scriptstyle H$};
\draw[thick] (0,-.3) -- (0,-.7) node[below] {$\scriptstyle F$};
\roundNbox{fill=white}{(0,0)}{.3}{.2}{.2}{$\eta$};
}
\qquad
\text{where}
\qquad
\begin{array}{r}
\fX = 
\tikzmath{
\begin{scope}
\clip [rounded corners=5] (-.3,-.3) rectangle (.3,.3);
\fill[\AColor] (-.3,-.3) rectangle (.3,.3);
\end{scope}
}~
\\[.5em]
\bigboxplus_i \Mod^\dag(\cC_i) = 
\tikzmath{
\begin{scope}
\clip [rounded corners=5] (-.3,-.3) rectangle (.3,.3);
\fill[\Asplitcolor] (-.3,-.3) rectangle (.3,.3);
\end{scope}
}~
\\[.5em]
\bigboxplus_i \Mod^\dag(\cD_i) = 
\tikzmath{
\begin{scope}
\clip [rounded corners=5] (-.3,-.3) rectangle (.3,.3);
\fill[\ccol] (-.3,-.3) rectangle (.3,.3);
\end{scope}
}\phantom{.}
\end{array}
\]
\item 
Given parallel 1-morphisms $(H,\eta)$ and $(K,\kappa)$ from $(\{\cC_i\},F)\to(\{\cD_i\},G)$,
a 2-morphism is an isometric natural equivalence $\alpha\colon H \Rightarrow K$ together with a unitary modification
\[
\mu\colon
\quad
\tikzmath{
\begin{scope}
\clip [rounded corners=5] (-.7,-.7) rectangle (.7,.7);
\fill[\AColor] (-.7,-.7) rectangle (.7,.7);
\fill[\Asplitcolor] (0,-.7) rectangle (.7,.7);
\fill[\ccol] (-.2,0) rectangle (.2,.7);
\end{scope}
\draw[thick] (-.2,.3) -- (-.2,.7) node[above] {$\scriptstyle G$};
\draw[thick] (.2,.3) -- (.2,.7) node[above] {$\scriptstyle K$};
\draw[thick] (0,-.3) -- (0,-.7) node[below] {$\scriptstyle F$};
\roundNbox{fill=white}{(0,0)}{.3}{.1}{.1}{$\kappa$};
}
\quad
\Rrightarrow
\quad
\tikzmath{
\begin{scope}
\clip [rounded corners=5] (-.7,-.7) rectangle (.7,1.7);
\fill[\AColor] (-.7,-.7) rectangle (.7,1.7);
\fill[\Asplitcolor] (0,-.7) rectangle (.7,1.7);
\fill[\ccol] (-.3,0) rectangle (.3,1.7);
\end{scope}
\draw[thick] (-.3,.3) -- (-.3,1.7) node[above] {$\scriptstyle G$};
\draw[thick] (.3,.3) -- node[right] {$\scriptstyle H$}  (.3,.7) ;
\draw[thick] (.3,1.3) -- (.3,1.7) node[above] {$\scriptstyle K$};
\draw[thick] (0,-.3) -- (0,-.7) node[below] {$\scriptstyle F$};
\roundNbox{fill=white}{(0,0)}{.3}{.2}{.2}{$\eta$};
\roundNbox{fill=white}{(.3,1)}{.3}{0}{0}{$\alpha$};
}
.
\]
\item 
Given parallel 2-morphisms $(\alpha,\mu)$ and $(\beta,\nu)$ from $(H,\eta) \to (K,\kappa)$,
a 3-morphism is a unitary modification $\xi\colon\alpha \Rrightarrow \beta$ satisfying $\mu\circ(\kappa G\xi) = \nu$.
\begin{equation}
\label{eq:Icecream}    
\begin{tikzcd}[row sep=-0.5in]
\tikzmath{
\begin{scope}
\clip [rounded corners=5] (-.7,-.7) rectangle (.7,.7);
\fill[\AColor] (-.7,-.7) rectangle (.7,.7);
\fill[\Asplitcolor] (0,-.7) rectangle (.7,.7);
\fill[\ccol] (-.2,0) rectangle (.2,.7);
\end{scope}
\draw[thick] (-.2,.3) -- (-.2,.7) node[above] {$\scriptstyle G$};
\draw[thick] (.2,.3) -- (.2,.7) node[above] {$\scriptstyle K$};
\draw[thick] (0,-.3) -- (0,-.7) node[below] {$\scriptstyle F$};
\roundNbox{fill=white}{(0,0)}{.3}{.1}{.1}{};
\node at (0,-.1) {$\kappa$};
}
\arrow[dr,"\mu"]
\arrow[rr,"\nu"]
&&
\tikzmath{
\begin{scope}
\clip [rounded corners=5] (-.7,-.7) rectangle (.7,1.7);
\fill[\AColor] (-.7,-.7) rectangle (.7,1.7);
\fill[\Asplitcolor] (0,-.7) rectangle (.7,1.7);
\fill[\ccol] (-.3,0) rectangle (.3,1.7);
\end{scope}
\draw[thick] (-.3,.3) -- (-.3,1.7) node[above] {$\scriptstyle G$};
\draw[thick] (.3,.3) -- node[right] {$\scriptstyle H$}  (.3,.7) ;
\draw[thick] (.3,1.3) -- (.3,1.7) node[above] {$\scriptstyle K$};
\draw[thick] (0,-.3) -- (0,-.7) node[below] {$\scriptstyle F$};
\roundNbox{fill=white}{(0,0)}{.3}{.2}{.2}{};
\node at (0,-.1) {$\eta$};
\roundNbox{fill=white}{(.3,1)}{.3}{0}{0}{};
\node at (.3,.9) {$\beta$};
}
\\
&
\tikzmath{
\begin{scope}
\clip [rounded corners=5] (-.7,-.7) rectangle (.7,1.7);
\fill[\AColor] (-.7,-.7) rectangle (.7,1.7);
\fill[\Asplitcolor] (0,-.7) rectangle (.7,1.7);
\fill[\ccol] (-.3,0) rectangle (.3,1.7);
\end{scope}
\draw[thick] (-.3,.3) -- (-.3,1.7) node[above] {$\scriptstyle G$};
\draw[thick] (.3,.3) -- node[right] {$\scriptstyle H$}  (.3,.7) ;
\draw[thick] (.3,1.3) -- (.3,1.7) node[above] {$\scriptstyle K$};
\draw[thick] (0,-.3) -- (0,-.7) node[below] {$\scriptstyle F$};
\roundNbox{fill=white}{(0,0)}{.3}{.2}{.2}{};
\node at (0,-.1) {$\eta$};
\roundNbox{fill=white}{(.3,1)}{.3}{0}{0}{};
\node at (.3,.9) {$\alpha$};
\draw[thick, rounded corners, dashed] (-.1,.6) rectangle (.7,1.4);
}
\arrow[ur,"\xi"]
\end{tikzcd}
\end{equation}
where we apply $\xi$ locally within the dashed box.
\end{itemize}
\end{defn}

\begin{thm}\label{spaceofONBscontractible}
The 3-groupoid of ONBs for $\fX$ is contractible.
\end{thm}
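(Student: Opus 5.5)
The plan is to show three things in turn. The 3-groupoid is nonempty. Between any two objects there is a 1-morphism. Each hom 2-groupoid is itself contractible. For the last point it suffices to show that between parallel 1-morphisms there is a 2-morphism, and that between parallel 2-morphisms there is exactly one 3-morphism.

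\textbf{Nonemptiness.} This is the remark following Definition \ref{defn:ONB}, via \cite[Cor.~4.52]{MR5078555}: a choice of simples $b_i$ gives an isometric equivalence $\bigboxplus_i \Mod^\dag(\Omega_{b_i})\isomeq \fX$.

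\textbf{Existence of 1-morphisms.} Given $(\{\cC_i\},F)$ and $(\{\cD_i\},G)$, both $F$ and $G$ are isometric equivalences. The unitary 2-adjoint $G^*$ from \eqref{eq:Unitary2AdjointExists} lies in $\Hom(\fX\to\bigboxplus\Mod^\dag(\cD_i))$. Since $G$ is an isometric equivalence, the unit $\id\Rightarrow G^*G$ and counit $GG^*\Rightarrow\id$ should be pointwise isometric equivalences. I would argue this through the isometric natural equivalences $\fX(G(c)\to G(c'))\isomeq \fX(c\to G^*G(c'))$ together with Corollary \ref{cor:isometric-yoneda}. Set $H\coloneq G^*F$, and let $\eta\colon F\Rightarrow GG^*F=GH$ be built from the inverse of the counit whiskered by $F$. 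Then $H$ is an isometric equivalence, being a composite of such.

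\textbf{2-morphisms and 3-morphisms.} Given parallel $(H,\eta)$ and $(K,\kappa)$, the natural candidate is $\alpha\colon H\Rightarrow G^*GH\cong G^*F\cong G^*GK\Rightarrow K$. Here the first and last maps use the unit and its inverse, and the middle maps use $G^*\eta^{-1}$ and $G^*\kappa$. The modification $\mu$ is then assembled from the adjunction modifications (the swallowtail data of Definition \ref{defn:2-adjoint}).

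For uniqueness of 3-morphisms, the equation $\mu\circ(\kappa G\xi)=\nu$ forces $G\xi$ to equal a fixed composite of $\mu^{-1}$ and $\nu$, because $\kappa$ is invertible. Since $G$ is an equivalence, whiskering by $G$ is bijective on modifications. Hence $\xi$ exists and is unique, and it is unitary because $\mu$ and $\nu$ are.

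\textbf{A cleaner packaging.} Equivalently, I would use Remark \ref{rem:DeterminedByONB}. Postcomposition with $G$ gives an equivalence of 2-groupoids from isometric equivalences $\bigboxplus\Mod^\dag(\cC_i)\to\bigboxplus\Mod^\dag(\cD_i)$ to isometric equivalences into $\fX$. The hom 2-groupoid in the 3-groupoid of ONBs is then the homotopy fiber of this equivalence over $F$, which is contractible.

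\textbf{Main obstacle.} The expected difficulty is the \emph{isometric} and \emph{unitary} bookkeeping, not the bare higher-categorical contractibility. Three points must be checked:
\begin{itemize}
\item the unit and counit of $G\dashv^\dag G^*$ are \emph{pointwise isometric} equivalences, and not merely equivalences;
\item whiskering by $G$ is an isometric equivalence of $\Hom$ 2-categories, so that the resulting $\alpha$ is isometric and $\mu$, $\xi$ are unitary;
\item the 2-morphism $\alpha$ is unique up to a unique 3-morphism.
\end{itemize}
For the first point, my first attempt would be to prove that $\Hom(\fZ\to\fX)\xrightarrow{G\circ-}\Hom(\fZ\to\fY)$ is pointwise isometric whenever $G$ is an isometric equivalence. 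I would do this by reducing, via Remark \ref{rem:DeterminedByONB}, to the restriction to $\rmB\Omega_{\boxplus b_i}$, and then applying the contractibility of Hilbert direct sums and of $\rmH^*$-monad splittings (Lemma \ref{lem:h-star-monad-splittings-unique}) as in the universal property \eqref{eq:UniversalPropertyOfCompletion}.
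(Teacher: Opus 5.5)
Your proposal is correct and follows essentially the same route as the paper: you take $H=G^*F$ with $\eta\colon F\Rightarrow GG^*F$ coming from the (inverse, i.e.\ dual) counit of $G\dashv^\dag G^*$, and the 2-morphism $H\Rightarrow G^*GH\Rightarrow G^*F\Rightarrow G^*GK\Rightarrow K$ with $\mu$ assembled from the zig-zag data. You then observe that the unique $\xi$ is unitary because it is built from unitaries and isometric equivalences. Your additions are the nonemptiness check and the derivation of pointwise isometry of the unit and counit via Corollary \ref{cor:isometric-yoneda}, which is the argument of desideratum \ref{isometriesin4Hilb}; both fill in steps the paper only asserts.
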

\begin{proof}
It is enough to show that there exists a 1-morphism between every pair of objects, 
a 2-morphism between every pair of parallel 1-morphisms, 
and a unique 3-morphism between every pair of parallel 2-morphisms.
It is a standard fact in category theory that the core of the full subcategory of the slice category over $\fX$ on isomorphisms is contractible (for any $n$-category).
Indeed, there is at most $\xi$ which can possibly satisfy \eqref{eq:Icecream} above.
In our case, we must check that the unique morphisms are isometric/unitary.

First, given objects $(\{\cC_i\},F)$ and $(\{\cD_i\},G)$, we can take unitary adjoints to get a functor $G^*F: (\{\cC_i\},F)\to (\{\cD_i\},G)$.
Since $F,G$ are isometric equivalences, so is $G^*F$.
Now we set $\eta: F\Rightarrow GG^*F$ as $\ev_G^\vee\circ \id_F$, i.e.,
$$
\tikzmath{
\begin{scope}
\clip [rounded corners=5] (-.7,-.7) rectangle (.7,.7);
\fill[\AColor] (-.7,-.7) rectangle (.7,.7);
\fill[\Asplitcolor] (.7,.7) -- (.3,.7) -- (.3,.3) .. controls ++(0,-.3) and ++(0,.3) .. (0,-.3) -- (0,-.7) -- (.7,-.7);
\fill[\ccol] (-.4,.7) -- (-.4,.3) arc(180:360:.2) -- (0,.3) -- (0,.7);
\end{scope}
\draw[thick] (-.4,.7) node[above] {$\scriptstyle G$} -- (-.4,.3) arc(180:360:.2) -- (0,.3) -- (0,.7) node[above] {$\scriptstyle G^*$};
\draw[thick] (.3,.3) -- (.3,.7) node[above] {$\scriptstyle F$};
\draw[thick] (.3,.7) -- (.3,.3) .. controls ++(0,-.3) and ++(0,.3) .. (0,-.3) -- (0,-.7) node[below] {$\scriptstyle F$};
}
$$
(Note here that functor composition is read \emph{right-to-left}, opposite to our usual left-to-right composition rule.)

Second, given two parallel 1-morphisms
$(H,\eta), (K,\kappa) :(\{\cC_i\},F)\to (\{\cD_i\},G)$,
we have the 2-morphism
$$
\tikzmath{
\begin{scope}
\clip [rounded corners=5] (-1.5,-.3) rectangle (.7,2.5);
\fill[\ccol] (-1.5,-.3) rectangle (.7,2.5);
\fill[\AColor] (-0.3,0.3) arc(0:-180:.3)--++(0,1.65) arc(180:0:.3);
\fill[\AColor] (-0.3,1.6) rectangle (.3,.6);
\fill[\Asplitcolor] (0,-.3) rectangle (.7,2.5);
\end{scope}
\draw[thick] (0,-.3)--node[right]{\scriptsize$F$}node[right,pos=.925]{\scriptsize$K$}node[right,pos=.075]{\scriptsize$H$} (0,2.5);
\draw[thick] (-0.3,0.3) arc(0:-180:.3)--node[left]{\scriptsize$G^*$}++(0,1.65) arc(180:0:.3);
\roundNbox{fill=white}{(-.15,.6)}{.3}{.3}{.3}{$\eta^*$};
\roundNbox{fill=white}{(-.15,1.65)}{.3}{.3}{.3}{$\kappa$};
}
:
\;
H\longrightarrow K
$$
which is automatically an isometric natural isomorphism as it is built from composites of whiskerings of isometric natural isomorphisms.
This natural isomorphism admits the unitary modification
\[
\tikzmath{
\begin{scope}
\clip [rounded corners=5] (-.7,-1.6) rectangle (.7,.7);
\fill[\AColor] (-.7,-1.6) rectangle (.7,.7);
\fill[\Asplitcolor] (0,-1.6) rectangle (.7,.7);
\fill[\ccol] (-.3,0) rectangle (.3,.7);
\end{scope}
\draw[thick] (-.3,.3) -- (-.3,.7) node[above] {$\scriptstyle G$};
\draw[thick] (.3,.3) -- (.3,.7) node[above] {$\scriptstyle K$};
\draw[thick] (0,-.3) -- (0,-1.6) node[below] {$\scriptstyle F$};
\roundNbox{fill=white}{(0,0)}{.3}{.2}{.2}{$\kappa$};
\draw[thick, dashed, rounded corners] (-.3,-1.3) rectangle (.3,-.7);
}
\;
\cong
\;
\tikzmath{
\begin{scope}
\clip [rounded corners=5] (-.7,-.7) rectangle (.7,2.7);
\fill[\AColor] (-.7,-.7) rectangle (.7,2.7);
\fill[\Asplitcolor] (0,-.7) rectangle (.7,2.7);
\fill[\ccol] (-.3,.3) rectangle (.3,.7);
\fill[\ccol] (-.3,2.3) rectangle (.3,2.7);
\end{scope}
\draw[thick] (-.3,.3) -- node[left] {$\scriptstyle G$} (-.3,.7) ;
\draw[thick] (.3,.3) -- node[right] {$\scriptstyle H$} (.3,.7) ;
\draw[thick] (0,-.3) -- (0,-.7) node[below] {$\scriptstyle F$};
\draw[thick] (0,1.3) -- node[right] {$\scriptstyle F$} (0,1.7) ;
\draw[thick] (-.3,2.3) -- (-.3,2.7) node[above] {$\scriptstyle G$};
\draw[thick] (.3,2.3) -- (.3,2.7) node[above] {$\scriptstyle K$};
\roundNbox{fill=white}{(0,0)}{.3}{.2}{.2}{$\eta$};
\roundNbox{fill=white}{(0,1)}{.3}{.2}{.2}{$\eta^*$};
\roundNbox{fill=white}{(0,2)}{.3}{.2}{.2}{$\kappa$};
}
\;
\cong
\;
\tikzmath{
\begin{scope}
\clip [rounded corners=5] (-1.5,-1) rectangle (.7,2.7);
\fill[\AColor] (-1.5,-1) rectangle (0,2.7);
\fill[\Asplitcolor] (0,-1) rectangle (.7,2.7);
\fill[\ccol] (-.3,0) rectangle (.3,.7);
\fill[\ccol] (-.3,2.3)  arc(0:180:.2cm) -- (-.7,.2) arc(0:-180:.2cm) -- (-1.1,2.7) -- (.3,2.7) -- (.3,2.3);
\end{scope}
\draw[thick] (-.3,0) -- (-.3,.7) ;
\draw[thick] (.3,0) -- node[right] {$\scriptstyle H$} (.3,.7) ;
\draw[thick] (0,-.6) -- (0,-1) node[below] {$\scriptstyle F$};
\draw[thick] (0,1.3) -- node[right] {$\scriptstyle F$} (0,1.7);
\draw[thick] (.3,2.3) -- (.3,2.7) node[above] {$\scriptstyle K$};
\draw[thick] (-.3,2.3) arc(0:180:.2cm) --node[right, yshift=.25cm,xshift=.1cm] {$\scriptstyle G^*$} (-.7,.2) arc(0:-180:.2cm) --node[left] {$\scriptstyle G$} (-1.1,2.7);
\roundNbox{fill=white}{(0,-.3)}{.3}{.2}{.2}{$\eta$};
\roundNbox{fill=white}{(0,1)}{.3}{.2}{.2}{$\eta^*$};
\roundNbox{fill=white}{(0,2)}{.3}{.2}{.2}{$\kappa$};
\draw[thick,dashed] [rounded corners=5] (-0.9,0.1) rectangle (-.1,.6);
}
\;\,
\cong
\;\,
\tikzmath{
\begin{scope}
\clip [rounded corners=5] (-1.5,-1) rectangle (.7,2.7);
\fill[\AColor] (-1.5,-1) rectangle (0,2.7);
\fill[\Asplitcolor] (0,-1) rectangle (.7,2.7);
\fill[\ccol] (-.3,0) arc(0:180:.2cm) arc(0:-180:.2cm) -- (-1.1,2.7) -- (.3,2.7) -- (.3,2.3) -- (0,1.7) -- (0,1.3) -- (.3,.7) -- (.3,0);
\fill[\AColor] (-.3,2.3) arc(0:180:.2cm) -- (-.7,.7) arc(-180:0:.2cm) -- (0,1.3) -- (0,1.7);
\end{scope}
\draw[thick] (.3,0) -- node[right] {$\scriptstyle H$} (.3,.7) ;
\draw[thick] (0,-.6) -- (0,-1) node[below] {$\scriptstyle F$};
\draw[thick] (0,1.3) -- node[right] {$\scriptstyle F$} (0,1.7);
\draw[thick] (.3,2.3) -- (.3,2.7) node[above] {$\scriptstyle K$};
\draw[thick] (-.3,2.3) arc(0:180:.2cm) --node[right, xshift=.1cm] {$\scriptstyle G^*$} (-.7,.7) arc(-180:0:.2cm);
\draw[thick] (-.3,0) arc(0:180:.2cm) arc(0:-180:.2cm) --node[left] {$\scriptstyle G$} (-1.1,2.7);
\roundNbox{fill=white}{(0,-.3)}{.3}{.2}{.2}{$\eta$};
\roundNbox{fill=white}{(0,1)}{.3}{.2}{.2}{$\eta^*$};
\roundNbox{fill=white}{(0,2)}{.3}{.2}{.2}{$\kappa$};
}
\;\,
\cong
\;\,
\tikzmath{
\begin{scope}
\clip [rounded corners=5] (-1.5,-1) rectangle (.7,2.7);
\fill[\AColor] (-1.5,-1) rectangle (0,2.7);
\fill[\Asplitcolor] (-.4,-1) rectangle (.7,2.7);
\fill[\ccol] (-1.1,0) -- (-1.1,2.7) -- (.3,2.7) -- (.3,2.3) -- (0,1.7) -- (0,1.3) -- (.3,.7) -- (.3,0);
\fill[\AColor] (-.3,2.3) arc(0:180:.2cm) -- (-.7,.7) arc(-180:0:.2cm) -- (0,1.3) -- (0,1.7);
\end{scope}
\draw[thick] (.3,0) -- node[right] {$\scriptstyle H$} (.3,.7) ;
\draw[thick] (-.4,-.6) -- (-.4,-1) node[below] {$\scriptstyle F$};
\draw[thick] (0,1.3) -- node[right] {$\scriptstyle F$} (0,1.7);
\draw[thick] (.3,2.3) -- (.3,2.7) node[above] {$\scriptstyle K$};
\draw[thick] (-.3,2.3) arc(0:180:.2cm) --node[right, xshift=.1cm] {$\scriptstyle G^*$} (-.7,.7) arc(-180:0:.2cm);
\draw[thick] (-1.1,0) --node[left] {$\scriptstyle G$} (-1.1,2.7);
\roundNbox{fill=white}{(0,-.3)}{.3}{1}{.2}{$\eta$};
\roundNbox{fill=white}{(0,1)}{.3}{.2}{.2}{$\eta^*$};
\roundNbox{fill=white}{(0,2)}{.3}{.2}{.2}{$\kappa$};
}
\]
where unitarity follows from the fact that $G$, its evaluations and coevaluations, and $\eta$ are isometric equivalences/isomorphisms.

Finally, one checks that the unique $\xi$ satisfying \eqref{eq:Icecream} is manifestly unitary, as it is built from isometric equivalences and unitaries.
\end{proof}

\section{3Hilb is self-enriched}
\label{sec:3HilbSelfEnriched}
We now use the notion of an ONB for a 3-Hilbert space $\fX$ to endow $\Hom(\fX\to \fY)$, the $\dag,\vee$-preserving 2-functors $\fX\to \fY$, with a UAF and spherical weight, promoting the $\rmC^*$ 2-category $\Hom(\fX\to \fY)$ to a 3-Hilbert space.
We then prove our definition satisfies several desiderata for the `correct' 3-Hilbert space structure.

\subsection{Self-enrichment is well-defined}
\label{subsec:selfenrichmentwelldefined}
\begin{defn} \label{defn:trace and UAF on Hom}
Suppose $\fX,\fY$ are 3-Hilbert spaces.
On $\Hom(\fX\to \fY)$, we define the weight
\begin{equation}
\label{eq:3HilbOnFun}
\Psi_F^{\Hom}(m\colon \id_F\Rrightarrow \id_F)
\coloneq
\sum_{b\in\pi_0\fX}
\frac{d_b}{D_{\Omega_b}}
\Psi^\fY_{F(b)}(m_b)
\qquad\qquad
\forall\,F\colon \fX\to \fY.
\end{equation}
Here, $d_b\coloneq d_{1_b} = \Psi(\id_{1_b})$ 
and
$D_{\Omega_b} \coloneq \sum_{X\in \pi_0\Omega_b} d_X^2$.
The UAF $\vee$ on $\Hom(\fX \to \fY)$ is determined on a transformation $\alpha \colon F \Rightarrow G$ as follows.
\begin{itemize}
\item For $a \in \fX$, we define the component of $\alpha^\vee $ at $a$ by 
$( \alpha^\vee)_a \coloneqq (\alpha_a)^\vee \in \fY(Ga \to Fa)$. 
\item For ${}_aX_b \in \fX(a \to b)$, we define the naturator unitary $(\alpha^\vee)_X\coloneq(\alpha_X^\vee)^\dag$ associated to ${}_aX_b$:
\[
\tikzmath{
\begin{scope}
\clip[rounded corners=5] (-.3,-.5) rectangle (1.3,1.5);
\fill[lightgray] (0,-.5)  -- (0,0) .. controls ++(0,.5) and ++(0,-.5) .. (1,1) -- (1,1.5) -- (1.3,1.5) -- (1.3,-.5) -- cycle;
\fill[lightgray!50] (0,-.5)  -- (0,0) .. controls ++(0,.5) and ++(0,-.5) .. (1,1) -- (1,1.5) -- (-.3,1.5) -- (-.3,-.5) -- cycle;
\fill[pattern=primedbox] (1,-.5) -- (1,0) .. controls ++(0,.5) and ++(0,-.5) .. (0,1) -- (0,1.5) -- (-.3,1.5) -- (-.3,-.5) -- cycle;
\fill[pattern=primeddots] (1,-.5) -- (1,0) .. controls ++(0,.5) and ++(0,-.5) .. (0,1) -- (0,1.5) -- (1.3,1.5) -- (1.3,-.5) -- cycle;
\end{scope}
\draw[thick] (1,-.5) node [below] {$\scriptstyle \alpha^\vee_b$} -- (1,0) .. controls ++(0,.5) and ++(0,-.5) .. (0,1) -- (0,1.5) node [above] {$\scriptstyle \alpha^\vee_a$};
\draw[thick,\XsColor] (0,-.5) node [below] {$\scriptstyle GX$}-- (0,0) .. controls ++(0,.5) and ++(0,-.5) .. (1,1) -- (1,1.5) node [above] {$\scriptstyle FX$};
\roundNbox{fill=white}{(.5,.5)}{.3}{0}{0}{$\scriptstyle\alpha^\vee_X$};
}
\;\coloneqq\;
\left(
\def\tkzscaletemp{0.7}
\scalebox{\tkzscaletemp}{$
\tikzmath{
\begin{scope}
\clip[rounded corners=5] (-1.3,-1) rectangle (2.3,2);
\fill[lightgray!50] (0,-1) -- (0,0) .. controls ++(0,.5) and ++(0,-.5) .. (1,1) -- (1,2) -- (-1.3,2) -- (-1.3,-1) -- cycle;
\fill[lightgray] (0,-1) -- (0,0) .. controls ++(0,.5) and ++(0,-.5) .. (1,1) -- (1,2) -- (2.3,2) -- (2.3,-1) -- cycle;
\fill[pattern=primedbox] (2,2) -- (2,0) arc (360:180:.5) .. controls ++(0,.5) and ++(0,-.5) .. (0,1) arc (0:180:.5) -- (-1,-1) -- (-1.3,-1) -- (-1.3,2) --cycle;
\fill[pattern=primeddots] (2,2) -- (2,0) arc (360:180:.5) .. controls ++(0,.5) and ++(0,-.5) .. (0,1) arc (0:180:.5) -- (-1,-1) -- (2.3,-1) -- (2.3,2) --cycle;
\end{scope}
\draw[very thick, \XsColor] (0,-1) node [below] {$FX$}-- (0,0) .. controls ++(0,.5) and ++(0,-.5) .. (1,1) -- (1,2) node [above] {$GX$};
\draw[very thick] (2,2) node [above] {$ \alpha^\vee_b$} -- (2,0) arc (360:180:.5) .. controls ++(0,.5) and ++(0,-.5) .. (0,1) arc (0:180:.5) -- (-1,-1) node [below] {$ \alpha^\vee_a$};
\roundNbox{fill=white,ultra thick}{(.5,.5)}{.4}{0}{0}{$\alpha_X$};
}$}
\right)^\dag
\;=\;
\def\tkzscaletemp{0.7}
\scalebox{\tkzscaletemp}{$
\tikzmath{
\tikzset{yscale=-1}
\begin{scope}
\clip[rounded corners=5] (-1.3,-1) rectangle (2.3,2);
\fill[lightgray!50] (0,-1) -- (0,0) .. controls ++(0,.5) and ++(0,-.5) .. (1,1) -- (1,2) -- (-1.3,2) -- (-1.3,-1) -- cycle;
\fill[lightgray] (0,-1) -- (0,0) .. controls ++(0,.5) and ++(0,-.5) .. (1,1) -- (1,2) -- (2.3,2) -- (2.3,-1) -- cycle;
\fill[pattern=primedbox] (2,2) -- (2,0) arc (360:180:.5) .. controls ++(0,.5) and ++(0,-.5) .. (0,1) arc (0:180:.5) -- (-1,-1) -- (-1.3,-1) -- (-1.3,2) --cycle;
\fill[pattern=primeddots] (2,2) -- (2,0) arc (360:180:.5) .. controls ++(0,.5) and ++(0,-.5) .. (0,1) arc (0:180:.5) -- (-1,-1) -- (2.3,-1) -- (2.3,2) --cycle;
\end{scope}
\draw[very thick, \XsColor] (0,-1) node [above] {$FX$}-- (0,0) .. controls ++(0,.5) and ++(0,-.5) .. (1,1) -- (1,2) node[below] {$GX$};
\draw[very thick] (2,2) node [below] {$\alpha^\vee_b$} -- (2,0) arc (360:180:.5) .. controls ++(0,.5) and ++(0,-.5) .. (0,1) arc (0:180:.5) -- (-1,-1) node [above] {$\alpha^\vee_a$};
\roundNbox{fill=white,ultra thick}{(.5,.5)}{.4}{0}{0}{$\alpha_X^\dag$};
}$}
.
\]

\item 
The evaluation $\ev_\alpha\colon  \alpha^\vee\otimes \alpha\Rrightarrow \id_{G}$
and coevaluation 
$\coev_\alpha \colon  \id_F\Rrightarrow \alpha\otimes \alpha^\vee$ 
modifications are given componentwise by
$(\ev_\alpha)_a\coloneq \ev_{\alpha_a}$ 
and 
$(\coev_\alpha)_a\coloneq\coev_{\alpha_a}$.
These formulas define modifications by the monoidality transformation axiom for $\alpha$.
The zig-zag axioms are verified componentwise.
\end{itemize}
\end{defn}

\begin{lem}
$\vee, \Psi^{\Hom}$ are well-defined.
\end{lem}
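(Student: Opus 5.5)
To prove the lemma, I will check separately that $\vee$ is a UAF on $\Hom(\fX\to\fY)$ and that $\Psi^{\Hom}$ is a spherical weight for it. Since the weight formula \eqref{eq:3HilbOnFun} refers to a chosen ONB $\pi_0\fX$, I must also check that it does not depend on that choice.

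\emph{Step 1: $\alpha^\vee$ is a $\dag$-transformation.}
First I verify that $\alpha^\vee$ as defined is a $\dag$-2-natural transformation $G\Rightarrow F$.
\begin{itemize}
\item The naturator $(\alpha_X^\vee)^\dag$ is unitary, because $\alpha_X$ is unitary and taking mates under the UAF of $\fY$ preserves unitarity.
\item Naturality in 2-morphisms $f\colon X\Rightarrow X'$ follows from naturality of $\alpha_X$, using $(f^\vee)^\dag=(f^\dag)^\vee$ in $\fY$ together with $F(f^\dag)=F(f)^\dag$.
\item Monoidality and unitality of $(\alpha^\vee)_X$ follow from those of $\alpha_X$ by a zig-zag computation. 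This uses the unitarity of the tensorators $\nu$ of the UAF on $\fY$.
\end{itemize}

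\emph{Step 2: $\vee$ is a UAF on $\Hom(\fX\to\fY)$.}
The paper already notes that the componentwise $\ev_\alpha$ and $\coev_\alpha$ are modifications; I would spell out that this is exactly the definition of $(\alpha^\vee)_X$ as a mate. The zig-zags then hold componentwise. The three UAF axioms of Definition \ref{defn:UAF} are all checked componentwise in $\fY$, because modifications compose and take adjoints componentwise. So they reduce to the corresponding axioms for $\vee_\fY$ applied to the components $\alpha_a$.

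\emph{Step 3: $\Psi^{\Hom}$ is a spherical weight.}
Positivity and faithfulness of $\Psi^{\Hom}_F$ come from positivity of each $\Psi^\fY_{F(b)}$ together with Remark \ref{rem:DeterminedByONB}: a modification is determined by its components at $b\in\pi_0\fX$, so $\Psi^{\Hom}_F(m^\dag m)=0$ forces $m_b=0$ for all $b$, hence $m=0$. For sphericality, take $m\in\End(\alpha)$ with $\alpha\colon F\Rightarrow G$. Then $\tr^\vee_R(m)$ and $\tr^\vee_L(m)$ have components $\tr^\vee_R(m_b)$ and $\tr^\vee_L(m_b)$ in $\fY$. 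Applying sphericality of $\Psi^\fY$ to each 1-morphism $\alpha_b\colon F(b)\to G(b)$ gives
\[
\Psi^{\Hom}_F(\tr_R^\vee m)=\sum_b \tfrac{d_b}{D_{\Omega_b}}\Psi^\fY_{F(b)}(\tr_R^\vee m_b)=\sum_b \tfrac{d_b}{D_{\Omega_b}}\Psi^\fY_{G(b)}(\tr_L^\vee m_b)=\Psi^{\Hom}_G(\tr_L^\vee m).
\]

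\emph{Step 4: independence of the ONB (main obstacle).}
It suffices to compare $b$ with another simple $b'\sim b$, one component at a time. Since $b'$ is in the same component, I can choose a nonzero simple 1-morphism ${}_{b'}X_{b}$. Naturality of $m$ along $X$ relates $m_{b'}$ to $m_b$. Concretely, I will close up $\id_{F(X)}\otimes m_b$ using $\ev_{F(X)}$ and $\coev_{F(X)}$, use that $F$ preserves the UAF and that $m$ commutes with the naturators of $\id_F$, and then apply sphericality of $\Psi^\fY$. The expected outcome is
\[
\Psi^\fY_{F(b')}\bigl(m_{b'}\cdot \tr(\id_{F X})\bigr)=\Psi^\fY_{F(b)}\bigl(\tr(\id_{FX})\cdot m_b\bigr).
\]
To isolate $m_{b'}$ alone, I would replace the single $X$ by the sum $\bigoplus_{X\in\pi_0\fX(b'\to b)}\frac{d_X}{\cdots}\,X$ and use the "Kirby colour" identities $\sum_X d_X \tr(\id_X)$ relating $D_{\Omega_b}$, $D_{\Omega_{b'}}$, $d_b$ and $d_{b'}$. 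This is the hardest part: the normalization $\frac{d_b}{D_{\Omega_b}}$ is presumably chosen exactly so that these identities match up. I expect it to need the identity $D_{\Omega_b}/d_b^2=D_{\Omega_{b'}}/d_{b'}^2$ (equality of global dimensions within a component, coming from the Morita equivalence $\Omega_b\sim\Omega_{b'}$ via $\fX(b\to b')$ in this spherical setting). If the direct computation becomes messy, my fallback is to pass through $\fX\cong\Mod^\dag(\Omega_b)$ and use formula \eqref{modeeqn} there.
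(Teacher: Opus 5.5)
Your main line matches the paper's proof. The UAF axioms for $\vee$ are checked componentwise in $\fY$. Your Step 3 sphericality argument is the paper's next lemma, verbatim. ONB-independence reduces to two facts: (i) $d_b^{-1}\Psi^\fY_{F(b)}(m_b)$ is constant on a component, and (ii) $D_{\Omega_b}/d_b^2=\FPdim(\Omega_b)$ is a Morita invariant, which is exactly the identity you anticipate. Two points need correcting.

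\emph{Step 4.} No Kirby colour is needed, and the global dimensions play no role in isolating $m_{b'}$. The simple objects $b',b$ have one-dimensional $\End(1_{b'})$, $\End(1_b)$, and $F$ preserves the UAF (the canonical isomorphism $F(X^\vee)\cong F(X)^\vee$ is unitary). Hence the bubbles $\tr^\vee_R(\id_{F(X)})$ and $\tr^\vee_L(\id_{F(X)})$ are $F$ of the bubbles of $X$, i.e.\ positive scalars $\dim_R(X)$, $\dim_L(X)$ times the identity. Your identity therefore reads $\dim_R(X)\,\Psi^\fY_{F(b')}(m_{b'})=\dim_L(X)\,\Psi^\fY_{F(b)}(m_b)$. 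Sphericality of $\Psi^\fX$ applied to $\id_X$ gives $\dim_R(X)\,d_{b'}=\dim_L(X)\,d_b$. Together these give (i), which the paper asserts without proof. Only then does (ii) enter, through Morita invariance of $\FPdim$. So neither the Kirby colour sum nor your fallback through $\Mod^\dag(\Omega_b)$ is needed.

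\emph{Step 1.} Your justification that ``taking mates under the UAF of $\fY$ preserves unitarity'' is false in general. View $\Hilb$ as the 1-endomorphisms of $\Hilb\in 2\Hilb$, and regard $\id\colon V\otimes W\Rightarrow V\otimes W$ as a 2-cell with $F(X)=\alpha_a=V$ and $\alpha_b=G(X)=W$. Its mate is the rank-one map $\coev_W\circ\ev_V\colon \bar V\otimes V\to W\otimes \bar W$, which is not unitary once $\dim V>1$.

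What is true is that the mate of a naturator $\alpha_X$ is unitary, and this uses that $X$ has a dual in $\fX$. Compose the unitary adjunction isomorphisms of Lemma \ref{lem:uaf-reps-unitary-adjs} with precomposition by the unitary naturator $\alpha_{X^\vee}$. This gives a unitary isomorphism $\fY(P\Rightarrow \alpha_a^\vee\otimes F(X))\cong \fY(P\Rightarrow G(X)\otimes\alpha_b^\vee)$, natural in $P$, which is postcomposition with the mate. Unitary Yoneda in the 2-Hilbert space $\fY(G(a)\to F(b))$ then makes the mate unitary.

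This point matters. $\ev_\alpha$ is a modification exactly when $(\alpha^\vee)_X$ is the inverse of the mate, whereas it is defined as the mate's dagger. The paper builds this unitarity into the definition of $\alpha^\vee$ without comment. So this is a flaw in your supporting argument, not a departure from the paper's proof.
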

\begin{proof}
\item[\underline{Step 1:}]
The formula \eqref{eq:3HilbOnFun} for $\Psi_F^\Fun$ is independent of the choice of ONB for $\fX$.
\begin{proof}
Suppose $\{c\}$ is another choice of ONB for $\fX$.
If $b\sim c$, then 
$$
d_b^{-1} \Psi^\fY_{F(b)}(\mu_b)
=
d_c^{-1} \Psi^\fY_{F(c)}(\mu_c).
$$
By \cite[p16, before (8)]{MR5078555}, since $\Omega_b$ is fusion,
$$
d_b^{-1} D_{\Omega_b}
=
\FPdim(\Omega_b)\cdot d_b.
$$
Since $\Omega_b$ and $\Omega_c$ 
are Morita equivalent $\rmH^*$-fusion categories
as $b\sim c$, we then have
\begin{align*}
\frac{d_b}{D_{\Omega_b}}
\Psi^\fY_{F(b)}(\mu_b)
&=
\frac{1}{d_b\cdot \FPdim \Omega_b}
\Psi^\fY_{F(b)}(\mu_b)
\\&=
\frac{1}{d_c\cdot \FPdim \Omega_{c}}
\Psi^\fY_{F(c)}(\mu_c)
\\&=
\frac{d_c}{D_{\Omega_c}}
\Psi^\fY_{F(c)}(\mu_c).
\qedhere
\end{align*}
\end{proof}

\item[\underline{Step 2:}]
$\vee$ is a UAF on $\Hom(\fX\to \fY)$.
\vspace*{.2cm}\\
\noindent\emph{Proof.}
We must check that 
(a) for all modifications $m\colon  \alpha\Rrightarrow \beta$ for parallel transformations $\alpha,\beta\colon  F\Rightarrow G$, $m^{\vee\dag}=m^{\dag\vee}$, 
and
(b) the canonical tensorator $\nu_{\alpha,\beta}\colon \alpha^\vee\beta^\vee\Rrightarrow (\beta\alpha)^\vee$
is unitary.
These are both verified componentwise, 
and both follow as $\ev,\coev$ for transformations were defined componentwise.
\end{proof}

\begin{lem}
$(\vee, \Psi^{\Hom})$ endow $\Hom(\fX\to \fY)$ with the structure of a pre-3-Hilbert space.
\end{lem}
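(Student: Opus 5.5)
We must show three things: $\Hom(\fX\to\fY)$ is a finite unitary 2-category, $\vee$ is a UAF on it, and $\Psi^{\Hom}$ is a spherical weight for $\vee$. The UAF part is Step~2 of the previous lemma, and well-definedness of $\Psi^{\Hom}$ is Step~1. So the work is finiteness/unitarity and sphericality plus positivity.

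\emph{Unitarity and finiteness.} By \cite[Prop.~2.13]{MR4369356}, $\Hom(\fX\to\fY)$ is a $\rmC^*$-2-category. Fix an ONB $\pi_0\fX=\{b_i\}$ and set $b\coloneq\bigboxplus b_i$. By Remark~\ref{rem:DeterminedByONB}, restriction to $\rmB\Omega_b$ identifies $\Hom(\fX\to\fY)$ with $\Hom(\rmB\Omega_b\to\fY)$. In that description, an endomorphism algebra of $\bigoplus_j F_j$ is a category of $1$-morphisms $\alpha_b\in\fY(\bigboxplus F_j(b)\to\bigboxplus F_j(b))$ equipped with unitary half-braidings against $\Omega_b$. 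This is a dual/centralizer-type category of the unitary multifusion categories $\Omega_b$ and $\End_\fY(\bigboxplus F_j(b))$ (a category of bimodule functors). Such categories are unitary multifusion by standard results, and the dimension of their centres is bounded in terms of the number of components of $\fX$ and $\fY$. This gives finiteness.

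\emph{Positivity and faithfulness.} Modifications are determined by their components at $b\in\pi_0\fX$, by the last bullet of Remark~\ref{rem:DeterminedByONB}. Each $\Psi^\fY_{F(b)}$ is positive definite, and the coefficients $d_b/D_{\Omega_b}$ are positive. Hence for $m\in\End(\id_F)$ we have $\Psi^{\Hom}_F(m^\dag m)=0$ iff all $m_b=0$ iff $m=0$.

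\emph{Sphericality, the main obstacle.} Let $\alpha\colon F\Rightarrow G$ and $f\in\End(\alpha)$. Since $\ev_\alpha$ and $\coev_\alpha$ are defined componentwise, $\tr^\vee_R(f)_b=\tr^\vee_R(f_b)$ and $\tr^\vee_L(f)_b=\tr^\vee_L(f_b)$ in $\fY$. Then
\[
\Psi^{\Hom}_F(\tr^\vee_R f)=\sum_{b}\frac{d_b}{D_{\Omega_b}}\Psi^\fY_{F(b)}(\tr^\vee_R f_b)=\sum_b\frac{d_b}{D_{\Omega_b}}\Psi^\fY_{G(b)}(\tr^\vee_L f_b)=\Psi^{\Hom}_G(\tr^\vee_L f),
\]
where the middle equality is sphericality of $\Psi^\fY$ applied to $\alpha_b\in\fY(F(b)\to G(b))$.

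The subtlety is checking that the componentwise traces really are the components of the modification traces. This uses the description of $\alpha^\vee_X$ as $(\alpha_X^\vee)^\dag$, so that the composite modifications are genuinely modifications; that is exactly the content of Definition~\ref{defn:trace and UAF on Hom}. The same sphericality computation then shows the induced traces $\Tr^{\Hom}$ are tracial, which completes the pre-3-Hilbert space structure. If the centralizer identification in the finiteness step resists a clean citation, I would instead bound the linking algebras using the isometric equivalence $\fX\cong\bigboxplus\Mod^\dag(\Omega_{b_i})$ together with the non-unitary statement \cite[Thm.~5.1.2]{MR4600461}.
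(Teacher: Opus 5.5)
Your proposal is correct, and its core is exactly the paper's proof. Because $\ev_\alpha$ and $\coev_\alpha$ (hence $\tr^\vee_R$ and $\tr^\vee_L$) are computed componentwise, the sphericality identity reduces term by term in \eqref{eq:3HilbOnFun} to sphericality of $\Psi^\fY$ on each $\alpha_b$. The paper checks only this identity and leaves positivity and finiteness implicit: your faithfulness argument via Remark~\ref{rem:DeterminedByONB} is a sound supplement, and your finiteness step is only a sketch, though the paper does not argue it here either.
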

\begin{proof}
We must show $\Psi^{\Hom}$ is a spherical weight for $\vee$.
Suppose $F,G\in \Hom(\fX\to \fY)$, $\alpha\colon  F\Rightarrow G$.
We must prove that
$$
\Psi_{G}^{\Hom}\!\left(
\tikzmath{
\fill[boxregion=white, rounded corners=5pt, dotted, draw=black] (-.9,-.9) rectangle (.6,.9);
\filldraw[primedregion=white, draw=black, thick] (0,.3) arc(0:180:.3cm) -- (-.6,-.3) arc(-180:0:.3cm);
\roundNbox{fill=white}{(0,0)}{.3}{0}{0}{$m$}
}
\right)
\overset{?}{=}
\Psi_{F}^{\Hom}\!\left(
\tikzmath{
\fill[primedregion=white, rounded corners=5pt, dotted, draw=black] (-.6,-.9) rectangle (.9,.9);
\filldraw[boxregion=white, draw=black, thick] (0,.3) arc(180:0:.3cm) -- (.6,-.3) arc(0:-180:.3cm);
\roundNbox{fill=white}{(0,0)}{.3}{0}{0}{$m$}
}
\right)
\qquad\qquad\qquad
\forall\, m\in \Omega_\alpha.
$$
By definition \eqref{eq:3HilbOnFun}, the above equation is equivalent to
$$
0
\overset{?}{=}
\sum_{b \in \pi_0\fX} \frac{d_b}{D_{\Omega_b}}
\left(
\Psi_{G(b)}^{\fY}\!\left(
\tikzmath{
\fill[boxregion=lightgray, rounded corners=5pt] (-.9,-.9) rectangle (.6,.9);
\filldraw[primedregion=lightgray, draw=black, thick] (0,.3) arc(0:180:.3cm) -- (-.6,-.3) arc(-180:0:.3cm);
\roundNbox{fill=white}{(0,0)}{.3}{0}{0}{$m_b$}
}
\right)
-
\Psi_{F(b)}^{\fY}\!\left(
\tikzmath{
\fill[primedregion=lightgray, rounded corners=5pt] (-.6,-.9) rectangle (.9,.9);
\filldraw[boxregion=lightgray, draw=black, thick] (0,.3) arc(180:0:.3cm) -- (.6,-.3) arc(0:-180:.3cm);
\roundNbox{fill=white}{(0,0)}{.3}{0}{0}{$m_b$}
}
\right)
\right),
$$
which clearly holds by sphericality of $\Psi^\fY$ and the definition of $\alpha^\vee$.
\end{proof}

\begin{prop}
\label{prop:EnrichmentWellDefined}
$(\vee,\Psi_F^{\Hom})$ endows $\Hom(\fX\to \fY)$ with the structure of a 3-Hilbert space.
\end{prop}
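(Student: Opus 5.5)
The proposition follows from the previous lemma, which makes $\Hom(\fX\to\fY)$ a pre-3-Hilbert space, once two further things are checked. First, $\Hom(\fX\to\fY)$ must be a \emph{finite} unitary 2-category, i.e.\ its linking algebras are unitary multifusion categories with a global bound on their centers. Second, it must be complete: it admits Hilbert direct sums and all $\rmH^*$-monads split.

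\textbf{Step 1: reduce to bimodule categories.} For the finiteness statement I would use coordinates. By Remark \ref{rem:DeterminedByONB} together with \cite[Cor.~4.52]{MR5078555}, fix ONBs for $\fX$ and $\fY$. This gives isometric equivalences $\fX\isomeq\bigboxplus_i\Mod^\dag(\cC_i)$ and $\fY\isomeq\bigboxplus_j\Mod^\dag(\cD_j)$, and hence an equivalence of $\rmC^*$-2-categories
$$
\Hom(\fX\to\fY)\;\simeq\;\bigboxplus_{i,j}\Hom(\rmB\cC_i\to \Mod^\dag(\cD_j)).
$$
The right-hand side is the 2-category of unitary $\cD_j$--$\cC_i$ bimodule categories. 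In the language of \cite{MR5078555} this is $\Mod^\dag(\cC_i^{\mathrm{mp}}\boxtimes\cD_j)$ in suitable conventions, which is a finite unitary 2-category. Hence every linking algebra $\End(\bigoplus F_k)$ is unitary multifusion, with a center bound coming from the finitely many simples in each summand. I would deliberately not try to match weights here: the spherical weight $\Psi^{\Hom}$ has already been constructed, and only the underlying $\rmC^*$-2-category structure is needed for finiteness.

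\textbf{Step 2: Hilbert direct sums.} These can be built pointwise. For $F,G\in\Hom(\fX\to\fY)$, set $(F\boxplus G)(a)\coloneq F(a)\boxplus G(a)$, and on 1- and 2-morphisms use the block-diagonal formula $I_aF(X)I_b^\vee\oplus J_aG(X)J_b^\vee$. This is $\dag,\vee$-preserving because the UAF on $\fY$ is compatible with Hilbert direct sums (\cite[Lem.~4.23]{MR5078555}). The inclusions $I\colon F\Rightarrow F\boxplus G$ and $J\colon G\Rightarrow F\boxplus G$ have components $I_a$ and $J_a$, with unitary naturators. Being an isometry, and the identity $\ev_I\ev_I^\dag+\ev_J\ev_J^\dag=\id$, are conditions stated only in terms of $\vee$ and unitarity. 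Since $\ev$ and $\coev$ in $\Hom(\fX\to\fY)$ were defined componentwise in Definition \ref{defn:trace and UAF on Hom}, both conditions reduce to the corresponding identities in $\fY$ at each $a$.

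\textbf{Step 3: splitting $\rmH^*$-monads (main obstacle).} Let $\alpha\colon F\Rightarrow F$ be an $\rmH^*$-monad with multiplication $\mu$ and unit $\iota$. I would use the unitarity formulation of \ref{H:Standard}. Then \ref{H:Frobenius}--\ref{H:Standard} are all componentwise conditions, so each $\alpha_a$ is an $\rmH^*$-monad on $F(a)$ in $\fY$. Choose a splitting $({}_{F(a)}Z_{a,G(a)},\gamma_a)$ in $\fY$, and define $G(a)$ to be the target object. For ${}_aX_b$, define $G(X)$ as the splitting of the separability idempotent on $Z_a^\vee\otimes F(X)\otimes Z_b$. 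This idempotent is built from the half-braiding $\alpha_X$ and the module actions of Remark \ref{rem:ModuleFromSplitting}; informally, $G(X)=Z_a^\vee\otimes_{\alpha_a}F(X)\otimes_{\alpha_b}Z_b$. The tensorators of $G$ come from those of $F$ and the unitaries $\gamma$. One then has to show the following:
\begin{itemize}
\item $G$ is $\dag$- and $\vee$-preserving. I would do this by checking that the relevant maps are unitary using the normalized multiplication with exponents $\pm1/2$, exactly as in the proof of Lemma \ref{lem:h-star-monad-splittings-unique}.
\item $Z=\{Z_a\}$, with the induced unitary naturators, is a transformation $F\Rightarrow G$.
\item $\gamma=\{\gamma_a\}$ is a unitary modification $\alpha\Rrightarrow Z\otimes Z^\vee$.
\end{itemize}
Lemma \ref{lem:h-star-monad-splittings-unique} makes the componentwise choices canonical up to contractible choice, which is what allows the coherences of $G$ to be assembled.

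To limit the amount of coherence checking in Step 3, I would first apply Remark \ref{rem:DeterminedByONB}: it suffices to construct $G$ on $\rmB\Omega_{\boxplus b}$, a single object, and then extend via the universal property of completion \eqref{eq:UniversalPropertyOfCompletion}. With this reduction only one splitting $Z_{\boxplus b}$ needs to be chosen, and $G$ is simply conjugation of $F|_{\Omega_{\boxplus b}}$ by $Z_{\boxplus b}$ relative to $\alpha_{\boxplus b}$. I expect the remaining verifications to be routine but lengthy diagrammatic checks.
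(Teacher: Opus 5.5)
Your Steps 2 and 3 are the paper's proof. Hilbert direct sums are built pointwise, and the isometry relations are checked componentwise because $\ev,\coev$ in $\Hom(\fX\to\fY)$ are componentwise. An $\rmH^*$-monad $\alpha$ is split by choosing splittings $(Y_a,\gamma_a)$ of each $\alpha_a$, taking $G(X)$ to be the splitting of the separability idempotent on $Y_1^\vee\otimes F(X)\otimes Y_2$, and setting $\beta_a=Y_a$ and $\gamma=\{\gamma_a\}$.

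Your Step 1 finiteness check and the optional reduction to $\rmB\Omega_{\boxplus b}$ are extra care that the paper leaves implicit. One small correction: with the paper's left-to-right convention, your block formula should read $(I_a^\vee\otimes F(X)\otimes I_b)\oplus(J_a^\vee\otimes G(X)\otimes J_b)$.
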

\begin{proof}
We prove that $\Hom(\fX\to \fY)$ admits Hilbert direct sums and is $\rmH^*$-monad complete.

\item[\underline{Admits $\boxplus$:}]
For $F,G \in \Hom(\fX \to \fY)$,
we define $F \boxplus G$ as follows.
\begin{itemize}
\item 
Since $\fY$ admits Hilbert direct sums, 
for $a\in\fX$, we may define
$$
(F\boxplus G)(a)
\coloneq 
F(a) \boxplus G(a).
$$
In the overlay graphical calculus, we denote
$$
F=
\tikzmath{
\filldraw[primedregion=white, rounded corners=5pt, draw=black, dotted] (0,0) rectangle (.6,.6);    
}
\qquad\qquad
G=
\tikzmath{
\fill[boxregion=white, rounded corners=5pt, draw=black, dotted] (0,0) rectangle (.6,.6);    
}
\qquad\qquad
F\boxplus G=
\tikzmath{
\fill[plusregion=white, rounded corners=5pt, draw=black, dotted] (0,0) rectangle (.6,.6);    
}\,.
$$
For the sequel, we denote by
$I_a\colon  F(a)\hookrightarrow F(a)\boxplus G(a)$
and
$J_a\colon  G(a)\hookrightarrow F(a)\boxplus G(a)$
the isometries witnessing this Hilbert direct sum.
Graphically, we write
$$
I_a
=
\tikzmath{
\begin{scope}
\clip[rounded corners=5pt] (0,0) rectangle (.6,.6);
\fill[primedregion=lightgray!50] (0,0) rectangle (.3,.6);    
\fill[plusregion=lightgray!50] (.3,0) rectangle (.6,.6);    
\end{scope}
\draw[thick] (.3,0) -- (.3,.6);
}
\qquad\qquad
I_a^\vee
=
\tikzmath{
\begin{scope}
\clip[rounded corners=5pt] (0,0) rectangle (.6,.6);
\fill[plusregion=lightgray!50] (0,0) rectangle (.3,.6);    
\fill[primedregion=lightgray!50] (.3,0) rectangle (.6,.6);    
\end{scope}
\draw[thick] (.3,0) -- (.3,.6);
}
\qquad\qquad
J_a
=
\tikzmath{
\begin{scope}
\clip[rounded corners=5pt] (0,0) rectangle (.6,.6);
\fill[boxregion=lightgray!50] (0,0) rectangle (.3,.6);    
\fill[plusregion=lightgray!50] (.3,0) rectangle (.6,.6);    
\end{scope}
\draw[thick] (.3,0) -- (.3,.6);
}
\qquad\qquad
J_a^\vee
=
\tikzmath{
\begin{scope}
\clip[rounded corners=5pt] (0,0) rectangle (.6,.6);
\fill[plusregion=lightgray!50] (0,0) rectangle (.3,.6);    
\fill[boxregion=lightgray!50] (.3,0) rectangle (.6,.6);    
\end{scope}
\draw[thick] (.3,0) -- (.3,.6);
}
$$
\item 
For a 1-morphism ${}_aX_b$, we define
$$
(F \boxplus G)({}_aX_b) 
=
\tikzmath{
\begin{scope}
\clip[rounded corners=5pt] (0,0) rectangle (.6,.6);
\fill[plusregion=lightgray!50] (0,0) rectangle (.3,.6);    
\fill[plusregion=lightgray] (.3,0) rectangle (.6,.6);    
\end{scope}
\draw[thick, \XsColor] (.3,0) -- (.3,.6);
}
\coloneq
\tikzmath{
\begin{scope}
\clip[rounded corners=5pt] (-.6,0) rectangle (.6,.6);
\fill[plusregion=lightgray!50] (-.3,0) rectangle (-.6,.6);    
\fill[primedregion=lightgray!50] (0,0) rectangle (-.3,.6);    
\fill[primedregion=lightgray] (0,0) rectangle (.3,.6);    
\fill[plusregion=lightgray] (.3,0) rectangle (.6,.6);    
\end{scope}
\draw[thick] (.3,0) -- (.3,.6);
\draw[thick, \XsColor] (0,0) -- (0,.6);
\draw[thick] (-.3,0) -- (-.3,.6);
}
\oplus
\tikzmath{
\begin{scope}
\clip[rounded corners=5pt] (-.6,0) rectangle (.6,.6);
\fill[plusregion=lightgray!50] (-.3,0) rectangle (-.6,.6);    
\fill[boxregion=lightgray!50] (0,0) rectangle (-.3,.6);    
\fill[boxregion=lightgray] (0,0) rectangle (.3,.6);    
\fill[plusregion=lightgray] (.3,0) rectangle (.6,.6);    
\end{scope}
\draw[thick] (.3,0) -- (.3,.6);
\draw[thick, \XsColor] (0,0) -- (0,.6);
\draw[thick] (-.3,0) -- (-.3,.6);
}
=
(I^{\vee}_{a} \otimes F(X) \otimes I_{b}) \oplus (J^{\vee}_{a} \otimes G(X) \otimes J_{b}).
$$
\item 
For a 2-morphism $f\in \fX( {}_aX_b\Rightarrow {}_aY_b)$, we define
$$
(F\boxplus G)(f)
=
\tikzmath{
\begin{scope}
\clip[rounded corners=5pt] (-.6,-.6) rectangle (.6,.6);
\fill[plusregion=lightgray!50] (-.6,-.6) rectangle (0,.6);    
\fill[plusregion=lightgray] (0,-.6) rectangle (.6,.6);    
\end{scope}
\draw[dotted, rounded corners=5pt] (-.6,-.6) rectangle (.6,.6);
\draw[thick, \XsColor] (0,-.6) -- (0,-.3);
\draw[thick, \YsColor] (0,.6) -- (0,.3);
\roundNbox{plusregion=white, draw=black}{(0,0)}{.3}{0}{0}{$f$}
}
\coloneq
\tikzmath{
\begin{scope}
\clip[rounded corners=5pt] (-.9,-.6) rectangle (.9,.6);
\fill[plusregion=lightgray!50] (-.9,-.6) rectangle (-.6,.6);    
\fill[primedregion=lightgray!50] (-.6,-.6) rectangle (0,.6);    
\fill[primedregion=lightgray] (0,-.6) rectangle (.6,.6);  
\fill[plusregion=lightgray] (.9,-.6) rectangle (.6,.6);  
\end{scope}
\draw[dotted, rounded corners=5pt] (-.6,-.6) rectangle (.6,.6);
\draw[thick] (-.6,-.6) -- (-.6,.6);
\draw[thick] (.6,-.6) -- (.6,.6);
\draw[thick, \XsColor] (0,-.6) -- (0,-.3);
\draw[thick, \YsColor] (0,.6) -- (0,.3);
\roundNbox{primedregion=white, draw=black}{(0,0)}{.3}{0}{0}{$f$}
}
\oplus
\tikzmath{
\begin{scope}
\clip[rounded corners=5pt] (-.9,-.6) rectangle (.9,.6);
\fill[plusregion=lightgray!50] (-.9,-.6) rectangle (-.6,.6);    
\fill[boxregion=lightgray!50] (-.6,-.6) rectangle (0,.6);    
\fill[boxregion=lightgray] (0,-.6) rectangle (.6,.6);  
\fill[plusregion=lightgray] (.9,-.6) rectangle (.6,.6);  
\end{scope}
\draw[dotted, rounded corners=5pt] (-.6,-.6) rectangle (.6,.6);
\draw[thick] (-.6,-.6) -- (-.6,.6);
\draw[thick] (.6,-.6) -- (.6,.6);
\draw[thick, \XsColor] (0,-.6) -- (0,-.3);
\draw[thick, \YsColor] (0,.6) -- (0,.3);
\roundNbox{boxregion=white, draw=black}{(0,0)}{.3}{0}{0}{$f$}
}
=
(I^{\vee}_{a} \otimes F(f) \otimes I_{b}) \oplus (J^{\vee}_{a} \otimes G(f) \otimes J_{b}).
$$

\item
For each $a\in \fX$, we define $(F\boxplus G)^0_a \colon  1_a\Rightarrow 1_{(F\boxplus G)(a)}$
by
$$
(F\boxplus G)^0_a
\coloneq
\tikzmath{
\begin{scope}
\clip[rounded corners=5pt] (-.6,-.3) rectangle (.6,.6);
\fill[plusregion=lightgray!50] (-.6,.6) -- (-.3,.6) -- (-.3,.3) arc(-180:0:.3cm) -- (.3,.6) -- (.6,.6) -- (.6,-.3) -- (-.6,-.3);    
\fill[primedregion=lightgray!50] (-.3,.6) -- (-.3,.3) arc(-180:0:.3cm) -- (.3,.6);       
\end{scope}
\draw[thick] (-.3,.6) -- (-.3,.3) arc(-180:0:.3cm) -- (.3,.6);
}
\oplus
\tikzmath{
\begin{scope}
\clip[rounded corners=5pt] (-.6,-.3) rectangle (.6,.6);
\fill[plusregion=lightgray!50] (-.6,.6) -- (-.3,.6) -- (-.3,.3) arc(-180:0:.3cm) -- (.3,.6) -- (.6,.6) -- (.6,-.3) -- (-.6,-.3);    
\fill[boxregion=lightgray!50] (-.3,.6) -- (-.3,.3) arc(-180:0:.3cm) -- (.3,.6);       
\end{scope}
\draw[thick] (-.3,.6) -- (-.3,.3) arc(-180:0:.3cm) -- (.3,.6);
}
=
\ev_{I_{a}}^\dag
\oplus
\ev_{J_{a}}^\dag
$$
where we suppress the unitaries 
$F^0_a\colon  1_a\Rightarrow 1_{F(a)}$
and
$G^0_a\colon  1_a\Rightarrow 1_{G(a)}$.
For composable 1-morphisms ${}_aX_b$ and ${}_bX_c$ in $\fX$, we define
$(F\boxplus G)^2_{X,Y} \colon  (F\boxplus G)({}_aX_b)\otimes_{(F\boxplus G)(b)} (F\boxplus G)({}_bY_c)\Rightarrow (F\boxplus G)({}_aX\otimes_b Y_c)$
by
$$
(F\boxplus G)^2_{X,Y}
\coloneq
\tikzmath{
\begin{scope}
\clip[rounded corners=5pt] (-.6,0) rectangle (1.8,.6);
\fill[plusregion=lightgray!50] (-.3,0) rectangle (-.6,.6);    
\fill[primedregion=lightgray!50] (-.3,0) -- (0,0) to[out=90,in=-90] (.3,.6) -- (-.3,.6);    
\fill[primedregion=lightgray] (0,0) -- (.3,0) arc (180:0:.3cm) -- (1.2,0) to[out=90,in=-90] (.9,.6) -- (.3,.6) to[out=-90,in=90] (0,0);
\fill[plusregion=lightgray] (.3,0) arc (180:0:.3cm);
\fill[primedregion=gray] (1.2,0) to[out=90,in=-90] (.9,.6) -- (1.5,.6) -- (1.5,0);    
\fill[plusregion=gray] (1.5,0) rectangle (1.8,.6);    
\end{scope}
\draw[thick] (1.5,0) -- (1.5,.6);
\draw[thick, \YsColor] (1.2,0) to[out=90,in=-90] (.9,.6);
\draw[thick] (.3,0) arc (180:0:.3cm);
\draw[thick, \XsColor] (0,0) to[out=90,in=-90] (.3,.6);
\draw[thick] (-.3,0) -- (-.3,.6);
}
\oplus
\tikzmath{
\begin{scope}
\clip[rounded corners=5pt] (-.6,0) rectangle (1.8,.6);
\fill[plusregion=lightgray!50] (-.3,0) rectangle (-.6,.6);    
\fill[boxregion=lightgray!50] (-.3,0) -- (0,0) to[out=90,in=-90] (.3,.6) -- (-.3,.6);    
\fill[boxregion=lightgray] (0,0) -- (.3,0) arc (180:0:.3cm) -- (1.2,0) to[out=90,in=-90] (.9,.6) -- (.3,.6) to[out=-90,in=90] (0,0);
\fill[plusregion=lightgray] (.3,0) arc (180:0:.3cm);
\fill[boxregion=gray] (1.2,0) to[out=90,in=-90] (.9,.6) -- (1.5,.6) -- (1.5,0);    
\fill[plusregion=gray] (1.5,0) rectangle (1.8,.6);    
\end{scope}
\draw[thick] (1.5,0) -- (1.5,.6);
\draw[thick, \YsColor] (1.2,0) to[out=90,in=-90] (.9,.6);
\draw[thick] (.3,0) arc (180:0:.3cm);
\draw[thick, \XsColor] (0,0) to[out=90,in=-90] (.3,.6);
\draw[thick] (-.3,0) -- (-.3,.6);
}
$$
where we suppress the unitaries
$F^2_{X,Y}\colon  {}_{F(a)}F(X)\otimes_{F(b)} F(Y)_{F(c)}\Rightarrow {}_{F(a)}F(X\otimes_{b} Y)_{F(c)}$
and $G^2_{X,Y}$.
Note that the caps above are $\coev_{I_b}^\dag$ and $\coev_{J_b}^\dag$ respectively.
\end{itemize}
One then checks unitarity of $(F\boxplus G)^0_a$ and $(F\boxplus G)^2_{X,Y}$ using the isometry relations from Definition \ref{defn:HilbertDirectSum} and the necessary coherences follow via diagrammatic calculus.

We now define an isometry $I\colon  F\Rightarrow F\boxplus G$ by
$$
I=
\tikzmath{
\begin{scope}
\clip[rounded corners=5pt] (0,0) rectangle (.6,.6);
\fill[primedregion=white] (0,0) rectangle (.3,.6);    
\fill[plusregion=white] (.3,0) rectangle (.6,.6);    
\end{scope}
\draw[dotted, rounded corners=5pt] (0,0) rectangle (.6,.6);
\draw[thick] (.3,0) -- (.3,.6);
}
=
\left(
\left\{
I_a
=
\tikzmath{
\begin{scope}
\clip[rounded corners=5pt] (0,0) rectangle (.6,.6);
\fill[primedregion=lightgray!50] (0,0) rectangle (.3,.6);    
\fill[plusregion=lightgray!50] (.3,0) rectangle (.6,.6);    
\end{scope}
\draw[thick] (.3,0) -- (.3,.6);
}
\right\}_{a\in \fX}
,
\left\{
I_X
=
\tikzmath{
\begin{scope}
\clip[rounded corners=5pt] (-1.2,-.6) rectangle (.6,.6);
\fill[plusregion=lightgray!50] (-.3,.6) -- (-.3,.3) arc (0:-180:.3cm) -- (-.9,.6);    
\fill[primedregion=lightgray!50] (-.6,-.6) to[out=90,in=-90] (0,0) -- (0,.6)  -- (-.3,.6) -- (-.3,.3) arc (0:-180:.3cm) -- (-.9,.6) -- (-1.2,.6) -- (-1.2,-.6);    
\fill[primedregion=lightgray] (-.6,-.6) to[out=90,in=-90] (0,0) -- (0,.6) -- (.3,.6) -- (.3,-.6);    
\fill[plusregion=lightgray] (.3,-.6) rectangle (.6,.6);    
\end{scope}
\draw[thick] (.3,-.6) node[below]{$\scriptstyle I_b$} -- (.3,.6);
\draw[thick, \XsColor] (-.6,-.6) node[below]{$\scriptstyle F(X)$} to[out=90,in=-90] (0,0) -- (0,.6);
\draw[thick] (-.3,.6) -- (-.3,.3) arc (0:-180:.3cm) -- (-.9,.6) node[above]{$\scriptstyle I_a$};
}
\right\}_{X\colon a\to b}
\right)
$$
and similar isometry $J\colon G\Rightarrow F\boxplus G$.
One checks the isometry relations componentwise.
\\

\item[\underline{$\rmH^*$-monads split:}]
Suppose $(\alpha\colon  F \Rightarrow F, m\colon  \alpha^2 \Rrightarrow \alpha, i\colon  \id_F \Rrightarrow \alpha )$
is an $\rmH^*$-monad in $\Hom(\fX\to \fY)$.
In diagrams, this means:
\begin{itemize}
\item
\ref{H:Frobenius}
$m^\dag$ is an $\alpha$–$\alpha$-bimodule map, i.e.,
\[
\tikzmath{
    \filldraw[primedregion=white, rounded corners=5pt, draw=black, dotted] (-.3,-.6) rectangle (1.5,.6);
    \draw[thick] (0,-.6) -- (0,0) arc (180:0:.3cm) arc (-180:0:.3cm) -- (1.2,.6);
    \draw[thick] (.3,.3) -- (.3,.6);
    \draw[thick] (.9,-.3) -- (.9,-.6);
}
=
\tikzmath{
    \filldraw[primedregion=white, rounded corners=5pt, draw=black, dotted] (-.3,0) rectangle (.9,1.2);
    \draw[thick] (0,0) arc (180:0:.3cm);
    \draw[thick] (0,1.2) arc (-180:0:.3cm);
    \draw[thick] (.3,.3) -- (.3,.9);
}
=
\tikzmath{
    \filldraw[primedregion=white, rounded corners=5pt, draw=black, dotted] (-.3,.6) rectangle (1.5,-.6);
    \draw[thick] (0,.6) -- (0,0) arc (-180:0:.3cm) arc (180:0:.3cm) -- (1.2,-.6);
    \draw[thick] (.3,-.3) -- (.3,-.6);
    \draw[thick] (.9,.3) -- (.9,.6);
}
\quad\text{where}\;
F=
\tikzmath{
\filldraw[primedregion=white, rounded corners=5pt, draw=black, dotted] (0,0) rectangle (.6,.6);    
},
\;
{}_F\alpha_F
=
\tikzmath{
\begin{scope}
\clip[rounded corners=5pt] (0,0) rectangle (.6,.6);
\fill[primedregion=white] (0,0) rectangle (.3,.6);    
\fill[primedregion=white] (.3,0) rectangle (.6,.6);    
\end{scope}
\draw[dotted, rounded corners=5pt] (0,0) rectangle (.6,.6);
\draw[thick] (.3,0) -- (.3,.6);
},
\;
m
=
\tikzmath{
    \tikzset{scale=1.25,yscale=-1}
    \filldraw[primedregion=white,rounded corners, draw=black, dotted](-.05,0)rectangle(.65,.5);
    \draw[thick](0.3,0)--(0.3,.25);
    \draw[thick](0.05,.5) arc(-180:0:.25);
},
\;
m^\dag=
 \tikzmath{
    \tikzset{scale=1.25}
    \filldraw[primedregion=white,rounded corners,draw=black, dotted](-.05,0)rectangle(.65,.5);
    \draw[thick](0.3,0)--(0.3,.25);
    \draw[thick](0.05,.5) arc(-180:0:.25);
}.
\]
\item
\ref{H:Separable} 
The endomorphism $mm^\dag$ of ${}_F\alpha_F$ is invertible, i.e.,
\[
\tikzmath{
    \tikzset{scale=0.3}
    \coordinate (O) at (0,0);
    \filldraw[primedregion=white,rounded corners, draw=black, dotted] (O) rectangle ++(3,4);
    \draw[thick] (O) ++(1.5,0) -- ++(0,1);
    \draw[thick] (O) ++(1.5,3) -- ++(0,1);
    \draw[thick](O)++(1.5,2) circle(1);
}
\in\End_{\Hom(\fX\to \fY)}({}_F\alpha_F)^\times.
\]
\item
\ref{H:Standard}  
Denoting $
\tikzmath{
    \tikzset{scale=1.25}
    \fill[primedregion=white, rounded corners=5pt, draw=black, dotted](0,0)rectangle(.5,.5);
    \draw[thick](0.25,.5)--(.25,.25);
    \filldraw(.25,.25)circle(0.05cm);
}
=i
$
and
$
\tikzmath{
    \tikzset{scale=1.25}
    \tikzset{yscale=-1}
    \fill[primedregion=white, rounded corners=5pt, draw=black, dotted](0,0)rectangle(.5,.5);
    \draw[thick](0.25,.5)--(.25,.25);
    \filldraw(.25,.25)circle(0.05cm);
}
=i^\dag
$,
the morphisms
$$
\tikzmath{
\fill[primedregion=white, rounded corners=5pt, draw=black, dotted] (1.8,-.3) rectangle (-.5,1.2);
\draw[thick] (0,-.3) --node[left]{$\scriptstyle A$} (0,.3) arc (180:0:.3cm) arc (-180:0:.3cm) --node[right]{$\scriptstyle A^\vee$} (1.2,1.2);
\draw[thick] (.3,.9) -- (.3,.6);
\filldraw (.3,.9) circle (.05cm);
}
\qquad\qquad\text{and}\qquad\qquad
\tikzmath{
\fill[primedregion=white, rounded corners=5pt, draw=black, dotted] (-1.8,-.3) rectangle (.5,1.2);
\draw[thick] (0,-.3) --node[right]{$\scriptstyle A$} (0,.3) arc (0:180:.3cm) arc (0:-180:.3cm) --node[left]{$\scriptstyle A^\vee$} (-1.2,1.2);
\draw[thick] (-.3,.9) -- (-.3,.6);
\filldraw (-.3,.9) circle (.05cm);
}
$$
are unitary (and thus equal).

\end{itemize}
Evaluating at $a\in \fX$, we overlay the above diagrams on an $a$-shaded region.
Observe that the overlaid diagrams equip each ${}_{F(a)}(\alpha_a)_{F(a)}\in \Omega_{F(a)}$ with the structure of an $\rmH^*$-monad.

We now construct a splitting of ${}_F\alpha_F$, which consists of a functor $G: \fX\to \fY$, a transformation $\beta: F\Rightarrow G$, and a unitary modification $\gamma: {}_F\alpha_F \Rrightarrow {}_F \beta \otimes_G \beta^\vee_F$ that is compatible with the canonical monad structure on ${}_F\beta \otimes_G \beta^\vee_F$.
\begin{itemize}
\item 
For each $a\in \fX$, 
we choose a splitting $({}_{F(a)}Y_{c}, \gamma_a)$ for the $\rmH^*$-monad $(\alpha_a, m_a, i_a)$, i.e., 
$\ev_{Y}\ev_{Y}^\dag \in \Omega_{c}^2$ is invertible, and $\gamma_a\colon  {}_{F(a)}(\alpha_a)_{F(a)} \Rightarrow {}_{F(a)}Y\otimes_c Y^\vee_{F(a)}$ is a unitary monad isomorphism.
We define $G(a)\coloneq c$.
$$
a=
\tikzmath{
\filldraw[lightgray!50, rounded corners=5pt] (0,0) rectangle (.6,.6);    
}
\qquad\qquad
F(a)=
\tikzmath{
\filldraw[primedregion=lightgray!50, rounded corners=5pt] (0,0) rectangle (.6,.6);    
}
\qquad\qquad
G(a)=
\tikzmath{
\filldraw[boxregion=lightgray!50, rounded corners=5pt] (0,0) rectangle (.6,.6);    
}
\qquad\qquad
{}_{F(a)}Y_{G(a)}=
\tikzmath{
\begin{scope}
\clip[rounded corners=5pt] (0,0) rectangle (.6,.6);
\fill[primedregion=lightgray!50] (0,0) rectangle (.3,.6);   
\fill[boxregion=lightgray!50] (.3,0) rectangle (.6,.6);   
\end{scope}
\draw[thick,blue] (.3,0) -- (.3,.6);
}
\qquad\qquad
\gamma_a=
\tikzmath{
\fill[rounded corners=5pt, primedregion=lightgray!50] (0,0) rectangle (1.2,.6);
\draw[preaction={boxregion=lightgray!50},thick,blue] (.3,.6) arc(-180:0:.3cm);
\draw[thick] (.6,.3) -- (.6,0);
}
$$
\item 
For a 1-morphism ${}_{a_1}X_{a_2}$, we define $G(X)$ as the splitting of the following idempotent:
\[
G(X)=
\tikzmath{
\begin{scope}
    \clip[rounded corners=5pt] (-1,-.8) rectangle (1,.8);
    \fill[boxregion=lightgray!50] (-1,-.8) rectangle (-.7,.8);      \fill[primedregion=lightgray!50] (-.7,-.8) rectangle (0,.8); 
    \fill[primedregion=lightgray] (0,-.8) rectangle (.7,.8);
    \fill[boxregion=lightgray] (.7,-.8) rectangle (1,.8);
\end{scope}
\draw[thick,blue] (-.7,-.8) node[below]{$\scriptstyle Y_1^\vee$} --(-.7,.8) node[above]{$\scriptstyle Y_1^\vee$};
\draw[thick,blue] (.7,-.8) node[below]{$\scriptstyle Y_2$} --(.7,.8) node[above]{$\scriptstyle Y_2$};
\draw[thick,\XsColor] (0,-.8) node[below]{$\scriptstyle F(X)$} --(0,.8) node[above]{$\scriptstyle F(X)$};
\draw[thick](-.7,0)-- (.7,0);
}
\coloneq
\tikzmath{
\begin{scope}
    \clip[rounded corners=5pt] (-1,-.8) rectangle (1.3,.8);
    \fill[boxregion=lightgray!50] (-1,-.8) rectangle (-.7,.8); 
    \fill[primedregion=lightgray!50] (-.7,-.8) rectangle (0,.8);
    \fill[primedregion=lightgray] (0,-.8) rectangle (1,.8);
    \fill[boxregion=lightgray] (1,-.8) rectangle (1.3,.8);
\end{scope}
\draw[thick,blue] (-.7,-.8) node[below]{$\scriptstyle Y_1^\vee$} --(-.7,.8) node[above]{$\scriptstyle Y_1^\vee$};
\draw[thick,blue] (1,-.8) node[below]{$\scriptstyle Y_2$} --(1,.8) node[above]{$\scriptstyle Y_2$};
\draw[thick,\XsColor] (0,-.8) node[below]{$\scriptstyle F(X)$} --(0,.8) node[above]{$\scriptstyle F(X)$};
\draw[thick] (.3,-.1) arc (180:-180:.1cm) node[above, xshift=.4cm,yshift = -.1cm] {$\scriptstyle -1$};
\draw[thick](.4,-.2) to[out = -90,in=150] (1,-.6);
\draw[thick](.4,0) to[out = 90,in=-60] (-.7,.6);
}
\qquad\qquad\qquad
\begin{aligned}
a_1&=
\tikzmath{
\filldraw[lightgray!50, rounded corners=5pt] (0,0) rectangle (.6,.6);    
}
&
a_2&=
\tikzmath{
\filldraw[lightgray, rounded corners=5pt] (0,0) rectangle (.6,.6);    
}
\\
{}_{a_1}X_{a_2}&=
\tikzmath{
\begin{scope}
\clip[rounded corners=5pt] (0,0) rectangle (.6,.6);
\fill[lightgray!50] (0,0) rectangle (.3,.6);   
\fill[lightgray] (.3,0) rectangle (.6,.6);   
\end{scope}
\draw[thick,\XsColor] (.3,0) -- (.3,.6);
}
\\
\alpha_{a_1}&=
\tikzmath{
\begin{scope}
\clip[rounded corners=5pt] (0,0) rectangle (.6,.6);
\fill[primedregion=lightgray!50] (0,0) rectangle (.6,.6);   
\end{scope}
\draw[thick] (.3,0) -- (.3,.6);
}
&
\alpha_{a_2}&=
\tikzmath{
\begin{scope}
\clip[rounded corners=5pt] (0,0) rectangle (.6,.6);
\fill[primedregion=lightgray] (0,0) rectangle (.6,.6);   
\end{scope}
\draw[thick] (.3,0) -- (.3,.6);
}
\end{aligned}
\]
where $({}_{F(a_i)}(Y_i)_{G(a_i)},\gamma_i)$ is our chosen splitting for $\alpha_{a_i}$ for $i=1,2$.
(Recall that the right $\alpha_{a_1}$-action on $Y_1^\vee$ and the left $\alpha_{a_2}$-action on $Y_2$ are as in Remark \ref{rem:ModuleFromSplitting}.)

\item 
For a 2-morphism $f \in \fX( {}_{a_1}X_{a_2} \Rightarrow {}_{a_1}Z_{a_2})$, we define
\[
G(f)=
\tikzmath{
\begin{scope}
    \clip[rounded corners=5pt] (-1,-.8) rectangle (1,.8);
    \fill[boxregion=lightgray!50] (-1,-.8) rectangle (-.7,.8);      
    \fill[primedregion=lightgray!50] (-.7,-.8) rectangle (0,.8); 
    \fill[primedregion=lightgray] (0,-.8) rectangle (.7,.8);
    \fill[boxregion=lightgray] (.7,-.8) rectangle (1,.8);
\end{scope}
\draw[thick,blue] (-.7,-.8) node[below]{$\scriptstyle Y_1^\vee$} --(-.7,.8) node[above]{$\scriptstyle Y_1^\vee$};
\draw[thick,blue] (.7,-.8) node[below]{$\scriptstyle Y_2$} --(.7,.8) node[above]{$\scriptstyle Y_2$};
\draw[thick,\XsColor] (0,-.8) node[below]{$\scriptstyle F(X)$} --(0,.8) node[above]{$\scriptstyle F(Z)$};
\draw[thick](-.7,-.5)-- (.7,-.5);
\roundNbox{primedregion=white, draw=black}{(0,.1)}{.3}{.2}{.2}{$F(f)$};
}
=
\tikzmath{
\begin{scope}
    \clip[rounded corners=5pt] (-1,-.8) rectangle (1,.8);
    \fill[boxregion=lightgray!50] (-1,-.8) rectangle (-.7,.8);      \fill[primedregion=lightgray!50] (-.7,-.8) rectangle (0,.8); 
    \fill[primedregion=lightgray] (0,-.8) rectangle (.7,.8);
    \fill[boxregion=lightgray] (.7,-.8) rectangle (1,.8);
\end{scope}
\draw[thick,blue] (-.7,-.8) node[below]{$\scriptstyle Y_1^\vee$} --(-.7,.8) node[above]{$\scriptstyle Y_1^\vee$};
\draw[thick,blue] (.7,-.8) node[below]{$\scriptstyle Y_2$} --(.7,.8) node[above]{$\scriptstyle Y_2$};
\draw[thick,\XsColor] (0,-.8) node[below]{$\scriptstyle F(X)$} --(0,.8) node[above]{$\scriptstyle F(Z)$};
\draw[thick](-.7,.5)-- (.7,.5);
\roundNbox{primedregion=white, draw=black}{(0,-.1)}{.3}{.2}{.2}{$F(f)$};
}
\]

\item 
For each $a \in \fX$, we define $G^0_a \colon 1_{G(a)} \Rightarrow G(1_a)$ by
$$
G^0_a
\coloneq
\tikzmath{
\begin{scope}
\clip[rounded corners=5pt] (-1,-.9) rectangle (1,.6);
\fill[boxregion=lightgray!50] (-1,.6) -- (-.7,.6) -- (-.7,0) arc(-180:0:.7cm) -- (.7,.6) -- (1,.6) -- (1,-.8) -- (-1,-.9);    
\fill[primedregion=lightgray!50] (-.7,.6) -- (.7,.6) --(.7,0) arc(0:-180:.7cm) -- (-.7,0);       
\end{scope}
\draw[thick,blue] (-.7,.6) node[above]{$\scriptstyle Y^\vee$} -- (-.7,0) arc(-180:0:.7cm) -- (.7,.6) node[above]{$\scriptstyle Y$};
\draw[thick, \XsColor] (0,.6) node[above]{$\scriptstyle F(1_a)$} -- (0,.1);
\filldraw[\XsColor] (0,.1) circle (.05cm);
\draw[thick](-.7,.3) -- (.7,.3);
\draw[thick] (-.7,.1) to[out=-45,in=90] (0,-.2);
\draw[thick] (0,-.3) circle (.1cm);
\node at (.3,-.1) {$\scriptstyle -1/2$};
}$$
For composable 1-morphisms ${}_{a_1} X_{a_2}$ and $ {}_{a_2} W_{a_3}$ in $\fX,$ we define $G^2_{X,W} \colon G(X) \otimes_{G(a_2)} G(W) \Rightarrow G(X \otimes_{a_2} W)$ 
by
$$
G^2_{X,Y} \coloneq
\tikzmath{
\begin{scope}
\clip[rounded corners=5pt] (-2,-.5) rectangle (2,1.2);
\fill[boxregion=lightgray!50] (-2,-.5) -- (-1.7,-.5) -- (-1.7,-.2) to[out = 90, in = -90] (-.9,1) -- (-.9,1.2) -- (-2,1.2) -- (-2,-.5) ;    
\fill[primedregion=lightgray!50] (-1.7,-.5) -- (-1.7,-.2) to[out = 90, in = -90] (-.9,1) -- (-.9,1.2) -- (-.05,1.2) -- (-.05,.8) to[out = -90, in=90] (-1,-.3) -- (-1,-.5) -- (-1.7,-.5);   
\fill[primedregion=lightgray] (.05,1.2) -- (.05,.8) to[out = -90, in=90] (1,-.3) -- (1,-.5) -- (.3,-.5)  -- (.3,-.3) to[out =90,in =0] (0,0) to[out = 180, in =90] (-.3,-.3) -- (-.3,-.5) -- (-1,-.5) -- (-1,-.3) to[out=90,in=-90] (-.05,.8) -- (-.05,1.2) -- (.05,1.2);
\fill[boxregion=lightgray] (.3,-.5)  -- (.3,-.3) to[out =90,in =0] (0,0) to[out = 180, in =90] (-.3,-.3) -- (-.3,-.5) -- (.3,-.5);
\fill[primedregion=lightgray!150](.05,1.2) -- (.05,.8) to[out = -90, in=90] (1,-.3) -- (1,-.5) -- (1.7,-.5) -- (1.7,-.2) to[out = 90, in = -90] (.9,1) -- (.9,1.2) -- (.05,1.2);
\fill[boxregion=lightgray!150] (1.7,-.5) -- (1.7,-.2) to[out = 90, in = -90] (.9,1) -- (.9,1.2) -- (2,1.2) -- (2,-.5) -- (1.7,-.5);
\end{scope}
\draw[thick,blue] (-1.7,-.5) node[below]{$\scriptstyle Y_1^\vee$} -- (-1.7,-.2) to[out = 90, in = -90] (-.9,1) -- (-.9,1.2) node[above]{$\scriptstyle Y_1^\vee$};
\draw[thick,blue] (1.7,-.5) node[below]{$\scriptstyle Y_3$} -- (1.7,-.2) to[out = 90, in = -90] (.9,1) -- (.9,1.2) node[above]{$\scriptstyle Y_3$};
\draw[thick, \XsColor] (1,-.5) node[below]{$\scriptstyle F(W)$} -- (1,-.3) to[out=90,in=-90] (.05,.8) -- (.05,1.2);
\draw[thick, \XsColor] (-1,-.5) node[below]{$\scriptstyle F(X)$} -- (-1,-.3) to[out=90,in=-90] (-.05,.8) -- (-.05,1.2) node[above,xshift = .05cm]{$\scriptstyle F(X \otimes W)$};
\draw[thick,blue] (.3,-.5) node[below]{$\scriptstyle Y_2^\vee$} -- (.3,-.3) to[out =90,in =0] (0,0) to[out = 180, in =90] (-.3,-.3) -- (-.3,-.5) node[below]{$\scriptstyle Y_2$};
\draw[thick] (-1.7,-.3) -- (-.3,-.3);
\draw[thick] (.3,-.3) -- (1.7,-.3);
\draw[thick] (-.9,1) -- (.9,1);
\draw[thick] (0,0) -- (0,.125);
\draw[thick] (0,.2) circle (.075cm);
\node at (.3,.1) {$\scriptstyle 1/2$};
}
=
\tikzmath{
\begin{scope}
\clip[rounded corners=5pt] (-2,-.5) rectangle (2,1.2);
\fill[boxregion=lightgray!50] (-2,-.5) -- (-1.7,-.5) -- (-1.7,-.2) to[out = 90, in = -90] (-.9,1) -- (-.9,1.2) -- (-2,1.2) -- (-2,-.5) ;    
\fill[primedregion=lightgray!50] (-1.7,-.5) -- (-1.7,-.2) to[out = 90, in = -90] (-.9,1) -- (-.9,1.2) -- (-.05,1.2) -- (-.05,.8) to[out = -90, in=90] (-1,-.3) -- (-1,-.5) -- (-1.7,-.5);   
\fill[primedregion=lightgray] (.05,1.2) -- (.05,.8) to[out = -90, in=90] (1,-.3) -- (1,-.5) -- (.3,-.5)  -- (.3,-.3) to[out =90,in =0] (0,0) to[out = 180, in =90] (-.3,-.3) -- (-.3,-.5) -- (-1,-.5) -- (-1,-.3) to[out=90,in=-90] (-.05,.8) -- (-.05,1.2) -- (.05,1.2);
\fill[boxregion=lightgray] (.3,-.5)  -- (.3,-.3) to[out =90,in =0] (0,0) to[out = 180, in =90] (-.3,-.3) -- (-.3,-.5) -- (.3,-.5);
\fill[primedregion=lightgray!150](.05,1.2) -- (.05,.8) to[out = -90, in=90] (1,-.3) -- (1,-.5) -- (1.7,-.5) -- (1.7,-.2) to[out = 90, in = -90] (.9,1) -- (.9,1.2) -- (.05,1.2);
\fill[boxregion=lightgray!150] (1.7,-.5) -- (1.7,-.2) to[out = 90, in = -90] (.9,1) -- (.9,1.2) -- (2,1.2) -- (2,-.5) -- (1.7,-.5);
\end{scope}
\draw[thick,blue] (-1.7,-.5) node[below]{$\scriptstyle Y_1^\vee$} -- (-1.7,-.2) to[out = 90, in = -90] (-.9,1) -- (-.9,1.2) node[above]{$\scriptstyle Y_1^\vee$};
\draw[thick,blue] (1.7,-.5) node[below]{$\scriptstyle Y_3$} -- (1.7,-.2) to[out = 90, in = -90] (.9,1) -- (.9,1.2) node[above]{$\scriptstyle Y_3$};
\draw[thick, \XsColor] (1,-.5) node[below]{$\scriptstyle F(W)$} -- (1,-.3) to[out=90,in=-90] (.05,.8) -- (.05,1.2);
\draw[thick, \XsColor] (-1,-.5) node[below]{$\scriptstyle F(X)$} -- (-1,-.3) to[out=90,in=-90] (-.05,.8) -- (-.05,1.2) node[above,xshift = .05cm]{$\scriptstyle F(X \otimes W)$};
\draw[thick,blue] (.3,-.5) node[below]{$\scriptstyle Y_2^\vee$} -- (.3,-.3) to[out =90,in =0] (0,0) to[out = 180, in =90] (-.3,-.3) -- (-.3,-.5) node[below]{$\scriptstyle Y_2$};
\draw[thick] (-1.7,-.3) -- (-.3,-.3);
\draw[thick] (.3,-.3) -- (1.7,-.3);
\draw[thick] (0,0) -- (0,.125);
\draw[thick] (0,.2) circle (.075cm);
\node at (.3,.1) {$\scriptstyle 1/2$};
}
$$

\end{itemize}

We now define a transformation $\beta: F\Rightarrow G$ and a unitary $\gamma: {}_F\alpha_F\Rrightarrow {}_F\beta\otimes_G \beta^\vee_F$ as follows:
\begin{itemize}
\item 
For each $a \in \fX,$ define $\beta_a \coloneq Y \colon F(a) \to G(a)$, and for a 1-morphism ${}_{a_1}X_{a_2}$, set
$$\beta_X \coloneq
\tikzmath{
\begin{scope}
\clip[rounded corners=5pt] (-.3,0) rectangle (2.6,1.3);
\fill[primedregion=lightgray!50] (-.3,0) -- (.3,0) -- (.3,.2) to[out = 90,in=-90] (1.6,1.1) -- (1.6,1.3) -- (.9,1.3) -- (.9,1.1) to[out =-90,in=0] (.6,.8) to[out = 180, in=-90] (.3,1.1) -- (.3,1.3) -- (-.3,1.3) -- (-.3,0);    
\fill[boxregion=lightgray!50] (.9,1.3) -- (.9,1.1) to[out =-90,in=0](.6,.8) to[out = 180, in=-90] (.3,1.1) -- (.3,1.3) -- (.9,1.3);   
\fill[primedregion=lightgray] (.3,0) -- (.3,.2) to[out = 90,in=-90] (1.6,1.1) -- (1.6,1.3) -- (2.3,1.3) -- (2.3,0) -- (.3,0) ;
\fill[boxregion=lightgray] (2.3,0) -- (2.6,0) -- (2.6,1.3) -- (2.3,1.3);
\end{scope}
\draw[thick, \XsColor] (.3,0) node[below]{$\scriptstyle F(X)$} -- (.3,.2) to[out = 90,in=-90] (1.6,1.1) -- (1.6,1.3) node[above]{$\scriptstyle F(X)$};
\draw[thick,blue] (2.3,0) node[below]{$\scriptstyle Y_2$} -- (2.3,1.3) node[above]{$\scriptstyle Y_2$};
\draw[thick,blue] (.9,1.3) node[above]{$\scriptstyle Y_1^\vee$}-- (.9,1.1) to[out =-90,in=0] (.6,.8) to[out = 180, in=-90] (.3,1.1) -- (.3,1.3) node[above]{$\scriptstyle Y_1$};
\draw[thick] (.9,1.1) -- (2.3,1.1);
\draw[thick] (.3,1.1) to[out=-135,in=90] (0,.8);
\draw[thick] (0,.7) circle (.1cm);
\node at (.35,.7) {$\scriptstyle 1/2$};
}\,
\qquad\text{which implies}\qquad
\beta^\vee_X =
\tikzmath{
\begin{scope}
\clip[rounded corners=5pt] (.3,0) rectangle (-2.6,-1.3);
\fill[primedregion=lightgray] (.3,0) -- (-.3,0) -- (-.3,-.2) to[out = -90,in=90] (-1.6,-1.1) -- (-1.6,-1.3) -- (-.9,-1.3) -- (-.9,-1.1) to[out =90,in=180] (-.6,-.8) to[out = 0, in=90] (-.3,-1.1) -- (-.3,-1.3) -- (.3,-1.3) -- (.3,0);    
\fill[boxregion=lightgray] (-.9,-1.3) -- (-.9,-1.1) to[out =90,in=180](-.6,-.8) to[out = 0, in=90] (-.3,-1.1) -- (-.3,-1.3) -- (-.9,-1.3);   
\fill[primedregion=lightgray!50] (-.3,0) -- (-.3,-.2) to[out = -90,in=90] (-1.6,-1.1) -- (-1.6,-1.3) -- (-2.3,-1.3) -- (-2.3,0) -- (-.3,0) ;
\fill[boxregion=lightgray!50] (-2.3,0) -- (-2.6,0) -- (-2.6,-1.3) -- (-2.3,-1.3);
\end{scope}
\draw[thick, \XsColor] (-.3,0) node[above]{$\scriptstyle F(X)$} -- (-.3,-.2) to[out = -90,in=90] (-1.6,-1.1) -- (-1.6,-1.3) node[below]{$\scriptstyle F(X)$};
\draw[thick,blue] (-2.3,0) node[above]{$\scriptstyle Y_1^\vee$} -- (-2.3,-1.3) node[below]{$\scriptstyle Y_1^\vee$};
\draw[thick,blue] (-.9,-1.3) node[below]{$\scriptstyle Y_2$} -- (-.9,-1.1) to[out =90,in=180] (-.6,-.8) to[out = 0, in=90] (-.3,-1.1) -- (-.3,-1.3) node[below]{$\scriptstyle Y_2^\vee$};
\draw[thick] (-.9,-1.1) -- (-2.3,-1.1);
\draw[thick] (-.3,-1.1) to[out=135,in=-90] (0,-.8);
\draw[thick] (0,-.7) circle (.1cm);
\node at (-.35,-.7) {$\scriptstyle 1/2$};
}
$$

\item 
Define $\gamma \colon {}_F\alpha_F \Rrightarrow {}_F\beta\otimes_G \beta^\vee_F$ by letting its component at $a$ be $\gamma_a \colon  \alpha_a \Rightarrow Y\otimes_{G(a)} Y^\vee$.
That $\gamma$ is a modification follows from the equality
\[
\tikzmath{
\begin{scope}
\clip[rounded corners=5pt] (-.3,0) rectangle (3.5,2);
\fill[primedregion=lightgray!50] (-.3,0) -- (.3,0) -- (.3,.2) to[out = 90,in=-90] (1.6,1.1) -- (1.6,2) -- (.9,2) -- (.9,1.1) arc(0:-180:.3cm) -- (.3,2) -- (-.3,2) -- (-.3,0);    
\fill[boxregion=lightgray!50] (.9,2) -- (.9,1.1) arc(0:-180:.3cm) -- (.3,2);   
\fill[primedregion=lightgray] (.3,0) -- (.3,.2) to[out = 90,in=-90] (1.6,1.1) -- (1.6,2) -- (3.5,2) -- (3.5,0) -- (.3,0) ;
\end{scope}
\draw[thick, \XsColor] (.3,0) node[below]{$\scriptstyle F(X)$} -- (.3,.2) to[out = 90,in=-90] (1.6,1.1) -- (1.6,2) node[above]{$\scriptstyle F(X)$};
\draw[preaction={boxregion=lightgray}, thick,blue] (2.3,.6) -- (2.3,1.1) arc(180:0:.3cm) -- (2.9,.6) arc(0:-180:.3cm);
\draw[thick,blue] (.9,2) node[above]{$\scriptstyle Y_1^\vee$} -- (.9,1.1) arc(0:-180:.3cm) (.3,1.1) -- (.3,2) node[above]{$\scriptstyle Y_1$};
\draw[thick] (2.6,.3) -- (2.6,0) node[below]{$\scriptstyle \alpha_{a_2}$};
\draw[thick] (.9,1.1) -- (2.3,1.1);
\draw[thick] (.3,1.1) to[out=-135,in=90] (0,.8);
\draw[thick] (0,.7) circle (.1cm);
\node at (.35,.7) {$\scriptstyle 1/2$};
\draw[thick] (2.9,1.1) to[out=45,in=-90] (3.2,1.4);
\draw[thick] (3.2,1.5) circle (.1cm);
\node at (2.8,1.6) {$\scriptstyle 1/2$};
}
=
\tikzmath{
\begin{scope}
\clip[rounded corners=5pt] (0,0) rectangle (1.9,1.3);
\fill[primedregion=lightgray!50] (0,0) -- (.6,0) -- (.6,.2) to[out = 90,in=-90] (1.6,.8) -- (1.6,1.3) -- (.9,1.3) -- (.9,1.1) to[out =-90,in=0] (.6,.8) to[out = 180, in=-90] (.3,1.1) -- (.3,1.3) -- (0,1.3) -- (0,0);    
\fill[boxregion=lightgray!50] (.9,1.3) -- (.9,1.1) to[out =-90,in=0](.6,.8) to[out = 180, in=-90] (.3,1.1) -- (.3,1.3) -- (.9,1.3);   
\fill[primedregion=lightgray] (.6,0) -- (.6,.2) to[out = 90,in=-90] (1.6,.8) -- (1.6,1.3) -- (1.9,1.3) -- (1.9,0) -- (.3,0) ;
\end{scope}
\draw[thick, \XsColor] (.6,0) node[below]{$\scriptstyle F(X)$} -- (.6,.2) to[out = 90,in=-90] (1.6,.8) -- (1.6,1.3) node[above]{$\scriptstyle F(X)$};
\draw[thick,blue] (.9,1.3) node[above]{$\scriptstyle Y_1^\vee$} -- (.9,1.1) to[out =-90,in=0] (.6,.8) to[out = 180, in=-90] (.3,1.1) -- (.3,1.3) node[above]{$\scriptstyle Y_1$};
\draw[thick] (.6,.8) to[out=-90,in=90] (1.6,.2) -- (1.6,0) node[below]{$\scriptstyle \alpha_{a_2}$};
}\,.
\]
\end{itemize}
We define the modification $\ev_\beta$ as in Definition \ref{defn:trace and UAF on Hom}.
Notice $\ev_\beta \ev^\dag_\beta$ is invertible and $\gamma$ is a unitary monad isomorphism as these hold for each component by construction.
\end{proof}

\subsection{The self-enrichment satisfies the desiderata}
\label{subsec:selfenrichmentdesiderata}
We now show that $(\vee,\Psi^{\Hom})$ satisfy our desiderata for a 3-Hilbert space structure on $\Hom(\fX\to \fY)$.
\begin{enumerate}[label=(D\arabic*)]

\item 
The canonical equivalence $\Hom(2\Hilb\to \fX)\cong \fX$ 
given by evaluation at $\Hilb\in 2\Hilb$ is isometric.
\begin{proof}
Since $\{\Hilb\}$ is an ONB for $2\Hilb$, for any $F\colon 2\Hilb\to \fX$ and $\mu\colon \id_F\Rightarrow\id_F$,
\[
\Psi_F^\Fun(\mu)
=
\frac{d_{\Hilb}}{D_{\End_{2\Hilb}(\Hilb)}}
\Psi^\fX_{F(\Hilb)}(\mu_{\Hilb})
=
\Psi^\fX_{F(\Hilb)}(\mu_{\Hilb}).
\qedhere
\]
\end{proof}

\item 
\label{D:YonedaEmbeddingIsometric}
The Yoneda embedding $\yo\colon\fX \hookrightarrow \Hom(\fX^{\op}\to 2\Hilb)$ is an isometric equivalence of 3-Hilbert spaces.
\begin{proof}
We know the Yoneda embedding $\yo$ is fully faithful, and it is isometrically essentially surjective by Corollary \ref{cor:AllPresheafsRepresentable}.
Also, $\yo$ is $\dag$-preserving by \cite[Thm.~2.9]{2404.05193}, as it is just a feature of the underlying $\rmC^*$-2-categories,
and it is UAF-preserving by Lemma \ref{lem:uaf-reps-unitary-adjs}
It remains to prove $\yo$ is isometric;
by \cite[Rem.~4.50]{MR5078555}, it suffices to check this on an ONB $\pi_0\fX$.
Recall that by definition, if $b\in \pi_0\fX$, then
$$
(\yo \alpha_b)_X \coloneq
[
X\cong 1_b\otimes X \xrightarrow{\alpha\otimes \id_X} 1_b\otimes X \cong X
]
\qquad\qquad
\forall\, {}_bX_b.
$$
We then calculate
\begin{align*}
\Psi_{\yo b}^{\Hom}(\yo \alpha)
&=
\frac{d_b}{D_{\Omega_b}} \Psi^{2\Hilb}_{\Omega_b}(\yo\alpha_b)
\\&=
\frac{d_b}{D_{\Omega_b}} \sum_{X\in\pi_0\Omega_b} d_X \Psi^\fX_b\left(\tikzmath{
\fill[fill=lightgray!50,rounded corners=5] (-1,-.5) rectangle (1,.5);
\roundNbox{fill=white}{(-.6,0)}{.25}{0}{0}{\scriptsize$\alpha$};
\draw[thick,fill=lightgray] (.2,0) arc (180:360+180:.3);
\node at (0,0) {$\scriptstyle X$};
}\right)
\\&=
\frac{1}{D_{\Omega_b}} \sum_{X\in\pi_0\Omega_b} d_X \Psi^\fX_b\left(\tikzmath{
\fill[fill=lightgray!50,rounded corners=5] (-1,-.5) rectangle (1,.5);
\roundNbox{fill=white}{(-.6,0)}{.25}{0}{0}{\scriptsize$\alpha$};
\draw[thick,fill=lightgray] (.2,0) arc (180:360+180:.3);
\node at (0,0) {$\scriptstyle X$};
}\right)
\Psi^\fX_b\left(\tikzmath{
\fill[fill=lightgray!50,rounded corners=5] (-1/2,-1/2) rectangle (1/2,1/2);
}\right)
\\&=
\frac{1}{D_{\Omega_b}} \sum_{X\in\pi_0\Omega_b} d_X \Psi^\fX_b\left(\tikzmath{
\fill[fill=lightgray!50,rounded corners=5] (-1,-.5) rectangle (0,.5);
\roundNbox{fill=white}{(-.5,0)}{.25}{0}{0}{\scriptsize$\alpha$};
}\right)
\Psi^\fX_b\left(\tikzmath{
\fill[fill=lightgray!50,rounded corners=5] (-1/2,-1/2) rectangle (.6,1/2);
\draw[thick,fill=lightgray] (-.2,0) arc (180:360+180:.25);
\node at (-.35,0) {$\scriptstyle X$};
}\,\right)
\\&=
\frac{1}{D_{\Omega_b}}
\left(
\sum_{X\in\pi_0\Omega_b} d_X^2
\right)
\Psi^\fX_b(\alpha)
\\&=
\Psi^\fX_b(\alpha).
\qedhere
\end{align*}
\end{proof}

\item\label{fact:hom-is-bim}
For $\rmH^*$-multifusion categories $\cC,\cD$,
the $\dag$-equivalence 
$$
\Hom(\Mod^\dag(\cC)\to\Mod^\dag(\cD)) \cong \Bim^\dag(\cC,\cD) \cong \Mod^\dag(\cC^{\mathrm{mp}}\boxtimes \cD)  
$$
given by $F \mapsto {}_\cC F(\cC_\cC)_\cD$ is isometric, i.e., an equivalence of 3-Hilbert spaces.
\begin{proof}
Since this is an equivalence of categories, it is enough to show that this preserves the dimensions of simple objects.
Moreover, it is enough to consider the case where $\cC$ and $\cD$ are both fusion.
Indeed, any $\cC$--$\cD$-bimodule decomposes as a sum of $\cC_{ii}$--$\cD_{jj}$-bimodules, where $\cC_{ii} \coloneq 1_i\otimes\cC\otimes 1_i$ is a fusion corner of $\cC$, so any simple will only be supported at one fusion block on each side. 

Before proceeding, we remind the reader from Example \ref{ex:ModforH*mFC} of the notation $d_c\coloneq\psi_\cC(\tr_\cC(\id_c))$, $D_\cC\coloneq \sum_{c\in\pi_0\cC} d_c^2$, $n_\cC\coloneq\dim(\End_\cC(1_\cC))$, and $\delta_\cC\coloneq \psi_\cC(\id_{1_\cC})$. 

First, since $\Mod^\dag(\cC)$ is connected, $\{\cC_\cC\}$ is an ONB, so using Proposition \ref{prop:FourierExpansion}, we compute
$$
d_F^{\Hom}
=
\Psi^{\Hom}_F(\id_{1_F}) 
\underset{\eqref{eq:3HilbOnFun}}{=}
\frac{d_\cC^{\Mod^\dag(\cC)}}{\dim^{\Mod^\dag(\cC)}\left( \Omega_{\cC_\cC} \right) }\Psi^{\Mod^\dag(\cD)}(\id_{1_{F\cC}}).
$$
To calculate the numerator of the right hand side, we use the formula \eqref{modeeqn} for $\Psi^{\Mod^\dag(\cC)}_{\cC_\cC}$, $
d_\cC^{\Mod^\dag(\cC)}
=
\frac{\delta_\cC}{D_\cC}\sum_{c\in\pi_0\cC} (d^{\cC}_c)^2
=
\delta_\cC
$
and simplify the denominator as follows:
\begin{align*}
\dim^{\Mod^\dag(\cC)}\left( \Omega_{\cC_\cC}\right) &= \sum_{c\in \pi_0\left(\Omega_{\cC_\cC}\cong \cC\right)}\Psi^{\Mod^\dag(\cC)}(\ev_c\circ\ev_c^\dag)^2
\\
&= \sum_{c\in\pi_0\cC }\bigg(\frac{\textcolor{red}{\delta_\cC}}{\textcolor{blue}{D_\cC}}\sum_{c'\in\pi_0\cC}\textcolor{blue}{d^\cC_{c'}}\underbrace{
\Tr^\cC_{c'}\left(\tikzmath{
\draw[thick] (0,0) arc (0:360:.4);
\draw[thick] (-1.1,-.4) -- ++(0,.8);
\node at (-1.3,0) {{$\scriptstyle c'$}};
\node at (.15,0) {$\scriptstyle c$};}\right)
}_{=d^\cC_c\textcolor{blue}{d^\cC_{c'}}/\textcolor{red}{\delta_\cC}}\bigg)^2 \\
&= 
\sum_{c\in\pi_0\cC}\left(d_c^\cC\right)^2\\
&=
D_\cC
\end{align*}
to obtain $\frac{d_\cC^{\Mod^\dag(\cC)}}{\dim^{\Mod^\dag(\cC)}\left( \Omega_{\cC_\cC} \right) } = \frac{\delta_\cC}{D_\cC}$.
By a similar calculation, 
$$
\Psi^{\Mod^\dag(\cD)}(\id_{1_{F\cC}}) = \frac{\delta_\cD}{D_\cD}\sum_{m \in \pi_0F\cC}(d_m^{F\cC})^2,
$$ 
so the dimension of $F$ in $\Hom(\Mod^\dag(\cC) \to \Mod^\dag(\cD))$ is
\[
\frac{\delta_\cC\delta_\cD}{D_\cC D_\cD}\sum_{m\in\pi_0F\cC} \left(d_m^{F\cC}\right)^2.
\]

On the other hand, if we calculate the dimension of $F\cC$ as a $\cC$--$\cD$-bimodule in $\Bim^\dag(\cC,\cD) \cong \Mod^\dag(\cC^{\mathrm{mp}}\boxtimes\cD)$, we get
\[ \frac{\delta_{\cC^{\mathrm{mp}}\boxtimes\cD}}{D_{\cC^{\mathrm{mp}}\boxtimes\cD}}\sum_{m \in \pi_0F\cC}\left(d^{F\cC}_m\right)^2, \]
which is visibly equal to the above.
\end{proof}

\item \label{nattransisisometryiffpointwiseisometry}
A 1-morphism $\eta\colon  F\Rightarrow G$ in $\Hom(\fX\to \fY)$ is an isometry if and only if it is a pointwise isometry.
\begin{proof}
Recall from \cite[Rem.~4.12]{MR5078555} that $\eta$ is an isometry if and only if $\coev_\eta$ is unitary and $\ev_\eta$ is an isometry.
Since the $\dagger$-structure on $\Hom(\fX\to \fY)$ is given pointwise, this is true if and only if $(\coev_\eta)_x=\coev_{\eta_x}$ is unitary and $(\ev_\eta)_x=\ev_{\eta_x}$ is an isometry at each object $x \in \fX$.
\end{proof}

\item \label{isometriesin4Hilb} 
A functor $F\in\Hom(\fX\to\fY)$ is an isometry (isometric equivalence) of 3-Hilbert spaces if and only if
$\mathrm{coev}_F\colon \id_{\fX}\Rightarrow F^*F$\footnote{For categories of functors, the conventions for evaluation and coevaluation are swapped as composition is read from right-to-left.} is an isometric equivalence in $\End(\fX)$ and $\mathrm{coev}_F$ is an isometry (isometric equivalence) in $\End(\fY)$. 
\begin{proof}
Note that $F$ is an isometry of 3-Hilbert spaces if and only if the maps $\fX(x_1 \to x_2) \to \fY(Fx_1 \to Fx_2)$ are isometric equivalences of 2-Hilbert spaces, since $F$ being an isometry means it is $\Psi$-preserving and fully faithful.
By unitary adjunction, the composite isometric equivalence $$
\fX(x_1 \to x_2) \cong^\dag \fY(Fx_1 \to Fx_2) \cong^\dag \fX(x_1 \to F^*Fx_2)
$$ 
is given by composition with $\coev_F$.
This is true if and only if each component of $\coev_F$ is a pointwise isometric equivalence, which by \ref{nattransisisometryiffpointwiseisometry} means $\coev_F$ is an isometric equivalence.
If $F$ is an isometric equivalence, this means that $F^*$ is also an isometry, and thus implies that $\ev_F$ is also an isometric equivalence.
\end{proof}

    \item 
    \label{D:YonedaEmbeddingUAF}
    Unitary 2-adjunctions induce unitary 2-adjunctions between functor 2-categories.
That is, if $F\dashv^\dag F^*$ for $F\colon  \fX\to \fY$, then for all $\fW$, 
$F\circ - \colon  \Hom(\fW\to \fX) \to \Hom(\fW\to \fY)$ 
has unitary 2-adjoint $F^*\circ -\colon  \Hom(\fW\to \fY)\to \Hom(\fW\to \fX)$.
\begin{proof}
We wish to show that for each functor $G:\fW \to \fX$ and $H\colon\fW \to \fY$, the equivalences $\Hom_{\Hom(\fW\to\fY)}(F\circ G,H) \cong^\dag \Hom_{\Hom(\fW,\fX)}(G,F^*\circ H)$ given by $\ev_F$ and $\coev_F$ are isometric.
Given $\alpha\colon F\circ G\Rightarrow H$ and $m\colon\alpha \Rrightarrow \alpha$, we want $\Psi^{\Hom(\fW\to\fY)}(\Tr(m)) = \Psi^{\Hom(\fW\to\fX)}(\Tr(\mate(m)))$.
Note that by the definition of $\Psi^{\Hom}$, it suffices to show that $\Psi^\fY_{Hw}(\Tr(m_w)) = \Psi^{\fX}_{F^* Hw}(\Tr(\mate(m)_w))$.
Since $\mate(m)_w$ is $\mate(m_w)$, and since $F \dashv^\dag F^*$, this is immediate.
\end{proof}
\end{enumerate}

\subsection{The symmetric monoidal structure of \texorpdfstring{$3\Hilb$}{3Hilb}}\label{sec:symm-monoidal-struct}
The final desideratum for our 3-Hilbert space structure on $\Hom(\fX\to \fY)$ is that it satisfies hom-tensor adjunction for the unitary Deligne product on $3\Hilb$, which corresponds to the usual unitary Deligne product on $\rmH^*$-multifusion categories under the equivalence $3\Hilb \cong \mathsf{H^*mFC}$.

    \begin{defn}
    Let $\fX,\fY$ be two 3-Hilbert spaces.
    We define a 2-category with objects formal pairs $c\threehilbtimes d$ of objects in $\fX$ and $\fY$, Hom categories $\Hom(c\threehilbtimes d,c'\threehilbtimes d') \coloneq \Hom(c,c')\boxtimes\Hom(d,d')$, and composition given componentwise.
    This category has a spherical state $\Psi$ given by the Deligne product of 2-Hilbert spaces.
    Explicitly, on a generating 2-morphism $\alpha\threehilbtimes \beta \colon \id_{c\threehilbtimes d} \Rightarrow \id_{c\threehilbtimes d}$, we have $\Psi(\alpha\threehilbtimes \beta) = \Psi^\fX(\alpha)\Psi^\fX(\beta)$.
    The \emph{unitary Deligne product} of $\fX$ and $\fY$, denoted $\fX\threehilbtimes \fY$, is defined to be the completion of the 2-category above. This is the unitary version of the Deligne tensor product described in \cite{MR4677193}.
    \end{defn}
    
    \begin{ex}
    \label{ex:3HilbSymmetricMonoidal}
    If $\cC,\cD$ are $\rmH^*$-multifusion categories, 
    $\Mod^\dag(\cC) \threehilbtimes \Mod^\dag(\cD) \isomeq \Mod^\dag(\cC\boxtimes\cD)$.
    Note that there is a clear map from the formal 2-category of pairs $\cM_\cC\threehilbtimes\cN_\cD$ to $\Mod^\dag(\cC\boxtimes\cD)$ by $\cM_\cC\threehilbtimes\cN_\cD \mapsto (\cM\boxtimes\cN)_{\cC\boxtimes \cD}$.
    This map exhibits $\Mod^\dag(\cC\boxtimes\cD)$ as the completion, since it is (in particular) the completion of the full subcategory on the generating module $(\cC\boxtimes\cD)_{\cC\boxtimes\cD}$.
    \end{ex}

    \begin{prop}
    The unitary Deligne product of 3-Hilbert spaces satisfies an isometric version of hom-tensor adjunction.
    That is, for any 3-Hilbert spaces $\fX,\fY,\fZ$, the natural equivalence of 2-categories $\Hom(\fX\to\Hom(\fY{\to}\fZ)) \cong \Hom(\fX\threehilbtimes\fY\to\fZ)$ is an isometry.
    \end{prop}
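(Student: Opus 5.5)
We already know the equivalence $\Phi\colon \Hom(\fX\to\Hom(\fY\to\fZ)) \to \Hom(\fX\threehilbtimes\fY\to\fZ)$ of underlying $\rmC^*$-2-categories. It sends $F$ to the functor $\widetilde F$ with $\widetilde F(x\threehilbtimes y)\coloneq F(x)(y)$ on the formal generating 2-category, extended via the universal property of completion \eqref{eq:UniversalPropertyOfCompletion}. It therefore only remains to show that $\Phi$ preserves spherical weights; by \cite[Prop.~4.9]{MR5078555} it then automatically preserves UAFs. As in the proof of \ref{D:YonedaEmbeddingIsometric}, the plan is to compare the two weights \eqref{eq:3HilbOnFun} on an arbitrary $m\colon \id_F\Rrightarrow\id_F$ using a single choice of ONB on each side, and to check that the normalizing factors match.

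First, fix ONBs $\pi_0\fX$ and $\pi_0\fY$. I claim $\{b\threehilbtimes c\}_{b\in\pi_0\fX,\,c\in\pi_0\fY}$ is an ONB for $\fX\threehilbtimes\fY$. Indeed $\Omega_{b\threehilbtimes c}=\Omega_b\boxtimes\Omega_c$ by the definition of the hom categories, and this is again $\rmH^*$-fusion, so $b\threehilbtimes c$ is simple. Reducing to $\fX=\Mod^\dag(\cC)$ and $\fY=\Mod^\dag(\cD)$ via \cite[Cor.~4.52]{MR5078555} and Example~\ref{ex:3HilbSymmetricMonoidal} shows these hit each component exactly once. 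By Theorem~\ref{spaceofONBscontractible} and Step~1 of well-definedness, this choice is harmless.

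Next, unwind both weights. On the right,
\[
\Psi^{\Hom}_{\widetilde F}(\widetilde m)=\sum_{b,c}\frac{d_{b\threehilbtimes c}}{D_{\Omega_b\boxtimes\Omega_c}}\Psi^\fZ_{F(b)(c)}(m_{b,c}).
\]
On the left, we apply \eqref{eq:3HilbOnFun} twice, first in $\Hom(\fX\to\Hom(\fY\to\fZ))$ and then in $\Hom(\fY\to\fZ)$:
\[
\Psi^{\Hom}_F(m)=\sum_b\frac{d_b}{D_{\Omega_b}}\Psi^{\Hom(\fY\to\fZ)}_{F(b)}(m_b)=\sum_{b,c}\frac{d_b}{D_{\Omega_b}}\frac{d_c}{D_{\Omega_c}}\Psi^\fZ_{F(b)(c)}((m_b)_c).
\]
We have $(m_b)_c=\widetilde m_{b\threehilbtimes c}$ by construction of $\Phi$, since on generators the components are literally equal. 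So it suffices to have $d_{b\threehilbtimes c}=d_bd_c$ and $D_{\Omega_b\boxtimes\Omega_c}=D_{\Omega_b}D_{\Omega_c}$. The first holds because the spherical weight on the Deligne product is the product weight. For the second, simples of $\Omega_b\boxtimes\Omega_c$ are $X\boxtimes Y$ with $d_{X\boxtimes Y}=d_Xd_Y$: the UDF and spherical trace on $\Omega_b\boxtimes\Omega_c$ are tensor products, and the loop in $\rmB(\Omega_b\boxtimes\Omega_c)$ factors. Summing squares then factors the global dimension.

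The main obstacle I expect is bookkeeping rather than computation. One must check that $\Psi^{\Hom(\fY\to\fZ)}_{F(b)}$ really evaluates the components $(m_b)_c$ computed at the objects $F(b)(c)$ of $\fZ$. One must also confirm that the weight on the completion $\fX\threehilbtimes\fY$ restricts on the generators $b\threehilbtimes c$ to the product weight. This holds since the completion inclusion is isometric and fully faithful. Finally, $\widetilde F$ is only determined up to pointwise isometric equivalence, and one must verify this does not change the value of $\Psi^{\Hom}$. This last point follows from Step~1 together with the fact that pointwise isometric equivalences transport $\Psi^\fZ$.
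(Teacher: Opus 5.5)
Your proposal is correct and follows essentially the same route as the paper: take $\{b\threehilbtimes c\}$ as an ONB of $\fX\threehilbtimes\fY$, unwind \eqref{eq:3HilbOnFun} twice on the left-hand side, and match the normalizations $d_{b\threehilbtimes c}/D_{\Omega_b\boxtimes\Omega_c}=d_bd_c/(D_{\Omega_b}D_{\Omega_c})$. The only differences are that you evaluate on an arbitrary modification $m$ rather than only on $\id_{\id_F}$, and you justify why $d$ and $D$ are multiplicative; the paper leaves both of these implicit.
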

    \begin{proof}
    Let $F\colon\fX \to \Hom(\fY\to\fZ)$.
    Then we have
    \begin{align*}
    \Psi(\id_{\id_F}) &= \sum_{x \in \pi_0\fX} \frac{d_x}{D_{\Omega_x}}\Psi^{\Hom(\fY{\to}\fZ)}_{Fx}(\id_{\id_{Fx}}) \\
    &= \sum_{\substack{x\in\pi_0\fX, \\ \!y\in\pi_0\fY}} \frac{d_xd_y}{D_{\Omega_x}D_{\Omega_y}}\Psi^{\fZ}_{Fxy}(\id_{\id_{Fxy}}).
    \end{align*}
    For the other side, we claim that objects in the form $x\threehilbtimes y$ where $x,y$ are a basis for $\fX$ and $\fY$, respectively, form a basis for $\fX\threehilbtimes \fY$.
    Note that $\Omega_{x\threehilbtimes y} = \Omega_x\boxtimes \Omega_y$ is a fusion category, so $x\threehilbtimes y$ is simple.
    And since $\fX \threehilbtimes \fY$ is the completion of objects in this form, this does indeed form a basis.
    Therefore, letting $F'$ denote the image of $F$ under the hom-tensor equivalence, we have
    \begin{align*}
    \Psi(\id_{\id_{F'}}) &= \sum_{x \in \pi_0\fX,y \in \pi_0\fY} \frac{d_{x\threehilbtimes y}}{D_{\Omega_{x\threehilbtimes y}}} \Psi^{\fZ}_{F'(x\threehilbtimes y)}(\id_{\id_{F'(x\threehilbtimes y)}}) \\
    &= \sum_{x \in \pi_0\fX,y\in\pi_0\fY} \frac{d_xd_y}{D_{\Omega_x}D_{\Omega_y}}\Psi^{\fZ}_{Fxy}(\id_{\id_{Fxy}}).
    \end{align*}
This verifies that the equivalence is $\Psi$-preserving; since it is clearly an equivalence of 2-categories, this means it is isometric.
    \end{proof}

We can now have the analogous result from Example \ref{ex:HomIsTensor} in $3\Hilb$.

\begin{cor}
There is an isometric equivalence $\Hom(\fX \to \fY) \cong^\dag \fY\threehilbtimes \fX^{1\op}$.
\end{cor}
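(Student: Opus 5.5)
The plan is to deduce the corollary by combining the isometric hom-tensor adjunction just proved with the unitary Yoneda desideratum \ref{D:YonedaEmbeddingIsometric}. The chain of isometric equivalences I will aim for is
\begin{align*}
\fY\threehilbtimes\fX^{1\op}
&\isomeq \Hom\big((\fY\threehilbtimes \fX^{1\op})^{1\op}\to 2\Hilb\big)
&&\text{(\ref{D:YonedaEmbeddingIsometric})}
\\&\isomeq \Hom\big(\fY^{1\op}\threehilbtimes \fX\to 2\Hilb\big)
\\&\isomeq \Hom\big(\fX\to \Hom(\fY^{1\op}\to 2\Hilb)\big)
&&\text{(hom-tensor Prop.)}
\\&\isomeq \Hom(\fX\to \fY)
&&\text{(\ref{D:YonedaEmbeddingIsometric}, \ref{D:YonedaEmbeddingUAF})}.
\end{align*}
The third step uses the hom-tensor proposition with the factors in the order $\fX\threehilbtimes\fY^{1\op}$, which requires the symmetry of $\threehilbtimes$.

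The first step is the Yoneda desideratum applied to the 3-Hilbert space $\fY\threehilbtimes\fX^{1\op}$. For the second step I need two facts. First, $(\fX\threehilbtimes\fY)^{1\op}\isomeq \fX^{1\op}\threehilbtimes\fY^{1\op}$ isometrically. Second, the symmetry $\fX\threehilbtimes\fY\isomeq\fY\threehilbtimes\fX$ is isometric. Both are checked on the formal 2-category of pairs before completion. There the hom categories and the weights $\Psi(\alpha\threehilbtimes\beta)=\Psi^\fX(\alpha)\Psi^\fY(\beta)$ are visibly invariant under swapping factors and under $1\op$. The UAF is also unchanged, since reversing 1-morphisms just exchanges $X$ and $X^\vee$ and fixes the traces. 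These isometric equivalences then extend to the completions by the universal property \eqref{eq:UniversalPropertyOfCompletion}, because isometric 1-morphisms preserve Hilbert direct sums and $\rmH^*$-monad splittings (Fact \ref{fact:IsometricPreservesStuff}). I also need that $\fX^{1\op}$ is a 3-Hilbert space with the same $\Psi$ and the reversed UAF; this is immediate from sphericality.

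For the last step I postcompose with an isometric equivalence $\fY\isomeq\Hom(\fY^{1\op}\to 2\Hilb)$. By Theorem \ref{thm:unitaryyoneda} and \ref{isometriesin4Hilb}, this equivalence has a unitary 2-adjoint which is its inverse. Then \ref{D:YonedaEmbeddingUAF} shows that $\yo\circ-$ and its adjoint $\yo^*\circ-$ induce isometric equivalences between $\Hom(\fX\to\fY)$ and $\Hom(\fX\to\Hom(\fY^{1\op}\to 2\Hilb))$. Concretely, the weight \eqref{eq:3HilbOnFun} is computed pointwise on $\pi_0\fX$, and postcomposition with a weight-preserving functor preserves each term.

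The main obstacle I expect is the second step, namely making the $1\op$ and symmetry compatibilities precise at the level of completions. Beyond the isometry itself, I would also need to confirm that these equivalences are $\dag,\vee$-preserving and not merely weight-preserving. To handle this, I would reduce to ONBs. Using Theorem \ref{spaceofONBscontractible}, I take $\fX\cong\bigboxplus\Mod^\dag(\cC_i)$ and $\fY\cong\bigboxplus\Mod^\dag(\cD_j)$. Then Example \ref{ex:3HilbSymmetricMonoidal} and \ref{fact:hom-is-bim} reduce the whole statement to the concrete isometry $\Bim^\dag(\cC,\cD)\cong\Mod^\dag(\cD\boxtimes\cC^{\mathrm{mp}})$. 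This gives an independent check of the entire chain.
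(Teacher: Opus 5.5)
Your main chain is correct, but it takes a different route from the paper.

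\textbf{The paper's route.} The paper argues concretely. It writes $\fX\cong^\dag\Mod^\dag(\cC)$ and $\fY\cong^\dag\Mod^\dag(\cD)$ for $\rmH^*$-multifusion categories $\cC,\cD$. It then applies \ref{fact:hom-is-bim} to land in $\Mod^\dag(\cD\boxtimes\cC^{\rm mp})$; the only reordering is the elementary swap $\cC^{\rm mp}\boxtimes\cD\cong\cD\boxtimes\cC^{\rm mp}$ before completing. Example~\ref{ex:3HilbSymmetricMonoidal} splits this as $\Mod^\dag(\cD)\threehilbtimes\Mod^\dag(\cC^{\rm mp})$. The proof finishes with $\Mod^\dag(\cC^{\rm mp})\cong^\dag\rmB(\cC^{\rm mp})^\cent\cong^\dag((\rmB\cC)^{1\op})^\cent\cong^\dag\Mod^\dag(\cC)^{1\op}$. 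This is essentially your closing ``independent check,'' except that you omit this last identification, which is exactly where the $1\op$ enters.

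\textbf{Your route.} Your main argument categorifies the 2-Hilbert proof of Example~\ref{ex:HomIsTensor}, using \ref{D:YonedaEmbeddingIsometric}, the hom-tensor proposition, and invariance of $\Psi^{\Hom}$ under pre- and postcomposition with isometric equivalences. What this buys is that you never need the explicit dimension computation behind \ref{fact:hom-is-bim}, nor any choice of presentation $\Mod^\dag(\cC)$. The argument would work verbatim wherever one has an isometric Yoneda lemma and isometric currying. The cost is that you need the symmetry and $1\op$-compatibility of $\threehilbtimes$ as isometric equivalences of completions, which the paper never states. Your sketch of these is fine. The $1\op$ part uses that $(\fZ^\cent)^{1\op}$ is a completion of $\fZ^{1\op}$; this is immediate from the universal property, and the paper uses it implicitly too.

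\textbf{Two small points.}
\begin{enumerate}
\item Your second step needs invariance of $\Psi^{\Hom}$ under \emph{pre}composition with an isometric equivalence. This uses that such an equivalence carries an ONB to an ONB while preserving $d_b$ and $D_{\Omega_b}$, together with ONB-independence of \eqref{eq:3HilbOnFun}.
\item In the last step, your direct pointwise computation with \eqref{eq:3HilbOnFun} already suffices. The detour through \ref{isometriesin4Hilb} and \ref{D:YonedaEmbeddingUAF} is unnecessary.
\end{enumerate}
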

\begin{proof}
Write $\fX \cong^\dag \Mod^\dag(\cC)$ and $\fY \cong^\dag \Mod^\dag(\cD)$ for some $\rmH^*$-multifusion categories $\cC,\cD$.
Then
\begin{align*}
\Hom(\Mod^\dag(\cC)\to \Mod^\dag(\cD))
&\cong^\dag
\Mod^\dag(\cD\boxtimes \cC^{\rm mp})
&&\text{\ref{fact:hom-is-bim}}
\\&
\cong^\dag
\Mod^\dag(\cD)\threehilbtimes \Mod^\dag(\cC^{\rm mp})
&&\text{(Ex.~\ref{ex:3HilbSymmetricMonoidal})}
\\&
\cong^\dag
\Mod^\dag(\cD)\threehilbtimes \Mod^\dag(\cC)^{1\op},
&&
\end{align*}
where the last isometric equivalence follows from
\[
\Mod^\dag(\cC^{\rm mp}) 
\cong^\dag 
\rmB(\cC^{\rm mp})^\cent 
\cong^\dag
((\rmB\cC)^{1\op})^{\cent } 
\cong^\dag
\Mod^\dag(\cC)^{1 \op}.
\qedhere
\]
\end{proof}

\bibliographystyle{alpha}
{\footnotesize{
\bibliography{../../bibliography/bibliography}

\newcommand{\etalchar}[1]{$^{#1}$}
\begin{thebibliography}{FHJF{\etalchar{+}}24}

\bibitem[Amb45]{MR13235}
Warren Ambrose.
\newblock Structure theorems for a special class of {B}anach algebras.
\newblock {\em Trans. Amer. Math. Soc.}, 57:364--386, 1945.
\newblock \mathscinet{MR13235} \doi{10.2307/1990182}.

\bibitem[Bae97]{MR1448713}
John~C. Baez.
\newblock Higher-dimensional algebra. {II}. {$2$}-{H}ilbert spaces.
\newblock {\em Adv. Math.}, 127(2):125--189, 1997.
\newblock \mathscinet{MR1448713} \doi{10.1006/aima.1997.1617}.

\bibitem[BD95]{MR1355899}
John~C. Baez and James Dolan.
\newblock Higher-dimensional algebra and topological quantum field theory.
\newblock {\em J. Math. Phys.}, 36(11):6073--6105, 1995.
\newblock \mathscinet{MR1355899} \arXiv{q-alg/9503002} \doi{10.1063/1.531236}.

\bibitem[CFH{\etalchar{+}}26]{MR5078555}
Quan Chen, Giovanni Ferrer, Brett Hungar, David Penneys, and Sean Sanford.
\newblock Manifestly unitary higher {H}ilbert spaces.
\newblock {\em J. Lond. Math. Soc. (2)}, 113(5):Paper No. e70532, 2026.
\newblock \mathscinet{MR5078555} \doi{10.1112/jlms.70532} \arxiv{2410.05120}.

\bibitem[CHPJP22]{MR4419534}
Quan Chen, Roberto Hern\'{a}ndez~Palomares, Corey Jones, and David Penneys.
\newblock Q-system completion for {$\rm C^*$} 2-categories.
\newblock {\em J. Funct. Anal.}, 283(3):Paper No. 109524, 2022.
\newblock \mathscinet{MR4419534} \doi{10.1016/j.jfa.2022.109524}
  \arxiv{2105.12010}.

\bibitem[CP22]{MR4369356}
Quan Chen and David Penneys.
\newblock Q-system completion is a 3-functor.
\newblock {\em Theory Appl. Categ.}, 38:Paper No. 4, 101--134, 2022.
\newblock \mathscinet{MR4369356} \doi{10.1002/num.22828} \arxiv{2106.12437}.

\bibitem[D{\'e}c23a]{MR4600461}
Thibault~D. D{\'e}coppet.
\newblock The {M}orita theory of fusion 2-categories.
\newblock {\em High. Struct.}, 7(1):234--292, 2023.
\newblock \mathscinet{MR4600461} \doi{10.21136/HS.2023.07} \arxiv{2208.08722}.

\bibitem[D{\'{e}}c23b]{2311.16827}
Thibault~D. D{\'{e}}coppet.
\newblock On the dualizability of fusion 2-categories.
\newblock \arxiv{2311.16827}, to appear Quantum Topol., 2023.

\bibitem[D{\'{e}}c24]{MR4677193}
Thibault~D. D{\'{e}}coppet.
\newblock The 2-{D}eligne tensor product.
\newblock {\em Kyoto J. Math.}, 64(1):1--29, 2024.
\newblock \mathscinet{MR4677193} \doi{10.1215/21562261-2023-0005}
  \arxiv{2103.16880}.

\bibitem[DHP24]{MR4814691}
Zachary Dell, Peter Huston, and David Penneys.
\newblock Unitary braided-enriched monoidal categories.
\newblock {\em Quantum Topol.}, 15(3):567--632, 2024.
\newblock \mathscinet{MR4814691} \doi{10.4171/qt/226} \arxiv{2208.14992}.

\bibitem[DR18]{1812.11933}
Christopher~L. Douglas and David~J. Reutter.
\newblock Fusion 2-categories and a state-sum invariant for 4-manifolds, 2018.
\newblock \arxiv{1812.11933}.

\bibitem[EJ25]{2507.05185}
David~E. Evans and Corey Jones.
\newblock An operator algebraic approach to fusion category symmetry on the
  lattice.
\newblock \arxiv{2507.05185}, 2025.

\bibitem[Fer24]{2404.05193}
Giovanni Ferrer.
\newblock Foundations for operator algebraic tricategories.
\newblock \arxiv{2404.05193}, 2024.

\bibitem[FH21]{MR4268163}
Daniel~S. Freed and Michael~J. Hopkins.
\newblock Reflection positivity and invertible topological phases.
\newblock {\em Geom. Topol.}, 25(3):1165--1330, 2021.
\newblock \mathscinet{MR4268163} \doi{10.2140/gt.2021.25.1165}
  \arxiv{1604.06527}.

\bibitem[FHJF{\etalchar{+}}24]{2403.01651}
Giovanni Ferrer, Brett Hungar, Theo Johnson-Freyd, Cameron Krulewski, Lukas
  M\"uller, Nivedita, David Penneys, David Reutter, Claudia Scheimbauer, Luuk
  Stehouwer, and Chetan Vuppulury.
\newblock Dagger $n$-categories.
\newblock \arxiv{2403.01651}, 2024.

\bibitem[FKP]{UQSLbook}
Giovanni Ferrer, Kyle Kawagoe, and David Penneys.
\newblock Unitary quantum symmetries lite.
\newblock In preparation. Some chapters available at
  \url{https://people.math.osu.edu/penneys.2/UQSL/UQSL.html}.

\bibitem[FLW02]{MR1910833}
Michael~H. Freedman, Michael Larsen, and Zhenghan Wang.
\newblock A modular functor which is universal for quantum computation.
\newblock {\em Comm. Math. Phys.}, 227(3):605--622, 2002.
\newblock \mathscinet{MR1910833} \doi{10.1007/s002200200645}
  \arxiv{quant-ph/0001108v2}.

\bibitem[FMPS26]{2606.11334}
Giovanni Ferrer, Lukas M\"uller, David Penneys, and Luuk Stehouwer.
\newblock The many faces of higher hilbert spaces.
\newblock \arxiv{2606.11334}, 2026.

\bibitem[FMT24]{MR4814695}
Daniel~S. Freed, Gregory~W. Moore, and Constantin Teleman.
\newblock Topological symmetry in quantum field theory.
\newblock {\em Quantum Topol.}, 15(3):779--869, 2024.
\newblock \mathscinet{MR4814695} \doi{10.4171/qt/223} \arxiv{2209.07471}.

\bibitem[GJF19]{1905.09566}
Davide Gaiotto and Theo Johnson-Freyd.
\newblock Condensations in higher categories, 2019.
\newblock \arxiv{1905.09566}.

\bibitem[GMP{\etalchar{+}}23]{MR4598730}
Pinhas Grossman, Scott Morrison, David Penneys, Emily Peters, and Noah Snyder.
\newblock The {E}xtended {H}aagerup fusion categories.
\newblock {\em Ann. Sci. \'{E}c. Norm. Sup\'{e}r. (4)}, 56(2):589--664, 2023.
\newblock \mathscinet{MR4598730} \doi{10.24033/asens.2541} \arxiv{1810.06076}.

\bibitem[Gra74]{MR371990}
John~W. Gray.
\newblock {\em Formal category theory: adjointness for {$2$}-categories}.
\newblock Lecture Notes in Mathematics, Vol. 391. Springer-Verlag, Berlin-New
  York, 1974.
\newblock \mathscinet{MR371990} \doi{doi:10.1007/BFb0061280}.

\bibitem[Gur13]{MR3076451}
Nick Gurski.
\newblock {\em Coherence in three-dimensional category theory}, volume 201 of
  {\em Cambridge Tracts in Mathematics}.
\newblock Cambridge University Press, Cambridge, 2013.
\newblock \mathscinet{MR3076451} \doi{10.1017/CBO9781139542333}.

\bibitem[HNP24]{2411.01678}
Andr\'e Henriques, Nivedita, and David Penneys.
\newblock Complete $\mathrm{W}^*$-categories.
\newblock \arxiv{2411.01678}, 2024.

\bibitem[HPT16]{MR3578212}
Andr\'e Henriques, David Penneys, and James Tener.
\newblock Categorified trace for module tensor categories over braided tensor
  categories.
\newblock {\em Doc. Math.}, 21:1089--1149, 2016.
\newblock \mathscinet{MR3578212} \doi{10.4171/DM/553} \arxiv{1509.02937}.

\bibitem[HPT24]{MR4750417}
Andr\'{e} Henriques, David Penneys, and James Tener.
\newblock Unitary {A}nchored {P}lanar {A}lgebras.
\newblock {\em Comm. Math. Phys.}, 405(6):Paper No. 137, 2024.
\newblock \mathscinet{MR4750417} \doi{10.1007/s00220-024-04985-w}
  \arxiv{2301.11114}.

\bibitem[HV19]{MR3971584}
Chris Heunen and Jamie Vicary.
\newblock {\em Categories for quantum theory}, volume~28 of {\em Oxford
  Graduate Texts in Mathematics}.
\newblock Oxford University Press, Oxford, 2019.
\newblock An introduction, \mathscinet{MR3971584}
  \doi{10.1093/oso/9780198739623.001.0001}.

\bibitem[JF22]{MR4444089}
Theo Johnson-Freyd.
\newblock On the classification of topological orders.
\newblock {\em Comm. Math. Phys.}, 393(2):989--1033, 2022.
\newblock \mathscinet{MR4444089} \doi{10.1007/s00220-022-04380-3}
  \arxiv{2003.06663}.

\bibitem[JNPW25]{MR4945955}
Corey Jones, Pieter Naaijkens, David Penneys, and Daniel Wallick.
\newblock Local topological order and boundary algebras.
\newblock {\em Forum Math. Sigma}, 13:Paper No. e135, 2025.
\newblock \mathscinet{MR4945955} \doi{10.1017/fms.2025.16} \arxiv{2307.12552}.

\bibitem[Jon24]{MR4814692}
Corey Jones.
\newblock D{HR} bimodules of quasi-local algebras and symmetric quantum
  cellular automata.
\newblock {\em Quantum Topol.}, 15(3):633--686, 2024.
\newblock \mathscinet{MR4814692} \doi{10.4171/qt/216} \arxiv{2304.00068}.

\bibitem[JP17]{MR3687214}
Corey Jones and David Penneys.
\newblock Operator algebras in rigid {$\rm C^*$}-tensor categories.
\newblock {\em Comm. Math. Phys.}, 355(3):1121--1188, 2017.
\newblock \mathscinet{MR3687214} \doi{10.1007/s00220-017-2964-0}
  \arxiv{1611.04620}.

\bibitem[JP19]{MR3948170}
Corey Jones and David Penneys.
\newblock Realizations of algebra objects and discrete subfactors.
\newblock {\em Adv. Math.}, 350:588--661, 2019.
\newblock \mathscinet{MR3948170} \doi{10.1016/j.aim.2019.04.039}
  \arxiv{1704.02035}.

\bibitem[JP20]{MR4079745}
Corey Jones and David Penneys.
\newblock Q-systems and compact {W}*-algebra objects.
\newblock In {\em Topological phases of matter and quantum computation}, volume
  747 of {\em Contemp. Math.}, pages 63--88. Amer. Math. Soc., Providence, RI,
  2020.
\newblock \mathscinet{MR4079745} \doi{10.1090/conm/747/15039}
  \arxiv{1707.02155}.

\bibitem[Kel05]{MR2177301}
G.~M. Kelly.
\newblock Basic concepts of enriched category theory.
\newblock {\em Repr. Theory Appl. Categ.}, (10):vi+137, 2005.
\newblock \mathscinet{MR2177301}, Reprint of the 1982 original [Cambridge Univ.
  Press, Cambridge; \mathscinet{MR0651714}].

\bibitem[Kit03]{MR1951039}
A.~Yu. Kitaev.
\newblock Fault-tolerant quantum computation by anyons.
\newblock {\em Ann. Physics}, 303(1):2--30, 2003.
\newblock \mathscinet{MR1951039} \doi{10.1016/S0003-4916(02)00018-0}
  \arxiv{quant-ph/9707021}.

\bibitem[Kit06]{cond-mat/0506438}
Alexei Kitaev.
\newblock Anyons in an exactly solved model and beyond.
\newblock {\em Annals of Physics}, 321(1):2--111, 2006.
\newblock January Special Issue. \doi{10.1016/j.aop.2005.10.005}
  \arxiv{cond-mat/0506438}.

\bibitem[KWZ17]{1702.00673}
Liang Kong, Xiao-Gang Wen, and Hao Zheng.
\newblock Boundary-bulk relation in topological orders.
\newblock {\em Nuclear Physics B}, 922:62--76, 2017.
\newblock \doi{10.1016/j.nuclphysb.2017.06.023} \arxiv{1702.00673}.

\bibitem[LW05]{PhysRevB.71.045110}
Michael~A. Levin and Xiao-Gang Wen.
\newblock String-net condensation: A physical mechanism for topological phases.
\newblock {\em Phys. Rev. B}, 71:045110, Jan 2005.
\newblock doi{10.1103/PhysRevB.71.045110} \arxiv{cond-mat/0404617}.

\bibitem[Naa12]{NaaijkensThesis}
Pieter Naaijkens.
\newblock {\em Anyons in infinite quantum systems : {QFT} in d=2+1 and the
  {T}oric {C}ode}.
\newblock PhD thesis, Radboud Universiteit Nijmegen, May 2012.
\newblock Available at \url{https://repository.ubn.ru.nl/handle/2066/92737}.

\bibitem[Oga22]{MR4362722}
Yoshiko Ogata.
\newblock A derivation of braided {$C^*$}-tensor categories from gapped ground
  states satisfying the approximate {H}aag duality.
\newblock {\em J. Math. Phys.}, 63(1):Paper No. 011902, 48, 2022.
\newblock \mathscinet{MR4362722}, \doi{10.1063/5.0061785}, \arxiv{2106.15741}.

\bibitem[Pen20]{MR4133163}
David Penneys.
\newblock Unitary dual functors for unitary multitensor categories.
\newblock {\em High. Struct.}, 4(2):22--56, 2020.
\newblock \mathscinet{MR4133163} \arxiv{1808.00323}.

\bibitem[Sch13]{MR3019263}
Gregor Schaumann.
\newblock Traces on module categories over fusion categories.
\newblock {\em J. Algebra}, 379:382--425, 2013.
\newblock \mathscinet{MR3019263} \doi{10.1016/j.jalgebra.2013.01.013}
  \arxiv{1206.5716}.

\bibitem[Wal21]{2104.02101}
Kevin Walker.
\newblock A universal state sum.
\newblock \arxiv{2104.02101}, 2021.

\bibitem[Wes]{UnitaryDiskLike}
Greyson Wesley.
\newblock Unitary disk-like categories, unitary tqfts, and higher hilbert
  spaces.
\newblock In preparation.

\bibitem[Yam07]{MR2325696}
Shigeru Yamagami.
\newblock Notes on operator categories.
\newblock {\em J. Math. Soc. Japan}, 59(2):541--555, 2007.
\newblock \mathscinet{MR2325696} \arxiv{math/0212136}.

\end{thebibliography}
}}
\end{document}